
\documentclass[12pt,reqno]{amsart}
\usepackage{amssymb,latexsym,amsmath,amsthm,amscd,epsfig}
\usepackage[active]{srcltx}
\usepackage[latin1]{inputenc}
\usepackage{pdfsync}

\def\cal{\mathcal}

\def\bT{\mathbf{T}}
\def\bZ{\mathbf{Z}}

\def\cD{\mathcal{D}}
\def\cF{\mathcal{F}}

\def\cG{\mathcal{G}}
\def\cP{\mathcal{P}}

\def\cT{\mathcal T}

\def\bpm{\begin{pmatrix}}
\def\epm{\end{pmatrix}}

\newcommand{\ttt}{{\rm Top}}

\newcommand{\sel}{{\rm Sel}}
\newcommand{\sss}{{\rm Stop}}
\newcommand{\tree}{{\rm Tree}}
\newcommand{\term}{{\rm Term}}
\newcommand{\rf}[1]{{\eqref{#1}}}

\newcommand{\fiproof}{{\hspace*{\fill} $\square$ \vspace{2pt}}}
\newcommand{\D}{{\mathbb D}}  
\newcommand{\wt}[1]{{\widetilde{#1}}}
\newcommand{\wh}[1]{{\widehat{#1}}}

\newcommand{\maxbad}{{\rm Bad}}
\newcommand{\eee}{{\rm End}}

\textwidth15cm
\textheight21cm
\evensidemargin.2cm
\oddsidemargin.2cm

\addtolength{\headheight}{3.2pt}    

\newtheorem{thm}{Theorem}[section]

\newtheorem{lemma}[thm]{Lemma}

\newtheorem{definition}[thm]{Definition}
\newtheorem{theorem}[thm]{Theorem}

\newtheorem*{theorema}{Theorem A}

%
%

\newtheorem{lem}[thm]{Lemma}

\theoremstyle{remark}
\newtheorem{remark}[thm]{\bf Remark}

\numberwithin{equation}{section}

\def\R{\mathbb R}
\def\C{\mathbb C}
\def\D{\mathbb D}
\def\H{\mathcal H}

\def\cB{\mathcal B}
\def\cG{\mathcal G}

\def\Z{\mathbb Z}
\def\diam{\text{diam}}
\def\Lip{\operatorname{Lip}}
\def\dist{\operatorname{dist}}
\def\supp{\operatorname{supp}}
\def\diam{\operatorname{diam}}

%
%

\newcommand{\ve}{{\varepsilon}}

\title{Quasiconformal maps, analytic capacity, and non linear potentials}

\author[X. Tolsa]{Xavier Tolsa}
\address{Instituci\'{o} Catalana de Recerca i Estudis Avan\c{c}ats (ICREA) and Departament de Matem\`{a}tiques,  Universitat Aut\`{o}noma de Barcelona, 08193 Bellaterra (Barcelona), Catalonia}
\email{{\tt xtolsa@math.uab.cat}}
\urladdr{http://mat.uab.es/~xtolsa}

\author[I. Uriarte-Tuero]{Ignacio Uriarte-Tuero}
\address{Department of Mathematics, Michigan State University, East Lansing, MI 48824, USA}
\email{{\tt ignacio@math.msu.edu}}

\thanks{2000 {\em Mathematical Subject Classification.}
.}

\thanks{{\em Key words and phrases.}
Quasiconformal mappings in the plane, analytic capacity.}

\thanks{X.\ T.\ is partially supported by grants MTM2007-62817, MTM2010-16232 (Spain), and
2009-SGR-420 (Catalonia). I.\ U.\ 
was a postdoctoral fellow at the University of Missouri, Columbia, USA, and at Centre de Recerca Matem\`{a}tica, Barcelona, Spain, for some periods of time during the elaboration of this paper. During the elaboration of this paper, he has been partially supported by grants DMS-0901524, CAREER DMS-1056965 (US NSF), Sloan Research Foundation, and MTM2010-16232, MTM2009-14694-C02-01 (Spain)}

\begin{document}


\begin{abstract}
In this paper we prove that if $\phi:\C\to\C$ is a $K$-quasiconformal map, with $K>1$, 
and $E\subset \C$ is a compact set contained in a ball $B$, then
$$\frac{\dot C_{ \frac{2K}{2K+1},\frac{2K+1}{K+1}}(E)}{\diam(B)^{\frac2{K+1}}}  \geq 
c^{-1} \left(\frac{\gamma(\phi(E))}{\diam(\phi(B))}\right)^{\frac{2K}{K+1}},$$
where $\gamma$ stands for the analytic capacity and $\dot C_{ \frac{2K}{2K+1},\frac{2K+1}{K+1}}$
is a capacity associated to a non linear Riesz potential. 
As a consequence, if $E$ is not $K$-removable (i.e. removable for bounded $K$-quasiregular maps), it has positive capacity $\dot C_{ \frac{2K}{2K+1},\frac{2K+1}{K+1}}$. This improves previous results that assert that $E$ must have
non $\sigma$-finite Hausdorff measure of dimension $2/(K+1)$.
We also show that the indices $\frac{2K}{2K+1}$, $\frac{2K+1}{K+1}$ are sharp, and that Hausdorff gauge functions do not appropriately discriminate which sets are $K$-removable. So essentially we solve the problem of finding sharp ``metric" conditions for $K$-removability.
\end{abstract}

\maketitle

\thispagestyle{empty}


\section{Introduction}

A homeomorphism $\phi:\Omega\to \Omega'$ between planar domains is called $K$-quasiconformal
if it preserves orientation, it belongs to the Sobolev space $W^{1,2}_{\rm loc}(\Omega)$, and satisfies
$$\max_\alpha|\partial_\alpha\phi| \leq K\,\min_\alpha|\partial_\alpha\phi|\quad \mbox{ a.e. in $\Omega$.}$$ If one does not ask $\phi$ to be a homeomorphism, then one says that 
$\phi$ is quasiregular. When $K=1$, the class of quasiregular maps coincides with the
one of analytic functions.

A compact set $E\subset \C$ is said to be removable for bounded $K$-quasiregular
maps (or, $K$-removable) if for every open set $\Omega\supset E$, every bounded
$K$-quasiregular map $f:\Omega\setminus E\to\C$ admits a $K$-quasiregular extension
to $\Omega$. By Stoilow's theorem, it turns out that $E$ is $K$-removable if, and only if, for every planar $K$-quasiconformal map $\phi$,  
$\phi(E)$ is removable for bounded analytic functions (i.e. $\phi(E)$ is $1$-removable). 
The Painlev\'{e} problem for $K$-quasiregular mappings consists in describing $K$-removable
sets in metric and geometric terms, in analogy to the classical Painlev\'{e} problem of characterizing removable
sets for bounded analytic functions in metric and geometric terms (i.e.\ precisely the case $K =1$). In this case ($K=1$), a ``solution'' in terms of the so called curvature
of measures was obtained in \cite{tolsasemiadditivityanalyticcapacity} (see also \cite{tolsabilip}).

The analytic capacity of a compact set $E\subset \C$ is defined by
$$\gamma(E)  =\sup_f |f'(\infty)|,$$
where the supremum is taken over all bounded analytic functions $f:\C\setminus E\to \C$
with $\|f\|_\infty\leq 1$, and
$$f'(\infty)= \lim_{z\to\infty}z(f(z)-f(\infty)).$$
This set function was introduced by Ahlfors in order to study the Painlev\'{e} problem for 
bounded analytic functions. He showed that $E$ is removable for these functions if and
only if $\gamma(E)=0$. By the relationship between $1$-removable and $K$-removable sets 
explained above, it turns out that $E$ is $K$-removable if and only if $\gamma(\phi(E))=0$
for all planar $K$-quasiconformal maps.

An old theorem of Painlev\'{e} shows that $\gamma (E) \lesssim \H^1 (E)$ for any compact set $E$, where $\H^1 (E)$ denotes the one-dimensional Hausdorff measure (length) of $E$.
This result gives the sharp condition in terms of Hausdorff measures for removability
for bounded analytic functions, since sets of zero length are removable, but there are sets of positive length which are not removable (e.g. a line segment.)

By Painlev\'{e}'s theorem, if $\gamma(\phi(E))>0$, then $\phi(E)$ has positive
length, and so it has Hausdorff dimension at least $1$. By the celebrated theorem
of Astala on the distortion of area \cite{astalaareadistortion}, this forces the Hausdorff
dimension of $E$ to be at least $2/(K+1)$. Quite recently, in \cite{ACMOU} it was shown 
that, in fact, the $\frac{2}{K+1}$-Hausdorff measure $\H^{\frac2{K+1}}(E)$ must be positive and, moreover, non $\sigma$-finite, or, equivalently, if $\H^{\frac2{K+1}}(E)$ is $\sigma$-finite, then $E$ is $K$-removable.
To get this result, the authors proved, on the one hand, 
that if $\H^1(\phi(E))$ is non $\sigma$-finite, then $\H^{\frac2{K+1}}(E)$ is also non 
$\sigma$-finite, equivalently, if $\H^{\frac2{K+1}}(E)$ is $\sigma$-finite, then $\H^1(\phi(E))$ is $\sigma$-finite (see \cite{Lacey-Sawyer-Uriarte} for related recent results). On the other hand, if $\H^1(\phi(E))$ is $\sigma$-finite and 
$\gamma(\phi(E))>0$, from David's solution of Vitushkin's conjecture 
\cite{davidunrectifiable1setszeroanalyticcapacity} and 
the countable semiadditivity of analytic capacity \cite{tolsasemiadditivityanalyticcapacity}, 
it turns out that $\phi(E)$ contains some
rectifiable subset of positive length.
Using improved distortion estimates for the dimension of rectifiable sets, the authors showed
that in this case the Hausdorff dimension of $E$ must be strictly larger that $2/(K+1)$, and
so $\H^{\frac2{K+1}}(E)$ is also non $\sigma$-finite in this case.

Moreover, in \cite{ACMOU} it was also shown (see Theorem \ref{NonRemovabilityBoundedKQRACMOUT} below) that for any measure function $h(t)=t^\frac{2}{K+1}\,\varepsilon(t)$ such that
$$
  \int_0 \frac{\varepsilon(t)^{1+1/K}}{t} dt < \infty \ ,
$$
there is a compact set $E\, $ which is not $K$-removable and such that $0<\H^h(E)<\infty$. In particular, whenever  $\varepsilon(t)$ is chosen so that in addition for every  $\alpha >0$ we have  $t^\alpha/\varepsilon(t) \to 0$  as  $t \to 0$, then there is a non-$K$-removable set $E$ with $\dim(E)=\frac{2}{K+1}$. Note that there is a small ``gap" between the sufficient condition for the gauge function $h$ for the existence of non-$K$-removable sets just mentioned, and the sufficient condition for $K$-removability of sets also from \cite{ACMOU} mentioned before (namely that $\H^{\frac{2}{K+1}} (E)$ be $\sigma$-finite, see Theorem \ref{RemovabilityBoundedKQRACMOUT} below.)

\medskip 
The main result of this paper improves on the preceding results, giving the sharp ``metric" condition for $K$-removability in terms of Riesz capacities (i.e. closing the aforementioned ``gap"):

\begin{theorem}\label{distorgamma}
Let $E\subset\C$ be compact and $\phi:\C\to\C$ a $K$-quasiconformal mapping, $K>1$. 
If $E$ is contained in a ball $B$,
then
$$\frac{\dot C_{ \frac{2K}{2K+1},\frac{2K+1}{K+1}}(E)}{\diam(B)^{\frac2{K+1}}}  \geq 
c^{-1} \left(\frac{\gamma(\phi(E))}{\diam(\phi(B))}\right)^{\frac{2K}{K+1}}.$$
\end{theorem}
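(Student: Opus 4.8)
The plan is to connect the analytic capacity $\gamma(\phi(E))$ on the image side to the nonlinear Riesz capacity $\dot C_{\frac{2K}{2K+1},\frac{2K+1}{K+1}}(E)$ on the source side by transporting an extremal object for $\gamma(\phi(E))$ back through $\phi$ and estimating its "energy" using quasiconformal distortion. First I would recall Tolsa's characterization of analytic capacity up to constants: there is a (complex) measure $\mu$ supported on $\phi(E)$, or more usefully a positive measure $\sigma$ supported on $\phi(E)$ with linear growth $\sigma(B(w,r))\le r$ and comparable curvature, such that $\sigma(\phi(E))\gtrsim \gamma(\phi(E))$. (Alternatively, work with the "David--L\'eger / $L^2$ of the Cauchy transform" formulation.) The point of using the positive growth measure $\sigma$ is that its pullback behaves well under $\phi$. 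Define $\nu = \phi^{-1}_\sharp \sigma$ on $E$, so $\nu(E)=\sigma(\phi(E))\gtrsim\gamma(\phi(E))$. The heart of the argument is to show that $\nu$, after normalization, is an admissible measure for a dual formulation of the Riesz capacity $\dot C_{\alpha,p}$ with $\alpha=\frac{2K}{2K+1}$, $p=\frac{2K+1}{K+1}$, i.e. that its nonlinear potential / its Wolff energy
$$
\mathcal{W}^{\nu}_{\alpha,p}(E) = \int\!\!\int_0^\infty \left(\frac{\nu(B(x,r))}{r^{2-\alpha p}}\right)^{p'-1}\frac{dr}{r}\,d\nu(x)
$$
is controlled, after scaling out $\diam(B)$, by $\nu(E)^{\text{(appropriate power)}}$; then the standard duality/Wolff-inequality characterization of $\dot C_{\alpha,p}$ (see Adams--Hedberg) yields the bound in Theorem~\ref{distorgamma}.

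The key estimate, and the place where Astala-type distortion enters, is the following: for a quasiconformal $\phi$ and a ball $B(w,r)\subset \phi(B)$, the preimage $\phi^{-1}(B(w,r))$ is "roughly round" of radius $\rho$ with $\rho \approx \diam(B)\bigl(r/\diam(\phi(B))\bigr)^{\frac{?}{?}}$; more precisely one uses Astala's area distortion together with the fact that $\sigma$ has linear growth to get, for the pushforward, the growth bound
$$
\nu\bigl(B(x,\rho)\bigr) \;\lesssim\; \diam(B)^{\,s}\,\rho^{\,\frac{2}{K+1}}\quad\text{(in the normalized, scale-invariant form)},
$$
where $s=\frac{2}{K+1}$ and the implicit constant involves $\gamma(\phi(E))/\diam(\phi(B))$ to the right power. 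This is exactly the statement that $\nu$ has "$\frac{2}{K+1}$-dimensional growth" up to the capacity-dependent constant. I would prove this by covering $B(x,\rho)$, mapping forward, controlling the image by the distortion theorem (Astala for a single ball, or a Gehring-type / area-distortion estimate for the distortion of Hausdorff content under $\phi$), and invoking $\sigma(B(w,r))\le r$. Feeding this growth bound into the Wolff integral, the $r$-integral converges because $2-\alpha p = \frac{2}{K+1}$ is exactly matched to the growth exponent (this is how $\alpha$ and $p$ were chosen), and the exponent $p'-1 = \frac{1}{K}$ is precisely the Astala-type exponent; the arithmetic then collapses to $\mathcal{W}^{\nu}_{\alpha,p}(E)\lesssim \nu(E)\cdot(\text{power of }\nu(E)/\diam(B)^{s})$, which on inversion is Theorem~\ref{distorgamma}.

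The main obstacle is making the distortion step quantitative and uniform in the right way: Astala's theorem controls the area (and Hausdorff content of dimension $\frac{2}{K+1}$) of the image of a ball, but here one needs a \emph{two-sided}, scale-localized comparison between $\nu$-mass on source balls and $\sigma$-mass on image balls, valid simultaneously at all scales and locations inside $B$, and one must be careful that the "bad" dilatation of $\phi$ (where roundness fails badly) does not accumulate. The standard tool is to factor $\phi$ or to use the distortion of $K$-quasiconformal maps on Hausdorff contents (the $\mathcal{H}^{t}_\infty$-version of Astala's theorem, as in Astala--Clop--Mateu--Orobitg--Uriarte-Tuero and Lacey--Sawyer--Uriarte-Tuero), which is exactly why the nonlinear exponent $1/K$ appears. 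A secondary technical point is passing from the positive growth measure with comparable curvature that represents $\gamma(\phi(E))$ to an estimate that survives pullback — here one should either use the corona/Menger-curvature machinery of \cite{tolsasemiadditivityanalyticcapacity} directly on $\sigma$, or replace $\gamma(\phi(E))$ at the outset by the comparable quantity $\sup\{\sigma(\phi(E)) : \operatorname{supp}\sigma\subset\phi(E),\ \sigma(B(w,r))\le r,\ c^2(\sigma)\lesssim \sigma(\phi(E))\}$ and only track $\sigma(\phi(E))$ thereafter, since curvature is not needed for the Wolff-energy computation, only the growth bound is.
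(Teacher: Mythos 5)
Your overall scaffolding is right: normalize, represent $\gamma$ by a linear‑growth measure with bounded curvature via Theorem A, transport through $\phi$, and try to exhibit an admissible measure for the nonlinear capacity on the other side with bounded Wolff potential. The identification of $2-\alpha p = \tfrac{2}{K+1}$ and $p'-1 = \tfrac1K$ as the Astala‑matched exponents is correct, and the idea of working via Hausdorff‑content distortion (rather than pushing Wolff potentials directly) is in fact what the paper does in Theorem~\ref{teocap} and Lemma~\ref{mainlem}.

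However, there is a genuine gap at the decisive step, and it concerns your parenthetical remark that ``curvature is not needed for the Wolff‑energy computation, only the growth bound is.'' This is exactly backwards. The growth bound $\nu(B(x,\rho))\lesssim \rho^{2/(K+1)}$ only makes the integrand $\bigl(\nu(B(x,\rho))/\rho^{2/(K+1)}\bigr)^{1/K}$ \emph{bounded}; it does not make $\int_0^{\diam B} (\cdot)\,\tfrac{d\rho}{\rho}$ converge, since $\int_0 d\rho/\rho=\infty$. So your claimed ``arithmetic collapse'' $\mathcal W^\nu_{\alpha,p}(E)\lesssim\nu(E)\cdot(\cdots)$ does not follow from the growth estimate; one needs the ratio $\nu(B(x,\rho))/\rho^{2/(K+1)}$ to be \emph{square‑summable (after the $1/K$ power) over dyadic scales}, a genuine decay/oscillation statement. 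A good test case already on the $K=1$ side makes this clear: arc length on a segment has linear growth, zero curvature, positive analytic capacity, but $\dot W^\mu_{2/3,3/2}\equiv\infty$ on the segment — so even before applying $\phi$, the measure you extract from Theorem~A does \emph{not} have bounded Wolff potential, and neither will any naive pushforward/pullback of it. The paper flags exactly this obstruction in Section~\ref{secstrat}: the Frostman measure obtained from the $h$-content distortion satisfies $\nu(B)\lesssim h(B)$ but that alone does not yield $\dot W^\nu_{\frac{2K}{2K+1},\frac{2K+1}{K+1}}\lesssim1$, ``because now the estimate $\dot W^\mu_{2/3,3/2}(x)\le C$ may be false.'' Closing this gap is the entire content of Sections~\ref{secarcs}--\ref{secproof}: the summability over scales comes from (i) the corona decomposition of $\mu$ with the packing condition $\sum_{Q\in\tree(R)}\ve_a(Q)^2\mu(Q)\lesssim\mu(R)$ (which uses curvature essentially), and (ii) the quantitative chord‑arc distortion Lemma~\ref{lemdistca4}, which guarantees that on ``long'' trees the density ratio drops by a definite factor $\eta^{\alpha-2/(K+1)}$ when passing to children, producing the geometric decay needed in \rf{eqff10}. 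The measure $\nu$ also cannot be produced by a plain Frostman lemma; it has to be built by the tree‑adapted splitting algorithm of Section~\ref{secconsnu} so that the $\nu/h$ ratio strictly decreases down long trees. In short: your proposal would prove that $\nu$ has the correct Hausdorff‑content growth (the content of Lemma~\ref{mainlem} / Theorem~\ref{teocap}(a)), but the passage from that to a finite Wolff potential — i.e.\ the difference between $\dot C_{\alpha,p}(\phi(E))>0$ and merely $M^h(\phi(E))>0$ — is the hard part, it requires the curvature of $\mu$, and your argument as stated does not address it.
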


In this theorem, the constant $c$ depends only on $K$. On the other hand,  $\dot C_{ \frac{2K}{2K+1},\frac{2K+1}{K+1}}$
is a Riesz capacity associated to a non linear potential. Recall that, for 
$\alpha>0$, $1<p<\infty$ with $0<\alpha p <2$, the Riesz capacity $\dot C_{\alpha,p}$
of $F$ is defined as
$$\dot C_{\alpha,p}(F) = \sup_\mu \mu(F)^p,$$
where the supremum runs over all positive measures $\mu$ supported on $F$ such that
$$I_\alpha(\mu)(x) = \int \frac1{|x-y|^{2-\alpha}}\,d\mu(x)$$
satisfies $\|I_\alpha(\mu)\|_{p'}\leq 1$, where as usual $p'=p/(p-1)$. 

It is easy to check that $\dot C_{\alpha, p}$ is a homogeneous capacity of degree $2-\alpha p$, that is,
$$\dot C_{\alpha, p}(\lambda F)
= |\lambda|^{2-\alpha p}\, \dot C_{ \alpha, p}(F)$$
for any compact set $F\subset \C$ and $\lambda\in\C$.
Therefore, 
 $\dot C_{ \frac{2K}{2K+1},\frac{2K+1}{K+1}}$ has  homogeneity $2/(K+1)$. 
The indices $\alpha=\frac{2K}{2K+1}$, $p=\frac{2K+1}{K+1}$, are sharp and cannot be improved in the theorem. See Theorem \ref{teosharp} below for a more precise statement. 

It is well known that sets with
positive capacity $\dot C_{\alpha,p}$ have non $\sigma$-finite
Hausdorff measure $\H^{2-\alpha p}$. So as a direct corollary of Theorem \ref{distorgamma}
one recovers the result of \cite{ACMOU} that asserts that if $\gamma(\phi(E))>0$, then $\H^{\frac2{K+1}}(E)$ is non $\sigma$-finite, or, as phrased in \cite{ACMOU}, if $\H^{\frac2{K+1}}(E)$ is $\sigma$-finite, then $\gamma(\phi(E))=0$.

On the other hand, not all sets with non $\sigma$-finite $\H^{\frac2{K+1}}$ measure
have positive capacity $\dot C_{ \frac{2K}{2K+1},\frac{2K+1}{K+1}}$. 
So Theorem \ref{distorgamma} provides new examples of $K$-removable sets. More precisely, for any increasing nonnegative function $h$ on $[0,\infty)$, with 
$$
\int_0^1 \left( \frac{h(r)}{r^{\frac{2}{K+1}}}  \right)^{1+\frac{1}{K}} \; \frac{dr}{r} = \infty \ ,
$$
there is a compact set $E \subset \C$ such that the generalized Hausdorff measure $\H^h(E)>0$ and $\dot C_{\frac{2K}{2K+1}, \frac{2K+1}{K+1}}(E) =0$. See Theorem \ref{Theorem5.4.2AdamsHedberg} below. 

However, for any positive function $h$ on $(0,\infty)$ such that
$$\varepsilon (r) = \frac{h(r)}{r^{\frac{2}{K+1}}} \to 0 \quad\mbox{ as $r\to0$,}$$
there is a compact set $E \subset \C$ such that $\H^h(E)=0$ and a $K$-quasiconformal map $\phi$ such that $\gamma(\phi E) >0$ (and hence $\dot C_{ \frac{2K}{2K+1},\frac{2K+1}{K+1} }(E) >0$, due to Theorem \ref{distorgamma}.) See
 Section~\ref{SectionExamples} (in particular, 
Theorem \ref{ForAnyGaugeFunctionMeasuringSetsStrictlyLargerThan2/K+1ThereIsANonRemovableSet}) for more details and examples.

The results just mentioned strongly suggest 1) that the ``gap" left in \cite{ACMOU} in terms of characterizing sharp ``metric" conditions for $K$-removability cannot be closed in terms of Hausdorff gauge functions, 2) that the language of (Riesz) capacities is more appropriate to obtain sharp ``metric" conditions for $K$-removability, and 3) that (in view of the aforementioned Theorem \ref{teosharp} below and the fact that we recover the previously known sufficient conditions for $K$-removability from \cite{ACMOU}), we close the ``gap" by obtaining sharp ``metric" conditions for $K$-removability in terms of Riesz capacities.

For our purposes, the description of Riesz capacities in terms of Wolff
potentials is more useful than the above definition of $\dot C_{\alpha,p}$. Consider
$$\dot W^\mu_{\alpha,p}(x) = \int_0^\infty \biggl(\frac{\mu(B(x,r))}{r^{2-\alpha p}}\biggr)^{p'-1}\,\frac{dr}r.$$
A well known theorem of Wolff asserts that
$$\dot C_{\alpha,p}(F) \approx \sup_\mu \mu(F),$$
where the supremum is taken over all measures $\mu$ supported on $F$ such that
$\dot W_{\alpha,p}^\mu(x)\leq 1$ for all $x\in F$. See \cite[Chapter 4]{adamshedberg},
for instance.
Notice that for the indices  $\alpha=\frac{2K}{2K+1}$, $p=\frac{2K+1}{K+1}$,
we have 
$$\dot W_{\alpha,p}^\mu(x) = \int_0^\infty \biggl(\frac{\mu(B(x,r))}{r^{\frac2{K+1}}}\biggr)^{\frac{K+1}K}\,\frac{dr}r.$$

We will also prove the following result in this paper.

\begin{theorem} \label{teocap}
Let  $1<p<\infty$, $E\subset\C$ be compact and $\phi:\C\to\C$ a $K$-quasiconformal mapping. Then,
\begin{itemize}
\item[(a)] If $E$ is contained in a ball $B$,
\begin{equation}\label{eq11}
\frac{\dot C_{\frac{2K}{2Kp-K+1},\frac{2Kp-K+1}{K+1}}(E)}{\diam(B)^{\frac2{K+1}}} \gtrsim 
\left(\frac{\dot C_{1/p,p}(\phi(E))}{\diam(\phi(B))}\right)^{\frac{2K}{K+1}}.
\end{equation}
\item[(b)]
If $\phi$ is conformal outside $E$, $K$-quasiconformal in $\C$, and moreover,
$|\phi(z) - z| = O(1/|z|)$ as $z\to\infty$, then
\begin{equation}\label{eq12}
\dot C_{1/p,p}(E)\approx \dot C_{1/p,p}(\phi(E)) 
\end{equation}
\end{itemize}
The constants in \rf{eq11} and \rf{eq12} only depend on $p$, $K$.
\end{theorem}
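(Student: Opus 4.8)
The plan is to deduce Theorem~\ref{teocap} from Theorem~\ref{distorgamma} (and its proof mechanism) together with standard facts about Riesz capacities, exploiting a Wolff-potential description. For part (a), the key observation is that the exponents appearing in Theorem~\ref{teocap} collapse to those of Theorem~\ref{distorgamma} when $p=\frac{2K+1}{K+1}$: indeed $2Kp-K+1 = 2K+1$ in that case. So part (a) is a ``scale-by-$p$'' generalization of the estimate relating $\gamma(\phi(E))$ to a nonlinear capacity of $E$. Concretely, I would revisit the proof of Theorem~\ref{distorgamma}: there one takes an extremal (or near-extremal) measure $\nu$ on $\phi(E)$ witnessing $\gamma(\phi(E))$, pulls it back by $\phi^{-1}$ to a measure $\mu$ on $E$, and uses Astala-type area-distortion estimates to control $\mu(B(x,r))/r^{2/(K+1)}$ in terms of the corresponding quantities for $\nu$; one then checks that the Wolff potential $\dot W^\mu_{\alpha,p}(x)$ stays bounded on $E$, which by Wolff's theorem bounds $\dot C_{\alpha,p}(E)$ from below. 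To prove (a), one replaces the starting measure: instead of the extremal measure for $\gamma(\phi(E))$, take a measure $\nu$ on $\phi(E)$ with bounded Wolff potential $\dot W^\nu_{1/p,p}\le 1$ and $\nu(\phi(E)) \gtrsim \dot C_{1/p,p}(\phi(E))$. Pulling back by $\phi^{-1}$ and running the same distortion/Wolff-potential computation — now with the general exponent $p$ rather than the specific value — produces a measure $\mu$ on $E$ whose Wolff potential $\dot W^\mu_{\frac{2K}{2Kp-K+1},\frac{2Kp-K+1}{K+1}}$ is controlled, giving \eqref{eq11}. The bookkeeping of how the exponents transform under the change of variables (this is precisely where the exotic-looking index $\frac{2K}{2Kp-K+1}$ comes from) is the part that needs care but is essentially forced by Astala's distortion exponents.

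For part (b), the hypotheses are much more rigid: $\phi$ is conformal outside $E$, globally $K$-quasiconformal, and normalized so $|\phi(z)-z| = O(1/|z|)$ at infinity. Under these assumptions $\phi$ is essentially the ``principal solution'' associated to a Beltrami coefficient supported on $E$, and the map behaves like the identity at large scales. The plan is to prove \eqref{eq12} by establishing the two-sided comparison $\dot C_{1/p,p}(E) \lesssim \dot C_{1/p,p}(\phi(E))$ and the reverse, using the symmetry between $\phi$ and $\phi^{-1}$ (which is $K$-quasiconformal, conformal outside $\phi(E)$, and satisfies the same normalization at infinity, possibly after an affine adjustment). So it suffices to prove one direction. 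For that direction, I would again transport a near-extremal measure $\mu$ for $\dot C_{1/p,p}(E)$ to the measure $\phi_\#\mu$ on $\phi(E)$ and estimate its Wolff potential. The crucial point enabling equivalence (rather than merely an inequality with a loss) is that since $\phi$ is \emph{conformal off $E$}, distortion of balls $B(x,r)$ that are far from $E$ relative to their size is uniformly bounded (a conformal map with bounded distortion of the complement acts like a bi-Lipschitz map at scales comparable to the distance to $E$), while for balls comparable to or inside a component of $E$ one uses quasisymmetry together with Astala's area distortion, but — critically — one shows these distortion effects at a given point telescope in a summable way along the Wolff-potential integral $\int_0^\infty (\cdots)^{p'-1}\, dr/r$, so that boundedness of $\dot W^\mu_{1/p,p}$ transfers to boundedness of $\dot W^{\phi_\#\mu}_{1/p,p}$ up to a multiplicative constant depending only on $p,K$.

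The main obstacle I anticipate is in part (b): controlling the Wolff potential of the pushed-forward measure requires a genuinely two-sided control of $\phi(B(x,r))$ in terms of some ball $B(\phi(x), \rho)$ with $\rho$ comparable to $\diam\phi(B(x,r))$, and the ratio $\rho/r$ must not degenerate in a way that spoils the integral. For a general $K$-quasiconformal map this fails — that is exactly why (a) only gives an inequality and with shifted exponents — so the argument must genuinely use that the Beltrami coefficient is supported on $E$ (equivalently, that $\phi$ is conformal on $\C\setminus E$) and the normalization at infinity. A clean way to organize this is via the standard decomposition of $\phi$'s distortion: near $x \in E$ at scale $r$, compare with the ``Astala exponent'' using that $E \cap B(x,r)$ is where all the distortion lives; away from $E$, use Koebe-type distortion for conformal maps. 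The technical heart is verifying that the resulting scale-by-scale comparison constants are \emph{uniformly} bounded and that their contributions to the Wolff integral sum to something bounded — essentially a Carleson-type summation over scales — after which Wolff's theorem closes both inequalities and hence gives \eqref{eq12}. One should also double-check that passing to $\phi^{-1}$ preserves the normalization hypothesis, perhaps up to composing with a translation, which is harmless since Riesz capacities are translation invariant.
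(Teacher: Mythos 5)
Your plan for part (a) starts from a misreading: you say the exponents of Theorem~\ref{teocap} collapse to those of Theorem~\ref{distorgamma} at $p=\frac{2K+1}{K+1}$, but in fact $2Kp-K+1=2K+1$ forces $p=\frac32$, and the right side then becomes $\dot C_{2/3,3/2}(\phi(E))$, not $\gamma(\phi(E))$. Since the paper explicitly notes that $\gamma$ and $\dot C_{2/3,3/2}$ are \emph{not} comparable (a segment has positive $\gamma$ but zero $\dot C_{2/3,3/2}$), Theorem~\ref{teocap}(a) is neither a special case nor a consequence of Theorem~\ref{distorgamma}; the logical order in the paper is the reverse, with Theorem~\ref{teocap} proved first and its machinery (Lemma~\ref{mainlem} in particular) feeding into the proof of Theorem~\ref{distorgamma}. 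So you cannot ``deduce Theorem~\ref{teocap} from Theorem~\ref{distorgamma}''.

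The deeper gap is the central mechanism you propose for both (a) and (b): pick a near-extremal measure for $\dot C_{1/p,p}$, push it forward (or pull it back) by $\phi$, and estimate the Wolff potential of the image measure directly via Astala-type area distortion, Koebe distortion off $E$, and a Carleson/telescoping summation over scales. This is, almost verbatim, the ``first attempt'' the paper itself rules out (``A first attempt might consist in obtaining suitable estimates for the Wolff potentials associated to the image measure $\phi\mu$. However, we have not been able to follow this approach.''). The obstruction is concrete: the quantity $\theta_\mu(x,r)=\mu(B(x,r))/r$ has no useful regularity in $r$ (it need not be doubling, so $\theta_\mu(x,2r)$ and $\theta_\mu(x,r)$ can be wildly different), and the quasiconformal image of a ball is not a ball; together these make the ``scale-by-scale comparison constants telescope summably'' step — which you flag as the technical heart but do not carry out — break down. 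The paper's actual idea, absent from your proposal, is to \emph{replace} $\theta_\mu$ by the smoothed density $\varepsilon_{\mu,a}(x,r)=\frac1r\int\psi_a((y-x)/r)\,d\mu(y)$, build Hausdorff-like contents $M^h$ with gauge $h(x,r)=r\,\varepsilon(x,r)$, prove distortion of $M^h$ under quasiconformal maps (via the auxiliary capacities $\gamma_{h,q}$, the space $\operatorname{Lip}^q(\varepsilon)$, reverse H\"older bounds on the Jacobian, the Astala--Nesi borderline integrability, and a factorization of $\phi$ into small-dilatation pieces), and only at the very end pass back to Wolff potentials via Frostman's lemma and the key inequality $\int_0^\infty \varepsilon_{\mu,a}(x,r)^{p'-1}\,\frac{dr}{r}\lesssim \dot W^\mu_{1/p,p}(x)$ from \eqref{ed**}. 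The smoothing is what supplies the doubling/decay (classes $\mathcal G_1$, $\mathcal G_2$) that your direct Wolff-potential argument lacks; without it, the ``telescoping'' you invoke has no rigorous footing.

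Your remark that part (b) reduces by symmetry to one inequality (since $\phi^{-1}$ inherits the hypotheses after a harmless translation) is correct and is indeed how the paper closes (b), but the single inequality it rests on is proved by the $h$-content route just described, not by the direct Wolff-potential transport you outline.
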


Notice that the capacity $\dot C_{1/p,p}$ is homogeneous of degree $1$, while 
$\dot C_{\frac{2K}{2Kp-K+1},\frac{2Kp-K+1}{K+1}}$
is homogeneous of degree $2/(K+1).$

To understand the relationship between analytic capacity and 
non linear potentials, we need to recall the characterization of analytic capacity in
terms of curvature. For $x\in\C$, denote
\begin{equation}\label{eqdfcm}
c^2_\mu(x) := \iint \frac1{R(x,y,z)^2}\,d\mu(y)d\mu(z),
\end{equation}
where $R(x,y,z)$ stands for the radius of the circle through $x,y,z$ (with $R(x,y,z)
=\infty$ if the points are colinear).

\begin{theorema} \label{teogamcur}
For any compact $E\subset\C$ we have
 $$\gamma(E)\simeq \sup\mu(E),$$
where the supremum is taken over all Borel measures $\mu$
supported on $E$ such that $\mu(B(x,r))\leq r$ for all $x\in \C$,
$r>0$ and $c^2_\mu(x)\leq 1$ for all $x\in \C$.
\end{theorema}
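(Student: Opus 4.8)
The plan is to prove the two halves of the equivalence separately; write $\Sigma(E)$ for the supremum on the right-hand side, taken over positive measures $\mu$ on $E$ obeying the stated growth and curvature normalizations. The inequality $\gamma(E)\gtrsim\Sigma(E)$ is obtained by manufacturing a bounded analytic function out of an admissible $\mu$, while $\gamma(E)\lesssim\Sigma(E)$ is the deep half and is where essentially all of the work lies.

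For $\gamma(E)\gtrsim\Sigma(E)$, fix an admissible $\mu$ and consider its Cauchy transform $\mathcal C\mu(z)=\int(z-w)^{-1}\,d\mu(w)$, which is analytic on $\C\setminus\supp\mu\supseteq\C\setminus E$ and satisfies $(\mathcal C\mu)'(\infty)=\mu(E)$. Since $\mathcal C\mu$ need not be uniformly bounded, I would first run a stopping-time argument: by Melnikov's identity, which writes $\iint R(x,y,z)^{-2}\,d\mu(y)\,d\mu(z)$ as a symmetrization of products of Cauchy kernels, the linear growth and the curvature bound give $\|\mathcal C_*\mu\|_{L^2(\mu)}\lesssim\mu(E)^{1/2}$ for the maximal truncated transform, so by Chebyshev the set where $\mathcal C_*\mu$ exceeds a large absolute constant carries at most a fraction of $\mu(E)$. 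Removing it yields $\nu=\mu|_G$ with $\mu(G)\gtrsim\mu(E)$ and $\|\mathcal C_*\nu\|_{L^\infty(\supp\nu)}\lesssim1$, and a short closest-point estimate using linear growth upgrades this to $\|\mathcal C\nu\|_{L^\infty(\C)}\lesssim1$; a fixed multiple of $\mathcal C\nu$ is then admissible for $\gamma(E)$ with derivative at $\infty$ of size $\gtrsim\nu(E)\gtrsim\mu(E)$.

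For $\gamma(E)\lesssim\Sigma(E)$, this is the substantial direction: it amounts to the comparability of $\gamma$ with the positive capacity $\gamma_+$ (defined as $\gamma$ but restricting to competitors of the form $\mathcal C\mu$ with $\mu\ge0$) together with the description of $\gamma_+$ via curvature. Starting from a near-extremal $f$ for $\gamma(E)$ with $\|f\|_\infty\le1$ and $|f'(\infty)|$ close to $\gamma(E)$, write $f=\mathcal C\mu$ for a complex measure $\mu$ on $E$ and carry out a corona-type decomposition of the dyadic cubes meeting $E$ adapted to $\mu$: on the ``good'' trees $\mu$ has roughly constant density and stays close to a Lipschitz graph of small slope, while the total mass lying in the ``bad'' cubes — those of too large density, or too far from a line — is summable with constant $\lesssim\gamma(E)$. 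Feeding this into the $T(1)$ theorem for the Cauchy transform on Lipschitz graphs produces a positive measure $\sigma$ on $E$ with $\sigma(E)\gtrsim\gamma(E)$, linear growth $\sigma(B(x,r))\lesssim r$, and — again via Melnikov's identity and the $L^2(\sigma)$-boundedness of $\mathcal C_\sigma$ coming out of the construction — finite total curvature; a final restriction of $\sigma$ to a subset of comparable mass on which the required pointwise bounds hold (Chebyshev plus iteration), together with a harmless dilation, makes $\sigma$ admissible in $\Sigma(E)$ with $\sigma(E)\gtrsim\gamma(E)$. The main obstacle, by a wide margin, is constructing the corona decomposition and proving the underlying rectifiability estimates with constants depending only on the indicated quantities; granting that machinery, extracting $\sigma$ and bookkeeping the normalizations is routine.
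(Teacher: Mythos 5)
First, a point of orientation: the paper does not prove this statement at all. It is quoted as ``Theorem A,'' with the inequality $\gamma(E)\gtrsim\sup\mu(E)$ attributed to Melnikov and the converse to the semiadditivity paper \cite{tolsasemiadditivityanalyticcapacity}; there is no in-paper argument to compare yours against. Your sketch of the easy direction is essentially the standard one and is sound: Melnikov's identity plus linear growth gives $\|\mathcal{C}_*\mu\|_{L^2(\mu)}^2\lesssim \mu(E)+c^2(\mu)\lesssim\mu(E)$, Chebyshev produces a subset $G$ of comparable mass with $\mathcal{C}_*(\mu|_G)$ uniformly bounded on $\supp(\mu|_G)$, and the usual maximum-principle-type lemma for the Cauchy transform of a measure with linear growth extends the bound to all of $\C$. (One still has to pass from uniformly bounded truncations to an actual bounded analytic competitor, e.g.\ by a weak-$*$ limit, but that is routine.)

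The hard direction, however, is where your proposal has a genuine gap, in two respects. First, the opening step ``write $f=\mathcal{C}\mu$ for a complex measure $\mu$ on $E$'' is not available for a general compact $E$: for an arbitrary admissible $f$ one only knows that $\overline\partial f$ is a compactly supported distribution on $E$, not a measure, and making sense of a density/accretivity function out of it is already a nontrivial reduction (in \cite{tolsasemiadditivityanalyticcapacity} this is handled by first reducing, via an induction on scales, to sets that are finite unions of segments, where $\overline\partial f$ is absolutely continuous with bounded density). Second, the machinery you invoke --- a corona decomposition into trees approximated by Lipschitz graphs of small slope, fed into the $T(1)$ theorem on Lipschitz graphs --- is not the mechanism that proves $\gamma\lesssim\gamma_+$ for general compacta; that route is adapted to sets of finite length (David's solution of Vitushkin's conjecture) and does not apply when $E$ has non-$\sigma$-finite length. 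The actual proof runs through the Nazarov--Treil--Volberg $Tb$ theorem with suppressed kernels for non-doubling measures, combined with the induction on scales, to extract a positive measure with linear growth and finite curvature of mass $\gtrsim\gamma(E)$. Since your sketch explicitly ``grants that machinery'' while describing the wrong machinery, the substantial half of the statement remains unproved; as the paper itself treats Theorem A as an external input, the honest version of your argument is simply to cite \cite{Melnikov} and \cite{tolsasemiadditivityanalyticcapacity}, as the paper does.
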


The inequality $\gamma(E)\gtrsim \sup\mu(E)$ is due to Melnikov \cite{Melnikov},
while the (more difficult) converse was proved in 
\cite{tolsasemiadditivityanalyticcapacity}.

It is easy to check that 
$$\biggl(\sup_{r>0}\frac{\mu(B(x,r))}{r}\biggr)^2\ + c^2_\mu(x)\leq C\sum_{k\in\Z}\biggl(\frac{\mu(B(x,2^k))}{2^k}\biggr)^2\leq
C\,\dot W^\mu_{2/3,3/2}(x).$$
From this fact, one infers that
\begin{equation}\label{eqbeta0}
\gamma(F) \geq c^{-1}\,\dot C_{2/3,3/2}(F)
\end{equation}
for every compact set $F$.
On the other hand, Theorem \ref{teocap} tells us that
\begin{equation}\label{eq4g}
\dot C_{ \frac{2K}{2K+1},\frac{2K+1}{K+1}}(E)  \geq c^{-1}\, \left( \dot C_{2/3,3/2}(\phi(E)) \right)^{\frac{2K}{K+1}}
\end{equation}
(assuming $\diam(B)=\diam(\phi(B))=1$). If the estimate $\gamma(F) \approx \dot C_{2/3,3/2}(F)$ were true, then Theorem \ref{distorgamma} would follow from this and \rf{eq4g}. However, the comparability of $\gamma$ and $\dot C_{2/3,3/2}$ is false (for instance,
if $F$ is a segment, $\gamma(F)>0$, while $\dot C_{2/3,3/2}(F)=0$).

Nevertheless, for Cantor type sets $F$ such as the ones considered 
in \cite{mattilaanalyticcapacitycantorsets} and \cite{MTV} it is true
that $\gamma(F) \approx \dot C_{2/3,3/2}(F)$. 
So for this type of sets, the estimate
$$\frac{\dot C_{ \frac{2K}{2K+1},\frac{2K+1}{K+1}}(\phi^{-1}(F))}{\diam(B)^{\frac2{K+1}}}  \geq c^{-1} 
\left(\frac{\gamma(F)}{\diam(\phi(B))}\right)^{\frac{2K}{K+1}}
$$
is a direct consequence of Theorem \ref{teocap}. On the other hand, by the results
in \cite{ACMOU}, if $F$ is rectifiable (and thus $\gamma(F)>0$), then the Hausdorff dimension of $\phi^{-1}(F)$ is strictly larger than 
$2/(K+1)$, and so 
$$\dot C_{ \frac{2K}{2K+1},\frac{2K+1}{K+1}}(\phi^{-1}(F))  >0.$$
The proof of Theorem \ref{distorgamma}
for general sets $E$ follows by combining the arguments in Theorem \ref{teocap}  with quantitative estimates for the distortion of rectifiable 
sets (more precisely, for the distortion of sub-arcs of chord arc curves).  
To this end, we will
 need to use a corona type construction similar to the one used in 
 \cite{tolsabilip} to prove the bilipschitz invariance
 of analytic capacity, modulo multiplicative estimates.

The relationship between capacities $\gamma_\beta$ associated to Calder\'on-Zygmund kernels of 
the form $x/|x|^{\beta+1}$ in $\R^n$ and the capacities $\dot C_{\alpha,p}$ was first 
observed by 
Mateu, Prat and Verdera \cite{mateupratverdera}. In this paper the authors proved
 that if $0<\beta<1$, then
\begin{equation}\label{eqbeta}
\gamma_\beta\approx \dot C_{(n-\beta)2/3,3/2}.
\end{equation}
An immediate consequence is that
 sets of positive but finite $\beta$-Hausdorff measure are removable for $\gamma_\beta$, as shown previously by Prat \cite{prat1}.
In the case $n=2$, $\beta=1$, the capacity $\gamma_\beta$ coincides with the
analytic capacity $\gamma$, modulo multiplicative constants, and the comparability
\rf{eqbeta} fails. Instead, only the inequality \rf{eqbeta0} holds. It is
an open problem to prove (or disprove) that \rf{eqbeta} holds for every non integer $\beta\in(0,n)$. 

Other results concerning the relationship between the capacities
$\dot C_{\alpha,p}$ and the so called Lipschitz harmonic capacity (which can be
considered as a generalization of analytic capacity to higher dimensions) have been
obtained recently in \cite{ENV}. See \cite{Paramonov} and \cite{volberg} for
more details on Lipschitz harmonic capacity.

The plan of the paper is the following:
in next section we prove Theorem \ref{teocap}, while Sections 3-7 are devoted to the proof
Theorem \ref{distorgamma}. 
Section \ref{SectionExamples} contains examples and results that illustrate the sharpness
of our results, and in Section \ref{sec9} there are some final remarks.
As usual, the letters $c,C$ denote constants (often, absolute constants) that may change 
at different occurrences, while constants with subscript, such as $C_1$, 
retain their values. The notation $A\lesssim B$ means that there is a positive constant $C$ such that $A\leq CB$; and $A\approx B$ means that $A\lesssim B\lesssim A$.

\bigskip


\section{Distortion estimates for non linear potentials}  \label{sec1}

\subsection{Strategy for the proof of Theorem \ref{teocap}}


Note that, that \rf{eq11} holds for all $K$-quasiconformal maps is equivalent to
\begin{equation}\label{eqd44}
\frac{\dot C_{ \frac{2K}{2Kp-K+1},\frac{2Kp-K+1}{K+1}}(\phi(E))}{\diam(\phi(B))^{\frac2{K+1}}}  \geq c^{-1} 
\left(\frac{\dot 
C_{1/p,p}(E)
}{\diam(B)}\right)^{\frac{2K}{K+1}} \quad \mbox{ if $E\subset B$,}
\end{equation}
 for all $K$-quasiconformal maps $\phi$. The difference between \rf{eq11} and \rf{eqd44} is mostly ``psychological": think of $\phi$ as taking a set $E$ of dimension $1$ to a set of dimension $\frac{2}{K+1}$, as opposed to the other way round (which would be looking at $\phi^{-1}$). The equivalence between \rf{eq11} and \rf{eqd44} uses quasisymmetry.

Let $\mu$ 
be a measure supported on $E$ such that $\dot W_{1/p,p}^\mu(x)\leq 1$ for all $x\in\C$.
In a sense, we want to show how $\mu$ is distorted.
A first attempt might consist in obtaining suitable estimates for
the Wolff potentials associated to the image measure $\phi\mu$. However,
we have not been able to follow this approach.

Instead, to prove \rf{eqd44}, we have transformed our original problem of estimating
distortion in terms of Riesz capacities into another involving ``Hausdorff-like'' 
measures or contents. As far as we know, this way of studying the Riesz capacities is new. The advantage of this 
 approach is that arguments  using covering lemmas and related techniques are
better suited for Hausdorff contents than for non linear potentials. In this way, some of the arguments in 
\cite{ACMOU} will be adapted to estimate Riesz capacities via the intermediate contents of Hausdorff type
that we will construct below.

\medskip
Throughout all this section we suppose that $\mu$ is a finite Borel measure supported on $E$ such that  $\dot W_{1/p,p}^\mu(x)\leq 1$ for all $x\in\C$.
In particular, notice that this implies that $\theta_\mu(B):=\mu(B)/r(B)\lesssim 1$ for any ball $B\subset \C$ with radius $r(B)$. 
We plan to introduce Hausdorff-like measures associated to $\mu$. To this end, first
we need to define suitable gauge functions on all the balls in $\C$.
Given a parameter $a>0$, we consider the function 
\begin{equation}\label{eqpsia}
\psi_a(x) = \frac1{|x|^{1+a}+1},\qquad x\in\C.
\end{equation}
For the ball $B=B(x,t)$ we define
\begin{equation}\label{defvex}
\ve_{\mu,a}(x,t) = \ve_{\mu,a}(B) := \frac1t \int \psi_a\Bigl(\frac{y-x}t\Bigr)d\mu(y),
\end{equation}
and we consider the gauge function
\begin{equation}\label{defhx}
h_{\mu,a}(x,t) = h_{\mu,a}(B) := t\ve_{\mu,a}(B).
\end{equation}
Notice that $\ve_{\mu,a}(B)$ and $h_{\mu,a}(B)$ can be considered as smooth versions of 
$\theta_\mu(B)$ and $\mu(B)$, respectively. One of the advantages of 
$\ve_{\mu,a}(x,t)$ over $\theta_\mu(x,t)$ (where, of course, $\theta_\mu(x,t):=\theta_\mu(B(x,t))$) is that 
 $\ve_{\mu,a}(x,2t)\leq C\ve_{\mu,a}(x,t)$ for any $x$ and $t>0$, which fails in general for $\theta_\mu(x,t)$. Analogously, we have $h_{\mu,a}(x,2t)\leq C\,h_{\mu,a}(x,t)$, while
 $\mu(B(x,t))$ and $\mu(B(x,2t))$ may be very different.

Observe that, decomposing the integrals into annuli, for all $x\in\C$ we get 
\begin{align*}
\int_0^\infty \ve_{\mu,a}(x,t)^{p'-1}\frac{dt}t &= \int_0^\infty\frac1{t^{p'-1}} \biggl(\int \psi_a\Bigl(\frac{y-x}t\Bigr)d\mu(y)\biggr)^{p'-1}\frac{dt}t\\
& \leq C \sum_{j\in \Z} 2^{-(p'-1)j} \Bigl(\sum_{k>j}\mu(B(x,2^k)) 2^{(1+a)(j-k)}\Bigr)^{p'-1}\\
& \leq C \sum_{j\in \Z} 2^{-(p'-1)j} \sum_{k>j}\mu(B(x,2^k))^{p'-1} 2^{(p'-1)(1+\frac a2)(j-k)},
\end{align*}
where we applied H\"older's inequality for $p'-1>1$, and the fact that $(c+d)^{p'-1}\leq c^{p'-1}+ d^{p'-1}$ otherwise.
Thus,
\begin{equation}\label{ed**}
\int_0^\infty \ve_{\mu,a}(x,t)^{p'-1}\frac{dt}t 
 \lesssim
\sum_{k\in \Z}  \mu(B(x,2^k))^{p'-1}\, 
 2^{-(p'-1)(1+\frac a2)k}
 \sum_{j<k}  2^{(p'-1)\frac a2 j} \lesssim  \dot W^\mu_{1/p,p}(x)\lesssim 1.
\end{equation}


\subsection{The contents $M^h$ and the families $\cG_1$ and $\cG_2$}

Let $\cB$ denote the family of all closed balls contained in $\C$.
We consider a function $\ve:\cB:\to[0,\infty)$ (for instance, we can take $\ve=\ve_{\mu,a}$), and
we define $h(x,r)=r\,\ve(x,r)$. We assume that $\ve,h$ are such that $h(x,r)\to0$ as $r\to0$, for all $x\in\C$·.
We introduce the measure $H^h$ following Carathéodory's construction (see \cite{mattila}, p.54):
given $0<\delta\leq\infty$ and a set $F\subset\C$, we consider
$$H^h_\delta(F) = \inf\sum_i h(B_i),$$
where the infimum is taken over all coverings $F\subset \bigcup_i B_i$ with balls $B_i$ with radii smaller that $\delta$. Finally, we
define 
$$H^h(F) = \lim_{\delta\to0} H^h_\delta(F).$$
Recall that $H^h$ is a Borel regular measure (see \cite{mattila}), although it is not a ``true'' Hausdorff measure.
For the $h$-content, we use the notation $M^h(E):=H_\infty^h(E)$.

We say that the function $\ve$ belongs to $\cG_1$ if it verifies the following properties for all balls 
$B(x,r)$, $B(y,s)$:
there exists a constant $C_0$ such that if $|x-y|\leq 2r$ and $r/2\leq s\leq 2r$, then
\begin{equation}\label{eqeq1}
C_0^{-1}\,\ve(x,r)\leq \ve(y,s)\leq C_0\,\ve(x,r).
\end{equation}
If moreover, there exists $C_0'$ such that
\begin{equation}\label{eqsum4}
\sum_{k\geq0} 2^{-k}\,\ve(x,2^kr)\leq C_0' \,\ve(x,r),
\end{equation}
then we set $\ve\in\cG_2$.

Notice that \rf{eqeq1} also holds with a different constant $C_0$ if one assumes $|x-y|\leq Cr$ and $C^{-1}r\leq s\leq Cr$.

It is easy to check that the function $\ve_{\mu,a}$ introduced above belongs to $\cG_1$ for all $a>0$, and to $\cG_2$ if $0<a<1$ (see Lemma \ref{lemtec5}
below for a stronger statement).
Moreover, we have:

\begin{lemma}
If $\ve\in\cG_1$ and $h(x,r)=r\,\ve(x,r)$, then Frostman's Lemma holds for $H^h$. That is to say,
given a compact set $F\subset \C$, the following holds: $M^h(F)>0$ if and only if
there exists a Borel measure $\nu$ supported on $F$ such that $\nu(B)\leq h(B)$ for any ball $B$. Moreover, one can find $\nu$ such that $\nu(F)\geq c^{-1} M^h(F)$.
\end{lemma}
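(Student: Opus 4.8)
The statement is a Frostman-type lemma for the Carathéodory outer measure $H^h$ built from a gauge $h(x,r)=r\,\ve(x,r)$ with $\ve\in\cG_1$, and the natural strategy is to reduce it to the classical dyadic Frostman lemma by exploiting the doubling/comparability property \rf{eqeq1}. The easy direction is essentially immediate: if $\nu$ is a Borel measure supported on $F$ with $\nu(B)\leq h(B)$ for every ball $B$, then for any covering $F\subset\bigcup_i B_i$ we have $\nu(F)\leq\sum_i\nu(B_i)\leq\sum_i h(B_i)$, so $\nu(F)\leq H^h_\delta(F)$ for every $\delta$ (in fact $\nu(F)\le M^h(F)$), whence $M^h(F)>0$ once $\nu(F)>0$; and passing to $\delta\to0$ gives $\nu(F)\le H^h(F)$ as well. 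So the content of the lemma is the reverse implication: from $M^h(F)=H^h_\infty(F)>0$ produce a measure $\nu$ on $F$ with $\nu(B)\le h(B)$ for all balls and $\nu(F)\gtrsim M^h(F)$.

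For that direction I would mimic the standard dyadic construction of Frostman measures (as in Mattila or Carleson), carrying the variable gauge $h$ through. First replace the gauge on arbitrary balls by a dyadic version: for a dyadic square (or ball) $Q$ of side/radius $\sim 2^{-k}$ centered near some point, set $\tilde h(Q)= 2^{-k}\,\ve(x_Q,2^{-k})$; property \rf{eqeq1} guarantees that $\tilde h(Q)$ is comparable to $h(B)$ for any ball $B$ of radius $\sim 2^{-k}$ meeting $Q$, with constants depending only on $C_0$, so $M^h(F)\approx M^{\tilde h}(F)$ up to dimensional/$C_0$ constants. Then run the usual top-down stopping-time construction on the dyadic lattice: start at a large scale with a pre-measure proportional to $\tilde h$ on the maximal cubes, and at each finer generation redistribute mass among the children so that the mass of every dyadic cube $Q$ is at most $\tilde h(Q)$, keeping as much total mass as possible. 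The limit measure $\nu$ (a weak-$*$ limit, using that everything lives in a fixed large ball since $F$ is compact) satisfies $\nu(Q)\le \tilde h(Q)$ for all dyadic $Q$, and the standard packing argument — every dyadic $Q$ that loses mass in the construction has at least one ancestor $Q'$ with $\nu(Q')=\tilde h(Q')$ — yields $\nu(F)\gtrsim M^{\tilde h}(F)\gtrsim M^h(F)$. Here the crucial input is again \rf{eqeq1}: it is what makes the "saturated ancestor" cubes behave like a genuine Frostman packing, i.e.\ $\sum \tilde h(Q'_j)\gtrsim H^{\tilde h}_\infty(F)$ when the $Q'_j$ cover $F$ disjointly.

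Finally one must upgrade the dyadic growth bound $\nu(Q)\le\tilde h(Q)$ to the bound $\nu(B)\le h(B)$ for \emph{every} ball $B$ (not just dyadic cubes). This is routine: any ball $B$ of radius $r$, $2^{-k-1}<r\le 2^{-k}$, is covered by a bounded number (depending only on dimension) of dyadic cubes $Q$ of that generation, and for each such $Q$ that meets $B$ one has $\tilde h(Q)\le C_0'\,h(B)$ by \rf{eqeq1} (applied with the roles $|x_Q-x_B|\lesssim r$, side$(Q)\sim r$); summing the bounded number of terms gives $\nu(B)\le C\,h(B)$, and one absorbs this constant into the construction (equivalently, run the construction with $\tilde h$ replaced by $c\,\tilde h$ for a small dimensional constant $c$, which costs only a constant factor in $\nu(F)$). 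The remark following \rf{eqeq1} — that \rf{eqeq1} persists with $|x-y|\le Cr$, $C^{-1}r\le s\le Cr$ — is exactly what licenses both the passage between balls and dyadic cubes and this last step.

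\textbf{Main obstacle.} There is no deep difficulty here; the one point requiring care is the bookkeeping that ensures the comparability constants (between $h$ on balls and $\tilde h$ on dyadic cubes, and in the packing estimate) depend only on $C_0$ from \rf{eqeq1} and on dimension, so that the final inequality $\nu(F)\ge c^{-1}M^h(F)$ has a constant of the claimed form. Everything else is the textbook dyadic Frostman argument with a (controlled) variable gauge in place of $r^s$.
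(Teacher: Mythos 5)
Your proposal is correct and follows essentially the same route as the paper, which simply cites the classical dyadic Frostman construction (Mattila, p.~112) and notes that the $\cG_1$ regularity \rf{eqeq1} is what lets one pass between dyadic cubes and arbitrary balls, and between nearby cubes of comparable scale. Your careful tracking of where \rf{eqeq1} enters (in defining the dyadic gauge $\tilde h$, in the packing estimate for the saturated cubes, and in upgrading the dyadic growth bound to a bound on all balls) is exactly the "taking into account the regularity properties" that the paper's one-line proof refers to.
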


The proof is almost the same as the one of the usual Frostman's Lemma (for instance, see \cite{mattila}, p.112), taking into account the regularity properties of the gauge functions $h\in\cG_1$.

For $h=h_{\mu,a}$, we have the following.

\begin{lemma}\label{lem2.2}
For any Borel set $A\subset\C$, we have
$$M^{h_{\mu,a}}(A) \geq C^{-1}\mu(A).$$
\end{lemma}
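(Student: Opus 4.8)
The plan is to prove the inequality $\sum_i h_{\mu,a}(B_i) \geq C^{-1}\mu(A)$ for every countable covering $\{B_i\}$ of $A$ by closed balls, and then pass to the infimum, which by definition gives $M^{h_{\mu,a}}(A) = H^{h_{\mu,a}}_\infty(A) \geq C^{-1}\mu(A)$.

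The key observation is a pointwise lower bound for the gauge function on each ball of the covering. Writing $B_i = B(x_i,t_i)$ and recalling \rf{defhx} together with \rf{defvex}, we have
$$h_{\mu,a}(B_i) = t_i\,\ve_{\mu,a}(B_i) = \int \psi_a\Bigl(\frac{y-x_i}{t_i}\Bigr)\,d\mu(y).$$
Now if $y\in B_i$, then $|y-x_i|/t_i\leq 1$, so from the definition \rf{eqpsia} of $\psi_a$ we get $\psi_a\bigl((y-x_i)/t_i\bigr) = \bigl(|(y-x_i)/t_i|^{1+a}+1\bigr)^{-1}\geq \tfrac12$. Since $\psi_a\geq 0$ everywhere, restricting the integral to $B_i$ yields
$$h_{\mu,a}(B_i) \;\geq\; \int_{B_i}\psi_a\Bigl(\frac{y-x_i}{t_i}\Bigr)\,d\mu(y)\;\geq\;\tfrac12\,\mu(B_i).$$

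Summing over $i$ and using countable subadditivity of $\mu$ on the covering $A\subset\bigcup_i B_i$ gives $\sum_i h_{\mu,a}(B_i)\geq \tfrac12\sum_i\mu(B_i)\geq \tfrac12\,\mu\bigl(\bigcup_i B_i\bigr)\geq \tfrac12\,\mu(A)$. Taking the infimum over all such coverings (no restriction on the radii is needed, since we are dealing with the content $H^{h_{\mu,a}}_\infty$) produces $M^{h_{\mu,a}}(A)\geq \tfrac12\,\mu(A)$, so the statement holds with $C=2$.

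There is essentially no obstacle here: the only point to be slightly careful about is that the estimate $h_{\mu,a}(B_i)\geq \tfrac12\mu(B_i)$ must hold for balls of \emph{arbitrary} radius (which it does, since the bound $\psi_a\geq \tfrac12$ on the unit ball is scale-independent), so that the argument applies to coverings by large balls as well and the passage to the infimum defining $M^{h_{\mu,a}}=H^{h_{\mu,a}}_\infty$ is legitimate.
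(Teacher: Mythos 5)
Your proof is correct and is essentially the same as the paper's: both reduce to the pointwise estimate $\mu(B_i)\leq C\,h_{\mu,a}(B_i)$ followed by summing over a covering and using countable subadditivity of $\mu$. The only difference is that you spell out the elementary bound $\psi_a\geq\tfrac12$ on the unit ball (giving $C=2$), which the paper takes as obvious without comment.
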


\begin{proof}
Given any $\eta>0$, consider a covering $A\subset \bigcup_i B_i$ by balls so that
$$\sum_i h_{\mu,a}(B_i) \leq  M^{h_{\mu,a}}(A) +\eta.$$ 
Since $\mu(B_i)\leq C h_{\mu,a}(B_i)$, we have
$$\mu(A)\leq \sum_i \mu(B_i)\leq C\sum_i h_{\mu,a}(B_i) \leq CM^{h_{\mu,a}}(A) + C\eta.$$ 
\end{proof}


Now, for technical reasons we need to extend the function $\ve(\cdot)$ defined on $\cB$ to the whole family of 
bounded sets. Given an arbitrary bounded set $A\subset\C$, let $B$ a ball with minimal diameter that contains
$A$. We define $\ve(A):=\ve(B)$. If $B$ is not unique, it does not matter. In this case, for definiteness we can choose
the infimum of the values $\ve(B)$ over all balls $B$ with minimal diameter containing $A$, for instance.
Analogously, if $h(x,r)=r\,\ve(x,r)$, we define $h(A)$ as the infimum of the $h(B)$'s.

It was mentioned above that $\ve_{\mu,a}\in\cG_2$. Our next objective consists in showing that if $\phi$ is
 a $K$-quasiconformal planar homeomorphism, then the function defined by
$$\ve(B) = \ve_{\mu,a}(\phi(B))$$ 
for any ball $B\subset\C$, also belongs to $\cG_2$. In fact, because of the geometric properties of quasiconformal
mappings and the smoothness of $\psi_a$, it is easily seen that $\ve$ satisfies \rf{eqeq1}.   
To show that \rf{eqsum4} also holds requires some more effort.
First we need a technical result, whose proof follows from an elementary calculation that we leave for the reader:

\begin{lemma}\label{lemtec1}
Let $a,b>0$, $a\neq b$, and denote $m=\min(a,b)$. For all $z\in\C$, we have
$$\sum_{k\geq 0} 2^{-bk}\,\frac1{\bigl(2^{-k}|z|\bigr)^{a} + 1}\leq \frac C{|z|^{m} + 1},$$
with $C$ depending only on $a,b$. 
\end{lemma}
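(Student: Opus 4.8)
The claim is the elementary estimate
$$\sum_{k\geq 0} 2^{-bk}\,\frac1{\bigl(2^{-k}|z|\bigr)^{a} + 1}\leq \frac C{|z|^{m} + 1},\qquad m=\min(a,b),$$
and my plan is to split the sum according to whether the term $2^{-k}|z|$ is large or small, i.e.\ around the index $k_0$ for which $2^{k_0}\approx |z|$. Write $r=|z|$ and assume $r\geq 1$ (the case $r\leq 1$ is immediate, since then every denominator is $\geq 1$ and the sum is bounded by $\sum_{k\geq0}2^{-bk}=C$, while the right-hand side is $\geq C'$). Let $k_0$ be the integer with $2^{k_0}\leq r<2^{k_0+1}$.

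For the \emph{small} indices $0\leq k\leq k_0$ we have $2^{-k}r\geq 1$, so $\bigl(2^{-k}r\bigr)^{a}+1\geq \bigl(2^{-k}r\bigr)^{a}$ and the corresponding partial sum is bounded by
$$\sum_{k=0}^{k_0} 2^{-bk}\,(2^{-k}r)^{-a}=r^{-a}\sum_{k=0}^{k_0}2^{(a-b)k}.$$
If $a>b$ this is $\lesssim r^{-a}\,2^{(a-b)k_0}\approx r^{-a}r^{a-b}=r^{-b}=r^{-m}$; if $a<b$ it is $\lesssim r^{-a}=r^{-m}$ (the geometric series converges); and if $a=b$ the sum has $\approx\log r$ terms — but the hypothesis $a\neq b$ excludes this, which is exactly why it is imposed. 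For the \emph{large} indices $k>k_0$ we have $2^{-k}r<1$, hence the denominator is $\geq 1$, and this partial sum is bounded by $\sum_{k>k_0}2^{-bk}\approx 2^{-bk_0}\approx r^{-b}\leq r^{-m}$. Adding the two pieces gives $\lesssim r^{-m}$, and since $r\geq1$ we have $r^{-m}\approx (r^{m}+1)^{-1}$, which is the desired bound.

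There is no real obstacle here; the only point requiring care is bookkeeping of the geometric series in the small-index range, where one genuinely uses $a\neq b$ to avoid a logarithmic loss, and tracking that the constant $C$ depends only on $a,b$ (through the convergence rate $2^{-|a-b|}$ and $2^{-b}$). I would simply record these two cases and conclude.
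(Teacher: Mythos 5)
Your argument is correct and complete: the dyadic split at $2^{k_0}\approx|z|$, the geometric-series bookkeeping in the two ranges (with the hypothesis $a\neq b$ used exactly where you say, to avoid the logarithmic loss), and the trivial case $|z|\leq 1$ together give the stated bound with $C=C(a,b)$. The paper explicitly leaves this lemma as "an elementary calculation for the reader," and your proof is precisely the intended computation, so there is nothing to compare beyond noting that you have supplied the omitted details correctly.
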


\begin{lemma}\label{lemtec5}
Let $\phi:\C\to\C$ be a $K$-quasiconformal mapping. If $0<a<C_1b$ (where $C_1$ is a small positive constant
depepending only on $K$), then,
$$\sum_{j\geq0}\frac{\ve_{\mu,a}(\phi(B(x,2^jr)))}{2^{bj}}\leq C(K)\,\ve_{\mu ,a}(\phi(B(x,r))).$$
In particular, if $a$ is chosen small enough, the function $\ve$ defined by $\ve(B) = \ve_{\mu,a}(\phi(B))$ for 
any ball $B$, belongs to $\cG_2$.  
\end{lemma}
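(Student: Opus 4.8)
The plan is to reduce the estimate to a pointwise bound on the gauge function $\psi_a$ after transporting the balls $\phi(B(x,2^jr))$ to a common scale, and then to sum a geometric-type series using Lemma \ref{lemtec1}. First I would normalize: fix $x$ and $r$, write $B_j=B(x,2^jr)$, and let $R_j$ denote the radius of (a ball comparable to) $\phi(B_j)$ and $c_j$ its center. By the standard distortion/quasisymmetry properties of $K$-quasiconformal maps (the ``egg-yolk'' principle), the images $\phi(B_j)$ are comparable to honest balls, the centers $c_j$ stay within $C R_j$ of $c_0$, and the radii grow at a controlled rate: there are constants depending only on $K$ with $R_{j+1}\ge \lambda\, R_j$ for some $\lambda=\lambda(K)>1$, hence $R_j \gtrsim \lambda^j R_0$. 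This is the one place where quasiconformality really enters, and it is the step I expect to require the most care — in particular one needs the \emph{lower} bound on how fast the radii grow, since that is what will beat the factor $2^{-bj}$, and the precise exponent relating $\lambda$ to $2$ is what forces the hypothesis $a<C_1 b$ (the constant $C_1$ will come out as essentially $\log_2\lambda$ times a universal factor).

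Next I would expand the definition:
\begin{equation}\label{eqplan1}
\ve_{\mu,a}(\phi(B_j)) \approx \frac1{R_j}\int \psi_a\!\Bigl(\frac{y-c_j}{R_j}\Bigr)\,d\mu(y),
\end{equation}
so that
\begin{equation}\label{eqplan2}
\sum_{j\ge0}\frac{\ve_{\mu,a}(\phi(B_j))}{2^{bj}} \approx \int \Bigl( \sum_{j\ge0} \frac1{2^{bj}R_j}\,\psi_a\!\Bigl(\frac{y-c_j}{R_j}\Bigr)\Bigr)d\mu(y).
\end{equation}
Then I would estimate the inner sum pointwise in $y$. Using $|c_j-c_0|\lesssim R_j$ and $R_j\gtrsim\lambda^j R_0$, one has $|y-c_j|\gtrsim \max(|y-c_0|-CR_j,0)$, and after splitting into the indices $j$ with $\lambda^j R_0 \lesssim |y-c_0|$ and those with $\lambda^j R_0 \gtrsim |y-c_0|$, each piece is dominated (up to constants, and after a change of index to make the geometric ratios powers of $2$) by an expression of the shape handled in Lemma \ref{lemtec1} with $z \approx (y-c_0)/R_0$. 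Choosing $a$ small enough relative to $b$ guarantees $a\ne b$ and that the exponent $m=\min(a,b)$ produced by Lemma \ref{lemtec1} is positive, yielding
\begin{equation}\label{eqplan3}
\sum_{j\ge0} \frac1{2^{bj}R_j}\,\psi_a\!\Bigl(\frac{y-c_j}{R_j}\Bigr) \lesssim \frac1{R_0}\,\frac1{\bigl(|y-c_0|/R_0\bigr)^{m}+1} \approx \frac1{R_0}\,\psi_m\!\Bigl(\frac{y-c_0}{R_0}\Bigr) \lesssim \frac1{R_0}\,\psi_a\!\Bigl(\frac{y-c_0}{R_0}\Bigr),
\end{equation}
the last step because $\psi_m\le C\psi_a$ when $m\le a$ (which holds once $a\le b$). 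Integrating \eqref{eqplan3} against $d\mu$ and comparing with \eqref{eqplan1} for $j=0$ gives exactly $\sum_{j\ge0} 2^{-bj}\ve_{\mu,a}(\phi(B_j)) \lesssim \ve_{\mu,a}(\phi(B(x,r)))$.

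Finally, for the ``in particular'' clause: property \eqref{eqeq1} for $\ve(B)=\ve_{\mu,a}(\phi(B))$ follows from quasisymmetry of $\phi$ together with the already-noted doubling and near-constancy of $\ve_{\mu,a}$ on comparable balls (as indicated in the paragraph preceding Lemma \ref{lemtec1}), and \eqref{eqsum4} is the special case $b=1$ of the displayed inequality, which is legitimate provided $C_1 b=C_1>a$, i.e.\ provided $a$ is chosen smaller than the constant $C_1(K)$; hence $\ve\in\cG_2$ for all sufficiently small $a>0$. The only genuine obstacle is the quasiconformal distortion input in the first paragraph — extracting a clean geometric lower bound $R_j\gtrsim\lambda^j R_0$ with $\lambda(K)>1$ and tracking how it interacts with the decay rate $2^{-bj}$ to pin down $C_1(K)$; everything after that is the elementary summation packaged in Lemma \ref{lemtec1}.
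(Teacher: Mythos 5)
Your high-level strategy (reduce to a geometric series via the quasiconformal distortion of the balls $B(x,2^jr)$ and then apply Lemma \ref{lemtec1}) is the right one and is essentially the paper's, but the central piece of quasiconformal input you identify is the wrong one, and fixing it is precisely what produces the constant $C_1(K)$.

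You claim the decisive estimate is a lower bound $R_{j+1}\geq\lambda R_j$, $\lambda=\lambda(K)>1$, ``since that is what will beat the factor $2^{-bj}$,'' and that $C_1$ comes out proportional to $\log_2\lambda$. This is backwards. Writing $d_j=\diam\phi(B(x,2^jr))$, what the argument actually needs is the \emph{upper} bound
$$\frac{d_j}{d_0}=\prod_{i=1}^j\frac{d_i}{d_{i-1}}\leq C(K)^j=2^{C_2 j},\qquad C_2=\log_2 C(K).$$
The reason: after regrouping the indices $j$ by the dyadic size $k$ of $d_j$ (so $d_j\approx 2^k d_0$), one needs a \emph{lower} bound on $j$ in terms of $k$ in order to make $2^{-bj}$ small; the display above gives exactly $2^j\gtrsim 2^{k/C_2}$, hence $2^{-bj}\lesssim 2^{-(b/C_2)k}$, and $C_1=1/C_2$. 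A lower bound $d_{j+1}\geq\lambda d_j$ would instead give an \emph{upper} bound on $j$ in terms of $k$, which is useless here (it controls multiplicity, but the tail $\sum_{j\geq k/C_2}2^{-bj}$ converges with the right rate even without any multiplicity bound). One can also see this directly in your pointwise strategy: in the regime $R_j\ll|y-c_0|$ the contribution is $\approx\sum_j 2^{-bj}R_j^a/|y-c_0|^{1+a}$, and to get $\sum_j 2^{-bj}R_j^a<\infty$ you need $R_j$ to grow \emph{at most} like $2^{(b/a)j}$, i.e.\ an upper bound with exponent forcing $a<b/C_2$; a fast lower bound makes this series worse, not better. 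After replacing ``lower bound'' by ``upper bound'' (and sourcing $C_1$ from $C(K)$ rather than $\lambda$), your reduction to Lemma \ref{lemtec1} and the ``in particular'' deduction of $\cG_2$ membership are both in order and match the paper.
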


\begin{proof}
We denote $d_j={\rm \diam}(\phi(B(x,2^jr)))$. We have
\begin{align*}
S & :=\sum_{j\geq0}\frac{\ve_{\mu ,a}(\phi(B(x,2^jr)))}{2^{bj}} \lesssim \sum_{j\geq0}\frac{\ve_{\mu ,a}(B(\phi
(x),d_j))}{2^{bj}}\\
&  \lesssim \sum_{k\geq0}
\sum_{j:d_0 2^{k}\leq d_j< d_02^{k+1}}\frac{\ve_{\mu ,a}(B(\phi(x),2^kd_0))}{2^{bj}}.
\end{align*}
For each $j\geq0$ we have
$$\frac{d_j}{d_0} = \prod_{i=1}^j \frac{d_i}{d_{i-1}} \leq C \prod_{i=1}^j \frac{{\rm \diam}(\phi
(B(x,2^ir)))}{{\rm \diam}(\phi(B(x,2^{i-1}r)))}
\leq C(K)^j = 2^{C_2j},$$
with $C_2$ depending on $K$. Thus, for $j,k$ such that $d_0 2^{k}\leq d_j< d_02^{k+1}$,
$$2^j\geq \Bigl(\frac{d_j}{d_0}\Bigr)^{1/C_2}\approx 2^{k/C_2}.$$
Then we obtain
$$S\lesssim \sum_{k\geq0}
\sum_{j:d_0 2^{k}\leq d_j< d_02^{k+1}} \frac{\ve_{\mu ,a}(B(\phi(x),2^kd_0))}{2^{C_1bk}} \leq C 
\sum_{k\geq0} \frac{\ve_{\mu ,a}(B(\phi(x),2^kd_0))}{2^{C_1bk}},$$
with $C_1=1/C_2$.
From Lemma \ref{lemtec1}, if $0<a<C_1b$, we infer that
\begin{align*}
\sum_{k\geq0} \frac{\ve_{\mu ,a}(B(\phi(x),2^kd_0))}{2^{C_1bk}} & = \sum_{k\geq0} \frac{1}{2^{(1+C_1b)k}d_0}\int \frac1{\biggl(\dfrac{|\phi(x)-y|}{2^kd_0}\biggr)^{1+a} + 1}\,
d\mu(y) \\
& \lesssim \frac1{d_0}\int 
 \frac 1{\biggl(\dfrac{|\phi(x)-y|}{d_0}\biggr)^{1+a} + 1}\,d\mu(y) \\ & =
\ve_{\mu ,a}(B(\phi(x),d_0))\lesssim \ve_{\mu ,a}(\phi(B(x,r))).
\end{align*}
\end{proof}

Another result that shows that some properties of the functions from $\cG_1$ are preserved under composition with quasiconformal maps is the following.

\begin{lemma} \label{lemcg}
Let  $\phi:\C\rightarrow\C$ be a $K$-quasiconformal mapping, and 
$\ve_0\in\cG_1$. 
Define $\ve(B) = \ve_0(\phi(B))$ for any ball $B\subset\C$. 
For any $s>0$ we have
$$\int_0^\infty \ve(x,r)^s\,\frac{dr}r\leq C(K,s)\int_0^\infty \ve_0(\phi(x),r)^s\,\frac{dr}r.
$$
\end{lemma}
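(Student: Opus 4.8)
The plan is to discretize the integral on the left dyadically and compare the resulting sum, term by term, with the corresponding sum for $\ve_0$. Write
\[
\int_0^\infty \ve(x,r)^s\,\frac{dr}r \approx \sum_{j\in\Z} \ve(x,2^j)^s = \sum_{j\in\Z}\ve_0\bigl(\phi(B(x,2^j))\bigr)^s,
\]
using that $\ve\in\cG_1$ (inherited from $\ve_0$ via the geometric control of $\phi$ on balls, as noted before Lemma \ref{lemtec1}) so that $\ve(x,r)\approx\ve(x,2^j)$ for $2^{j}\le r<2^{j+1}$. Let $d_j=\diam\bigl(\phi(B(x,2^j))\bigr)$, so that $\phi(B(x,2^j))$ is contained in, and contains a fixed fraction of, a ball of radius comparable to $d_j$ centered at $\phi(x)$; the quasisymmetry of $\phi$ gives $\ve_0(\phi(B(x,2^j)))\approx \ve_0\bigl(B(\phi(x),d_j)\bigr)$, again by the $\cG_1$ property of $\ve_0$ and the bounded geometry of quasiconformal images of balls. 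Hence
\[
\int_0^\infty \ve(x,r)^s\,\frac{dr}r \lesssim \sum_{j\in\Z}\ve_0\bigl(B(\phi(x),d_j)\bigr)^s.
\]

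The key point is then that the map $j\mapsto d_j$ is, up to a uniformly bounded multiplicity, a reparametrization of the dyadic scales at $\phi(x)$. On the one hand $j\mapsto d_j$ is monotone increasing; on the other hand, the standard quasiconformal distortion estimate for concentric balls (the same one used in the proof of Lemma \ref{lemtec5}: $d_{i}/d_{i-1}\le C(K)$ and, by quasisymmetry, also $d_i/d_{i-1}\ge c(K)>1$ whenever $i\ge1$, with the analogous two-sided bound for $i\le 0$) shows that each dyadic annulus $\{z: 2^k d_0\le |z-\phi(x)|<2^{k+1}d_0\}$ can contain $\phi(B(x,2^j))$ for at most $C(K)$ values of $j$, and conversely consecutive $d_j$ cannot skip arbitrarily many dyadic scales. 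Therefore, regrouping the sum according to the value of $\lfloor \log_2(d_j/d_0)\rfloor$ and using that the inner sum over each group has at most $C(K)$ terms, we obtain
\[
\sum_{j\in\Z}\ve_0\bigl(B(\phi(x),d_j)\bigr)^s \lesssim_K \sum_{k\in\Z}\ve_0\bigl(B(\phi(x),2^k d_0)\bigr)^s \approx \int_0^\infty \ve_0(\phi(x),r)^s\,\frac{dr}r,
\]
where the last step again uses $\ve_0\in\cG_1$ to pass from dyadic values back to the continuous integral. Combining the displays yields the claimed inequality with constant $C(K,s)$.

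The main obstacle is the two-sided control of $d_j/d_{j-1}$, i.e.\ ensuring that $d_j$ genuinely grows (and, for $j\to-\infty$, genuinely shrinks) at a definite geometric rate, so that the correspondence $j\leftrightarrow k$ has bounded multiplicity in \emph{both} directions and no dyadic scale $2^k d_0$ is ever ``missed.'' The upper bound $d_j/d_{j-1}\le C(K)$ is exactly what was used in Lemma \ref{lemtec5}; the matching lower bound $d_j/d_{j-1}\ge c(K)>1$ follows from applying the same estimate to the $K$-quasiconformal inverse $\phi^{-1}$ (which controls $\diam(B(x,2^{j-1}r))$ in terms of $\diam(\phi^{-1}$ of the relevant balls$)$), and this is the one extra input beyond what Lemma \ref{lemtec5} already supplies. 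Everything else is bookkeeping with the $\cG_1$ regularity and Hölder-free manipulations of sums.
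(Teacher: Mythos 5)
Your proposal is correct and follows essentially the paper's own route: discretize dyadically, replace $\ve_0(\phi(B(x,2^j)))$ by $\ve_0(B(\phi(x),d_j))$, and regroup using that $\#\{j:2^k\leq d_j<2^{k+1}\}\leq C(K)$. You also correctly supply the justification (via quasisymmetry of $\phi^{-1}$) of this bounded-multiplicity estimate, which the paper asserts without elaboration; your worry about dyadic scales being \emph{skipped} is superfluous, though, since the lemma is a one-sided inequality and omitted scales only make the right-hand side larger.
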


\begin{proof}
We have
$$
\int_0^\infty 
\ve_0(\phi(B(x,r)))^s\, \frac{dr}r 
\leq C(s)\sum_{j\in\Z} \ve_0(\phi(B(x, 2^j)))^s.$$
Denote now $r_j=\diam(\phi(B(x, 2^j))$. We obtain
\begin{align*}
\sum_{j\in\Z} \ve_0(\phi(B(x, 2^j))^s & = \sum_{k\in\Z} \sum_{j:2^k\leq r_j< 2^{k+1}}\!\!\!\!\ve_0(\phi(B(x, 2^j))^s \\ &\lesssim \sum_{k\in\Z} \sum_{j:2^k\leq r_j< 2^{k+1}}\!\!\!\!\ve_0(B(\phi(x), r_j))^s \\
& \lesssim C(K) \sum_{k\in\Z} \ve_0(B(\phi(x), 2^k))^s\leq C(K,s)\int_0^\infty 
\ve_0(\phi(x),r)^s\, \frac{dr}r,
\end{align*}
where we took into account that $\#\{j:2^k\leq r_j< 2^{k+1}\}\leq C(K)$ because of the geometric properties of quasiconformal mappings.
\end{proof}

\bigskip


\subsection{The space $\Lip^q(\ve)$}
We wish to obtain distortion estimates by quasiconformal maps in terms of the contents $M^h$, from which
we will derive the corresponding estimates in terms of non linear potentials. To study the distortion in terms of
$M^h$, our arguments below are inspired by the ones of \cite{ACMOU}.

Given $1\leq q<\infty$ and a function $\ve:\cB\to[0,\infty)$, we define $\Lip^q(\varepsilon)$
 as the class of all functions $f:\C\to\C$ for which there is some constant $M$ such that
$$\biggl(\frac{1}{|B|}\int_B |f-f_B|^q\biggr)^{1/q}\leq M\,\ve(B)$$
for all balls $B$. In the sufficient part of the definition, one can replace the average $f_B=|B|^{-1
}\int_B f$ by any constant $c_B$, getting the same class of functions. The infimum of all these constants $M$ is denoted by $\|f\|_{\Lip^q(\varepsilon)}$.

Let us look at the behaviour of a function in $\Lip(\varepsilon) = \Lip^1(\varepsilon)$ under a $K$-quasi\-conformal mapping.

\begin{lem}\label{PVMO}
Let $\ve_0\in\cG_1$ and let  $\phi:\C\rightarrow\C$ be a $K$-quasiconformal mapping. Set
$$\ve(B):= \ve_0(\phi(B))$$
with $a>0$, 
and $h(x,r)=r\,\ve(x,r)$.
Then, given $q$ with $K<q<\infty$, for all $f\in\operatorname{Lip}^q(\ve_0)$, we have  $f\circ\phi\in\operatorname{Lip}(\ve)$ and $$\|f\circ\phi\|_{\operatorname{Lip}(\ve)}\leq C(q,K)\,\|f\|_{\operatorname{Lip}^q(\ve_0)}.$$
\end{lem}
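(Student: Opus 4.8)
\medskip\noindent\emph{Proof plan.} The plan is to control, for an arbitrary ball $B\subset\C$, the mean oscillation of $g:=f\circ\phi$ over $B$ by pushing the integral forward through $\phi$, applying H\"older's inequality in the exponent $q$ to peel off the Jacobian, and then bounding that Jacobian factor by the sharp higher integrability of planar quasiconformal maps. I would fix $B=B(x,r)$ and let $B^*$ be a ball of minimal diameter containing $\phi(B)$, so that by the definition of the extension of $\ve_0$ to bounded sets $\ve(B)=\ve_0(\phi(B))=\ve_0(B^*)$; since $\ve_0\in\cG_1$, this value is moreover stable under replacing $B^*$ by any comparable ball. By the quasi-symmetry of $K$-quasiconformal maps, $\phi(B)$ is quasi-round: it contains a ball $B_1$ with $\diam(B_1)\gtrsim\diam(\phi(B))$, with the constant depending only on $K$. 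Since also $\phi(B)\subset B^*$ and $\diam(B^*)\approx\diam(\phi(B))$, one gets $2B^*\subset\lambda B_1$ for some $\lambda=\lambda(K)$, and then, applying the distortion estimates to the $K$-quasiconformal map $\phi^{-1}$, $\phi^{-1}(2B^*)\subset\phi^{-1}(\lambda B_1)\subset C_3B$ with $C_3=C_3(K)$; in particular $|\phi^{-1}(2B^*)|\lesssim|B|$. All these reductions only cost constants depending on $K$.

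Since in the definition of $\Lip(\ve)$ the average $g_B$ may be replaced by any constant, I would estimate $\tfrac1{|B|}\int_B|g-c|$ with $c:=f_{B^*}$. By the change of variables (area) formula for the quasiconformal homeomorphism $\phi^{-1}$ --- which is differentiable a.e., orientation preserving and satisfies Lusin's condition (N), so that $J_{\phi^{-1}}\geq0$ and $\int_AJ_{\phi^{-1}}\,dm=|\phi^{-1}(A)|$ --- and since $\phi(B)\subset B^*$,
$$
\int_B|g-c|\,dm=\int_{\phi(B)}|f-f_{B^*}|\,J_{\phi^{-1}}\,dm\leq\int_{B^*}|f-f_{B^*}|\,J_{\phi^{-1}}\,dm.
$$
H\"older's inequality with exponents $q$ and $q'$ then gives
$$
\int_B|g-c|\,dm\leq\Big(\int_{B^*}|f-f_{B^*}|^q\,dm\Big)^{1/q}\Big(\int_{B^*}J_{\phi^{-1}}^{\,q'}\,dm\Big)^{1/q'},
$$
and the first factor is at most $\|f\|_{\Lip^q(\ve_0)}\,\ve_0(B^*)\,|B^*|^{1/q}$ directly from the definition of $\Lip^q(\ve_0)$ applied to the ball $B^*$.

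For the second factor I would invoke the key input. Since $q>K$, one has $q'=q/(q-1)<K/(K-1)$, which is exactly the exponent range in which the Jacobian of the $K$-quasiconformal map $\phi^{-1}$ is locally $q'$-integrable, with the quantitative reverse H\"older estimate (Astala's sharp higher integrability, cf.\ \cite{astalaareadistortion})
$$
\Big(\frac1{|B^*|}\int_{B^*}J_{\phi^{-1}}^{\,q'}\,dm\Big)^{1/q'}\leq C(q,K)\,\frac1{|2B^*|}\int_{2B^*}J_{\phi^{-1}}\,dm=C(q,K)\,\frac{|\phi^{-1}(2B^*)|}{|2B^*|}\leq C(q,K)\,\frac{|B|}{|B^*|},
$$
the last inequality absorbing into $C(q,K)$ the constants from $|\phi^{-1}(2B^*)|\lesssim|B|$ and $|2B^*|\approx|B^*|$. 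Hence $\big(\int_{B^*}J_{\phi^{-1}}^{\,q'}\,dm\big)^{1/q'}\leq C(q,K)\,|B^*|^{1/q'-1}|B|$. Multiplying the two factors and using $1/q+1/q'=1$,
$$
\int_B|g-c|\,dm\leq C(q,K)\,\|f\|_{\Lip^q(\ve_0)}\,\ve_0(B^*)\,|B^*|^{1/q+1/q'-1}\,|B|=C(q,K)\,\|f\|_{\Lip^q(\ve_0)}\,\ve_0(B^*)\,|B|.
$$
Dividing by $|B|$ and recalling $\ve_0(B^*)=\ve(B)$ would give $\tfrac1{|B|}\int_B|g-g_B|\leq C(q,K)\,\|f\|_{\Lip^q(\ve_0)}\,\ve(B)$ for every ball $B$, i.e.\ $f\circ\phi\in\Lip(\ve)$ with $\|f\circ\phi\|_{\Lip(\ve)}\leq C(q,K)\,\|f\|_{\Lip^q(\ve_0)}$.

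The hard part will be the bound on the Jacobian factor: it requires the \emph{quantitative} higher integrability of $J_{\phi^{-1}}$, with exponent $q'$ and a constant depending only on $q'$ and $K$ --- that is, Astala's sharp integrability theorem in reverse-H\"older form --- together with the precise identification of the admissible range $q'<K/(K-1)$, equivalently $q>K$, which is exactly the hypothesis on $q$. The remaining ingredients (the area formula for $\phi^{-1}$, the quasi-roundness and quasi-symmetry facts relating $B$, $\phi(B)$ and $B^*$, and the stability of $\ve_0\in\cG_1$ under bounded changes of the ball) are standard for planar $K$-quasiconformal maps and contribute only constants depending on $K$.
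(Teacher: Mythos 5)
Your proof is correct and follows essentially the same route as the paper's: change variables through $J_{\phi^{-1}}$, apply H\"older with exponents $q,q'$, and bound the Jacobian factor via the sharp reverse H\"older inequality for planar $K$-quasiconformal maps in the range $q'<K/(K-1)$, finally using $\ve_0\in\cG_1$ to pass from $\ve_0(B^*)$ to $\ve(B)$. The only cosmetic difference is bookkeeping: the paper picks $B_0$ centered at $\phi(x)$ and divides/multiplies by $|B_0|/|\phi^{-1}(B_0)|$ so the reverse H\"older average cancels with $|\phi^{-1}(B_0)|\approx|B|$, whereas you phrase the reverse H\"older on $2B^*$ and bound $|\phi^{-1}(2B^*)|\lesssim|B|$ directly; these are equivalent.
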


\begin{proof}
We will follow the techniques used in \cite{reimann}. Given a ball $B=B(x,t)$,
we can find a ball $B_0$ centered at $\phi(x)$ such that $B_0\supset \phi(B)$ and $|B|\leq|\phi^{-1}(B_0)|\leq C(K)\,|B|$, where $C(K)$ depends only on $K$. We have:
$$\aligned
\frac{1}{|B|}&\int_B|f\circ\phi(z)-c_B|\,dm(z)=\frac{1}{|B|}\int_{\phi(B)}|f(w)-
c_B|\,J\phi^{-1}(w)\,dm(w)\\
&\leq\frac{1}{|B|}\int_{B_0}|f(w)-c_B|\,J\phi^{-1}(w)\,dm(w)\\
&\leq\,C(K)\,\frac{1}{|B_0|}\int_{B_0}|f(w)-c_B|\,J\phi^{-1}(w)\,dm(w)\,\,\frac{
|B_0|}{|\phi^{-1}(B_0)|}\\
&\leq\,C(K)\,\left(\frac{1}{|B_0|}\int_{B_0}|f(w)-c_B|^{q}dm(w)\right)^{1/q}
\frac{\left(\frac{1}{|B_0|}\int_{B_0}J\phi^{-1}(w)^{q'}\,dm(w)\right)^{1/q'}}{\left(
\frac{1}{|B_0|}\int_{B_0}J\phi^{-1}(w)\,dm(w)\right)}\\
&\leq C(K,q)\,\|f\|_{\Lip^q(\ve_0)}\,\varepsilon_0(B_0),
\endaligned
$$
where the last inequality follows from the fact that the Jacobian satisfies the reverse H\"older inequality 
$$
\left(\frac{1}{|B_0|}\int_{B_0}J\phi^{-1}(w)^{q'}\,dm(w)\right)^{1/q'}\leq
 C(K,q) \frac{1}{|B_0|}\int_{B_0}J\phi^{-1}(w)\,dm(w)$$
for $q'<K/(K-1)$, by \cite[p.37]{astalaiwaniecsaksman}. 

Since $\ve_0\in\cG_1$, we have $\ve_0(B_0)\approx\ve_0(\phi(B))=\ve(B)$, and then
$$
\frac{1}{|B|}\int_B|f\circ\phi(z)-c_B|\,dm(z)\leq\,C(K,q)\,\|f\|_{\Lip^q(\varepsilon_0
)}\,\varepsilon(B).
$$
Thus $\|f\circ \phi\|_{Lip(\ve)}\leq C(K,q) \|f\|_{Lip^q(\ve_0)}$.
\end{proof}

\bigskip


\subsection{The capacities $\gamma_{h,q}$}

Given $1<q<\infty$, for a bounded set $F\subset\C$ and a function $h:\cB\to[0,\infty)$, with $h(x,r)=r\,\ve(x,r)$,
we set
$$\gamma_{h,q}(F)=\sup|\langle\overline\partial f, 1\rangle|=\sup|f'(\infty)|$$
where the supremum is taken over all $\Lip^q(\varepsilon)$ functions with $\|f\|_{
\Lip^q(\varepsilon)}\leq 1$, $f(\infty)=0$ and such that $\overline\partial f$ is 
a distribution supported on $F$.

\begin{lem}\label{CH}
Let $E$ be a compact set and $\ve\in\cG_1$. For $1\leq q<2$ we have
\begin{enumerate}
\item[(a)] $\gamma_{h,q}(E)\leq C\, M^h(E)$.
\item[(b)] If moreover $\ve\in\cG_2$, then $M^{h}(E)\leq C(q)\,\gamma_{h,q}(E)$.
\end{enumerate}
\end{lem}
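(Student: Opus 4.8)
The plan is to prove the two inequalities in Lemma \ref{CH} by adapting the classical comparison between analytic capacity and Hausdorff content, using the averaging/$\Lip^q$ machinery in place of pointwise Lipschitz bounds. Throughout, write $h(x,r)=r\,\ve(x,r)$ with $\ve\in\cG_1$, and recall that by the Frostman lemma above, $\M^h(E)>0$ iff there is a nonzero measure $\nu$ supported on $E$ with $\nu(B)\leq h(B)$ for all balls $B$, and one may take $\nu(E)\gtrsim \M^h(E)$.

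For part (a), the plan is to show $\gamma_{h,q}(E)\lesssim \M^h(E)$ by a Cauchy transform / smoothing argument. Let $f$ be admissible for $\gamma_{h,q}(E)$, so $\|f\|_{\Lip^q(\ve)}\leq 1$, $f(\infty)=0$, and $T:=\overline\p f$ is a distribution supported on $E$ with $f'(\infty)=\langle T,1\rangle$ (up to the usual $\pi$-factor). First I would fix a covering $E\subset\bigcup_i B_i$ by balls with $\sum_i h(B_i)\leq \M^h(E)+\eta$, and refine it to a bounded-overlap covering $\{B_i\}$ with radii $r_i$. The key point is a local estimate: using that $f\in\Lip^q(\ve)$ and $\overline\p f$ is supported in $E$, one gets $|\langle \overline\p f, \varphi_i\rangle|\lesssim h(B_i)$ for a suitable partition-of-unity-type test function $\varphi_i$ adapted to $2B_i$; indeed, integrating by parts, $\langle\overline\p f,\varphi_i\rangle = -\langle f-c, \overline\p\varphi_i\rangle$ for any constant $c$, and choosing $c=f_{2B_i}$, Hölder with exponents $q,q'$ ($q<2\le q'$, so $\|\overline\p\varphi_i\|_{q'}\lesssim r_i^{2/q'-1}$) gives $|\langle\overline\p f,\varphi_i\rangle|\lesssim (\fint_{2B_i}|f-f_{2B_i}|^q)^{1/q}\, r_i^{2/q'} \, r_i^{-1}\cdot r_i^{2/q}\lesssim \ve(2B_i)\,r_i \approx h(B_i)$, using $\ve\in\cG_1$. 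Summing over $i$, using $\sum_i\varphi_i=1$ near $E=\supp\overline\p f$ and bounded overlap, yields $|f'(\infty)|=|\langle\overline\p f,1\rangle|\lesssim \sum_i h(B_i)\leq \M^h(E)+\eta$; let $\eta\to0$. The hypothesis $q<2$ is precisely what makes $q'\ge 2$, so the exponent $2/q'-1\le 0$ behaves correctly; this is the spot requiring care.

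For part (b), assuming additionally $\ve\in\cG_2$, the plan is the reverse: given $\nu$ supported on $E$ with $\nu(B)\leq h(B)$ for all balls and $\nu(E)\gtrsim \M^h(E)$ (Frostman), I would set $f=\frac1\pi\cdot\frac1z * \nu = $ the Cauchy transform of $\nu$, so that $\overline\p f=\nu$ is supported on $E$, $f(\infty)=0$, and $f'(\infty)=\nu(E)$. It remains to bound $\|f\|_{\Lip^q(\ve)}\lesssim_q 1$, i.e. to show $(\fint_B|f-f_B|^q)^{1/q}\lesssim \ve(B)$ for every ball $B=B(x_0,t)$. Split $\nu=\nu|_{2B}+\nu|_{(2B)^c}$. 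For the far part, $\frac1z*(\nu|_{(2B)^c})$ is essentially constant on $B$: its gradient is controlled by $\int_{|y-x_0|>2t}|x-y|^{-2}d\nu(y)\lesssim t^{-1}\sum_{k\ge1}2^{-k}\theta_\nu(2^k B)\lesssim t^{-1}\sum_{k\ge1}2^{-k}\ve(2^k B)\lesssim t^{-1}\ve(B)$ by the $\cG_2$ condition \rf{eqsum4}, so the $q$-mean oscillation over $B$ is $\lesssim \ve(B)$. For the near part, the Cauchy transform of $\nu|_{2B}$ satisfies a weak-type / Kolmogorov estimate since $\nu$ has linear growth ($\theta_\nu\lesssim 1$), giving $\|\frac1z*(\nu|_{2B})\|_{L^q(B)}\lesssim t^{2/q}\,\nu(2B)/t^{?}$ — more precisely, by the standard estimate $\|\frac1{|z|}*(\nu|_{2B})\|_{L^{q,\infty}}\lesssim \|\nu|_{2B}\|$ for $q<2$ combined with growth, one gets $(\fint_B|\tfrac1z*\nu|_{2B}|^q)^{1/q}\lesssim \nu(2B)/t \lesssim \ve(2B)\approx \ve(B)$. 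Combining, $\|f\|_{\Lip^q(\ve)}\lesssim_q 1$, hence $\M^h(E)\lesssim \nu(E)=f'(\infty)\lesssim \gamma_{h,q}(E)$.

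The main obstacle I anticipate is the near-part estimate in (b): one needs the right truncated-Cauchy-transform bound in $L^q$ for $q<2$ against a measure of linear growth, uniformly in the ball, and to make sure the constant depends only on $q$ (and on the $\cG_1,\cG_2$ constants $C_0,C_0'$) and not on $\nu$ or $B$; this is where the classical potential-theoretic estimate for $\gamma$ versus $\M^1$ gets genuinely used, and where the restriction $q<2$ is essential (for $q\ge 2$ the Cauchy transform of a general linear-growth measure need not be in $L^q_{loc}$). A secondary technical point is justifying the integration by parts and the identity $f'(\infty)=\langle\overline\p f,1\rangle$ for the distributional $\overline\p f$ in (a); this is standard but should be stated carefully given that $f$ is only assumed to lie in $\Lip^q(\ve)\subset \mathrm{VMO}$-type spaces rather than being smooth.
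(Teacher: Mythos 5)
Your proposal is correct and follows essentially the same route as the paper: part (a) via a partition of unity subordinate to a near-optimal covering and the $\Lip^q$ oscillation bound (the paper just uses the $L^1$ mean directly, so your extra H\"older step with $q'$ is harmless but unnecessary, and $q<2$ is not really what is at stake there), and part (b) via Frostman's lemma and the Cauchy transform of the Frostman measure, with the far part controlled exactly as you say by the $\cG_2$ summation condition. The only immaterial difference is the near-part estimate in (b): you invoke the weak-$L^{2}$/Kolmogorov bound for $\frac1{|z|}*\nu|_{2B}$, while the paper obtains the same bound $\nu(2B)^q/r^q\lesssim\ve(B)^q$ directly by H\"older and Fubini using $\int_B|w-z|^{-q}\,dm(z)\lesssim r^{2-q}$ for $q<2$ — which is precisely where the restriction $q<2$ enters, as you correctly anticipate.
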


\begin{proof} First we show (a).
Fix a real number $\eta>0$ and take a covering of $E$ by balls $B_j$, with 
radius $r_j$, such that $\sum_jh(B_j)\leq M^h(E)+\eta$. Consider a partition of unity
 associated to this covering, that is, for each $j$ we take an infinitely differentiable
  function $\varphi_j$ supported on $2B_j$ with 
  $\|\nabla\varphi_j\|_\infty\leq\frac{C}{r_j}$, and so that $\sum_j\varphi_j=1$ on a neighbourhood of $E$. Then, if $\|f\|_{
\Lip^q(\varepsilon)}\leq 1$,
$$\aligned
|\langle\overline\partial f, 1\rangle|
&=|\langle\overline\partial f,\sum_j\varphi_j\rangle|=|\sum_j\langle\overline\partial(f-f_{2B_j}),\varphi_j\rangle|\\
&\leq\sum_j\int_{2B_j}|f-f_{2B_j}|\,|\overline\partial\varphi_j|\,dm\leq\sum_j\frac{
C}{r_j}\int_{2B_j}|f-f_{2B_j}|\,dm\\
&\leq C\,\sum_jr_j\,\varepsilon(2B_j)\leq C\,\sum_jr_j\,\varepsilon(B_j)\leq C\,(M^h(E)+\eta).
\endaligned$$
Hence, $\gamma_{h,q}(E)\leq C\,M^h(E)$.

To prove (b), we suppose that $\ve\in\cG_2$. If $M^{h}(E)>0$ then by Frostman's Lemma
there exists a positive measure $\nu$, supported on $E$, such that $\nu(B(x,r)
)\leq h(x,r)$ and $\nu(E)\geq C\,M^{h}(E)$. The function $f=\nu\ast\frac{1}{z}$ is 
analytic outside $E$, $f(\infty)=0$ and $\langle\overline\partial f, 1\rangle=\nu(E)$. Now, we will check that $f\in\Lip^q(\varepsilon)$. Fix a ball $B=B(z_0,r)$ and $c_B=\int_{\C\setminus 2B}\frac{d\nu(w)}{w
-z_0}$. We have
\begin{align}\label{DecomposeIntegralInsideAndOutside2}
 \frac{1}{|B|}&\int_B|f(z)-c_B|^q\,dm(z)\leq \nonumber \\
&\leq  \frac{1}{|B|}\int_B\left(\int_{2B}\frac{1}{|w-z|}\,d\nu(w)+\int_{\C\setminus 2B}\left|\frac{1}{w-z}-\frac{1}{w-z_0}\right|d\nu(w)\right)^qdm(z).\nonumber \\
\end{align}

For the first term on the right side, by H\"older's inequality and Fubini's Theorem, since $q<2$,
\begin{align*}
\frac{1}{|B|}\int_{B}\biggl(\int_{2B}\frac{1}{|w-z|}\,d\nu(w)\biggr)^q\,dm(z)&\leq 
\frac{\nu(2B)^{q-1}}{|B|}\int_{2B}\int_{B}\frac{1}{|w-z|^q}\,dm(z)\,d\nu(w)\\
&\leq C \frac{\nu(2B)^q}{r^q}\leq
C\,\varepsilon
(2B)^q\leq C\ve(B)^q.
\end{align*}

Since $|w-z|\simeq|w-z_0|$ for $z\in B$ and $w\in\C\setminus 2B$, we have
\begin{align*}
\int_{\C\setminus 2B}\frac{|z-z_0|}{|w-z_0|^2}\,d\nu(w) &
\leq Cr\,\sum_{j=1}^\infty\int_{2^{j+1}B\setminus 2^jB}\frac{d\nu(w)}{|w-z_0|^2}\\
&\leq Cr\,\sum_{j=1}^\infty\frac{h(z_0,2^{j+1}r)}{(2^jr)^2}
\leq C\,\sum_{j=0}^\infty\frac{\varepsilon(2^jB)}{2^j}\leq C\ve(B),
\end{align*}
using the fact that $\ve\in\cG_2$.
Thus we get
$$\frac{1}{|B|}\int_B\left(\int_{\C\setminus 2B}\left|\frac{1}{w-z}-\frac{1}{w-z_0}\right|d\nu(w)\right)^qdm(z)\leq C\ve(B)^q,
$$
and so (b) follows.
\end{proof}

Recall that a quasiconformal mapping $\phi:\C\to\C$ is called principal if it is conformal
outside a compact set and $|\phi(z) - z| = O(1/|z|)$ as $z\to\infty$.

\begin{lem}\label{VMO}
Let $E$ be a compact set, and $\phi:\C\rightarrow\C$ a principal $K$-quasiconformal mapping, conformal on $\C\setminus E$. Given $\ve_0\in\cG_1$,
define
$$\ve(x,r) = \ve_0(\phi(B(x,r))$$
and $h(x,t)=r\,\ve(x,r)$. 
For $q>K$, we have
$$\gamma_{h_0,q}(\phi(E))\leq C\,\gamma_{h,1}(E).$$
\end{lem}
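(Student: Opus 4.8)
The plan is to transfer an extremal function for $\gamma_{h_0,q}(\phi(E))$ back through $\phi$ and show the pulled-back function is admissible for $\gamma_{h,1}(E)$, up to a constant. Concretely, let $g$ be a function competing for $\gamma_{h_0,q}(\phi(E))$: $\|g\|_{\Lip^q(\ve_0)}\le 1$, $g(\infty)=0$, $\overline\partial g$ supported on $\phi(E)$, and $|g'(\infty)|$ close to $\gamma_{h_0,q}(\phi(E))$. Set $f = g\circ\phi$. By Lemma~\ref{PVMO}, since $q>K$, we have $f\in\Lip(\ve)$ with $\|f\|_{\Lip(\ve)}\le C(q,K)\|g\|_{\Lip^q(\ve_0)}\le C(q,K)$, where $\ve(x,r)=\ve_0(\phi(B(x,r)))$ as in the statement. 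Because $\phi$ is a homeomorphism of $\C$ with $\phi^{-1}(\phi(E))=E$ and $g$ is analytic (hence $\overline\partial g=0$ in the sense of distributions) on $\C\setminus\phi(E)$, the composition $f=g\circ\phi$ is analytic on $\C\setminus E$ — here one uses that $\phi$ is conformal on $\C\setminus E$, so $g\circ\phi$ is a composition of analytic maps there — and thus $\overline\partial f$ is a distribution supported on $E$. Also $f(\infty)=g(\phi(\infty))=g(\infty)=0$.

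The remaining point is to compare $f'(\infty)$ with $g'(\infty)$. Since $\phi$ is principal, $\phi(z)=z+O(1/|z|)$ near infinity, so for large $z$, $f(z)=g(\phi(z))=g(z+O(1/|z|))$. Using $g(z)=g'(\infty)/z+o(1/|z|)$ at infinity together with this asymptotic, one gets $z f(z)\to g'(\infty)$ as $z\to\infty$, i.e. $f'(\infty)=g'(\infty)$. (One should be slightly careful to justify the interchange; expanding $g$ in a Laurent-type series valid near $\infty$ and substituting $\phi(z)=z+O(1/|z|)$ suffices, since $\overline\partial g$ has compact support so $g$ is analytic in a neighbourhood of $\infty$.) Therefore $f/C(q,K)$ is admissible for $\gamma_{h,1}(E)$, giving $|g'(\infty)| = |f'(\infty)| \le C(q,K)\,\gamma_{h,1}(E)$, and taking the supremum over $g$ yields $\gamma_{h_0,q}(\phi(E))\le C\,\gamma_{h,1}(E)$.

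The main obstacle I anticipate is the distributional bookkeeping: verifying rigorously that $\overline\partial(g\circ\phi)$ is supported on $E$ when $g$ is merely a bounded function with $\overline\partial g$ a distribution (not an $L^1$ function), and that the pairing $\langle\overline\partial f,1\rangle$ used to define $\gamma_{h,1}$ really equals $f'(\infty)$ for this $f$. On $\C\setminus E$ everything is classical since $\phi$ is conformal there and $g$ analytic on $\phi(\C\setminus E)=\C\setminus\phi(E)$, so the only subtlety is a neighbourhood of $E$, where one argues by duality/approximation: for a test function $\varphi$ supported away from $E$, $\langle\overline\partial f,\varphi\rangle = -\langle f,\overline\partial\varphi\rangle$ can be computed by a change of variables $w=\phi(z)$ (whose Jacobian is well-behaved by the reverse Hölder estimates already invoked in the proof of Lemma~\ref{PVMO}), reducing to $\langle\overline\partial g,\psi\rangle$ for a corresponding test function $\psi$ supported away from $\phi(E)$, which vanishes. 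A clean way to organize this is to note that $f\in\Lip(\ve)\subset W^{1,1}_{\rm loc}$-type regularity is enough to make $\overline\partial f$ a genuine (locally integrable, or at least order-zero) distribution, and then the identity $f'(\infty)=\langle\overline\partial f,1\rangle$ follows from the generalized Cauchy formula exactly as in the definition of $\gamma_{h,q}$.
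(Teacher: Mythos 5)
Your proposal is essentially identical to the paper's proof: pull back a near-extremal function for $\gamma_{h_0,q}(\phi(E))$ via $\phi$, invoke Lemma~\ref{PVMO} to control the $\Lip(\ve)$ norm, note analyticity on $\C\setminus E$ since $\phi$ is conformal there, and use $\phi'(\infty)=1$ (principality) to equate the derivatives at infinity. The extra paragraph on distributional bookkeeping is harmless padding rather than a deviation from the paper's argument.
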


\begin{proof}
Consider $f\in\Lip^q(\varepsilon_0)$ which is analytic in $\C\setminus\phi(E)$, 
$\|f\|_{\Lip^q(\varepsilon_0)}\leq 1$ and $f(\infty)
=0$. Set $g=f\circ\phi$. 
Then $g$ is analytic on $\C\setminus E$ and, by Lemma \ref{PVMO}, 
$g\in\Lip^1(\varepsilon)$ and $\|g\|_{\Lip^1(\varepsilon)}\leq C(K)\,\|f\|_{\Lip^q(\varepsilon_0)}\leq C(K)$. 
So, we have $|g'(\infty)|\leq C(K)\,\gamma_{h,1}(E)$. Moreover, since $\phi$ is
principal, $\phi'(\infty)=1$ and so
$$|g'(\infty)|=|f'(\infty)|\,|\phi'(\infty)|=|f'(\infty)|.$$
Consequently $\gamma_{h_0,q}(\phi(E))\leq C(K)\,\gamma_{h,1}(E)$.
\end{proof}

\bigskip


\subsection{Distortion of $h$-contents}

From Lemmas \ref{CH} and \ref{VMO} we get:

\begin{lem}\label{lemkpetit}
Let $E$ be a compact set, and $\phi:\C\rightarrow\C$ a principal $K$-quasiconformal mapping, with $K<2$, conformal on $\C\setminus E$. Given $\ve_0\in\cG_2$,
define
$$\ve(x,r) = \ve_0(\phi^{-1}(B(x,r))$$
and $h(x,r)=r\,\ve(x,r)$. We have
$$M^{h_0}(E)\leq C\,M^{h}(\phi(E)).$$
\end{lem}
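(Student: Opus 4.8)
The goal is to transfer the estimate $\gamma_{h_0,q}(\phi(E)) \leq C\,\gamma_{h,1}(E)$ — which would follow from Lemma~\ref{VMO} applied to $\phi^{-1}$ — into a statement about $h$-contents via the comparability of $\gamma_{h,q}$ and $M^h$ from Lemma~\ref{CH}. The natural move is to apply Lemma~\ref{VMO} with the roles of $E$ and $\phi(E)$ interchanged: since $\phi:\C\to\C$ is a principal $K$-quasiconformal mapping conformal on $\C\setminus E$, its inverse $\psi:=\phi^{-1}$ is also a principal $K$-quasiconformal map (principality and the $K$-bound are preserved under inversion), and $\psi$ is conformal on $\C\setminus\phi(E)$ because $\phi$ is conformal on $\C\setminus E$. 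Applying Lemma~\ref{VMO} with the compact set $\phi(E)$, the map $\psi$, and the gauge $\ve_0\in\cG_2\subset\cG_1$ gives, for $q>K$,
$$\gamma_{h_0,q}(\psi(\phi(E))) = \gamma_{h_0,q}(E) \leq C\,\gamma_{h,1}(\phi(E)),$$
where $h$ here is built from $\ve(x,r)=\ve_0(\psi(B(x,r))) = \ve_0(\phi^{-1}(B(x,r)))$, exactly the $h$ in the statement of Lemma~\ref{lemkpetit}.

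\textbf{Chaining the capacity comparisons.} It remains to replace the two capacities $\gamma_{h_0,q}(E)$ and $\gamma_{h,1}(\phi(E))$ by the corresponding $h$-contents using Lemma~\ref{CH}. Since $K<2$ we may fix $q$ with $K<q<2$, so Lemma~\ref{CH} applies with this exponent. For the left-hand side: $\ve_0\in\cG_2\subset\cG_1$, so part~(b) of Lemma~\ref{CH} yields $M^{h_0}(E)\leq C(q)\,\gamma_{h_0,q}(E)$. For the right-hand side: part~(a) of Lemma~\ref{CH} (which only needs $\ve\in\cG_1$) gives $\gamma_{h,1}(\phi(E))\leq C\,M^h(\phi(E))$ — here we use that $1<q<2$ is admissible and that part~(a) holds for $q=1$ as well, or more simply that $\gamma_{h,1}(\phi(E)) \leq C\,M^h(\phi(E))$ is the $q=1$ case of (a). Stringing the three inequalities together,
$$M^{h_0}(E) \leq C(q)\,\gamma_{h_0,q}(E) \leq C(q,K)\,\gamma_{h,1}(\phi(E)) \leq C(q,K)\,M^h(\phi(E)),$$
which is the desired conclusion, with the constant depending only on $K$ (after fixing, say, $q=(K+2)/2$).

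\textbf{Points requiring care.} The one genuinely substantive point is checking that Lemma~\ref{VMO} is legitimately applicable to $\psi=\phi^{-1}$, i.e.\ that $\psi$ is a \emph{principal} $K$-quasiconformal map conformal off $\phi(E)$. The $K$-quasiconformality of $\psi$ is standard; conformality of $\psi$ on $\C\setminus\phi(E)$ is immediate since $\phi$ maps $\C\setminus E$ conformally onto $\C\setminus\phi(E)$; and principality ($\psi$ conformal near $\infty$ with $\psi(z)-z=O(1/|z|)$) follows from the corresponding property of $\phi$ via the expansion of the inverse of $z+O(1/|z|)$ near infinity. A second point is purely bookkeeping: one must make sure that the gauge function $h$ that appears in Lemma~\ref{VMO} when applied to $\psi$ — namely $r\,\ve_0(\psi(B(x,r)))$ — is exactly the $h$ named in the statement of Lemma~\ref{lemkpetit}, which it is since $\psi=\phi^{-1}$. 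Finally, to invoke Lemma~\ref{CH}(b) for $h_0$ one needs $\ve_0\in\cG_2$, which is given in the hypothesis; and for Lemma~\ref{CH}(a) applied to $\phi(E)$ one only needs $\ve\in\cG_1$, which holds because $\ve_0\in\cG_1$ and composition with the quasiconformal map $\psi$ preserves membership in $\cG_1$ (the same verification used implicitly in Lemma~\ref{VMO}). No serious obstacle arises beyond these verifications; the lemma is essentially a concatenation of the three preceding results once the inversion symmetry is set up correctly.
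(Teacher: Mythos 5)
Your proof is correct and follows essentially the same route as the paper's: fix $q$ with $K<q<2$, chain $M^{h_0}(E)\leq C\,\gamma_{h_0,q}(E)$ (Lemma \ref{CH}(b), using $\ve_0\in\cG_2$), then $\gamma_{h_0,q}(E)\leq C\,\gamma_{h,1}(\phi(E))$ (Lemma \ref{VMO} applied to $\phi^{-1}$), then $\gamma_{h,1}(\phi(E))\leq C\,M^h(\phi(E))$ (Lemma \ref{CH}(a) with $q=1$). Your explicit verification that $\phi^{-1}$ is principal, $K$-quasiconformal, and conformal off $\phi(E)$, and that the induced gauge lies in $\cG_1$, is exactly the bookkeeping the paper leaves implicit.
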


\begin{proof}Take $q$ such that $K<q<2$.
By Lemma \ref{CH}, we have $M^{h_0}(E)\leq C\gamma_{h_0,q}(E)$. By Lemma
\ref{VMO} (applied to $\phi^{-1}$), $\gamma_{h_0,q}(E)\leq C \gamma_{h,1}(\phi(E))$.
Finally, by
Lemma \ref{CH} again, $\gamma_{h,1}(\phi(E))\leq C M^{h}(\phi(E))$.
\end{proof}

Our next objective in this section is to extend Lemma \ref{lemkpetit} to the case $K\geq2$. 

\begin{lemma} \label{lemkgran}
Let $\ve:\cB\to[0,\infty)$ be a function from $\cG_1$, and set
$h(x,r)=r\,\ve(x,r)$. Suppose that for any principal $K_0$-quasiconformal mapping $\phi:\C\to\C$ 
conformal on $\C\setminus E$, with $E$ compact, and with $K_0\leq K$,
the function $\ve_\phi:\cB\to[0,\infty)$ defined by $\ve_\phi(B)=\ve(\phi^{-1}(B))$ 
is in $\cG_2$. Then,  
$$M^h(E)\leq C(K)\,M^{h_\phi}(\phi(E))$$
for any compact set $E\subset \C$, where $h_\phi(x,r)=r\,\ve_\phi(x,r)$.
\end{lemma}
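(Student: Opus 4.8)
The plan is to reduce the case $K\geq 2$ to the already-proved case $K<2$ (Lemma \ref{lemkpetit}) by factoring a principal $K$-quasiconformal map into a bounded number of quasiconformal maps, each of which is ``close enough'' to conformal that the small-$K$ argument applies to it. The standard device for this is the \emph{Stoilow-type factorization}: write $\phi = \phi_n\circ\phi_{n-1}\circ\cdots\circ\phi_1$, where each $\phi_i$ is principal and $K_0$-quasiconformal for some $K_0 = K^{1/n}<2$ once $n$ is chosen large (depending only on $K$). One obtains such a factorization by iteratively solving Beltrami equations: if $\mu_\phi$ is the Beltrami coefficient of $\phi$, split it multiplicatively into $n$ pieces each of norm comparable to $\|\mu_\phi\|_\infty^{1/n}$ so that each intermediate map has distortion at most $K^{1/n}$. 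A subtle point here is that each $\phi_i$ should be conformal off the image of $E$ under the preceding composition; since $\phi$ is conformal on $\C\setminus E$, all the Beltrami coefficients are supported on the appropriate forward images of $E$, so each $\phi_i$ is indeed principal and conformal outside a compact set which is the image of $E$.

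Next, I would set up the telescoping. Write $E_0 = E$ and $E_i = \phi_i(E_{i-1})$, so $E_n = \phi(E)$. For each $i$, define $\ve^{(i)}(B) := \ve_\phi\bigl((\phi_{i+1}\circ\cdots\circ\phi_n)^{-1}(B)\bigr)$ — i.e. pull the gauge $\ve_\phi$ back through the ``tail'' of the factorization — and set $h^{(i)}(x,r) = r\,\ve^{(i)}(x,r)$; note $\ve^{(n)} = \ve_\phi$ and $\ve^{(0)} = \ve$ (up to the identification $\ve_\phi = \ve\circ\phi^{-1}$ composed back). The hypothesis of the lemma says precisely that each such pullback of $\ve$ through a principal $K_0$-quasiconformal map (with $K_0\leq K$) lands in $\cG_2$; I need to know that this persists under the intermediate pullbacks appearing in the chain, which follows because each $\ve^{(i)}$ is itself of the form $\ve_\psi$ for $\psi = \phi_{i+1}\circ\cdots\circ\phi_n\circ\phi$ (a principal $K$-quasiconformal map conformal off $E$, hence in $\cG_2$ by hypothesis) — and more to the point, Lemma \ref{lemkpetit} only requires the relevant source gauge to be in $\cG_2$, which we arrange. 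Then applying Lemma \ref{lemkpetit} to the principal $K_0$-quasiconformal map $\phi_{i}$ (conformal on $\C\setminus E_{i-1}$), with source gauge $\ve^{(i)}\in\cG_2$, gives
$$M^{h^{(i-1)}}(E_{i-1}) \leq C(K_0)\, M^{h^{(i)}}(E_i),$$
and chaining these $n$ inequalities yields $M^h(E) = M^{h^{(0)}}(E_0) \leq C(K_0)^n\, M^{h^{(n)}}(E_n) = C(K)\,M^{h_\phi}(\phi(E))$, since $n$ and $K_0$ depend only on $K$.

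The main obstacle I anticipate is bookkeeping the $\cG_2$ membership along the chain: Lemma \ref{lemkpetit} needs its input gauge $\ve_0$ to be in $\cG_2$, not merely $\cG_1$, and a priori pulling a $\cG_2$ function back through a quasiconformal map only lands in $\cG_1$ for free (the $\cG_2$ summation condition \rf{eqsum4} requires the quantitative argument of Lemma \ref{lemtec5}, valid only for special gauges like $\ve_{\mu,a}$). This is exactly why the hypothesis of Lemma \ref{lemkgran} is stated the way it is — it \emph{postulates} that all pullbacks $\ve_\phi$ through principal $K_0$-quasiconformal maps (any $K_0\leq K$) conformal off $E$ lie in $\cG_2$. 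So the real work is to verify that every intermediate gauge $\ve^{(i)}$ arising in the factorization is literally of that form, i.e. a pullback of the \emph{original} $\ve$ through a single principal quasiconformal map of distortion $\leq K$ conformal off $E$; this holds because the composition of the factors is again such a map, and composing the two definitions shows $\ve^{(i)} = \ve\circ(\text{that map})^{-1}$ on balls. Once this identification is in place the rest is the clean telescoping above, with no further analytic input beyond Lemma \ref{lemkpetit} and the existence of the $K^{1/n}$-factorization.
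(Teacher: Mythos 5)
Your overall strategy is exactly the paper's: factor $\phi=\phi_n\circ\cdots\circ\phi_1$ into $K^{1/n}$-quasiconformal pieces with $K^{1/n}<2$, each principal and conformal off the running image of $E$, then chain Lemma~\ref{lemkpetit} through intermediate gauges, with the lemma's hypothesis supplying the needed $\cG_2$ membership because each intermediate gauge is a pullback of $\ve$ through a single principal $K_0$-quasiconformal map conformal off $E$ with $K_0\leq K$. That is the paper's proof, and your verbal description of why the $\cG_2$ hypothesis is exactly what makes the telescoping legal is the key observation.

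However, your explicit formulas are off in two places and, taken literally, would break the $\cG_2$ verification. First, the intermediate gauge should be the pullback of $\ve$ through the \emph{head} $H_i=\phi_i\circ\cdots\circ\phi_1$, i.e.\ $\ve^{(i)}(B)=\ve\bigl(H_i^{-1}(B)\bigr)$ (equivalently $\ve^{(i)}(B)=\ve_\phi\bigl(\phi_n\circ\cdots\circ\phi_{i+1}(B)\bigr)$, with no inverse on the tail); the formula you wrote, $\ve_\phi\bigl((\phi_{i+1}\circ\cdots\circ\phi_n)^{-1}(B)\bigr)$, gives $\ve\bigl(H_i^{-1}T_i^{-2}(B)\bigr)$ after unwinding $\ve_\phi=\ve\circ\phi^{-1}$, which has $\ve^{(0)}\neq\ve$, contradicting what you yourself assert a line later. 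Second, and more seriously, you claim $\ve^{(i)}=\ve_\psi$ with $\psi=\phi_{i+1}\circ\cdots\circ\phi_n\circ\phi$ ``a principal $K$-quasiconformal map.'' But that composition has distortion $K^{(n-i)/n}\cdot K=K^{(2n-i)/n}>K$ for $i<n$, so the lemma's hypothesis would not apply to it, and moreover it is not conformal off $E$. The correct $\psi$ is $H_i=\phi_i\circ\cdots\circ\phi_1$, which is $K^{i/n}$-quasiconformal (hence $\leq K$), principal, and conformal off $E$, so the hypothesis does apply. With those two corrections your proof coincides with the paper's.
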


\begin{proof}
We factorize $\phi$ so that $\phi = \phi_n\circ\cdots \circ\phi_1$, where
$\phi_i$ are $K^{1/n}$-quasiconformal mappings conformal on $\C\setminus \phi_{n-1}(E)$, with $n$ big enough so that $K^{1/n}<2$.
So we have
$$E = E_0 \stackrel{\phi_1}{\longrightarrow} E_1
\stackrel{\phi_2}{\longrightarrow} \ldots \stackrel{\phi_{n-1}}{\longrightarrow} E_{n-1} \stackrel{\phi_n}{\longrightarrow} E_n
=\phi(E).$$
By Lemma \ref{lemkpetit}, we have
$$M^{h}(E) = M^h(E_0)\leq C\,M^{h_1}(\phi_1(E_0)) = C\,M^{h_1}(E_1),$$
where $\ve_1(B) =\ve(\phi_1^{-1}(B))$ (notice that $\ve\in \cG_2$ by hypothesis).

Denote now $\ve_2 (B) = \ve_1(\phi_2^{-1}(B))=\ve(\phi_1^{-1}(\phi_2^{-1}(B)))$ and $h_2(x,r)=r\, \ve_2(x,r)$.
Since $\ve_1\in\cG_2$ by the hypotheses above, 
by Lemma \ref{lemkpetit} again, $$M^{h_1}(E_1)\leq C M^{h_2}(E_2).$$
Going on in this way, after $n$ steps we obtain
$$M^h(E_0)\leq C M^{h_1}(E_1)\leq  \cdots \leq
C M^{h_n}(E_n),$$
with $E=E_0$, \, $E_n= \phi(E)$,\, $h_n(x,r)=r\,\ve_n(x,r)$, and
$$\ve_n(B) = \ve(\phi_1^{-1}(\phi_2^{-1}(\cdots \phi_n^{-1}(E)))) \approx 
\ve(\phi^{-1}(B)) = \ve_\phi(B).$$ 
\end{proof}

\bigskip


\subsection{Proof of Theorem \ref{teocap} (b)}

Recall that $\mu$ is a Borel measure supported on $E$ such that  $\dot W_{1/p,p}^\mu(x)\leq 1$ for all $x\in\C$ and such that $\dot C_{1/p,p}(E)\approx\mu(E)$.
We know that $M^{h_{\mu,a}}(E)\geq C^{-1}\mu(E)$, with $h_{\mu,a}$ defined in \rf{defhx} and $a$ small enough. Set 
$$\ve(x,r) = \ve_{\mu,a}(\phi^{-1}(B(x,r)))$$
and $h(x,r)=r\,\ve(x,r)$. 
By Lemma \ref{lemtec5}, $\ve\circ\phi^{-1}\in\cG_2$ for any $K_0$-quasiconformal
mapping such that $K_0\leq K$. Then,
 by Lemma \ref{lemkgran} we have
$$M^{h_{\mu,a}}(E)\leq C\,M^h(\phi(E)).$$
 By Frostman's Lemma there exists
some measure $\nu$ supported on $\phi(E)$ such that $\nu(\phi(E))\geq C^{-1}
M^{h}(\phi(E))$ with $\nu(B)\leq {h}(B)$ for all balls $B$.  
Recall that
$$\int_0^\infty \ve_{\mu,a}(x,r)^{p'-1}\frac{dr}r\lesssim 1 \quad\mbox{ for all $x\in\C$,}$$
by \rf{ed**}. From Lemma \ref{lemcg} we deduce that this also holds with $\ve$ instead
of $\ve_{\mu,a}$, and thus
$$\int_0^\infty \biggl(\frac{\nu(B(x,r))}{r}\biggr)^{p'-1}
\,\frac{dr}r \leq \int_0^\infty \biggl(\frac{h(x,r)}{r}\biggr)^{p'-1}
\,\frac{dr}r \leq C.$$
In terms of Wolff's potentials, this is the same as saying that
$$\dot W^\nu_{1/p,p}(x)\leq C$$
for all $x\in\C$.
Therefore, 
$$\dot C_{1/p,p}(\phi(E)) \gtrsim \nu(\phi(E)) \gtrsim
M^{h}(\phi(E))
\gtrsim M^{h_{\mu,a}}(E)\gtrsim \mu(E)
\gtrsim \dot C_{1/p,p}(E).$$
\fiproof

\bigskip

\subsection{The main lemma on $h$-contents}

\begin{lemma}\label{mainlem}
  Let $\ve_0\in\cG_1$ and set $h_0(x,r)=r\,\ve_0(x,r)$.
  Suppose that for any $K_0$-quasiconformal mapping $\psi:\C\to\C$, with $K_0\leq K$, we have
  $\ve_0\circ\psi \in \cG_2$.
Let $E\subset B(0,1/2)$ be compact
and $\phi:\C\to\C$ a principal $K$-quasiconformal mapping,
conformal on $\C\setminus \bar \D$.
Denote 
\begin{equation}\label{defhed}
\ve(x,r):= \ve_0(\phi^{-1}(B(x,r)))^{2K/(K+1)},\qquad
h(x,r):= r^{2/(K+1)}\ve(x,r).
\end{equation}
Then we have
$$M^{h_0}(E) \leq C(K) \,M^h(\phi(E))^{(K+1)/2K}.$$
\end{lemma}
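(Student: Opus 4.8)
The plan is to reduce Lemma \ref{mainlem} to Lemma \ref{lemkgran} by exploiting the factorization of a $K$-quasiconformal map into two pieces, one that is conformal on $E$ and one that controls the distortion of the measure there. Concretely, write $\phi$ as a composition where the intermediate ``target'' set is controlled. By Astala's factorization (as used in \cite{ACMOU}), one can write $\phi = \phi_2 \circ \phi_1$, where $\phi_1$ is principal $K$-quasiconformal, conformal on $\C\setminus\bar\D$, with $\phi_1$ conformal on a neighbourhood of... — actually the cleaner route is: factor so that $\phi_1$ is the ``bad'' part supported near $E$ and $\phi_2$ is conformal on $\phi_1(E)$. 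The key quantitative input is Astala's distortion theorem: for a ball $B(x,r)$ with $E\subset B(0,1/2)$ and $\phi$ normalized as above, $\diam(\phi(B(x,r))) \lesssim \diam(\phi(B))^{?}$ — more precisely the area/diameter distortion gives $|\phi(B(x,r))| \lesssim r^{2/K}$ type bounds, which is exactly where the exponent $2K/(K+1)$ and $2/(K+1)$ in \eqref{defhed} come from.

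First I would set up the covering side. Given $\eta>0$, take a covering of $\phi(E)$ by balls $B_i = B(x_i, r_i)$ with $\sum_i h(B_i) \le M^h(\phi(E)) + \eta$. Pulling back, $\{\phi^{-1}(B_i)\}$ covers $E$; each $\phi^{-1}(B_i)$ sits inside a ball $\wt B_i$ with $\diam(\wt B_i) \approx \diam(\phi^{-1}(B_i))$. Then $M^{h_0}(E) \le \sum_i h_0(\wt B_i) = \sum_i \diam(\wt B_i)\, \ve_0(\wt B_i)$. Now $\ve_0(\wt B_i) \approx \ve_0(\phi^{-1}(B_i)) = \ve(B_i)^{(K+1)/2K}$ by the definition in \eqref{defhed} and the $\cG_1$ quasi-monotonicity of $\ve_0$. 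So the task becomes: bound $\sum_i \diam(\phi^{-1}(B_i))\, \ve(B_i)^{(K+1)/2K}$ by $C\big(\sum_i r_i^{2/(K+1)}\ve(B_i)\big)^{(K+1)/2K}$. By Hölder's inequality with exponents $\frac{2K}{K+1}$ and its conjugate $\frac{2K}{K-1}$, this follows once one shows $\sum_i \diam(\phi^{-1}(B_i))^{\frac{2K}{K-1}} \lesssim 1$, i.e. a uniform bound on the $\frac{2K}{K-1}$-summability of the diameters of the pullbacks of a packing. This is precisely Astala's area-distortion estimate: the pullbacks $\phi^{-1}(B_i)$ have bounded overlap (quasiconformal maps quasi-preserve disjointness up to fixed multiplicity) and lie in a bounded region, and $\sum |\phi^{-1}(B_i)| \lesssim 1$; interpolating this with $\diam(\phi^{-1}(B_i))\lesssim 1$ and using the sharp exponent in Astala's theorem gives the $\frac{2K}{K-1}$ bound. (Alternatively one invokes the higher-integrability of $J\phi^{-1}$, exactly as in Lemma \ref{PVMO}.)

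For the converse-direction bookkeeping and to handle the case $K\ge 2$, I would first prove the statement for $K<2$ directly via the chain Lemmas \ref{CH}–\ref{VMO} (the content $\gamma_{h,q}$ is subadditive and behaves well under the Hölder manipulation above), and then bootstrap to general $K$ by the same factorization trick as in Lemma \ref{lemkgran}: write $\phi=\phi_n\circ\cdots\circ\phi_1$ with each $\phi_j$ being $K^{1/n}$-quasiconformal, $n$ large so $K^{1/n}<2$; apply the $K<2$ case at each stage, tracking how the gauge $\ve_0$ transforms. The hypothesis that $\ve\circ\psi\in\cG_2$ for every $K_0\le K$ is exactly what guarantees the intermediate gauges stay in the right class so the chain closes, and the exponent $2K/(K+1) = \prod (\text{local exponents})$ composes correctly across the $n$ steps (the local exponent at a $K^{1/n}$-step is $2K^{1/n}/(K^{1/n}+1)$, and the product telescopes to the stated exponent).

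The main obstacle I expect is the sharp Hölder/summability step: getting the exponent $\frac{2K}{K-1}$ for $\sum_i \diam(\phi^{-1}(B_i))^{\frac{2K}{K-1}}$ is where Astala's optimal distortion result is essential and where a naive bound would lose constants depending on the covering. One must be careful that the packing $\{B_i\}$ of $\phi(E)$ can be taken with controlled overlap (or passed to such by a Besicovitch/Vitali selection) so that the pullbacks have the bounded-overlap property needed to sum their areas; and one must verify that passing from $\diam(\phi^{-1}(B_i))$ to an enclosing ball $\wt B_i$ does not disturb the $\cG_1$-comparability $\ve_0(\wt B_i)\approx\ve_0(\phi^{-1}(B_i))$, which uses the geometric (round-ball-to-round-ball up to bounded eccentricity) behaviour of quasiconformal maps. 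Everything else is the now-standard content/capacity dictionary established in the preceding lemmas.
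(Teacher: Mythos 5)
Your proposal captures the right raw materials (cover $\phi(E)$, pull back, Jacobian/area control, two Hölder applications, factorization) but it has two genuine gaps that the paper's proof is specifically engineered to close, plus one arithmetic slip.

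First, the Hölder bookkeeping as written does not produce the target quantity. With exponents $\tfrac{2K}{K+1}$ and $\tfrac{2K}{K-1}$ applied to $\sum_i \diam(\phi^{-1}(B_i))\,\ve(B_i)^{(K+1)/2K}$ you get
$\bigl(\sum_i\diam(\phi^{-1}(B_i))^{2K/(K-1)}\bigr)^{(K-1)/2K}\bigl(\sum_i\ve(B_i)\bigr)^{(K+1)/2K}$, whereas the target is
$\bigl(\sum_i r_i^{2/(K+1)}\ve(B_i)\bigr)^{(K+1)/2K}$ — the factor $r_i^{2/(K+1)}$ is missing, and since $r_i\le 1$ one cannot insert it for free. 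Repairing this by weighting $\diam(\phi^{-1}(B_i))$ with $r_i^{-1/K}$ turns the required summability into $\sum_i\int_{B_i}(J\phi^{-1})^{K/(K-1)}\,dm\lesssim 1$, i.e.\ one needs the \emph{endpoint} exponent $K/(K-1)$ for $J\phi^{-1}$. That is exactly what fails for a generic principal $K$-quasiconformal map: Astala's theorem gives $J\phi^{-1}\in L^p_{\rm loc}$ for every $p<K/(K-1)$, but the borderline $p=K/(K-1)$ is in general not attained, so ``interpolating with $\diam\lesssim 1$'' cannot recover it.

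The paper's device that makes the endpoint available is a factorization $\phi=\phi_2\circ\phi_1$ \emph{adapted to the chosen covering}: letting $\Omega=\bigcup_i D_i$ be a neighbourhood of $E$ built from the pullbacks $\phi^{-1}(B_i)$, one takes $\phi_1$ principal and conformal on $\C\setminus\overline\Omega$, and $\phi_2$ principal and conformal on $\phi_1(\Omega)\cup(\C\setminus\D)$. The first leg, $M^{h_0}(E)\le M^{h_0}(\Omega)\le C\,M^{\wt h}(\phi_1(\Omega))$, is a direct appeal to Lemma \ref{lemkgran} (a content-invariance statement coming from the $\gamma_{h,q}$ capacity chain; no Jacobian integrability is used there). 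The second leg then only needs $J(\phi_2^{-1})\in L^{K/(K-1)}(\phi(\Omega))$, which \emph{does} hold precisely because $\phi_2$ is conformal on $\phi_1(\Omega)$ — this is the improved borderline integrability result of \cite[Lemma 5.2]{astalanesi}, and it is the crux. Your proposal treats the factorization as an optional ``bootstrap for $K\ge 2$,'' but it is the mechanism that produces the sharp exponent for every $K>1$; Lemma \ref{lemkgran} is invoked once, for all $K$, not step-by-step.

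Finally, the claimed telescoping of exponents across an $n$-fold factorization is false: for $K=4$, $n=2$, the product $\prod_{j=1}^2\frac{2K^{1/2}}{K^{1/2}+1}=\bigl(\tfrac{4}{3}\bigr)^2=\tfrac{16}{9}$, while $\tfrac{2K}{K+1}=\tfrac{8}{5}$; and as $n\to\infty$ the product $\bigl(\tfrac{K^{1/n}+1}{2K^{1/n}}\bigr)^n\to K^{-1/2}\ne\tfrac{K+1}{2K}$. The exponent $(K+1)/2K$ does not arise from composing $n$ local exponents; it arises a single time from the Jacobian estimate for $\phi_2$.
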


\begin{proof}
Consider an arbitrary covering $\phi(E)\subset\bigcup_i B_i$ by a finite number of balls $B_i:=B(x_i,t_i)$
(recall that $\phi(E)$ is compact).
For each $i$, take also a ball $D_i$ centered at $\phi^{-1}(x_i)$ which contains
$\phi^{-1}(B_i)$ and which has comparable diameter.

 We denote 
$\Omega = \bigcup_i D_i$. Notice that $E\subset \Omega$. Then we consider the decomposition
$\phi=\phi_2\circ\phi_1$, where $\phi_1$, $\phi_2$ are principal $K$-quasiconformal
mappings. Moreover, we require $\phi_1$ to be conformal outside $\overline{\Omega}$ and $\phi_2$ conformal 
in $\phi_1(\Omega) \cup (\C
\setminus \D)$.

By Lemma \ref{lemkgran}, we have
$$M^{h_0}(E)\leq M^{h_0}(\Omega)\leq C\,M^{\wt h}(\phi_1(\Omega)),$$
with 
$$\wt h(x,r)= r\,\wt\ve(x,r):= r\,\ve_0(\phi_1^{-1}(B(x,r))).$$

Now we will estimate
$M^{\wt h}(\phi_1(\Omega))$ in terms of $M^h(\phi(E))$. For each $i$,
let $\wt D_i$ be a ball centered at $\phi_2^{-1}(x_i)$ containing $\phi_1(D_i)$
and such that $\diam(\wt D_i)\approx \diam(\phi_1(D_i))$. Notice that we also have
\begin{equation}\label{eqsa1}
B_i\subset \phi_2(\wt D_i)\qquad\mbox{and}\qquad \diam(B_i)\approx \diam(\phi_2(\wt D_i)).
\end{equation}
By Vitali's covering lemma there exists a subfamily of disjoint balls $\wt D_j$, $j\in
J$, 
such that $\phi_1(E)\subset \cup_j 5\wt D_j$. Applying H\"older's inequality twice (once for the intergral and once for the sum),
the fact that $J(\phi_2^{-1})\in L^{K/(K-1)}(\phi(\Omega))$ because of the improved borderline integrability of the Jacobian $J(\phi_2^{-1})$ under the assumption that 
$\phi_2:\phi_1(\Omega)\to\phi(\Omega)$ is conformal (by \cite[Lemma 5.2]{astalanesi}), 
and \eqref{eqsa1} (these last two facts yield the last inequality), we obtain
\begin{align*}
M^{\wt h}(\phi_1(\Omega)) &
\leq \sum_{j\in J} \wt h(5\wt D_j)\leq C \sum_{j\in J} \wt h(\wt D_j) = C \sum
_{j\in J}\diam(\phi_1(D_j))\, \wt \ve(\wt D_j)\\
& \leq C \sum_{j\in J} \biggl(\int_{\phi(D_j)} J(\phi_2^{-1})\,dm\biggr)^{1/2}\wt\ve(\wt D_j)\\
&\leq 
 C \sum_{j\in J} \biggl(\int_{\phi(D_j)} J(\phi_2^{-1})^{K/(K-1)}\,dm\biggr)^{(K-1)/2K}
 \diam(\phi(D_j))^{1/K} \,\wt\ve(\wt D_j)\\
 & \leq 
  C \biggl(\sum_{j\in J} \int_{\phi(D_j)} J(\phi_2^{-1})^{K/(K-1)}\,dm\biggr)^{(K-1)/2K}\\
  &\quad \times
  \biggl(\sum_{j\in J}
 \diam(\phi(D_j))^{2/(K+1)} \,\wt\ve(\wt D_j)^{2K/(K+1)}\biggr)^{(K+1)/2K}\\
& \leq C \biggl(\sum_{j\in J}
 \diam(B_j)^{2/(K+1)} \,\wt\ve(\wt D_j)^{2K/(K+1)}\biggr)^{(K+1)/2K} \ .
\end{align*}
Notice now that
$$\wt\ve(\wt D_j) = \ve_0(\phi_1^{-1}(\wt D_j))\approx\ve_0(D_j)\approx\ve_0(\phi^{-1}(B_j)).$$
Recalling that 
$$\ve(x,t):= \ve_0(\phi^{-1}(B(x,t)))^{2K/(K+1)}\quad \mbox{ and }\quad h(x,t)= t^{2/(K+1)}\ve(x,t),$$
 we deduce
$$
M^{h_0}(E)\leq C M^{\wt h}(\phi_1(\Omega))  \leq C
\biggl(\sum_{j\in J} h(B_j)\biggr)^{(K+1)/2K}.
$$
If we take the infimum over all coverings of $\phi(E)$ by balls $B_j$, 
then we get
$$M^{h_0}(E)\leq C M^h(\phi(E))^{\frac{K+1}{2K}}.$$
\end{proof}

\bigskip


\subsection{Proof of Theorem \ref{teocap} (a)}

By standard methods (see e.g. \cite{Lacey-Sawyer-Uriarte}, p.289, proof of lemma 2.1), we may assume that $\phi$ is a principal quasiconformal mapping,
conformal on $\C\setminus \bar \D$,
and that $E\subset B(0,1/2)=:\frac12 B$ (and so $\diam(\phi(B))\approx 1$).

Let 
$\mu$ be a Borel measure supported on $E$ such that  $\dot W_{1/p,p}^\mu(x)\leq 1$ for all $x\in\C$ and such that $\dot C_{1/p,p}(E)\approx\mu(E)$.
We know that $M^{h_{\mu,a}}(E)\geq C^{-1}\mu(E)$. If $0<a<1$ is small enough,
then $\ve_0:=\ve_{\mu,a}$ satisfies the assumptions of Lemma \ref{mainlem}, and so
$$M^{h_{\mu,a}}(E) \leq C(K) M^h(\phi(E))^{(K+1)/2K},$$
with $h$ given by \eqref{defhed} (replacing $\ve_0$ there by $\ve_{\mu,a}$). 
By the definition of $\ve$ and Lemma~\ref{lemcg},
\begin{align*}
\int_0^\infty \ve(x,r)^{\frac{(p'-1)(K+1)}{2K}}
\,\frac{dr}r 
& = \int_0^\infty\ve_{\mu,a}(\phi^{-1}(B(x,r)))^{p'-1} \,\frac{dr}r \\& \leq C
\int_0^\infty\ve_{\mu,a}(\phi^{-1}(x),r)^{p'-1} \,\frac{dr}r
\leq C
\end{align*}
for all $x\in\C$.

Now we apply Frostman's lemma again, and we deduce that there exists
some measure $\nu$ supported on $\phi(E)$ such that $\nu(\phi(E))\geq C^{-1}
M^h(\phi(E))$ with $\nu(B)\leq h(B)$ for all balls $B$. 
So we have
$$\int_0^\infty \biggl(\frac{\nu(B(x,r))}{r^{\frac2{K+1}}}\biggr)^{\frac{(p'-1)(K+1)}{2K}}
\,\frac{dr}r \leq C$$
for all $x\in\C$.
In terms of Wolff's potentials, this is the same as saying that
$$\dot W^\nu_{\alpha,q}(x)\leq C$$
for all $x\in\C$, with
$$\alpha= \frac{2K}{2Kp-K+1},\qquad q=\frac{2Kp-K+1}{K+1}.$$
Therefore, 
$$\dot C_{\alpha,q}(\phi(E)) \gtrsim \nu(\phi(E)) \gtrsim \dot C_{1/p,p}(E)^{\frac{2K}{K+1}}.$$
\fiproof


\bigskip

\section{Strategy for the proof of Theorem \ref{distorgamma}}\label{secstrat}

Sections 3-7 are devoted to the proof of Theorem \ref{distorgamma}. 
An equivalent way of formulating this theorem consists in saying that if 
 $E\subset B(0,1/2)$ is compact and $\phi:\C\to\C$ a principal $K$-quasiconformal mapping,
 conformal on $\C\setminus \bar B(0,1)$,
 then 
\begin{equation}\label{eqdistor}
\dot C_{ \frac{2K}{2K+1},\frac{2K+1}{K+1}}(\phi(E))  \geq c^{-1} \gamma(E)^{\frac{2K}{K+1}},
\end{equation}
by appropriate normalizations.
To prove this result we will use the following tools:
\begin{itemize}
\item the characterization of analytic capacity in terms of
curvature in Theorem~A,
\item a corona type decomposition for measures with finite curvature analogous 
to the one used in \cite{tolsabilip} to study the behavior of analytic capacity under bilipschitz maps,
\item the main Lemma \ref{mainlem} on the distortion of $h$-contents under quasiconformal
maps,
\item improved quantitative estimates for the distortion of sub-arcs of chord arc curves.
\end{itemize}

\bigskip
Let us describe the arguments to prove \rf{eqdistor} in more detail.
Given, $E\subset\C$ with $\gamma(E)>0$, let $\mu$ be a measure supported on $E$
such that $\mu(E)\approx\gamma(E)$, $\mu(B(x,r))\leq r$ for all $x\in\C$, $r>0$, and $c^2_\mu(x)\leq 1$ for all $x\in\C$. 
As in the preceding section, for each $a>0$ we construct the measure $\H^{h_a}$ associated to $\mu$, with $h_a(x,t)=t\,\ve_a(x,t)$, where 
$$\ve_a(x,t) = \frac1t \int \psi_a\Bigl(\frac{y-x}t\Bigr)d\mu(y),$$
and $\psi_a$ is defined as in \rf{eqpsia}. To simplify the notation, now 
we will write $\ve_a$ and $h_a$ instead of $\ve_{\mu,a}$ and $h_{\mu,a}$.
The main Lemma \ref{mainlem} on the distortion
of $h$-contents tells us that
$$M^h(\phi(E))\gtrsim M^{h_a}(E)^{\frac{2K}{K+1}}\gtrsim \mu(E)^{\frac{2K}{K+1}},$$
where $h$ is the gauge function defined by
$$ h(x,t):= t^{2/(K+1)}\ve(x,t),\quad \ve(x,t):= \ve_a(\phi^{-1}(B(x,t)))^{2K/(K+1)},$$
with $a>0$ small enough.

By Frostman's Lemma we deduce that there exists a measure $\nu$ supported on $\phi(E)$ satisfying $\nu(\phi(E))\approx M^h(\phi(E))$ and $\nu(B(x,r))\leq h(x,t)$.
However, from the last estimate we cannot infer that
\begin{equation}\label{eqclau8}
\dot W^\nu_{ \frac{2K}{2K+1},\frac{2K+1}{K+1}}(x)\leq C\quad \mbox{ for all $x\in \C$},
\end{equation}
as in the proof of Theorem \ref{teocap}, because now the estimate
$$\dot W^\mu_{2/3,3/2}(x)\leq C\quad \mbox{ for all $x\in \C$}$$
may be false.

To obtain a measure $\nu$ supported on $\phi(E)$ satisfying \rf{eqclau8} we will use
the information on the curvature of $\mu$. Indeed, by \cite[Main Lemma 3.1]{tolsabilip}, 
there exists some collection of squares $\ttt(\mu)$ such that
\begin{equation}\label{eqcor90}
\sum_{Q\in \ttt(\mu)} \theta_\mu(Q)^2 \mu(Q) 
\leq C \biggl(\mu(E)+\int c^2_\mu(x)\,d\mu(x)\biggr)\leq C\mu(E),
\end{equation}
where $\theta_\mu(Q)=\mu(Q)/\ell(Q)$ (here $\ell(Q)$ stands for the side length of $Q$), and the last inequality is a consequence of the normalization $c^2_\mu(x)\leq 1$ for all $x\in\C$. 
For each square $Q\in \ttt(\mu)$ there exists some chord arc curve $\Gamma_Q$ (or a fixed finite number of chord arc curves) satisfying some precise properties. 
Roughly speaking, if a dyadic square $P$  intersects $E$ and $\ell(P)\leq \diam(E)$, then it belongs to some ``tree'' with ``root'' $Q\in\ttt(\mu)$ and $P$ is close to the curve $\Gamma_Q$. For more precise information, see \cite{tolsabilip}.

It is easy to check (but we will not need it for the proof, see however Lemma \ref{lempac0} and Section \ref{secconsnu} which contains the actual proof), that \rf{eqcor90} implies that
$$\sum_{Q\in \ttt(\mu)} \ve_a(Q)^2 \mu(Q) \leq C \mu(E).$$
By Tchebytchev, we infer that for all $x$ in a subset $E_0\subset E$ with $\mu(E_0)\geq \mu(E)/2$,
$$\sum_{Q\in \ttt(\mu):\,x\in Q} \ve_a(Q)^2 \leq C.$$
Arguing as in the preceding section (but again we will not need this for the proof, Section \ref{secconsnu} contains the actual proof), this implies that
\begin{equation}\label{eqsx1}
\sum_{Q\in \phi(\ttt(\mu)):x\in Q} \biggl(\frac{h(Q)}{\ell(Q)^{2/(K+1)}}\biggr)^{\frac{K+1}{K}} \leq C
\end{equation}
for all $x\in\phi(E_0)$.
By Frostman's Lemma, we deduce that there exists a measure $\nu$ supported on $\phi(E)$ with $\nu(2Q)\leq h(2Q)\lesssim h(Q)$ for all the squares $Q$, and so
$$\sum_{Q\in \phi(\ttt(\mu)):x\in Q} \biggl(\frac{\nu(2Q)}{\ell(Q)^{2/(K+1)}}\biggr)^{\frac{K+1}{K}} \leq C.$$
In this inequality, if instead of summing over all the squares $Q\in \phi(\ttt(\mu))$ containing $x$ we summed over all $Q\in\phi(\cD)$ containing $x$, then we would
obtain \rf{eqclau8}, and thus
$$
\dot C_{ \frac{2K}{2K+1},\frac{2K+1}{K+1}}(\phi(E))  \gtrsim \nu(\phi(E)) \gtrsim \mu(E_0)^{\frac{2K}{K+1}}\gtrsim \gamma(E)^{\frac{2K}{K+1}}.$$

In a sense, to extend the sum in \rf{eqsx1} from the squares in $\phi(\ttt)$ to the entire collection of $Q\in\phi(\cD)$,
we can use the geometric properties of the corona decomposition (i.e. 
different scales). To be able to use this information, we have to obtain improved distortion estimates for
subsets of chord arc curves, in a more quantitative way
 than the ones of \cite[Section 3]{ACMOU} for rectifiable sets. This is what we do in next section. 
 
To tell the truth, in the arguments above, when we apply Tchebytchev to obtain the
subset $E_0\subset E$, some of the delicate properties of the corona decomposition for $\mu$
are destroyed, and so we will follow a somewhat different approach, although similar
in spirit to the one outlined above. Because of this reason, we will need to obtain
a corona decomposition for $\mu$ slightly different to the one in \cite{tolsabilip}. 
We carry out this task in Section \ref{seccorona0}. The required measure $\nu$ is
constructed in Section \ref{secconsnu}. A direct application of Frostman Lemma is not
enough, and we will have to use a more sophisticated argument more adapted to the corona
decomposition. Finally, in Section \ref{secproof} we prove that the key estimate \rf{eqclau8} holds for $\nu$.


\bigskip
\section{Distortion of sub-arcs of chord arc curves} \label{secarcs}

Our arguments are inspired by the ones used in \cite{ACMOU} to obtain improved
distortion results for rectifiable sets. However, we need more precise
quantitative estimates. Some notation we will use is as follows. The length of an interval $I \subset \partial\D$ is denoted by $\ell(I)$. The side length of a square $Q$ is denoted by $\ell(Q)$. Given a bijective mapping $\phi:\C\to\C$ and a square $Q$, one defines the side length of $\phi(Q)$ as $\ell(\phi(Q)) := \diam(\phi(Q))$. Analogously, $\ell(\phi(I)) := \diam(\phi(I))$.

\begin{lemma}\label{lemdistca1}
Let $\ve>0$ and let $\phi:\C\to\C$ be a $(1+\ve)$-quasiconformal mapping which is conformal on $\D$, such that $\phi'(0)=1$. 
Denote $\alpha_0=1-c_0\ve^2$, where $c_0$ will be a sufficiently large constant.
Let $\{I_n\}_n\subset\partial\D$ be a collection of pairwise disjoint dyadic intervals. 
\begin{itemize}
 \item[(a)] If $c_0\geq 40$, we have
$$\sum_n \ell(\phi(I_n))^{\alpha_1} \leq C\Bigl(\sum_n \ell(I_n)^{\alpha_0}\Bigr)^b,$$
where $\alpha_1=1-\frac14 c_0 \ve^2$, and $b>0$ depends only on $c_0$; and $C$ 
on $c_0$ and~$\ve$.
\item[(b)] If 
$$\sum_n \ell(I_n)^{\alpha_0}\geq \delta,$$
then
$$\sum_n \ell(\phi(I_n))^{\alpha_2}\geq\delta',$$
where $\alpha_2=1-(2c_0+10)\ve^2$ and $\delta'>0$ depends on $\delta, c_0, \ve$.
\end{itemize}
\end{lemma}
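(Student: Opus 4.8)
The overall strategy is to transfer the problem from the circle $\partial\D$ to the real line by the (conformal) reparametrization $\theta\mapsto\phi(e^{i\theta})$ — since $\phi$ is conformal on $\D$ and hence on a neighbourhood of $\partial\D$ — and then to invoke Astala-type distortion for the boundary values. The key quantitative input is that a $(1+\ve)$-quasiconformal map conformal on $\D$ distorts the harmonic measure / arclength of a boundary arc $I$ only mildly: one has $\ell(\phi(I))\approx \diam(\phi(I))$ because $\phi(\partial\D)$ is a chord-arc curve with constant $\to 1$ as $\ve\to 0$, and the distortion exponent of $\ell(I)$ under $\phi$ on the boundary is governed by the Astala exponent $1/(1+k)$ with $k=\ve/(2+\ve)\approx\ve/2$, so that $\ell(\phi(I))\lesssim \ell(I)^{1-c\ve^2}$ up to a multiplicative constant, and similarly for the lower bound. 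This is exactly the ``improved distortion'' philosophy of \cite[Section 3]{ACMOU}, but here I must keep explicit track of the exponents $\alpha_0=1-c_0\ve^2$, $\alpha_1=1-\tfrac12 c_0\ve^2$, $\alpha_2=1-(2c_0+2)\ve^2$ and produce a power-type inequality rather than an asymptotic one.

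For part (a): I would first normalize, covering $\partial\D$ by a bounded number of arcs on each of which $\phi$ has controlled conformal distortion, so it suffices to treat intervals $I_n$ inside a fixed arc. Then I would use the chord-arc property of $\phi(\partial\D)$ together with a Koebe-type estimate to get $\ell(\phi(I_n))\lesssim \diam(\phi(I_n)) \lesssim \ell(I_n)^{\beta}$ pointwise, for an exponent $\beta$ slightly below $1$ depending on $\ve$ through the quasiconformal dilatation; more precisely one wants $\beta$ with $\alpha_1=\beta\,\alpha_0/b$-compatible bookkeeping. The passage from the pointwise bound to the sum uses the elementary inequality $\sum_n x_n^{s} \le \bigl(\sum_n x_n^{t}\bigr)^{s/t}$ valid for $0<t\le s$ when all $x_n\le 1$ (which we may assume after rescaling so that $\sum \ell(I_n)\le 1$), and this is where the hypothesis $c_0\ge 20$ enters: it guarantees that the ratio of exponents is large enough (bounded away from $1$) that the exponent $b=s/t$ in the conclusion is a genuine fixed power, independent of $\ve$, rather than degenerating to $1$ as $\ve\to0$. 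I would compute $b$ explicitly in terms of $c_0$ (something like $b=\alpha_1/\alpha_0\cdot(\text{distortion exponent})^{-1}$, bounded away from $1$ precisely when $c_0$ is large) and absorb the $\ve$-dependence into $C$.

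For part (b): this is the reverse direction and I would apply part (a)-type reasoning to $\phi^{-1}$, which is also $(1+\ve)$-quasiconformal and conformal on $\phi(\D)$, but now I only need a \emph{lower} bound on a single scale rather than a self-improving power inequality, so I can afford the cruder exponent $\alpha_2=1-(2c_0+2)\ve^2$. Concretely: from $\sum_n\ell(I_n)^{\alpha_0}\ge\delta$ and $\ell(I_n)\le 1$ one gets, by reverse Hölder bookkeeping, a lower bound $\sum_n\ell(I_n)^{\alpha_2}\ge\delta^{\alpha_2/\alpha_0}$ after noting $\alpha_2<\alpha_0$; then applying the distortion estimate $\ell(\phi(I_n))\gtrsim \ell(I_n)^{1+c\ve^2}$ (Astala distortion for $\phi^{-1}$ on the boundary, again via the chord-arc property) term by term, with the loss in the exponent absorbed into the gap $\alpha_0-\alpha_2\asymp\ve^2$, yields $\sum_n\ell(\phi(I_n))^{\alpha_2}\ge\delta'$ with $\delta'$ depending only on $\delta,c_0,\ve$. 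The main obstacle, and the delicate point I expect to spend the most care on, is the precise exponent accounting: one must check that the multiplicative constants coming from the chord-arc/Koebe estimates (which blow up as $\ve\to0$ and are therefore allowed to sit in $C$) never get raised to a power depending on $n$, and that the loss $\ve^2$ in each of the exponent gaps $\alpha_0\to\alpha_1$ and $\alpha_0\to\alpha_2$ exactly matches the quantitative Astala distortion exponent $k\asymp\ve/2$ squared — i.e. that the boundary distortion of a chord-arc curve improves the naive $1/(1+k)\approx 1-k$ bound to $1-ck^2$, which is the real content borrowed from the conformality on $\D$ and the $W^{1,2+}$ higher integrability of $\phi$.
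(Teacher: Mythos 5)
Your proposal hinges on a pointwise distortion estimate of the form $\ell(\phi(I))\lesssim \ell(I)^{1-c\ve^2}$ (and its reverse for part (b)), which you then combine with the elementary summation inequality $\sum_n x_n^s \leq \bigl(\sum_n x_n^t\bigr)^{s/t}$. That pointwise bound is false, and this is the crux of the matter. For a $(1+\ve)$-quasiconformal map that is conformal on $\D$, the best \emph{pointwise} bound on the boundary derivative is of order $|\phi'(z)| \lesssim (1-|z|)^{-c\ve}$ (think Mori/Koebe), which only yields $\ell(\phi(I))\lesssim\ell(I)^{1-c\ve}$ — an $\ve$-order loss, not $\ve^2$. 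The $\ve^2$ gain in the exponent, which is exactly what makes the exponents $\alpha_0,\alpha_1,\alpha_2$ consistent, is available \emph{only in an averaged sense}: the proof in the paper relies on the quantitative integral-means-spectrum bound $\beta(q)\leq 9\bigl(\tfrac{K-1}{K+1}\bigr)^2 q^2\approx 9\ve^2 q^2/4$ for the circular integral means $\int_{\partial\D}|\phi'(r e^{it})|^q\,dt$, together with a two-stage H\"older argument (first over $n$ at a fixed scale $j$, then over $j$), to produce the exponent $b=1/p'$ depending only on $c_0$. Your elementary inequality cannot substitute for this: if you plug in a pointwise exponent $\beta$, the resulting $b=\beta\alpha_1/\alpha_0$ tends to $1$ as $\ve\to 0$ and depends on $\ve$, contradicting the claim that $b$ depends only on $c_0$.

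The same gap infects your sketch of part (b). The paper does not apply (a) to $\phi^{-1}$; instead it isolates a ``bad'' set of intervals on which $|\phi'|$ is too small, controls its contribution by integral means of $1/|\phi'|$ (again an $L^p$ average, not a pointwise bound), and applies Koebe only on the complementary ``good'' intervals. A pointwise $\ell(\phi(I))\gtrsim\ell(I)^{1+c\ve^2}$ is false for the same reason as above, so the term-by-term argument you describe does not close. Your closing remark that the improvement ``$1-k$ to $1-ck^2$'' follows from ``the $W^{1,2+}$ higher integrability of $\phi$'' conflates the interior Jacobian integrability with the boundary integral means spectrum; they are related but the latter, in the sharp quantitative form used here, is the genuinely non-trivial input, and it is an $L^p$ statement, not a pointwise one.
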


\begin{proof}
{\bf (a)}
We drop the factor $2\pi$ in all calculations. Let $\cD_j$ be the collection of the dyadic intervals of length $2^{-j}$ of $\partial \D$, and set $\{I_n\}=\{I_n^j\}_{j,n}$, with $I_n^j\in \cD_j$. 
Consider Whitney squares $\{Q_n^j\}_{j,n}\subset \D$ so that $\ell(Q_n^j)\approx \ell(I_n^j)\approx\dist(Q_n^j,I_n^j)$. Denote by
$z_n^j$ the center of $Q_n^j$. By Koebe's distortion theorem, we have
\begin{equation}\label{eqfr65}
\ell(\phi(Q_n^j))\approx |\phi'(z_n^j)|\ell(Q_n^j) \approx |\phi'(z_n^j)|\,(1-|z_n^j|) \approx |\phi'(z)|\,(1-|z|),
\end{equation}
for all $z\in Q_ n^j$.
Denoting $\ell_j=\ell(Q_n^j)=2^{-j}$, $r_j= 1-\ell_j$, and $N_j = \# \{I_n^j\}_n$, using H\"older's inequality we get
\begin{align*}
\ell_j \sum_{n=1}^{N_j} \ell(\phi(I_n^j))^{\alpha_1} & \approx \ell_j \sum_{n=1}^{N_j} \ell(\phi(Q_n^j))^{\alpha_1}\lesssim \int_{\bigcup_n I_n^j}\ell(I_n^j)^{\alpha_1} \,|\phi'(r_j\,e^{it})|^{\alpha_1}\,dt\\
& = \ell_j^{\alpha_1} \int_{\bigcup_n I_n^j} |\phi'(r_j\,e^{it})|^{\alpha_1}\,dt\\
& \leq N_j^{1/p'}\,\ell_j^{\alpha_1+\frac1{p'}} \biggl[\int_{\bigcup_n I_n^j} |\phi'(r_j\,e^{it})|^{\alpha_1 p}\,dt\biggr]^{1/p}
\end{align*}
for $1<p<\infty$. Since $\phi$ is $(1+\ve)$-quasiconformal,  we have the following estimate for the integral means:
$$\int_{\bigcup_n I_n^j} |\phi'(r_j\,e^{it})|^q\,dt \leq \frac{C_\beta}{\ell_j^{\beta}},$$
with $\beta>\beta(q)$, for any $q \in \R$, by definition of $\beta(q)$ (see e.g Chapter 8 in \cite{pommerenke}). Recall also (e.g.\cite[page 182]{pommerenke}) that, for any $q \in \R$,
$$\beta(q) \leq 9\biggl(\frac{K-1}{K+1}\biggr)^2\,q^2.$$
So if we choose $\beta =9\ve^2q^2,$ we get
$$\sum_{n=1}^{N_j} \ell(\phi(I_n^j))^{\alpha_1} \lesssim N_j^{1/p'}\,\ell_j^{\alpha_1-1+\frac1{p'} -9\ve^2\alpha_1^2p}.$$
Replacing
$N_j = \dfrac{1}{\ell_j^{\alpha_0}}\sum_{n= 1}^{N_j}\ell(I_n^j)^{\alpha_0},$
we obtain
\begin{equation}\label{eq245}
\sum_{n=1}^{N_j} \ell(\phi(I_n^j))^{\alpha_1} \lesssim
\biggl(\sum_{n= 1}^{N_j}\ell(I_n^j)^{\alpha_0}\biggr)^{1/p'}
\ell_j^{\alpha_1-1+\frac1{p'} -\frac{\alpha_0}{p'} - 9\ve^2\alpha_1^2p}.
\end{equation}
Since $\alpha_1\leq1$, if we set $\alpha_0=1-c_0\ve^2$ and $\alpha_1=1-c_1\ve^2$, we get
$$\alpha_1-1+\frac1{p'} -\frac{\alpha_0}{p'} - 9\ve^2\alpha_1^2p \geq
\alpha_1-1+\frac1{p'} -\frac{\alpha_0}{p'} - 9\ve^2 p \geq \ve^2(\frac{c_0}{p'}-c_1-9p) =:a.$$
Since $c_0\geq 10$, we can choose $p\in(1,\infty)$ and $c_1>0$ such that
$$\frac{c_0}{p'}-c_1-9p>0,$$
and so $a>0$ (e.g. $p=2$, $c_1 = \frac{c_0}{4}$, and $c_0 \geq 40$). By \eqref{eq245} and H\"older's inequality we get
\begin{align*}
\sum_j \sum_n \ell(\phi(I^j_n))^{\alpha_1} \lesssim \sum_j
\biggl(\sum_{n= 1}^{N_j}\ell(I_n^j)^{\alpha_0}\biggr)^{1/p'}
\ell_j^a\leq \biggl(
\sum_j \sum_{n= 1}^{N_j}\ell(I_n^j)^{\alpha_0}\biggr)^{1/p'}
\biggl(\sum_j \ell_j^{ap}\biggr)^{1/p}.
\end{align*}
Since $a>0$, we have $\sum_j \ell_j^{ap}\leq C(c_0,\ve)$, and the statement (a) in the lemma follows.

\bigskip
\noindent {\bf (b)} We use the same notation as in (a). Let $\ell_{max}=\max_n \ell(I_n)$, and denote
$$Z_j = \{I_n^j: |\phi'(z_n^j)|\leq \ell(I_n^j)^\gamma\},$$
where $\gamma>0$ is some small constant to be chosen below. Then we have
$$\ell_j^{1-\gamma}\,\# Z_j \lesssim \int_{\bT} \frac{dt}{|\phi'(r_j e^{it})|}\leq \frac{C(\beta)}{\ell_j^{\beta}},$$
for $\beta>\beta(-1)$.
So we infer that
$$\sum_{I\in Z_j} \ell(I)^{\alpha_0} =
\ell_j^{\alpha_0}\,\# Z_j \leq C(\beta)\ell_j^{\gamma - \beta +\alpha_0-1}.
$$
Assuming that
\begin{equation}\label{eqhip4}
 \gamma - \beta +\alpha_0-1 >0,
\end{equation}
summing on $j\geq0$ and setting $Z=\bigcup_j Z_j$, we get
$$\sum_{I\in Z} \ell(I)^{\alpha_0} \leq C(\beta)\sum_{j\geq0} \ell_j^{\gamma - \beta +\alpha_0-1} \leq C(\beta,\gamma,\ve)\, \ell_{max}^{\gamma - \beta +\alpha_0-1}.$$
Therefore, if $\ell_{max}$ is small enough (depending on $\beta,\gamma,\delta,\ve$) we infer that
$\sum_{I\in Z}\ell(I)^{\alpha_0}\leq \frac\delta2,$
and so
$$\sum_{I\not\in Z}\ell(I)^{\alpha_0}\geq \frac\delta2.$$

For the intervals $I\not\in Z$ we use \eqref{eqfr65}, and we obtain
$$\ell(I)\approx \frac1{|\phi'(z_I)|}\,\ell(\phi(I)) \leq \frac{\ell(\phi(I))}{\ell(I)^\gamma},$$
where $z_I=z_n^j$ if $I= I_n^j$. We deduce
\begin{equation}\label{eqft56}
\frac\delta2 \leq \sum_{I\not\in Z} \ell(I)^{\alpha_0} \leq C \sum_{I\not\in Z}\ell(\phi(I))^{\alpha_0/(1+\gamma)}.
\end{equation}
Therefore, (b) holds if $\ell_{\max}$ is small enough and 
we choose $\beta$ and $\gamma$ such that \eqref{eqhip4} is true, that is, if 
\begin{equation}\label{eqhip43}
\gamma>\beta + c_0\ve^2
\end{equation}
Using the estimate (see e.g. \cite[page 182]{pommerenke})
$$\beta(-q) \leq 9\biggl(\frac{K-1}{K+1}\biggr)^2\,q^2,$$
we derive $\beta(-1)\leq 9 \ve^2$. Thus, \rf{eqhip43} holds if we choose
$$\gamma = (10 + c_0)\ve^2,$$
say. Then we have
$$\frac{\alpha_0}{1+\gamma} = \frac{1-c_0\ve^2}{1+(10+c_0)\ve^2} \geq (1-c_0\ve^2)(1- (10+c_0)\ve^2)
\geq 1- (10+2c_0)\ve^2.$$
 From \rf{eqft56} we deduce
$$C \delta \leq \sum_n \ell(\phi(I_n))^{1- (10+2c_0)\ve^2},$$
if $\ell_{max}$ is small enough, i.e. if $\ell_{max}\leq l_0$, where $l_0$ is some constant depending on $c_0,\ve, \delta$.

The case where $\ell_{max}$ is not small follows easily from the preceding estimates. Indeed, let $\cF$ be the family of dyadic intervals
obtained by splitting  each interval $I_n$ into $2^N$ pairwise disjoint dyadic intervals, with $N$ big enough so that each interval from
$\cF$ has length smaller than $l_0$. If we have
$$I_n = I_1' \cup \ldots \cup I_{2^N}',$$
with $I_j'\in \cF$, then we get
$$\ell(I_n)^{\alpha_0} \leq \sum_{j=1}^{2^N}\ell(I_j')^{\alpha_0},$$
and thus,
$\delta \leq \sum_{I\in \cF} \ell(I)^{\alpha_0}.$
So we infer that
$$C \delta \leq \sum_{I\in \cF} \ell(\phi(I))^{1- (10+2c_0)\ve^2}\leq 2^N \sum_n \ell(\phi(I_n))^{1- (10+2c_0)\ve^2},$$
with $N$ depending on $c_0,\ve, \delta$.
\end{proof}

\begin{lemma}\label{lemdistca2}
Let $\ve>0$ and let $\phi:\C\to\C$ be a $(1+\ve)$-quasiconformal mapping. 
Denote $\alpha_0=1-c_0\ve^2$.
Let $\{I_n\}_n\subset\partial\D$ be a collection of pairwise disjoint dyadic intervals 
such that
$$\sum_n \ell(I_n)^{\alpha_0}\geq \delta_0 .$$
Then we have
$$\sum_n \ell(\phi(I_n))^{\alpha}\geq\delta\,\diam(\phi(\D))^\alpha,$$
where $\alpha=1-C \ve^2$ (and $C$ depends on $c_0$), and $\delta>0$, which depends on $\delta_0, c_0, \ve$ (i.e. $\delta = \delta (\delta_0, c_0, \ve)$).
\end{lemma}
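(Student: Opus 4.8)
\textbf{Proof plan for Lemma \ref{lemdistca2}.}
The plan is to reduce the general $(1+\ve)$-quasiconformal map $\phi$ to the situation already treated in part (b) of Lemma \ref{lemdistca1}, where the map is conformal on $\D$ and normalized by $\phi'(0)=1$. First I would use the standard Stoilow-type factorization (decomposition of principal-type quasiconformal maps) together with an affine normalization: compose $\phi$ with suitable M\"obius/affine maps so that, after precomposition, the unit disc $\D$ becomes a region on which the new map is conformal, and after postcomposition the image has controlled diameter. More precisely, since $\phi$ is $(1+\ve)$-quasiconformal on $\C$ but a priori not conformal on $\D$, I would write $\phi = \phi_2 \circ \phi_1$, where $\phi_1$ carries the Beltrami coefficient of $\phi$ supported outside $\D$ (so $\phi_1$ is conformal on $\D$) and $\phi_2$ is conformal on $\phi_1(\D)$; both factors are $(1+\ve)$-quasiconformal. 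Alternatively, one absorbs the non-conformality on $\D$ into a bounded distortion of the dyadic intervals $I_n\subset\partial\D$, at the cost of replacing $\alpha_0$ by a slightly smaller exponent. Either way the key point is that only a \emph{bounded-in-$\ve$} loss in the exponent is incurred.

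The core step is then to apply Lemma \ref{lemdistca1}(b) to the conformal-on-$\D$ factor. That lemma says: if $\sum_n \ell(I_n)^{\alpha_0}\ge\delta$ then $\sum_n \ell(\psi(I_n))^{\alpha_2}\ge\delta'$ with $\alpha_2 = 1-(2c_0+2)\ve^2$, for $\psi$ conformal on $\D$ with $\psi'(0)=1$. After rescaling by $\diam(\psi(\D))$ — which by Koebe's theorem is comparable to $|\psi'(0)|$ up to absolute constants, hence comparable to $1$ under the normalization, or to the chosen normalizing constant in general — this yields the desired lower bound $\sum_n \ell(\psi(I_n))^{\alpha_2}\gtrsim \delta'\,\diam(\psi(\D))^{\alpha_2}$. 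Then I would push this estimate through the remaining factor: the remaining map is $(1+\ve)$-quasiconformal and conformal on the relevant image region, so one applies a Koebe/distortion argument (or Lemma \ref{lemdistca1}(a) and (b) again on that factor) to transfer the lower bound on $\sum \ell(\psi(I_n))^{\alpha_2}$ to a lower bound on $\sum \ell(\phi(I_n))^{\alpha}$ with $\alpha = 1 - C\ve^2$ for a possibly larger absolute constant $C$; the scale-invariant formulation (dividing by $\diam(\phi(\D))^\alpha$) makes all the normalization constants drop out.

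The main obstacle I expect is bookkeeping the exponents and the normalizations simultaneously: each factorization step and each application of Koebe's distortion theorem degrades the H\"older exponent by an amount of order $\ve^2$, and one must check that finitely many such steps (a number independent of $\ve$, for $\ve$ small) only produce a cumulative loss of the form $C\ve^2$, so that $\alpha=1-C\ve^2$ still has the right shape. One also has to be careful that the constant $\delta$ in the conclusion is allowed to depend on $\delta_0$, $c_0$, $\ve$ but not degenerate as the number of intervals grows — this is already handled by the quantitative nature of Lemma \ref{lemdistca1}(b) (where $\ell_{\max}$ is split into pieces of bounded size, with a multiplicative factor $2^N$ depending only on $c_0,\ve$), so the same device works here. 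Apart from that the argument is a routine composition of the two parts of Lemma \ref{lemdistca1} together with elementary conformal distortion estimates.
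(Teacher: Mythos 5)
Your overall strategy — factor $\phi$ into pieces that are conformal on the relevant disk and then apply Lemma \ref{lemdistca1} to each piece — is the same idea the paper uses, but the two-factor decomposition $\phi=\phi_2\circ\phi_1$ you describe does not quite work, and there is also an internal inconsistency in how you describe it. If $\phi_1$ carries the Beltrami coefficient of $\phi$ supported outside $\D$, then indeed $\phi_1$ is conformal on $\D$; but then $\phi_2=\phi\circ\phi_1^{-1}$ is conformal where $\mu_\phi=\mu_{\phi_1}$, which is the image of $\C\setminus\D$, so $\phi_2$ is conformal \emph{outside} $\phi_1(\D)$, not on $\phi_1(\D)$ as you write. (The alternative reading, $\mu_{\phi_1}=\mu_\phi\chi_\D$, would make $\phi_2$ conformal on $\phi_1(\D)$ but $\phi_1$ conformal \emph{outside} $\D$, contradicting your parenthetical.) Either way, after the first factor, the arcs you are trying to estimate lie on the quasicircle $\phi_1(\partial\D)$, not on $\partial\D$, and Lemma \ref{lemdistca1}(a)/(b) is formulated precisely for dyadic arcs of $\partial\D$ with a map conformal inside $\D$. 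There is no step that takes you back to $\partial\D$, so you cannot iterate the lemma on the second factor; ``a Koebe/distortion argument'' does not substitute, because it is exactly the integral-means estimates on $\partial\D$ in Lemma \ref{lemdistca1} that produce the $1-O(\ve^2)$ exponent, and those are not available on a general quasicircle.

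The paper resolves this by using a \emph{three}-factor decomposition $\phi=f\circ g^{-1}\circ h$, chosen so that the unit circle $\partial\D$ reappears in the middle: $h$ is principal and conformal on $\C\setminus\D$ (so $\diam(h(\D))\approx1$), $g$ is conformal on $\D$ with $g(\D)=h(\D)$, and $f$ is conformal on $\D$ with $f(\D)=\phi(\D)$, giving the chain $\D\xrightarrow{h}h(\D)\xrightarrow{g^{-1}}\D\xrightarrow{f}\phi(\D)$. One applies Lemma \ell{lemdistca1}(b) to $h$, then (a) to $g$ (which, since $g(g^{-1}\circ h(I_n))=h(I_n)$, inverts to give a lower bound on $\sum\ell(g^{-1}\circ h(I_n))^{\alpha''}$), and then (b) to $f$. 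It is exactly the insertion of $g^{-1}$ that returns the arcs to $\partial\D$ so that the lemma can be applied a second and third time. Your proposal is missing this device; without it, the plan stalls after the first application of Lemma \ref{lemdistca1}.
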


\begin{proof}
The lemma follows by combining (a) and (b) in the preceding lemma: arguing as
in \cite{ACMOU}, we write $\phi=f\circ g^{-1} \circ h$, so that $f,g,h$ are
$(1+C\ve)$-quasiconformal and moreover
$h$ is principal and conformal on $\C\setminus\D$ (and so $\diam(h(\D))\approx1$), $f,g$ are conformal on $\D$, and $f(\D) =
\phi(\D)$ and $g(\D)= h(\D)$. So
$$\D \stackrel{h}{\longrightarrow} h(\D)
\stackrel{g^{-1}}{\longrightarrow} \D \stackrel{f}{\longrightarrow}\phi(\D).$$
From (b) in Lemma \ref{lemdistca1} we infer that
$$\sum_n \ell(h(I_n))^{\alpha'}\geq\delta',$$
with $\alpha' = 1-C'\ve^2$. Here $C' = C'(c_0)$, and $\delta' = \delta' (\delta_0, c_0, \ve)$. By (a) in the same lemma we get
$$\sum_n \ell(g^{-1}\circ h(I_n))^{\alpha''}\geq\delta'',$$
with $\alpha'' = 1-C''\ve^2$. Here $C'' = C''(C')$, and $\delta'' = \delta'' (\delta', C', \ve)$. And by (b) again,
$$\sum_n \ell(f\circ g^{-1}\circ h(I_n))^{\alpha'''}\geq\delta'''\diam(\phi(\D))^{\alpha'''},$$
where $\alpha''' = 1-C'''\ve^2$. Here $C''' = C'''(C'')$, and $\delta''' = \delta''' (\delta'', C'', \ve)$.
\end{proof}

\begin{lemma} \label{lemdistca3}
Let $\phi:\C\to\C$ be a $K$-quasiconformal mapping. Let $\{I_n\}_n$ be a family of pairwise disjoint sub-arcs of $\partial \D$ such that
$$\sum_n \ell(I_n)\geq \delta,$$
with $\delta>0$.
Then,
$$\sum_n \ell(\phi(I_n))^\alpha \geq \delta'\,\diam(\phi(\D))^\alpha,$$
where $\delta'$ is a positive constant depending only on $K$, $\delta$; and $\alpha$ depends only on $K$ and verifies
$$\frac2{K+1}< \alpha<1.$$
\end{lemma}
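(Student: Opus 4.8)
The plan is to bootstrap from the ``small distortion'' result of Lemma \ref{lemdistca2}, which handles $(1+\ve)$-quasiconformal maps with exponent $\alpha = 1 - C\ve^2$, up to arbitrary $K$ by a composition/factorization argument. The point is that for $(1+\ve)$-quasiconformal maps the loss in the H\"older exponent is only of order $\ve^2$, which is summable when one splits a $K$-quasiconformal map into $n$ factors each $K^{1/n}$-quasiconformal: writing $1 + \ve_n = K^{1/n}$ one has $\ve_n \approx (\log K)/n$, so $n\,\ve_n^2 \approx (\log K)^2/n \to 0$ as $n \to \infty$. Thus by taking $n$ large enough we can make the total exponent loss as small as we like, in particular strictly less than $1 - \frac{2}{K+1} = \frac{K-1}{K+1}$, which is exactly what is needed for the conclusion $\frac{2}{K+1} < \alpha < 1$.

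First I would reduce to arcs that are \emph{dyadic} intervals of $\partial\D$: given the pairwise disjoint sub-arcs $I_n$ with $\sum_n \ell(I_n) \geq \delta$, each $I_n$ contains a dyadic sub-arc $I_n'$ with $\ell(I_n') \geq \frac14\ell(I_n)$ (and these remain pairwise disjoint), so $\sum_n \ell(I_n') \geq \delta/4$, and since $\phi(I_n') \subset \phi(I_n)$ we have $\ell(\phi(I_n'))^\alpha \leq \ell(\phi(I_n))^\alpha$ only in the wrong direction --- so instead I use $\sum_n \ell(\phi(I_n))^\alpha \geq \sum_n \ell(\phi(I_n'))^\alpha$ is false; rather one notes $\phi(I_n) \supset \phi(I_n')$ forces $\ell(\phi(I_n)) \geq \ell(\phi(I_n'))$ up to the constant implicit in comparing diameters of arcs and their images, hence $\sum_n \ell(\phi(I_n))^\alpha \gtrsim \sum_n \ell(\phi(I_n'))^\alpha$. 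This reduces the claim to a family of pairwise disjoint dyadic intervals. Next, since $\sum_n \ell(I_n') \geq \delta/4$ and each $\ell(I_n') \leq 1$, and since $t \mapsto t^{\alpha_0}$ with $\alpha_0 = 1 - c_0\ve_n^2 \leq 1$ satisfies $t^{\alpha_0} \geq t$ on $[0,1]$, we immediately get $\sum_n \ell(I_n')^{\alpha_0} \geq \delta/4 =: \delta_0$, which is the hypothesis of Lemma \ref{lemdistca2}.

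The core step is then to iterate Lemma \ref{lemdistca2} along the factorization $\phi = \phi_n \circ \cdots \circ \phi_1$ with each $\phi_i$ being $(1+\ve_n)$-quasiconformal, $\ve_n = K^{1/n} - 1$. Applying Lemma \ref{lemdistca2} at the first stage to the family $\{I_n'\}$ inside $\partial\D$ gives a lower bound $\sum_m \ell(\phi_1(I_m'))^{\alpha^{(1)}} \geq \delta^{(1)}\diam(\phi_1(\D))^{\alpha^{(1)}}$ with $\alpha^{(1)} = 1 - C\ve_n^2$. To continue one must re-normalize: the arcs $\phi_1(I_m')$ live on a circle (the image of $\partial\D$) which, after an affine map, can be taken to be $\partial\D$ again, and Lemma \ref{lemdistca2} is stated for a general $(1+\ve)$-quasiconformal map of $\C$, so it applies to the next factor $\phi_2$ post-composed with that affine normalization (affine maps are conformal and do not affect the estimates beyond harmless constants). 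After $n$ iterations, the exponent has dropped to $\alpha = 1 - nC\ve_n^2$ and the constant $\delta'$ depends on $\delta, K, n$ but, crucially, $n$ itself is chosen as a function of $K$ only. Since $nC\ve_n^2 \to 0$, we fix $n = n(K)$ large enough that $nC\ve_n^2 < \frac{K-1}{K+1}$, giving $\alpha > \frac{2}{K+1}$; and $\alpha < 1$ is automatic. Tracking the $\diam(\phi(\D))^\alpha$ factor through each stage (each stage multiplies by $\diam$ of the image domain, telescoping to $\diam(\phi(\D))^\alpha$ up to $K$-dependent constants, by Koebe-type distortion comparisons between consecutive images) yields the stated inequality.

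The main obstacle I anticipate is the \emph{bookkeeping of normalizations} between successive factors: Lemma \ref{lemdistca2} requires its map to be defined on all of $\C$ and, implicitly through its proof via Lemma \ref{lemdistca1}, the intervals to sit on the unit circle, whereas after one application the relevant arcs lie on $\phi_1(\partial\D)$, which is only a quasicircle. One must therefore insert, at each stage, an auxiliary conformal (in fact affine, using $\diam$ and a center) change of variables carrying the ambient circle back to $\partial\D$, verify that this does not degrade the quasiconformality constant of the next factor beyond $1 + C\ve_n$, and control the accumulated multiplicative constants and the $\diam$ factors --- exactly the kind of ``$\phi = f \circ g^{-1} \circ h$'' maneuver already used inside the proof of Lemma \ref{lemdistca2} itself, now applied one more level up. Once this normalization is handled cleanly, the exponent arithmetic $nC\ve_n^2 \to 0$ does the rest and the $\frac{2}{K+1} < \alpha < 1$ window is reached by choosing $n$ large.
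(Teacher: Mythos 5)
Your plan of iterating Lemma \ref{lemdistca2} along a factorization $\phi=\phi_n\circ\cdots\circ\phi_1$ with each factor $K^{1/n}$-quasiconformal fails because the exponent constant in Lemma \ref{lemdistca2} is not uniform across iterations. In that lemma the hypothesis is $\sum\ell(I_n)^{\alpha_0}\geq\delta_0$ with $\alpha_0=1-c_0\ve^2$ and the conclusion holds with $\alpha=1-C\ve^2$, but tracing the proof (one application of part (b), one of part (a), one of part (b) of Lemma \ref{lemdistca1}) shows $C$ depends on $c_0$, roughly $C\approx 8c_0+O(1)$: part (b) doubles the coefficient ($c_0\mapsto 2c_0+2$), part (a) doubles it again when used for preimages ($c_0\mapsto 2c_0$, since $\alpha_1=1-(c_0/2)\ve^2$), and part (b) doubles it once more. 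So if you feed the output exponent of stage $j$ as the input of stage $j+1$, the coefficient multiplies by a fixed factor $\approx 8$ at each stage, and after $n$ stages the cumulative loss is of size $8^n c_0\ve_n^2\approx 8^n(\log K)^2/n^2\to\infty$, not $n C\ve_n^2\to 0$ as you claim. The window $\frac2{K+1}<\alpha<1$ is therefore not reached by taking $n$ large; in fact the argument only closes for $K$ so close to $1$ that a single application of Lemma \ref{lemdistca2} already suffices. There is also a secondary issue with the re-normalization step: $\phi_1(\partial\D)$ is a quasicircle, and straightening it back to $\partial\D$ requires a further quasiconformal (not affine) change of variable, which also feeds into the constant bookkeeping.

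The paper's proof avoids the iteration entirely. It factors $\phi=\phi_2\circ\phi_1$ with $\phi_1$ a \emph{single} $(1+\ve)$-quasiconformal piece and $\phi_2$ carrying the remaining bulk $K_2=K/(1+\ve)$. Lemma \ref{lemdistca2} is applied once to $\phi_1$, giving $\sum\ell(\phi_1(I_n))^{\alpha_1}\geq\delta_1$ with $\alpha_1>\frac2{K_1+1}$ by a strict margin $\eta>0$. The bulk piece $\phi_2$ is then handled by a completely different, softer mechanism: one covers the arcs $\phi_1(I_n)$ by disjoint balls, writes $\ell(\phi_1(I_n))^{\alpha_1}$ in terms of the Jacobian of $\phi_2^{-1}$ over these balls via H\"older's inequality, and uses the strict $L^{K_2'/(K_2'-1)}$-integrability of the Jacobian for $K_2'>K_2$. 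The exponent arithmetic then shows $\alpha:=\alpha_1 p'/K_2'>\frac2{K+1}$ exactly because of the surplus $\eta$ from the first stage. This Jacobian-integrability step is what lets the argument cross from $(1+\ve)$-quasiconformality to general $K$ without any loss-compounding; you would need to find it, or a substitute of the same flavor, to complete the proof.
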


\begin{proof}
By appropriate standard arguments, we may assume that $\diam(\phi(\D))=1$.
 We factorize $\phi=\phi_2\circ\phi_1$ so that 
$\phi_i$, $i=1,2$ are $K_i$-quasiconformal, with $K_1= 1+\ve$ and
$K_2=K/K_1$, and so that $\diam(\phi_1(\D))=1$.
By quasi-symmetry we may assume that the intervals $I_n$ are dyadic. (See e.g. \cite[Chapter 3]{astalaiwaniecmartin} for the definition of quasisymmetry and its equivalence with quasiconformality. A consequence of this definition is that both $\delta$ and $\delta'$ in the statement only change by a constant factor which only depends on $K$ if each of the intervals $I_n$ is replaced by a ``nearby" dyadic interval which has comparable size to $I_n$.)
By Lemma \ref{lemdistca2} we have
$$\sum_n \ell(\phi_1(I_n))^{\alpha_1} \geq \delta_1,$$
with 
\begin{equation}\label{eqd59}
\eta:=\alpha_1 - \frac2{K_1+1} >0
\end{equation}
if $\ve$ is small enough.

To estimate the distortion of the arcs $\phi_1(I_n)$, we consider a family of pairwise
disjoint balls $B_n$ centered on $\phi_1(I_n)$ with
radii $r_n\approx \ell(\phi_1(I_n))$, and so that $\diam(\phi_2(B_n))\approx \ell(\phi(I_n))$. Take a constant $K_2'>K_2$ to be fixed below. By H\"older's inequality, we have
\begin{align*}
\sum_n \ell(\phi_1(I_n))^{\alpha_1} & \approx
\sum_n r_n^{\alpha_1}  \lesssim \sum_n \biggl(\int_{\phi_2(B_n)} J(\phi_2^{-1})\,dx\biggr)^{\alpha_1/2}\\
& \leq \sum_n \biggl(\int_{\phi_2(B_n)} J(\phi_2^{-1})^{\frac{K_2'}{K_2'-1}}\,dx\biggr)^{\frac{\alpha_1  (K_2'-1)}{2K_2'}} \ell(\phi(I_n))^{\frac{\alpha_1}{K_2'}}\\
& \leq \biggl(\sum_n \int_{\phi_2(B_n)} J(\phi_2^{-1})^{\frac{K_2'}{K_2'-1}}\,dx\biggr)^{1/p} \biggl(\sum_n \ell(\phi(I_n))^{\frac{\alpha_1 p'}{K_2'}}\biggr)^{1/p'},
\end{align*}
where we chose
\begin{equation}\label{eqd60}
\frac1p = \frac{\alpha_1  (K_2'-1)}{2K_2'}.
\end{equation}
Notice that
$K_2'/(K_2'-1)<K_2/(K_2-1)$ and then
$$\sum_n \int_{\phi_2(B_n)} J(\phi_2^{-1})^{\frac{K_2'}{K_2'-1}}\,dx\leq \int J(\phi_2^{-1})^{\frac{K_2'}{K_2'-1}}\,dx \leq C(K_2, K_2') <\infty.$$
The last estimate follows from our normalizations ($\diam(\phi_1(\D))=\diam(\phi(\D))=1$) and the higher integrability of quasiconformal maps (see e.g. equation (13.24) in \cite{astalaiwaniecmartin}), which is in turn a consequence of the celebrated area distortion theorem proved by K. Astala in \cite{astalaareadistortion}. So we get
$$\delta_1\leq \sum_n \ell(\phi_1(I_n))^{\alpha_1} \leq C \biggl(\sum_n \ell(\phi(I_n))^{\frac{\alpha_1 p'}{K_2'}}\biggr)^{1/p'}.$$

To show that the lemma holds in this particular case, it is enough to take
$$\alpha:= \frac{\alpha_1 p'}{K_2'},$$
and then it remains to check that $2/(K+1)< \alpha <1$.
To this end, observe that, by \rf{eqd59} and \rf{eqd60},
$$\frac1p > \frac{K_2'-1}{(K_1+1) K_2'},$$
and thus
$$\frac1{p'} < \frac{K_1 K_2' +1}{(K_1+1)K_2'}.$$
From this estimate and \rf{eqd59} we obtain
$$\frac{\alpha_1 p'}{K_2'} > \biggl(\frac2{K_1+1} + \eta\biggr) \frac{K_1+1}{K_1 K_2' + 1} = \frac{2}{K_1K_2' +1} + \eta\,\frac{K_1+1}{K_1K_2'+1}.$$
From this inequality (with given $K= K_1K_2$ and $\eta$) it is clear that if $K_2'$ is close enough to $K_2$ (with $K_2'>K_2$), then
$$\alpha= \frac{\alpha_1 p'}{K_2'} > \frac2{K+1}.$$

To show that $\alpha<1$, notice that \rf{eqd60} implies that
$$\frac1p < \frac{K_2'-1}{2K_2'},$$
and then one easily gets 
$$p'< \frac{2K_2'}{K_2'+1},$$
and thus
$$\alpha= \frac{\alpha_1 p'}{K_2'} < \frac{p'}{K_2'} < \frac{2}{K_2'+1}
<1,$$
since $K_2'>K_2\geq1$.
\end{proof}

\begin{remark}\label{RemarkQuantitativeCompressionOfLine} The preceding arguments show that, choosing a suitable 
$K_2'$, one gets
$$\alpha \geq \frac{2}{K +1} + \frac\eta2\,\frac{K_1+ 1}{K+1}\geq 
\frac{2}{K +1} + \frac\eta2\,\frac{1}{K+1}.$$ 
\end{remark}

\begin{lemma} \label{lemdistca4}
Let $\phi:\C\to\C$ be a principal $K$-quasiconformal mapping, and let $\Gamma\subset\C$ be a chord arc curve. 
Let $\{I_n\}_n$ be a family of pairwise disjoint subarcs of $\Gamma$ such that
$$\sum_n \ell(I_n)\geq \delta\,\diam(\Gamma),$$
with $\delta>0$.
If the chord arc constant $C_\Gamma$ is close enough to $1$, that is, $|C_\Gamma -1|\leq
\ve_0$ with $\ve_0=\ve_0(K)$, then
$$\sum_n \ell(\phi(I_n))^\alpha \geq \delta'\, \diam(\phi(\Gamma))^\alpha,$$
where $\delta'$ is a positive constant depending only on $K$, $\delta$, and the chord arc constant; and $\alpha$ depends only on $K$ and verifies
$$\frac2{K+1}< \alpha.$$
\end{lemma}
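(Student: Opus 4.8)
The plan is to reduce Lemma~\ref{lemdistca4} to the circular case settled in Lemma~\ref{lemdistca3}, by pre-composing $\phi$ with a bilipschitz change of variables that flattens $\Gamma$ and whose distortion is close to $1$ precisely because $C_\Gamma$ is close to $1$. Since the assertion is homogeneous, we may dilate $\Gamma$ at will (adjusting $\phi$ accordingly, replacing it by a suitable $\lambda^{-1}\phi(\lambda\,\cdot\,)$, which stays principal $K$-quasiconformal). A chord arc curve with constant near $1$ is nearly straight, hence, after such a dilation, bilipschitz equivalent with constant $L_0=1+c(C_\Gamma-1)$ to a short sub-arc $A\subset\partial\D$; the underlying near-isometric arc-length parametrization, combined with a M\"obius identification of an interval with a sub-arc of $\partial\D$ and Tukia's bilipschitz extension theorem, produces a bilipschitz homeomorphism $R:\C\to\C$ of constant still close to $1$ (depending only on $C_\Gamma$) with $R(A)=\Gamma$. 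In particular $R$ is $L_0^2$-quasiconformal.

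First I would transfer the hypothesis to $\partial\D$: setting $J_n:=R^{-1}(I_n)\subset A$ we get pairwise disjoint sub-arcs of $\partial\D$ with, since $R$ is $L_0$-bilipschitz, $\ell(J_n)\ge L_0^{-1}\ell(I_n)$ and $\diam A\le L_0\diam\Gamma$, so that $\sum_n\ell(J_n)\ge L_0^{-2}\delta\,\diam A=:\delta_1>0$, with $\delta_1$ depending on $\delta$ and $C_\Gamma$. Next, put $\psi:=\phi\circ R$, which is $K':=K L_0^2$-quasiconformal, and to which Lemma~\ref{lemdistca3} applies since that lemma does not require the map to be principal. As $\psi(J_n)=\phi(I_n)$ and $\phi(\Gamma)=\psi(A)\subset\psi(\partial\D)$, so that $\diam(\psi(\D))=\diam(\psi(\partial\D))\ge\diam(\phi(\Gamma))$, Lemma~\ref{lemdistca3} yields
$$\sum_n\ell(\phi(I_n))^{\alpha'}\ \ge\ \delta'\,\diam(\psi(\D))^{\alpha'}\ \ge\ \delta'\,\diam(\phi(\Gamma))^{\alpha'},$$
for some $\alpha'$ with $\tfrac{2}{K'+1}<\alpha'<1$ and $\delta'=\delta'(K',\delta_1)>0$, i.e.\ depending only on $K$, $\delta$ and $C_\Gamma$.

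It remains to upgrade the exponent from $>\tfrac{2}{K'+1}$ to $>\tfrac{2}{K+1}$, the value attached to the original constant $K$. For this I would use the remark following Lemma~\ref{lemdistca3}, which gives $\alpha'\ge\tfrac{2}{K'+1}+\tfrac{\eta}{2}\,\tfrac{1}{K'+1}$ for an internal parameter $\eta=\eta(K')>0$ that stays bounded below as $K'$ ranges over a neighbourhood of $K$. Since $K'=K L_0^2\to K$ as $C_\Gamma\to1$, one fixes this internal parameter first and then takes $\ve_0=\ve_0(K)$ small enough that $\alpha'>\tfrac{2}{K+1}$; declaring $\alpha$ to be any fixed number in $(\tfrac{2}{K+1},\alpha']$ depending only on $K$, and normalising $\diam(\phi(\Gamma))=1$ so that each $\ell(\phi(I_n))\le1$ and lowering the exponent only increases the sum, we obtain $\sum_n\ell(\phi(I_n))^{\alpha}\ge\sum_n\ell(\phi(I_n))^{\alpha'}\ge\delta'$, which is the claim.

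The main obstacle is the first step: constructing the flattening $R$ with distortion close to $1$ and quantifying how its bilipschitz constant depends on $C_\Gamma$. This is where $|C_\Gamma-1|\le\ve_0$ is genuinely used, and it rests on the standard but non-trivial facts that a nearly straight chord arc curve admits a near-isometric parametrization and that such a parametrization extends to a bilipschitz self-map of the plane without seriously worsening the constant. A second, subtler point is the exponent bookkeeping: because passing to $\psi=\phi\circ R$ inflates the quasiconformality constant from $K$ to $K'>K$, a naive appeal to Lemma~\ref{lemdistca3} lands below the critical threshold $\tfrac{2}{K+1}$, and recovering it requires both the quantitative improvement recorded in the remark after Lemma~\ref{lemdistca3} and the correct order in which the small parameters are chosen.
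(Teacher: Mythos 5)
Your proposal follows essentially the same route as the paper. The paper's own proof also pre-composes $\phi$ with a bilipschitz flattening of $\Gamma$ whose quasiconformality constant is close to $1$ (via V\"ais\"al\"a's bilipschitz extension theorem applied to a parametrization $f:J\to\Gamma$ from a segment, rather than going through a short sub-arc of $\partial\D$ and Tukia's theorem as you do), then applies Lemma~\ref{lemdistca3} to the composition to obtain an exponent exceeding $\frac{2}{KK_{\widetilde f}+1}$, and finally uses the leeway recorded in the Remark after Lemma~\ref{lemdistca3} to push the exponent above $\frac{2}{K+1}$ once $K_{\widetilde f}$ is close enough to $1$ — exactly the two-stage bookkeeping you describe.
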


Recall that a chord arc curve is the bilipschitz image of an interval. The chord arc constant is the bilipschitz constant (or rather the infimum over all the possible bilipschitz 
constants).

Notice that the above result can be understood as a quantitative version of the result of 
\cite{ACMOU} which asserts that 
if $F$ is rectifiable and of positive length, then $\dim(\phi(F))>2/(K+1)+ c(K)$, where $c(K)$ is some positive constant depending only on $K$.

\begin{proof}[Proof of Lemma \ref{lemdistca4}]
If $\Gamma$ is an arc of a circumference or a segment, then the result follows   
from Lemma \ref{lemdistca3} by appropriate normalization.

In the case of a general chord arc curve with small constant, we consider
a bilipschitz parametrization
$f:J\to\Gamma$, where $J$ is a segment with $\ell(J) = \diam(\Gamma)$, so 
that the 
 bilipschitz constant $C_f$ of $f$ is very close to $1$:
$$|C_f - 1|\leq c(K)\qquad\mbox{with $c(K) \to 0$ as $\ve_0(K)\to 0$.}$$
By a theorem of V\"{a}is\"{a}l\"{a} \cite{vaisala}, $f$ can be extended to a bilipschitz 
mapping $\widetilde f:\C\to\C$ with constant $C_{\widetilde f}$ depending on $C_f$ very close to $1$ too. In particular $\widetilde f$ is quasiconformal
with constant $K_{\widetilde f}\to 1$ as $\ve_0(K)\to 0$.

Using the auxiliary mapping $\phi_0 = \phi\circ f$, we deduce from Lemma \ref{lemdistca3} that
$$\sum_n \ell(\phi(I_n))^\alpha \geq \delta'\, \diam(\Gamma)^\alpha,$$
with $\alpha$ such that
$\alpha> \dfrac2{K K_{\widetilde f}+1}.$ By Remark \ref{RemarkQuantitativeCompressionOfLine}, 
for $K_{\widetilde f}$ close enough to $1$, we have
$$\alpha> \frac2{K+1}.$$ 
\end{proof}

\bigskip


\section{A corona type decomposition for measures with finite curvature and linear growth}\label{seccorona0}

Throughout all this section we suppose that $\mu$ is supported on $E\subset B(0,1/2)$, and satisfies
$$\mu(B(x,r))\leq r\quad \mbox{for all $x\in\C$, $r>0$; }\qquad\;\;\; c^2_\mu(x)\leq 1
\quad\mbox{for all $x\in\C$.}$$
 As explained in Section \ref{secstrat}, our objective is to construct a
corona type decomposition for $\mu$, which has some similarities with the one of \cite{tolsabilip}. This corona type decomposition will be used in Section \ref{secconsnu}
to find a measure $\nu$ supported on $\phi(E)$ with bounded potential
$\dot W_{ \frac{2K}{2K+1},\frac{2K+1}{K+1}}^\nu$.


\subsection{Additional notation and terminology}\label{subsecterm}

By a square we mean a square with sides parallel to
the axes. Moreover, we assume the squares to be half closed - half
open. The side length of a square $Q$ is denoted by $\ell(Q)$. If $\ell(Q)=2^{-n}$,
then we write $J(Q)=n$.
Given $a>0$, $aQ$ denotes the square concentric
with $Q$ with side length $a\ell(Q)$.
A square $Q\subset \C$ is called $4$-dyadic if it is of the form
$[j2^{-n},\, (j+4)2^{-n}) \times [k2^{-n},\, (k+4)2^{-n})$, with
$j,k,n\in \Z$. So a $4$-dyadic square with side length
$4\cdot2^{-n}$ is made up of $16$ dyadic squares with side length
$2^{-n}$. 

Given $a,b>1$, the square $Q$ is
$(a,b)$-doubling if $\mu(aQ)\leq b\mu(Q)$. If we don't want to
specify the constant $b$, we say that $Q$ is $a$-doubling.
If $h_a$ is the function defined in \rf{defhx},
we say that $Q$ is $(h_a,b)$-doubling
if
$$h_a(Q)\leq b\mu(Q),$$
which is equivalent to $\ve_a(Q)\leq b\theta_\mu(Q).$
Notice that if $Q$ is $(h_a,b)$-doubling, then, for all $c>1$ there exists some $d>0$ depending only on $a,b,c$ such that $Q$ is $(c,d)$-doubling. 

Given a bijective mapping $\phi:\C\to\C$ and a square $Q$,  one says that that $\phi(Q)$ is a $\phi$-square, and then
one defines its side length as $\ell(\phi(Q)) := \diam(Q)$. 
If $Q_0$ is a dyadic (or $4$-dyadic) square, we
say that $\phi(Q_0)$ is a dyadic (or $4$-dyadic)
$\phi$-square. If $Q= \phi(Q_0)$ is a $\phi$-square, we
denote $\lambda Q = \phi(\lambda Q_0)$, for
$\lambda>0$.  

 An Ahlfors regular curve is a curve
$\Gamma$ such that $\H^1(\Gamma\cap B(x,r))\leq C r$ for all
$x\in \Gamma$, $r>0$, and some fixed $C>0$. Recall that $\Gamma$ is a chord arc curve if it is a bilipschitz
image of an interval in $\R$. If the bilipschitz constant of the map
is $L$, we say that $\Gamma$ is an $L$-chord arc curve.

The total Menger curvature of $\mu$ is
$$c^2(\mu)= \int c^2_\mu(x)\,d\mu(x),$$
with $c^2_\mu(x)$ defined by \rf{eqdfcm}.
The curvature operator $K_\mu$ is
$$K_\mu(f)(x)=\int k_\mu(x,y) f(y) d\mu(y),\hspace{8mm}
f\in L^1_{loc}(\mu),\,x\in\C,$$ where $k_\mu(x,y)$ is the kernel
$$k_\mu(x,y)=\int \frac1{R(x,y,z)^2}\, d\mu(z),\hspace{8mm}
x,y\in\C.$$
 For $j\in\Z$, the truncated operators
$K_{\mu,j}$, $j\in\Z$, are defined as
$$K_{\mu,j} f(x) = \int_{|x-y|>2^{-j}}
k_\mu(x,y)\,f(y)\,d\mu(y),\hspace{8mm} f\in
L^1_{loc}(\mu),\,x\in\C.$$
Notice that
$c^2_\mu(x) = K_\mu(\chi_E)(x).$

\bigskip


\subsection{Properties of $(h_a,b)$-doubling squares}

\begin{remark}
Let $Q$ be a square and $x$ its center. For $N\geq 1$, we have
\begin{align*}
\ve_a(Q)  & \approx \frac1{\ell(Q)} \int \frac1{\left(\frac{|x-y|}{\ell(Q)}\right)^{1+a} + 1}\,
d\mu(y)\\ & \leq C \sum_{j=0}^N \frac1{\ell(Q)}\,\frac{\mu(2^jQ)}{2^{j(1+a)}} + \frac1{\ell(Q)}\int_{\C\setminus Q_N} \frac1{\left(\frac{|x-y|}{\ell(Q)}\right)^{1+a}}\,
d\mu(y),
\end{align*}
where $Q_N:=2^NQ$ and the constant $C$ depends on $a$ but on $N$. Since 
$$ \frac1{\ell(Q)}\int_{\C\setminus Q_N} \frac1{\left(\frac{|x-y|}{\ell(Q)}\right)^{1+a}}\,
d\mu(y) = \frac{2^{-aN}}{\ell(Q_N)}\int_{\C\setminus Q_N} \frac1{\left(\frac{|x-y|}{\ell(Q_N)}\right)^{1+a}}\,
d\mu(y) \leq C(a) 2^{-aN}\,\ve_a(Q_N),$$
we deduce
\begin{equation}\label{eqsum69}
\ve_a(Q) \leq C(a)\biggl( \sum_{j=0}^{N-1} 2^{-aj}\theta_\mu(2^jQ) + 2^{-aN}\,\ve_a(Q_N)\biggr).
\end{equation}
The converse inequality is also true, but we will not
need it.
\end{remark}
\bigskip

\begin{lemma}\label{lemnodob}
Given $a>0$, let $b>0$ be some big enough constant.
Let $Q$ be a square, and suppose that $2^{-j} Q$ is not $(h_a,b)$-doubling for 
$0\leq j \leq N$. Then,
\begin{equation}\label{eqd99}
\theta_\mu(2^{-j}Q)\leq 2^{-aj/2}\,\ve_a(Q)\quad \mbox{ for $0\leq j\leq N$,}
\end{equation}
and
\begin{equation}\label{eqd99.5}
\sum_{j=0}^N \ve_a(2^{-j} Q)^2 \leq C\ve_a(Q)^2,
\end{equation}
with $C$ independent of $N$.
\end{lemma}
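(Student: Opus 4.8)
\textbf{Proof plan for Lemma \ref{lemnodob}.}
The plan is to exploit the non-$(h_a,b)$-doubling hypothesis inductively, starting from $Q$ and working inward to $2^{-j}Q$. First I would unwind the definitions: $2^{-j}Q$ not $(h_a,b)$-doubling means $h_a(2^{-j}Q) > b\,\mu(2^{-j}Q)$, equivalently $\ve_a(2^{-j}Q) > b\,\theta_\mu(2^{-j}Q)$. The point is that if the density $\theta_\mu$ at scale $2^{-j}Q$ were comparable to $\ve_a$, then this inequality would force $b$ to be bounded, a contradiction once $b$ is chosen large. To make this quantitative, apply the pointwise estimate \rf{eqsum69} to the square $2^{-j}Q$, with a suitable choice of the number of dilations $N'$ needed to reach $Q$ (so $2^{N'}(2^{-j}Q) = 2^{j}\cdot 2^{-j}Q$ — wait, more precisely one uses $2^{-j}Q \subset Q$ and applies \rf{eqsum69} with $Q$ there replaced by $2^{-j}Q$ and $Q_N = Q$, i.e. $N = j$). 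This gives
\begin{equation*}
\ve_a(2^{-j}Q) \leq C(a)\Bigl(\sum_{i=0}^{j-1} 2^{-ai}\,\theta_\mu(2^{-(j-i)}Q) + 2^{-aj}\,\ve_a(Q)\Bigr).
\end{equation*}

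Now I would feed in the non-doubling hypothesis at each intermediate scale: $\theta_\mu(2^{-(j-i)}Q) \leq b^{-1}\ve_a(2^{-(j-i)}Q)$ for $0\le i\le j-1$. Setting $x_j := \ve_a(2^{-j}Q)$, this turns the display into the recursive inequality
\begin{equation*}
x_j \leq \frac{C(a)}{b}\sum_{i=0}^{j-1} 2^{-ai}\,x_{j-i} + C(a)\,2^{-aj}\,x_0.
\end{equation*}
Choosing $b$ large enough (depending only on $a$) so that $C(a)/b$ is small, a standard induction/Gronwall-type argument on this convolution recursion yields $x_j \leq C\,2^{-aj/2}\,x_0$, which (via $\theta_\mu(2^{-j}Q)\le b^{-1}x_j \le x_j \le C 2^{-aj/2}\ve_a(Q)$, reabsorbing the constant) is exactly \rf{eqd99}. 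Then \rf{eqd99.5} follows immediately by squaring and summing the geometric series $\sum_j 2^{-aj} < \infty$.

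The main obstacle I anticipate is making the recursion estimate clean: one must verify that the convolution kernel $2^{-ai}$ together with the small factor $C(a)/b$ genuinely contracts, i.e. that the geometric decay $2^{-aj/2}$ survives the self-referential sum (the terms $x_{j-i}$ on the right range over \emph{all} previous scales, not just the immediately preceding one). The trick is to prove the bound $x_j \le A\,2^{-aj/2}\,x_0$ by induction on $j$: plugging the inductive hypothesis into the right-hand side produces $\frac{C(a)}{b}\sum_{i=0}^{j-1}2^{-ai}A\,2^{-a(j-i)/2}x_0 + C(a)2^{-aj}x_0 \le A\,2^{-aj/2}x_0\bigl(\tfrac{C(a)}{b}\sum_{i\ge0}2^{-ai/2} + \tfrac{C(a)}{A}\bigr)$, and one closes the induction by choosing first $A$ large relative to $C(a)$ and then $b$ large relative to $A$ and $a$. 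Everything else is routine: \rf{eqsum69} is already available, and the passage from \rf{eqd99} to \rf{eqd99.5} is just summing a convergent geometric series, with $C$ manifestly independent of $N$.
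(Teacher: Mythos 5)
Your proposal is correct and follows essentially the same route as the paper: both feed the non-doubling hypothesis $\theta_\mu\le b^{-1}\ve_a$ into \rf{eqsum69} and close an induction on $j$ with the ansatz of $2^{-aj/2}$ decay by taking $b$ large (the paper inducts on $\theta_\mu(2^{-j}Q)$, you on $x_j=\ve_a(2^{-j}Q)$, which if anything makes \rf{eqd99.5} more immediate). One small repair: the $i=0$ term of your convolution sum is $x_{j-0}=x_j$ itself, so you cannot plug the inductive hypothesis into it as written; you must first absorb $\tfrac{C(a)}{b}x_j$ into the left-hand side (or start the sum at $i=1$ using $\theta_\mu(2^{-j}Q)\le C\,\theta_\mu(2^{-j+1}Q)$, which is what the paper does).
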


\begin{proof}
By \rf{eqsum69}, the fact that $2^{-j} Q$ is not $(h_a,b)$-doubling for 
$0\leq j \leq N$ implies that
\begin{equation}\label{eqd100}
\theta_\mu(2^{-j} Q)\leq \frac1b \ve_a(2^{-j}Q)\leq \frac{C_3}{b}\biggl( \sum_{k=1}^{j-1} 2^{-ak}\theta_\mu(2^{-j+k}Q) + 2^{-aj}\ve_a(Q)
\biggr),
\end{equation}
where $C_3$ depends on $a$. Notice that the sum above starts with $k=1$, while the one in \rf{eqsum69} starts with $j=0$ (we used the fact that $\theta_\mu(2^{-j}Q)\leq C\theta_\mu(2^{-j+1}Q)$).

We prove \rf{eqd99} by induction on $j$. For $j=0$, this is a direct consequence of
the definition of $(h_a,b)$-doubling squares.
Suppose that \rf{eqd99} holds for $0\leq h\leq j$, with $j\leq N-1$, and consider the case $j+1$. Using \rf{eqd100} and the induction hypothesis we get
\begin{align*}
\theta_\mu(2^{-j-1} Q) &\leq \frac{C_3}{b}\biggl( \sum_{k=1}^{j} 2^{-ak}
\theta_\mu(2^{-j-1+k}Q) + 2^{-a(j+1)}\ve_a(Q)\biggr)\\
& \leq \frac{C_3}{b}\biggl( \sum_{k=1}^{j} 2^{-ak}2^{(-j-1+k)a/2}\,\ve_a(Q) + 2^{-a(j+1)}\ve_a(Q)\biggr)
\end{align*}
Since $$\sum_{k=1}^{j} 2^{-ak}2^{(-j-1+k)a/2}\leq C(a) 2^{-aj/2},$$
we obtain
$$\theta_\mu(2^{-j-1} Q) \leq\frac{C_3C(a)}{b}\,\bigl( 2^{-aj/2}   + 2^{-a(j+1)}\bigr)
\ve_a(Q).$$
If $b$ is chosen big enough, we get
$$\theta_\mu(2^{-j-1} Q) \leq 2^{-a(j+1)/2}\ve_a(Q).$$

The estimate \rf{eqd99.5} is a straightforward consequence of \rf{eqd99}, using Cauchy-Schwartz
inequality. We leave the details for the reader.
\end{proof}

Let $b=b(a)>0$ be big enough so that \rf{eqd99} and \rf{eqd99.5} hold. 
It is immediate to check that if $Q$ is $(h_a,b)$-doubling and $R\supset Q$ is a square
such that $\ell(R)\leq 4\ell(Q)$, then $R$ is $(h_a,C_4b)$-doubling.  
We say that a square
$R$ is $h_a$-doubling if it is $(h_a,C_4b)$-doubling.

Let $Q,R$ be squares with $\ell(Q)\leq\ell(R)$. We 
denote 
$$D_\mu(Q,R) = \sum_{j: Q\subset 2^jQ \subset R_Q}\ve_a(2^jQ)^2,$$
where $R_Q$ denoted the smallest square of the form $2^jQ$ that contains $R$.  
The preceding 
lemma says that if $Q\subset R$ and there are no $h_a$-doubling squares of the form $2^jQ$ such that
$Q\subset2^jQ\subset R_Q$, then $D_\mu(Q,R)\leq C\ve_a(R)^2$. 

The definition of $D_\mu(Q,R)$
can be extended in a natural way to the case where $Q$ is replaced by a point. In this case
the sum above runs over all squares centered at $x$ with side length $2^j$, $j\in\bZ$, which
are
contained in $R_x$, where $R_x$ is the smallest square centered at $x$ that contains $R$.

\begin{remark} \label{rem53}
Let $\mu$ be any Radon measure on $\C$, and let $d$ be big enough. Then, for $\mu$-almost all $x\in E$, there exists a sequence of  $(2,d)$-doubling squares $\{Q_n\}_n$ centered
at $x$ such that $\ell(Q_n)\to 0$. However, this statement 
is false if we replace $(2,d)$-doubling squares by $(h_a,d)$-doubling squares when $a$ is small. The reader can check that this is the case for planar Lebesgue measure, for instance.
\end{remark}

\bigskip

\subsection{The family $\maxbad(R)$} \label{secstop}

Let $R$ be some fixed $4$-dyadic square such that $\frac12 R$ is $h_a$-doubling. In this subsection we will explain
the construction of a family of $4$-dyadic squares called $\maxbad(R)$. 

Let $A>10$ be some big constant to be chosen below, $\delta$ some small positive constant ($\delta<1/10$, say) which depends on $A$; and $\ve_0$ another small constant with  $0<\ve_0<1/100$ (depending on $A$ and $\delta$). Let $Q$ be a square centered at some point in
$3R\cap\supp(\mu)$, with $\ell(Q) =2^{-n} \ell(R)$, $n\geq5$. We
introduce the following notation:
\begin{itemize}
\item[(a)] If $\theta_\mu(Q) \geq A\theta_\mu(R)$, then we write $Q\in
HD_{c}(R)$ (high density).

\item[(b)] If $Q\not\in HD_{c}(R)$ and
$$\mu\bigl\{x\in Q:\,K_{\mu,J(Q)+10}\chi_E(x) - K_{\mu,
J(R)-2}\chi_E(x)\geq \ve_0\theta_\mu(R)^2\bigr\} \geq
\frac12\,\mu(Q),$$ then we set $Q\in HC_{c}(R)$ (high
curvature).

\item[(c)] If $Q\not\in HD_{c}(R)\cup HC_{c}(R)$ and
there exists some square $S_Q$ such that $Q\subset \frac1{100}
S_Q$, with $\ell(S_Q)\leq \ell(R)/8$ and $\theta_\mu(S_Q)\leq
\delta\, \theta_\mu(R)$, then we set $Q\in LD_{c}(R)$ (low
density).
\end{itemize}

For each point $x\in 3R\cap\supp(\mu)$ which belongs to some
square from $HD_{c}(R)\cup HC_{c}(R)\cup LD_{c}(R)$ consider the
largest square $Q_x\in HD_{c}(R)\cup HC_{c}(R)\cup
LD_{c}(R)$ which contains $x$. Let $\wh{Q}_x$ be a $4$-dyadic
square with side length $4\ell(Q_x)$ such that
$Q_x\subset\frac12\wh{Q}_x$. Now we apply Besicovitch's covering
theorem to the family $\{\wh{Q}_x\}_x$ (notice that this theorem
can be applied because $x\in\frac12 \wh{Q}_x$), and we obtain a
family of $4$-dyadic squares $\{\wh{Q}_{x_i}\}_i$ with finite
overlap such that the union of the squares from $HD_{c}(R)\cup HC_{c}(R)\cup LD_{c}(R)$
is contained (as a set in $\C$) in
$\bigcup_i \wh{Q}_{x_i}$. We define
$$\maxbad(R):=\{\wh{Q}_{x_i}\}_i.$$
Notice that the squares $Q\in \maxbad(R)$ satisfy $\ell(Q)\leq \ell(R)/8$.
If $Q_{x_i}\in HD_{c}(R)$, then we write $\wh{Q}_{x_i}\in
HD(R)$, and analogously with $HC_{c}(R)$, $LD_{c}(R)$ and
$HC(R)$, $LD(R)$. We also denote
\begin{equation}\label{defgg}
G(R) = 3R\setminus \bigcup_{Q\in\maxbad(R)} Q.
\end{equation}

\begin{remark}
The constants that we denote by $C$ (with or without subindex) in
the rest of Section \ref{seccorona0} do not depend on
$A$, $\delta$, or $\ve_0$, unless stated otherwise.
\end{remark}

To define the squares $\maxbad(R)$ we have followed
quite closely the arguments in \cite{tolsabilip}. However, there 
are a couple of small changes: in \cite{tolsabilip} we ask the square $R$ to be 
$(70,5000)$-doubling instead of $(h_a,b)$-doubling. Moreover, in \cite{tolsabilip} the squares from $HD_{c}(R)$,
$LD_{c}(R)$, and $HC_{c}(R)$ are
asked to be $(70,5000)$-doubling and then the resulting squares from $\maxbad(R)$ are
$(16,5000)$-doubling. Now, for convenience, we have not asked any doubling condition on
these squares, although below we will need other stopping squares to be doubling
(in fact, $h_a$-doubling).

The squares from $\maxbad(R)$ satisfy the following important properties:

\begin{lemma}\label{lemcorba}
Let $0<\rho<1$ be some fixed constant. Let $R$ be a $4$-dyadic square such that $\frac12 R$ is $h_a$-doubling.
 Given $A$ and $\delta$ as above, if 
$\ve_0$ is chosen small enough (depending on $A,\delta,\rho$), there are constants $C_5=C_5(A,\delta)>1$ and $C_6=C_6(A,\delta)>0$, and there are $N_0$ chord arc curves with constant $(1+\rho)$ whose union we denote
by $\Gamma_R$ with the following properties:
\begin{itemize}
\item[(a)] 
$G(R)\subset \Gamma_R$;
\item[(b)]any square $Q\in\maxbad(R)$ satisfies
$$C_5Q\cap \Gamma_R\neq\varnothing;$$
\item[(c)] if $P$ is a square concentric with $Q\in\maxbad(R)$ and 
$C_5\ell(Q)\leq \ell(P)\leq \ell(R)$, then
$$C_6^{-1}\theta_\mu(R)\leq \theta_\mu(P)\leq C_6\theta_\mu(R).$$

\end{itemize}
The constant $N_0$ depends only on $A$,$\delta$, and $\rho$.
\end{lemma}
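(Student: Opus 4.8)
The plan is to follow closely the construction of \cite[Main Lemma 3.1]{tolsabilip}, adapting it to the stopping condition based on $(h_a,b)$-doubling squares rather than $(70,5000)$-doubling ones. Fix $R$ with $\frac12R$ being $h_a$-doubling. Recall that $G(R) = 3R \setminus \bigcup_{Q\in\maxbad(R)} Q$ consists exactly of those $x\in 3R\cap\supp(\mu)$ such that no square $Q$ centered at (a point near) $x$ with $\ell(Q)\leq \ell(R)/8$ lies in $HD_c(R)\cup HC_c(R)\cup LD_c(R)$. So for $x\in G(R)$ and every scale $t$ with $\ell(R)/8\geq t$ down to $0$: the square $Q(x,t)$ is not high density, hence $\theta_\mu(Q(x,t))\leq A\,\theta_\mu(R)$; it is not low density, hence (using linear growth and that $\frac12R$, being $h_a$-doubling, is $C$-doubling) $\theta_\mu(Q(x,t))\gtrsim \delta\,\theta_\mu(R)$; and it is not high curvature, so $K_{\mu,J(Q(x,t))+10}\chi_E - K_{\mu,J(R)-2}\chi_E < \ve_0\,\theta_\mu(R)^2$ on at least half of $Q(x,t)$ in $\mu$-measure.

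First I would record the consequence of these facts for the piece of $\mu$ lying over $G(R)$. Summing the curvature increments above over dyadic scales between $\ell(Q)$ and $\ell(R)$, exactly as in \cite{tolsabilip}, yields that $\mu$ restricted to a neighbourhood of $G(R)$ has small total Menger curvature, of order $\ve_0\,\theta_\mu(R)^2\,\mu(3R)$, together with Ahlfors upper and lower regularity (with constants governed by $A$ and $\delta$) at all the intermediate scales. Normalising so that $\theta_\mu(R)\approx1$, this says that $G(R)$ carries a measure with linear growth from above and below and with arbitrarily small curvature at the relevant scales, hence with arbitrarily small Jones $\beta$-numbers (computed with $\mu$) at those scales, the smallness being controlled by $\ve_0$.

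Next I would invoke a traveling-salesman type construction (as in \cite{tolsabilip}, following Jones, Okikiolu and David--Semmes): from the smallness of the $\beta$-numbers of $G(R)$ one covers $G(R)$ by a finite union $\Gamma_R$ of Ahlfors regular curves whose chord arc constant can be made as close to $1$ as desired by taking $\ve_0$ small (depending on $A,\delta,\rho$), so $\Gamma_R$ consists of $(1+\rho)$-chord arc curves; the number $N_0$ of them is bounded in terms of $A,\delta,\rho$ only, since each has length $\lesssim\ell(R)$, they meet $3R$, and they are $(1+\rho)$-chord arc. This gives (a). For (b) and (c), note that each $Q=\wh Q_{x_i}\in\maxbad(R)$ comes from a maximal $Q_{x_i}\in HD_c(R)\cup HC_c(R)\cup LD_c(R)$ with $x_i\in 3R\cap\supp(\mu)$, $Q_{x_i}\subset\frac12 Q$ and $\ell(Q)=4\ell(Q_{x_i})$. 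By maximality, every square centered at $x_i$ with size between $\approx\ell(Q)$ and $\ell(R)/8$ avoids the stopping family, so in particular it is not low density nor high density; repeating the density estimate above for such squares and then comparing with an arbitrary concentric $P\supset Q$ with $C_5\ell(Q)\leq\ell(P)\leq\ell(R)$ (via the doubling of $\frac12 R$ and linear growth) gives (c). For (b), since these larger squares around $x_i$ are not low density, one finds a point of $\supp(\mu)$ at scale $\approx\ell(Q)$ near $x_i$ that survives all the stopping conditions, hence lies in $G(R)\subset\Gamma_R$; taking $C_5=C_5(A,\delta)$ large enough forces this point into $C_5Q$, so $C_5Q\cap\Gamma_R\neq\varnothing$.

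The main obstacle is the chord arc construction with constant $1+\rho$: one must track quantitatively how the smallness of the total curvature over $G(R)$ (governed by $\ve_0$) passes to smallness of the $\beta$-numbers and then to the chord arc constant of the output curves, while keeping $N_0$ bounded independently of the scale of $R$. This is where the choice of $\ve_0$ small (depending on $A,\delta,\rho$) is used, and it is the technically heaviest part; the remaining steps are bookkeeping with the stopping conditions, essentially as in \cite{tolsabilip}.
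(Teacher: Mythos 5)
Your proposal follows the same route as the paper, which does not write out a proof at all but simply refers to \cite[Section 4]{tolsabilip} (with ``very minor adjustments'') for the construction of $\Gamma_R$ from the density and curvature stopping conditions, and to \cite[Subsection 2.3]{Clop-Tolsa} for the upgrade from AD-regular to $(1+\rho)$-chord arc curves. Your sketch of that machinery (small curvature plus two-sided density over $G(R)$ $\Rightarrow$ small $\beta$-numbers $\Rightarrow$ a bounded number of chord arc curves, with (b) and (c) extracted from maximality of the stopping squares) is consistent with the cited argument.
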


For the proof of this lemma, see \cite[Section 4]{tolsabilip}.
One only needs to make very minor adjustments for that arguments to work in our situation.
See also \cite[Subsection 2.3]{Clop-Tolsa} concerning the fact that one can take chord arc
curves (in the original arguments in \cite{tolsabilip} $\Gamma_R$ turns out to be an 
AD regular curve). We will not go through the details.

\begin{remark} \label{remdob}
It is easy to check that the property (c) in the preceding lemma implies that the
squares $P$ from (c) are $(h_a,c)$-doubling, with $c$ depending on $A$ and~$\delta$.
\end{remark}

We also have:

\begin{lemma}\label{lemld}
Given $A>0$, if $\delta$ and $\ve_0$ are chosen small enough, then for any
 $4$-dyadic square $R$ with $\frac12 R$ $h_a$-doubling, we have 
$$\mu\biggl(\,\bigcup_{Q\in LD(R)}Q\biggr)\leq \frac1{100}\,\mu(R).$$
\end{lemma}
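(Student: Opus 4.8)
The plan is to show that the low-density squares contribute only a small fraction of the mass of $R$, by a standard argument combining the linear growth of $\mu$ with a Vitali-type covering argument applied to the auxiliary squares $S_Q$. Recall that by definition, for each $Q\in LD(R)$ the selection procedure produced an underlying square $Q_{x}\in LD_c(R)$ with $Q_{x}\subset\frac1{100}S_{Q_x}$, where $\ell(S_{Q_x})\leq\ell(R)/8$ and $\theta_\mu(S_{Q_x})\leq\delta\,\theta_\mu(R)$. First I would record that, since $\tfrac12 R$ is $h_a$-doubling and the squares in $\maxbad(R)$ have $\ell(Q)\leq\ell(R)/8$, all these auxiliary squares $S_{Q_x}$ are contained in, say, $2R$; and by the $h_a$-doubling property of $\tfrac12 R$ together with Remark \ref{remdob}-type comparisons, $\mu(2R)\leq C\,\mu(R)$. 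The linear growth bound $\mu(S)\leq C\,\ell(S)$ will be the basic input for controlling the total mass covered.

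The key steps, in order, are as follows. (1) For each $Q\in LD(R)$ pick the square $S_Q:=S_{Q_x}$ as above. (2) Apply Besicovitch's (or the $5r$-)covering lemma to the family $\{S_Q\}_{Q\in LD(R)}$ to extract a subfamily $\{S_k\}_k$ with bounded overlap (independent of everything) such that $\bigcup_{Q\in LD(R)}Q\subset\bigcup_{Q}\frac1{100}S_Q\subset\bigcup_k S_k$. (3) Estimate
$$\mu\Bigl(\bigcup_{Q\in LD(R)}Q\Bigr)\leq\sum_k\mu(S_k)\leq\sum_k\theta_\mu(S_k)\,\ell(S_k)\leq\delta\,\theta_\mu(R)\sum_k\ell(S_k).$$
(4) Control $\sum_k\ell(S_k)$: since the $S_k$ have bounded overlap and are all contained in $2R$, and using the linear growth of $\mu$ from below is not available, instead I would use that $\sum_k\ell(S_k)^{2}\lesssim\ell(R)^{2}$ by bounded overlap and area comparison, but what is actually needed is a bound on $\sum_k\ell(S_k)$. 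Here one uses the high-density alternative: because none of the relevant squares lies in $HD_c(R)$, one has $\theta_\mu(S_k)\geq c\,\theta_\mu(R)$ is \emph{false} in general — rather, I would instead bound $\sum_k\ell(S_k)\leq C_\delta\sum_k\mu(S_k)/\theta_\mu(R)$ only when densities are comparable, which they need not be. The cleaner route, and the one I would follow, is: $\sum_k\ell(S_k)\lesssim \ell(R)\cdot\bigl(\text{number of dyadic generations}\bigr)$ is too weak, so instead group the $S_k$ by size $\ell(S_k)\approx 2^{-j}\ell(R)$ and use that at each scale the bounded-overlap squares contained in $2R$ satisfy $\sum_{k:\ell(S_k)\approx 2^{-j}\ell(R)}\ell(S_k)\lesssim 2^{j}\ell(R)$ — also too weak. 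The correct input is that $\mu$ has \emph{upper} linear growth only, so I would instead bound $\sum_k\mu(S_k)$ directly by bounded overlap: $\sum_k\mu(S_k)\leq C\,\mu(2R)\leq C\,\mu(R)$ is available but loses the factor $\delta$. Combining (3) with this gives $\mu(\bigcup Q)\leq C\delta^{0}\mu(R)$, no gain; the gain must come from step (3)'s factor $\delta$ paired with $\sum_k\ell(S_k)\lesssim\mu(R)/\theta_\mu(R)$.

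So the decisive point, and what I expect to be the main obstacle, is obtaining the bound $\sum_k\ell(S_k)\leq C\,\mu(R)/\theta_\mu(R)$. This should follow because the chord arc curves $\Gamma_R$ from Lemma \ref{lemcorba} have total length $\leq N_0(1+\rho)\diam(\Gamma_R)\lesssim\ell(R)$, and each $S_k$ either meets $\Gamma_R$ (in which case the bounded overlap gives $\sum\ell(S_k)\lesssim\ell(R)\lesssim\mu(R)/\theta_\mu(R)$, using $\theta_\mu(R)\lesssim 1$ and $\mu(R)\gtrsim\ell(R)\theta_\mu(R)$) or it has small density; a more careful bookkeeping, exactly as in the proof of the analogous statement in \cite[Section 4]{tolsabilip}, shows that after choosing $\delta$ and then $\ve_0$ small (depending on $A$) one gets $\mu(\bigcup_{Q\in LD(R)}Q)\leq\frac1{100}\mu(R)$. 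I would therefore carry out steps (1)–(3) as above, then invoke the length bound on $\Gamma_R$ from Lemma \ref{lemcorba} together with $\theta_\mu(P)\approx\theta_\mu(R)$ for the intermediate-scale squares $P$ to convert $\sum_k\ell(S_k)$ into a multiple of $\mu(R)/\theta_\mu(R)$, and finally absorb the resulting constant by taking $\delta$ small enough; the role of $\ve_0$ small is only to guarantee (via Lemma \ref{lemcorba}) that $\Gamma_R$ with the desired small chord arc constant exists. The only genuinely new element relative to \cite{tolsabilip} is the bookkeeping with $h_a$-doubling in place of the earlier $(70,5000)$-doubling, which affects none of the estimates above since $h_a$-doubling implies ordinary doubling with a controlled constant.
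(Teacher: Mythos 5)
Your outline has the right skeleton — reduce to the auxiliary squares $S_Q$, extract a covering subfamily, and cash in the density factor $\delta$ — but the decisive step, bounding $\sum_k\ell(S_k)\lesssim\ell(R)$, is not actually established, and you say as much yourself. Two concrete problems remain.

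First, the assertion that ``the bounded overlap gives $\sum_k\ell(S_k)\lesssim\ell(R)$'' once each $S_k$ meets $\Gamma_R$ does not follow as stated: a square can intersect a rectifiable curve of total length $\lesssim\ell(R)$ while containing an arbitrarily short arc of it, so bounded overlap plus mere intersection gives no control on $\sum_k\ell(S_k)$. To convert intersection into a length contribution of order $\ell(S_k)$ one must use that $\Gamma_R$ passes through (a small dilate of) the bad square $Q_{x_i}$ sitting near the \emph{center} of $S_k$ — this is what Lemma~\ref{lemcorba}(b) together with $Q_{x_i}\subset\frac1{100}S_Q$ buys you — \emph{and} that the relevant component of $\Gamma_R$ has diameter much larger than $\ell(S_k)$, forcing the curve to traverse a substantial piece of $2S_k$. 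None of this is spelled out, and the naive version you state is simply false.

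Second, there is an order-of-quantifiers issue you don't address: the constants $C_5,C_6,N_0$ in Lemma~\ref{lemcorba} depend on $\delta$ (indeed $C_6^{-1}\lesssim\delta$ is forced, since $S_Q$ itself witnesses a low-density square near $Q$), so the constant $C$ in any bound $\sum_k\ell(S_k)\le C\,\ell(R)$ extracted via $\Gamma_R$ must be shown to be \emph{uniform in $\delta$}; otherwise the final absorption ``take $\delta$ small'' is circular. The paper sidesteps all of this by deferring to \cite[Section 7]{tolsabilip}, where the bookkeeping is done carefully; your proposal identifies the right ingredients and is in the same spirit, but as written it leaves the core estimate unproved and rests on a step that is not correct in the generality claimed.
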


For the proof, see \cite[Section 7]{tolsabilip}. Again, the arguments there work with very
minor adjustments.

\bigskip

\subsection{The families $\sel(\mu)$,  $\sel_S(\mu)$, and $\sel_L(\mu)$.}

In the corona construction from \cite{tolsabilip} one constructs 
recursively the familiy of
squares $\ttt(\mu)$ mentioned in Section \ref{secstrat}. In this subsection we construct a quite analogous family which we will denote by $\sel(\mu)$ (the ``selected
squares''). We use another notation 
because the family $\sel(\mu)$ 
will have significant differences with respect the family $\ttt(\mu)$ of \cite{tolsabilip}.

First we have to distinguish two types of $h_a$-doubling squares:

\begin{definition}\label{def58}
Let $\eta>0$ be some constant to be fixed below (in Section \ref{secproof}), which will depend on $A,\delta,\ve_0,\rho,K$ (recall that $K$ is the distortion of the quasiconformal
mapping $\phi$).
Let $R$ be a square such that $\frac12 R$ is $h_a$-doubling. We say $R$ is of type S if
$$\mu\biggl(\,\bigcup_{\substack{Q\in \maxbad(R):\\ \ell(Q)\geq \eta \ell(R)}} 
Q\cap \frac12 R\biggr)
\geq \frac12\,\mu\Bigl(\frac12R\Bigr).$$
Otherwise, we say that $R$ is of type L. The letters S and L stand for ``short'' and ``long''
trees, respectively (this terminology will be more clear below).
\end{definition}

Before constructing the families $\sel(\mu)$,  $\sel_S(\mu)$, and $\sel_L(\mu)$, 
 we have to define the family of terminal squares $\cT(R)$.

\bigskip

\subsubsection{\bf Definition of $\cT(R)$ when $R$ is of type S} \label{subsels}

Let $R$ be a square of type $S$, so that $\frac 12 R$ is $h_a$-doubling.
For $x\in 3R$, consider 
the biggest $4$-dyadic square 
$Q_x$ of type $L$ containing $x$, such that $\frac12 Q_x$ is $h_a$-doubling, and such that $\ell(Q_x)\leq \ell(R)/8$, if it exists. Let $\cT_0(R)$ be the collection of these squares
$Q_x$.
We denote by $F(R)$ the subset of those points $x\in 3R$ such there does not exists such a square $Q_x$.

By Vitali's covering theorem there exists a subfamily $\cT(R)\subset \cT_0(R)$ such that
the squares 
$\{5Q\}_{Q\in\cT(R)}$ are pairwise disjoint and so that
$$\bigcup_{Q\in\cT_0(R)}5Q\subset \bigcup_{Q\in\cT(R)}15Q.$$
Since the squares $Q_x$ that intersect $\frac12 R$ are contained in $R$ and
they are doubling,
$$\mu\biggl(\,\bigcup_{Q\in\cT(R)}Q\cap R\biggr) \geq C_7^{-1}\mu\biggl(\frac12R\setminus F(R)\biggr).$$

\bigskip

\subsubsection{\bf Definition of $\cT(R)$ when $R$ is of type L}\label{subsell}

In this case
$$\mu(G(R)) + \mu\biggl(\,\bigcup_{\substack{Q\in \maxbad(R):\\ \ell(Q)< \eta \ell(R)}} 
Q\cap \frac12 R\biggr)
\geq\frac12\,\mu\biggl(\frac12R\biggr).$$

If $\mu(G(R))\geq \frac14\,\mu(\frac12 R)$, then we set $\cT(R)=\varnothing$.

Suppose now that $\mu(G(R))< \frac14\,\mu(\frac12 R)$. Then, 
$$\mu\biggl(
\,\bigcup_{\substack{Q\in \maxbad(R):\\ \ell(Q)< \eta \ell(R)}}
Q\cap \frac12 R\biggr)\geq \frac14\,\mu\biggl(\frac12 R\biggr).$$
Recall that, the squares $C_5Q$ in Lemma \ref{lemcorba} are doubling, by the statement (c) in the same lemma applied to $P=C_5Q$ (in fact,
$(h_a,c)$ doubling, with $c=c(A,\delta)$, by Remark \ref{remdob}).
We assume that the constant $\eta$ in the  Definition \ref{def58} of L squares is small
enough so that 
$$C_5\ell(Q)\leq \ell(R)/100 \qquad \mbox{if $\ell(Q)<\eta\ell(R)$},$$
say. By Vitali's covering theorem, there exists a subfamily 
\begin{equation}\label{eqsj7}
\{S_j\}_{j\in I_R}\subset \{C_5Q:\,Q\in \maxbad(R),\,\ell(Q)<\eta\ell(R)\}
\end{equation}
 such that the squares $5S_j$, $j\in I_R$, are pairwise
disjoint and contained in $R$ and, moreover, using the doubling property of the squares $C_5Q$,
$$\mu\biggl(\,\bigcup_{j\in I_R} S_j\cap R\biggr)\geq C^{-1}\mu\biggl(\frac12 R\biggr)
\geq C_8^{-1}\mu(R),$$
with $C_8$ depending on $A,\delta$ (but not on $\eta$).

Take a square $S_j$, $j\in I_R$, such that $S_j\cap R\neq \varnothing$. For each $x\in E\cap S_j$, consider the biggest 
square $P_x$ centered at $x$, with $\ell(P_x)\leq \ell(S_j)/16$, which is 
$(h_a,b)$-doubling, with $b$ as explained just above
Remark \ref{rem53}, in case such a square exists. 
We denote by $F_j(R)$ the subset of those points $x\in E\cap S_j$ such there does not exists such a square. Denote by $\wh P_x$ a $4$-dyadic square with side length $4\ell(P_x)$ such that
$P_x\subset \frac12 \wh P_x$. Notice that the squares $\wh P_x$ are $h_a$-doubling and they
are contained in $3S_j$.

By Vitali's covering theorem, there exists a subfamily $\{\wh P_{x_i}\}_i\subset 
\{\wh P_x\}_{x\in E\cap S_j\setminus F_j(R)}$ such that the squares $5\wh P_{x_i}$ are pairwise
disjoint, and 
$$\mu(S_j\setminus F_j(R)) \leq C\mu\biggl(\bigcup_i \wh P_{x_i}\biggr).$$
 We define
$\cT_j(R) :=\{\wh P_{x_i}\}_i,$
and finally
$$\cT(R) := \bigcup_{j\in I_R}\cT_j(R).$$
We also set
$$F(R) := \bigcup_{j\in I_R}F_j(R).$$

\bigskip

\subsubsection{\bf Definition of $\sel(\mu)$,  $\sel_S(\mu)$, and $\sel_L(\mu)$}

The family $\sel(\mu)$ is constructed recursively. 
 Let $R_0$ be a $4$-dyadic square with $\ell(R_0)\simeq
\diam(E)$ such that $E$ is contained in one of the four dyadic
squares in $\frac12R_0$ with side length $\ell(R_0)/4$.
The first square of $\sel(\mu)$ is $R_0$.
The next squares that we choose as elements of $\sel(\mu)$ are the ones from 
$\cT(R_0)$. And, now the ones that belong to $\cT(R)$ for some $R\in\cT(R_0)$,
an so on.

In other words, $\sel(\mu)$ is the smallest family of $4$-dyadic squares that contains
$R_0$ and which has the property that if $R\in\sel(\mu)$, then the squares from 
$\cT(R)$ also belong to $\sel(\mu)$.

The family $\sel_S(\mu)$ is made up of the squares from $\sel(\mu)$ of type S, while
 $\sel_L(\mu)$ is the subfamily of the squares from $\sel(\mu)$ of type L.

\bigskip

\subsection{\bf The packing condition for squares in $\tree(R)$, $R\in\sel_S(\mu)$}

\begin{definition}\label{def51}
For $R\in\sel_S(\mu)$, we denote by $\term(R)$ the collection of dyadic squares 
$Q$ such that $Q\subset 3P$ for some $P\in 
\cT_0(R)$, so that, moreover, $Q$ is maximal. We call them terminal squares.

We denote by $\tree(R)$ the family of dyadic squares that are contained in 
$R$ and that are not properly contained in any square from $\term(R)$.

We also set
$$\eee(R) = E\cap R\setminus \bigcup_{Q\in\term(R)}P.$$
\end{definition}

Notice that the points in $\eee(R)$ can be considered as terminal squares of
$\tree(R)$ with zero side length.

The main objective of this subsection consists in proving the following result.

\begin{lemma}\label{lempac0}
Let $R\in\sel_S(\mu)$. Then,
$$\sum_{Q\in\tree(R)}\ve_a(Q)^2\,\mu(Q)\leq C(A,\delta,\ve_0,\eta)\mu(R).$$
\end{lemma}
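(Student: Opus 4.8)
The goal is a Carleson/packing estimate for the tree attached to a type-S square $R$. The key point in Definition~\ref{def51} is that every $Q\in\tree(R)$ is a dyadic square not \emph{properly} contained in any terminal square, so the squares in $\tree(R)$ form a ``tree'' hanging below $R$ whose leaves are the terminal squares $\term(R)$ together with the points of $\eee(R)$. The strategy is to split the sum according to how $\ve_a(Q)$ compares with $\theta_\mu(Q)$, i.e. whether $Q$ is $h_a$-doubling or not, and then to use, for the doubling squares, that we are essentially summing $\theta_\mu(Q)^2\mu(Q)$ over a Carleson family sitting near a union of chord arc curves, which is controlled by the curvature of $\mu$ exactly as in \cite{tolsabilip}.

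\textbf{Step 1: reduce to $h_a$-doubling squares.} For each $Q\in\tree(R)$ let $\wh Q$ be the smallest $h_a$-doubling dyadic square with $Q\subset\wh Q\subset R$ (this exists since $\tfrac12 R$, hence $R$, is $h_a$-doubling). If between $Q$ and $\wh Q$ there are no $h_a$-doubling squares, Lemma~\ref{lemnodob} (more precisely the consequence \eqref{eqd99.5} and the estimate $D_\mu(Q,R)\le C\ve_a(R)^2$ recorded after its proof) gives $\ve_a(Q)\le 2^{-aj/2}\ve_a(\wh Q)$ where $2^{-j}=\ell(Q)/\ell(\wh Q)$. Hence, grouping the squares of $\tree(R)$ according to their first $h_a$-doubling ancestor $\wh Q$ and summing the geometric series in $j$,
\begin{equation*}
\sum_{Q\in\tree(R)}\ve_a(Q)^2\mu(Q)\ \lesssim\ \sum_{\substack{Q\in\tree(R)\\ Q\ h_a\text{-doubling}}}\ve_a(Q)^2\,\mu(3Q),
\end{equation*}
using the finite overlap of the dilates $3Q$ of dyadic squares of a fixed generation together with $h_a$-doubling to pass from $\mu(Q')$ on a descendant $Q'$ with ancestor $\wh Q=Q$ to $\mu(3Q)$ (and absorbing the geometric factor $\sum_j 2^{-aj}2^{-aj/2}\cdot(\text{number of generation-}j\text{ descendants inside }Q)$ — here one uses that descendants at the same generation have disjoint interiors so $\sum_{Q'}\mu(Q')\le \mu(\wh Q)$, and $2^{-aj}$ beats the density growth). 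So it suffices to bound the right-hand side.

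\textbf{Step 2: the $h_a$-doubling squares form a Carleson family near $\Gamma_R$.} For an $h_a$-doubling $Q\in\tree(R)$ we have $\ve_a(Q)\approx\theta_\mu(Q)$, so the term is $\approx\theta_\mu(Q)^2\mu(3Q)$. Now $Q$ is \emph{not} properly contained in a terminal square, and terminal squares sit inside $3P$ with $P\in\cT(R)$; since $R$ is of type S, its stopping family $\maxbad(R)$ and the curves $\Gamma_R$ of Lemma~\ref{lemcorba} are available, and the construction of $\cT(R)$ in Subsection~\ref{subsels} guarantees that a square of $\tree(R)$ which is not inside some $3P$ lies in $G(R)\subset\Gamma_R$ or is one of the stopping squares with $\ell(Q)\ge\eta\ell(R)$ (of which there are boundedly many per scale). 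For the former, $Q$ meets $\Gamma_R$ and $\theta_\mu(Q)\lesssim\theta_\mu(R)$ by the approximation-at-the-root properties; for the latter, $\ell(Q)\ge\eta\ell(R)$ makes the number of such squares $O_\eta(1)$. Thus the sum over $h_a$-doubling $Q$ is dominated, up to the constant $C(A,\delta,\ve_0,\eta)$, by $\theta_\mu(R)^2\sum_{Q}\mu(3Q)$ over dyadic $Q\subset R$ meeting $\Gamma_R$, plus boundedly many terms $\theta_\mu(R)^2\mu(R)$. Since $\Gamma_R$ is a union of $N_0$ chord arc (in particular AD-regular) curves of length $\lesssim\ell(R)$, for each generation the squares meeting $\Gamma_R$ satisfy $\sum_Q\ell(Q)\lesssim\ell(R)$; combining with $\mu(3Q)\le C\ell(Q)$ (linear growth of $\mu$) and $\theta_\mu(R)^2=\mu(R)^2/\ell(R)^2$ one gets, after summing a geometric series in the scales (the linear-growth factor $\ell(Q)$ against the count $\ell(R)/\ell(Q)$ gives $\ell(R)$ per scale, and there are boundedly effectively many relevant scales because below $\cT(R)$ we stop), the bound $C(A,\delta,\ve_0,\eta)\,\theta_\mu(R)\,\ell(R)\cdot\theta_\mu(R)=C(A,\delta,\ve_0,\eta)\,\mu(R)\,\theta_\mu(R)\le C(A,\delta,\ve_0,\eta)\,\mu(R)$, using $\theta_\mu(R)\le 1$. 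Alternatively — and this is the cleaner route — one invokes directly \cite[Main Lemma~3.1]{tolsabilip} (cited in \eqref{eqcor90}) applied inside $R$ to get $\sum_{Q\in\ttt(\mu),Q\subset R}\theta_\mu(Q)^2\mu(Q)\lesssim\mu(R)+c^2(\mu\!\restriction\! R)\lesssim\mu(R)$, and then observes that every $h_a$-doubling $Q\in\tree(R)$ is comparable to a square of that Carleson family near $\Gamma_R$.

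\textbf{Main obstacle.} The delicate point is Step~1: controlling the non-$h_a$-doubling squares. The function $\ve_a$, unlike $\theta_\mu$, is not comparable to a true density, and — as Remark~\ref{rem53} warns — one cannot expect $h_a$-doubling squares to be abundant at small scales. The replacement is exactly Lemma~\ref{lemnodob}: a long run of non-$h_a$-doubling squares forces geometric decay $\theta_\mu(2^{-j}Q)\lesssim 2^{-aj/2}\ve_a(Q)$ and the $\ell^2$ bound \eqref{eqd99.5}, so the contribution of such runs is a convergent geometric series dominated by the value $\ve_a(\wh Q)^2$ at the first doubling ancestor. Making the bookkeeping of ``first $h_a$-doubling ancestor inside $R$'' precise — so that each $h_a$-doubling square is charged only boundedly often and the geometric factors do not interact badly with the mass redistribution — is the real content; everything after that is the chord-arc-curve packing already established in \cite{tolsabilip}.
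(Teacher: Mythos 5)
Your plan correctly identifies the two main ingredients — Lemma~\ref{lemnodob} for the non-$h_a$-doubling runs, and the corona decomposition of \cite{tolsabilip} — and the paper also ultimately rests on these. However, your Step 2 contains a genuine gap: you treat the packing of the $h_a$-doubling squares of $\tree(R)$ as if it followed directly from the corona estimate $\sum_{Q\in\ttt_R(\mu)}\theta_\mu(Q)^2\mu(Q)\lesssim\mu(R)$ (which is \eqref{eqsum83}) together with a count of dyadic squares ``near $\Gamma_R$.'' That does not work, for two reasons. First, $\ttt_R(\mu)$ is a \emph{sparse} family of corona ``tops,'' whereas the $h_a$-doubling squares of $\tree(R)$ form a much denser family living at all the scales between a top $Q_0\in\ttt_R(\mu)$ and its stopping squares $\sss(Q_0)$; nothing in \cite[Main Lemma 3.1]{tolsabilip} directly packs that intermediate collection. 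Second, the curve $\Gamma_R$ from Lemma~\ref{lemcorba} only describes the set at the scale of $R$; at the scales of the corona descendants the geometry is governed by the new curves $\Gamma_{Q_0}$ attached to the new tops $Q_0$, so the squares of $\tree(R)$ are \emph{not} squares ``meeting $\Gamma_R$'' with bounded count per generation. In the same vein, the claim $\theta_\mu(Q)\lesssim\theta_\mu(R)$ for $Q\in\tree(R)$ is false in general: the density can grow multiplicatively as one descends through high-density stopping squares.

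The real content of the paper's proof is precisely the bridge you gloss over: Lemma~\ref{lem46} shows that, for each corona top $Q_0\in\ttt_R(\mu)$, the \emph{full subtree} $\tree_R(Q_0)$ satisfies $\sum_{P\in\tree_R(Q_0)}\ve_a(P)^2\mu(P)\lesssim\ve_a(Q_0)^2\mu(Q_0)$, and then one sums over $Q_0$ using \eqref{eqsum83} and $\ve_a(Q_0)\approx\theta_\mu(Q_0)$. Lemma~\ref{lem46} is not a square-counting argument near a curve; it is a distributional / good-$\lambda$ argument for the function $F(x)=\sum_k\max_{P\sim(x,k)}\ve_a(P)^2$, with geometric decay of $\mu\{F>\lambda\}$. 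The decay step uses Lemma~\ref{lem45}, and this is exactly where the type-S hypothesis on $R$ enters: any $h_a$-doubling $4$-dyadic $Q$ sitting inside a corona tree and not swallowed by $\term(R)$ must itself be type S (else $Q\subset 3P$ for some $P\in\cT(R)$ and we stop), so at least half of $\mu(\frac12Q)$ lives on stopping squares $S_i\in\maxbad(Q)$ with $\ell(S_i)\geq\eta\ell(Q)$, and from these high-density/high-curvature squares one extracts a substantial-measure subfamily with controlled $D_\mu(\cdot,Q)$. Your proposal attributes the type-S condition to the availability of $\maxbad(R)$ and $\Gamma_R$ (which exist for any $h_a$-doubling $R$, type S or not) and locates the ``real content'' in Step 1; the heart of the argument is in fact the good-$\lambda$ estimate of Lemma~\ref{lem46} driven by Lemma~\ref{lem45}, which your proposal omits.
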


The main tool for the proof will be the corona construction of \cite{tolsabilip}. 
To state the  precise result that we will use, we need to introduce some notation. Let $R$ be a $4$-dyadic square such that $\frac12 R$ is $h_a$-doubling. The next lemma 
deals with a family $\ttt_R(\mu)$ of $4$-dyadic squares satisfying 
some precise properties.
Given $Q\in\ttt_R(\mu)$, we denote by $\sss(Q)$ the subfamily of squares $P\in\ttt_R(\mu)$
satisfying
\begin{itemize}
\item[(a)] $P\cap 3Q\neq\varnothing$,
\item[(b)] $\ell(P)\leq \ell(Q)/8$,
\item[(c)] $P$ is maximal, in the sense that there are not other squares $\{P_j\}_j
\subset\ttt_R(\mu)$ with $\ell(P_j)<\ell(P)$ such that $P\subset \bigcup_j P_j$.
\end{itemize}
We also denote 
$$\wt G(Q) = 3Q\cap E\setminus \bigcup_{P\in \sss(Q)} P.$$

\begin{lemma}[\cite{tolsabilip}]\label{lembilip}
Let $\mu$ be a Radon measure supported on $E\subset\C$ such that $\mu(B(x,r))\leq r$
for all $x\in\C$, $r>0$, and $c^2(\mu_{|40 R})<\infty$. Let $\wt A>10$ be big enough
and $\wt \delta,\wt \ve_0>0$ small enough. 
 Let $R$ be a
$4$-dyadic square such that $\frac12 R$ is $h_a$-doubling. There exists a family
$\ttt_R(\mu)$ of $4$-dyadic squares contained in $4R$ such that
\begin{equation}\label{eqcorona1}
\sum_{Q\in\ttt_R(\mu)}\theta_\mu(Q)^2\mu(Q)\leq C(\wt A,\wt \delta, \wt\ve_0)\bigl(\mu(R)
+ c^2(\mu_{|40 R})\bigr),
\end{equation}
and such that for $Q\in\ttt_R(\mu)$, if $P$ is a square 
with $\ell(P)\leq \ell(Q)$ such that either $P\cap \wt G(Q)\neq \varnothing$ or
there is another square $P'\in\sss(Q)$ satisfying $P\cap P'\neq\varnothing$ and 
$\ell(P')\leq \ell(P)$, then
\begin{itemize}
\item[(a)] $\theta_\mu(P)\leq C\wt A\,\theta_\mu(Q)$,

\item[(b)] every square $P''$ concentric
with $P$ such that $P\subset P''\subset 5R$ and $D_\mu(P,P'')\geq C_9(\wt A,\wt\delta)\theta_\mu(Q)^2$, satisfies
$$\theta_\mu(P'')\geq C^{-1}\wt\delta\,\theta_\mu(Q).$$

\item[(c)] every square $P''$ such that $\frac12 P''$ is $h_a$ doubling
and $P\subset \frac 34P''$,  $P''\subset 5R$ and $D_\mu(P,P'')\geq C(\wt A,\wt\delta)
\theta_\mu(Q)^2$, satisfies
$$\mu\bigl\{x\in P'':\,K_{\mu,J(P'')+10}\chi_E(x) - K_{\mu,
J(R)-4}\chi_E(x)\geq \wt\ve_0\theta_\mu(Q)^2\bigr\} \leq \beta
\,\mu(P''),$$
where $0<\beta<1$ is some fixed constant.
\end{itemize}
\end{lemma}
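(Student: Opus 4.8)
The plan is to reproduce the recursive corona construction of \cite{tolsabilip} with the root fixed to be $R$, the only substantive changes being the replacement of $(70,5000)$-doubling squares by $h_a$-doubling ones and the use of the smooth densities $\ve_a$ in the definition of $D_\mu$. First I would build $\ttt_R(\mu)$ by a stopping-time argument: starting from $R$, and at a general step from a $4$-dyadic square $Q_0$ with $\frac12 Q_0$ $h_a$-doubling, descend through the $4$-dyadic subsquares $P$ of $3Q_0$ and stop at the maximal $P$ for which one of the three conditions used to define $\maxbad(Q_0)$ in Section \ref{secstop} holds --- \emph{high density}, $\theta_\mu(P)\geq\wt A\,\theta_\mu(Q_0)$; \emph{high curvature}, a portion of $\mu|_P$ of mass at least $\frac12\mu(P)$ carrying a curvature increment $\geq\wt\ve_0\theta_\mu(Q_0)^2$ between scales $J(Q_0)$ and $J(P)$; or \emph{low density / change of density}, there being a much sparser square above $P$ inside $5Q_0$ for which $D_\mu(P,\cdot)$ exceeds the threshold $C_9(\wt A,\wt\delta)\theta_\mu(Q_0)^2$. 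After the usual $4$-dyadic adjustment and a Besicovitch thinning this produces the stopping family $\sss(Q_0)$; I then iterate on those members whose halves are $h_a$-doubling and let $\ttt_R(\mu)$ be the union over all generations, each square so obtained lying in $4R$ since side lengths shrink by a factor at least $8$ at each step. As in \cite[Section 4]{tolsabilip}, with the chord arc refinement of \cite[Subsection 2.3]{Clop-Tolsa}, the hypothesis $c^2(\mu_{|40 R})<\infty$ is what forces the good set $\wt G(Q)=3Q\cap E\setminus\bigcup_{P\in\sss(Q)}P$ to lie on a chord arc curve of arbitrarily small constant, which is the geometric content behind properties (a)--(c).

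Next I would check that (a), (b), (c) are forced by the stopping rules, read contrapositively. If $P$ with $\ell(P)\leq\ell(Q)$ meets $\wt G(Q)$ or meets a smaller square of $\sss(Q)$, then neither $P$ nor a bounded dilate of it is contained in any square of $\sss(Q)$, so descending from $Q$ to $P$ no high-density stop occurred; hence $\theta_\mu(P)\leq C\wt A\,\theta_\mu(Q)$, with $C$ absorbing the $4$-dyadic and doubling adjustments of the intermediate squares, which is (a). For (b): if such a $P$ admitted a concentric $P''\subset 5R$ with $D_\mu(P,P'')\geq C_9(\wt A,\wt\delta)\theta_\mu(Q)^2$ and simultaneously $\theta_\mu(P'')<C^{-1}\wt\delta\,\theta_\mu(Q)$, then the low-density / change-of-density rule would have cut $P$ off before it reached its position in the tree. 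For (c): the high-curvature rule says precisely that an $h_a$-doubling $P''$ with $P\subset\frac34 P''\subset 5R$ and $D_\mu(P,P'')$ above the threshold cannot carry the increment $\geq\wt\ve_0\theta_\mu(Q)^2$ on more than a fixed fraction $\beta$ of its mass, for otherwise $P$ would have been stopped earlier.

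The substance of the lemma, and the step I expect to be the main obstacle, is the packing estimate \rf{eqcorona1}, which is the localized form of \cite[Main Lemma 3.1]{tolsabilip}. I would split $\ttt_R(\mu)$ according to the reason each square was stopped. The high-curvature squares are easy: the stopping inequality gives $\wt\ve_0\,\theta_\mu(Q)^2\,\mu(Q)\lesssim\int_Q\bigl(K_{\mu,J(Q)+10}\chi_E-K_{\mu,J(Q_0)-4}\chi_E\bigr)\,d\mu$ for $Q_0$ the parent of $Q$ in $\ttt_R(\mu)$, and since the scale ranges attached to a chain of nested stopping squares through a given point are consecutive and non-overlapping, summing over $Q$ telescopes and is bounded by $c^2(\mu_{|40 R})+\mu(R)$. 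The low-density squares are handled by the purely geometric Carleson-type argument of \cite[Section 7]{tolsabilip} (the analogue of Lemma \ref{lemld}): at such a square almost all the mass escapes into a much sparser square, which can occur with only Carleson-bounded total mass, so their contribution is $\lesssim\mu(R)$. The genuinely hard case is the high-density squares, where the full curvature machinery of \cite{tolsabilip} is needed: along any chain of squares in which the density roughly doubles relative to a fixed reference level a comparable amount of Menger curvature must be generated (the quantitative version of the principle that, under linear growth, oscillation of the density is controlled by $c^2_\mu$), and after a careful bookkeeping over such chains and over generations one obtains $\sum_{\text{HD}}\theta_\mu(Q)^2\mu(Q)\lesssim\mu(R)+c^2(\mu_{|40 R})$. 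Adding the three contributions gives \rf{eqcorona1}. Finally I would note that passing from $(70,5000)$-doubling to $h_a$-doubling and from $\theta_\mu$ to the smooth $\ve_a$ in $D_\mu$ changes nothing essential, since $\ve_a(Q)\approx\theta_\mu(Q)$ on $h_a$-doubling squares while $\ve_a(2t)\lesssim\ve_a(t)$ always, so only the numerical constants of \cite{tolsabilip} are affected.
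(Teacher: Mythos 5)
Your proposal follows essentially the same route as the paper, which in fact offers no proof of this lemma at all: it simply cites the corona construction of \cite{tolsabilip} and remarks that "the same arguments, with very minor changes, work" after replacing $(32,5000)$-doubling squares by $h_a$-doubling ones and restricting to the root $R$ rather than assuming $E\subset R$. Your sketch — stopping-time construction of $\sss(Q)$ via the high-density/high-curvature/low-density rules, properties (a)--(c) read off contrapositively from those rules, and the packing estimate obtained by telescoping the curvature increments and running the density-increment-costs-curvature argument for the high-density squares — is a faithful reconstruction of that argument and is consistent with everything the paper says about it.
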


\medskip
Some remarks about the choice of 
the constants $\wt A,\wt\delta,\wt\ve_0,\beta$ in the preceding lemma are in order: first, $\wt A$ can be taken as big as desired. After
choosing $\wt A$, one has to take $\wt \delta\leq \wt\delta_1(\wt A)$, where $\wt\delta_1(\wt A)$ is some fixed small constant, and finally, one has to choose
$\wt\ve_0\leq \wt\ve_1(\wt A,\wt \delta,\beta)$. In particular, the preceding lemma holds for all
 $\wt\ve_0$ small enough, at the price of increasing the constant in the right side of 
 \rf{eqcorona1} as $\wt\ve_0\to0$.

In \cite{tolsabilip}, the reader will not find an exact statement
such as Lemma \ref{lembilip}. In fact, in \cite{tolsabilip}, every square $\frac12Q$,
with $Q\in\ttt(\mu)$, is $(32,5000)$-doubling, instead of $h_a$-doubling.
Also, Lemma \ref{lembilip} is proved only in the particular case where
$E\subset R$. However, the same arguments, with very 
minor changes, work with the assumptions above. 
On the other hand, the corona decomposition of \cite{tolsabilip} also states the existence
of curves $\Gamma_Q$ satisfying properties similar to the ones of Lemma \ref{lemcorba}.
However, this information is not useful to prove Lemma \ref{lempac0}, and so we have
skipped it. 

\medskip

\begin{lemma}\label{lem45}
Let $R\in\sel_S(\mu)$ and 
$Q_0\in \ttt_R(\mu)$. Also, let $Q\in\tree(R)$ be a $4$-dyadic square such that $Q\cap 3Q_0\neq \varnothing$, $\ell(Q)\leq \ell(Q_0)/8$, and so
that $\frac12Q$ is $h_a$-doubling. Then there exists a collection of 
squares or points $\{P_i\}_i$ contained in $Q$ such that
\begin{itemize}
\item[(a)] each $P_i$ is contained either in a union of squares from $P\in\term(R)\cup\eee(R)$, or in $3P$, for some $P\in \sss(Q_0)$, 
\item[(b)]  $D_\mu(P_i,Q)\leq M\theta_\mu(Q)^2$ if $\ell(P_i)\leq\ell(Q)$, with $M$ depending on 
$\wt A,\wt\delta$,
\item[(c)] and
$$\mu\biggl(Q\cap \bigcup_i P_i\biggr)\geq \tau\mu(Q),$$
\end{itemize}
assuming that the constants $A,\wt A, \delta, \wt \delta, \ve_0, \wt \ve_0,\beta$ are chosen 
appropriately.
\end{lemma}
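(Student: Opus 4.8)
The plan is to split into two cases according to the position of $Q$ relative to the stopping squares $\sss(Q_0)$ of the corona family $\ttt_R(\mu)$ given by Lemma \ref{lembilip}. If $Q\subset 3P$ for some $P\in\sss(Q_0)$, I would just take the one-element family $\{P_i\}_i=\{Q\}$: then (a) holds with this $P$, (c) holds with $\tau=1$, and (b) reduces to $D_\mu(Q,Q)=\ve_a(Q)^2\le C\,\theta_\mu(Q)^2$, which holds because $\tfrac12 Q$ is $h_a$-doubling, so that $\ve_a(Q)\le C\,\theta_\mu(Q)$. The real work is the complementary case: $Q$ is not contained in $3P$ for any $P\in\sss(Q_0)$. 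Then every $P\in\sss(Q_0)$ meeting $Q$ has $\ell(P)<4\ell(Q)$, hence $P\subset C_{10}Q$ for an absolute $C_{10}$, and since $\ell(Q)\le\ell(Q_0)/8$ one has $Q\subset 4Q_0$; so, up to a piece of $\mu$ that is handled directly, $E\cap Q$ is covered by $\wt G(Q_0)$ together with the squares of $\sss(Q_0)$ that are contained in $C_{10}Q$.

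In this case I would run a stopping time inside $C_{10}Q$: subdividing $Q$ dyadically, a dyadic descendant $P$ is selected the first time one of the following holds: (S1) $P\subset 3P'$ for some $P'\in\sss(Q_0)$; (S2) $P$ is contained in a square of $\term(R)$; or (S3) $D_\mu(P,Q)\ge M_0\,\theta_\mu(Q)^2$, where $M_0=M_0(\wt A,\wt\delta)$ is a large constant fixed at the end. The points of $E\cap Q$ never selected lie, by construction, in $\eee(R)\cap\wt G(Q_0)$, and I would add them to $\{P_i\}_i$ as degenerate squares of zero side length; property (a) is then immediate. For (b): a square $P$ selected by (S3) has a dyadic parent $\wh P$ that was not selected, so $D_\mu(P,Q)\le M_0\theta_\mu(Q)^2+\ve_a(\wh P)^2$, and the same holds a fortiori for squares selected by (S1) or (S2). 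The overshoot $\ve_a(\wh P)^2$ is controlled by Lemma \ref{lemnodob}: if there is no $h_a$-doubling square of the form $2^j\wh P$ between $\wh P$ and $Q$, then \rf{eqd99.5} plus the $h_a$-doubling of $\tfrac12 Q$ give $\ve_a(\wh P)^2\le C\,\theta_\mu(Q)^2$; otherwise, taking the largest such doubling square $P^{\#}$, \rf{eqd99.5} bounds $\ve_a(\wh P)^2$ by $C\,\ve_a(P^{\#})^2\le C\,\theta_\mu(P^{\#})^2$, and Lemma \ref{lembilip}(a), applicable because $P^{\#}$ meets $\wt G(Q_0)$ or a square of $\sss(Q_0)$ with $\ell(P^{\#})\le\ell(Q_0)$, bounds this by $C\wt A\,\theta_\mu(Q)^2$ — after, if needed, peeling off beforehand by an auxiliary stopping rule the part of $Q$ where $\theta_\mu\gg\theta_\mu(Q_0)$ and treating it directly, so that $\theta_\mu(Q)\approx\theta_\mu(Q_0)$ on the region that matters. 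This gives (b) with $M=M_0+C(\wt A,\wt\delta)$.

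The main obstacle is (c): bounding by $(1-\tau)\mu(Q)$ the $\mu$-measure of the squares selected by the density rule (S3). This is where the curvature of $\mu$ is needed. If $P$ is selected by (S3) and meets $\wt G(Q_0)$ or a small square of $\sss(Q_0)$, then, having chosen $M_0$ so large that $M_0\theta_\mu(Q)^2\ge C_9(\wt A,\wt\delta)\theta_\mu(Q_0)^2$, Lemma \ref{lembilip}(b) forces the concentric dyadic ancestors of $P$ below $Q$ to keep density $\gtrsim\wt\delta\,\theta_\mu(Q_0)$ once they have accumulated enough of $D_\mu$, so that a definite proportion of the chain from $P$ to $Q$ consists of squares of density comparable to $\theta_\mu(Q_0)$; Lemma \ref{lembilip}(c) applied along that chain then forces $\mu$ to pick up a definite amount of Menger curvature above $P$. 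Summing over a maximal disjoint subfamily of the (S3)-squares and comparing with the corona packing $\sum_{P\in\ttt_R(\mu)}\theta_\mu(P)^2\mu(P)\le C(\wt A,\wt\delta,\wt\ve_0)\bigl(\mu(3Q_0)+c^2(\mu_{|40 R})\bigr)$ from \rf{eqcorona1}, together with $c^2(\mu_{|3Q_0})\lesssim\theta_\mu(Q_0)^2\mu(3Q_0)$, I expect a bound $\sum_{\mathrm{(S3)}}\mu(P)\le C(\wt A,\wt\delta)\,\wt\ve_0\,\mu(Q)$, which is $\le(1-\tau)\mu(Q)$ once $\wt\ve_0$, $\beta$ (and then $\tau$) are chosen small enough. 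The genuinely delicate point, as is usual for this kind of construction, is the order in which $M_0$, the doubling constants and the smallness parameters $\wt\delta,\wt\ve_0,\beta,\eta$ have to be fixed so that (b) and (c) hold simultaneously for one and the same decomposition.
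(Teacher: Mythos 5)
Your proposal takes a genuinely different route, and the key step fails. The paper does not run a density stopping time inside $Q$ and then try to discard the (S3)-squares by a curvature count. Instead it invokes the type~S/type~L dichotomy for $Q$ itself: if $Q$ is type~L then $Q$ is already inside a $\term(R)$-square and one may take $\{P_i\}=\{Q\}$; if $Q$ is type~S then, by definition of type~S, a definite fraction of $\mu(\tfrac12 Q)$ is covered by squares of $\maxbad(Q)$ of side length in $[\eta\ell(Q),\ell(Q)/8]$, and after discarding $LD(Q)$ via Lemma~\ref{lemld} this mass is carried by $HD(Q)\cup HC(Q)$. The $HD(Q)$-squares have density $\gg\theta_\mu(Q_0)$ once $A\wt\delta\gg\wt A$, so Lemma~\ref{lembilip}(a) forces them to be near $\sss(Q_0)$; the $HC(Q)$-squares have large truncated curvature (increment $\geq\ve_0\theta_\mu(Q)^2\gtrsim\wt\delta^2\ve_0\theta_\mu(Q_0)^2$), which, contrasted with the smallness of the corresponding increment along the corona tree given by Lemma~\ref{lembilip}(c) once $\wt\ve_0\ll\wt\delta^2\ve_0$, again forces them near $\sss(Q_0)$. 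The localization to $Q$ is automatic because $HD(Q)$ and $HC(Q)$ are built from $\maxbad(Q)$, a local object.

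Your step (c) does not survive. The claim $\sum_{(S3)}\mu(P)\leq C\wt\ve_0\,\mu(Q)$ would require localizing the curvature packing to $Q$: the corona estimate~\rf{eqcorona1} only says $\sum_{P\in\ttt_R(\mu)}\theta_\mu(P)^2\mu(P)\lesssim\mu(R)+c^2(\mu_{|40R})$, and neither this sum nor $c^2(\mu_{|40R})$ is under control on the scale of $Q$; a priori the (S3)-stopped region could carry almost all of $\mu(Q)$ while the global corona packing is saturated elsewhere. The auxiliary inequality $c^2(\mu_{|3Q_0})\lesssim\theta_\mu(Q_0)^2\mu(3Q_0)$ you invoke is not available (and is false in general). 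In addition, you read Lemma~\ref{lembilip}(c) backwards: it asserts that the set where the truncated curvature increment is \emph{large} has measure $\leq\beta\mu(P'')$, i.e., the increment is \emph{small} on most of $P''$; it does not force curvature to accumulate above a density-stopped square. In the paper this estimate is used in the opposite direction, to show that squares with large truncated curvature (the $HC(Q)$ squares, defined with the constant $\ve_0$, not $\wt\ve_0$) cannot occur far from $\sss(Q_0)$ when $\wt\ve_0\ll\wt\delta^2\ve_0$ and $\beta$ is small. Without these two ingredients your stopping-time scheme does not yield (c).
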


In this lemma, by convenience we understand that the points in $\eee(R)$ are squares
with zero side length. To prove it, we will make
essential use of the fact that $R$ is of type S.

\begin{proof} If the square $Q$ is of type L, then $Q\in\term(R)$ by definition (since
$Q\in\tree(R)$ and $R$ is of type $S$).
Then we just take $P_i=Q$ and then the lemma holds.
If every square $T$ which intersects $\frac12 Q$ and such that
$D_\mu(T,Q)\geq M\theta_\mu(Q)^2$ is contained in $3P$, for some $P\in\sss(Q_0)\cup \wt G(Q_0)$, we are also done. Therefore, we may assume that $Q$ is of type $S$ and that there exists a square $T$
which intersects $\frac12 Q$ 
such that $D_\mu(T,Q)\geq M\theta_\mu(Q)^2$, satisfying $T\cap P\neq \varnothing$ for some $P\in\sss(Q_0)\cup \wt G(Q_0)$ with $\ell(P)<\ell(T)$ (otherwise, $T\subset 3P$).
This condition implies that 
$$C^{-1}\wt\delta\theta_\mu(Q_0)\leq \theta_\mu(Q)\leq C\wt A\theta_\mu(Q_0),$$
by conditions (a) and (b) of Lemma \ref{lembilip}, assuming $M$ big enough.

Since $Q$ is of type S, there
are squares $S_i\in\maxbad(Q)$ such that $\eta\ell(Q)\leq\ell(S_i)\leq\ell(Q)/8$, with
$S_i\cap\frac12 Q\neq\varnothing$, and
$$\mu\biggl(\bigcup_i S_i\biggr)\geq \frac12 \mu\Bigl(\frac12 Q\Bigr).$$
By Lemma \ref{lemld}, if $\delta$ is small enough, there are squares $\{S_i\}_{i\in I_{HD}}\subset HD(Q)$
and $\{S_i\}_{i\in I_{HC}}\subset HC(Q)$ such that
$$\mu\biggl(\,\bigcup_{i\in I_{HD}\cup I_{HC}} S_i\biggr)\geq \frac14 \mu\Bigl(\frac12 Q\Bigr).$$

Notice that if $S_i\in HD(Q)$, then 
$$\theta_\mu(S_i)\geq C^{-1} A\theta_\mu(Q)\geq C^{-1}A\wt \delta\theta_\mu(Q_0)\gg\theta_\mu(Q_0)$$
if we choose $A$ such that $A\wt \delta\gg \wt A$. Then it is easy to check that $S_i$ 
satisfies the conditions (a) and (b) of the lemma. Condition (c) also holds if 
$$\mu\biggl(\,\bigcup_{i\in I_{HD}} S_i\biggr)\geq \frac18 \mu\Bigl(\frac12 Q\Bigr).$$

If the latter condition fails, then we have
$$\mu\biggl(\,\bigcup_{i\in I_{HC}} S_i\biggr)\geq \frac18 \mu\Bigl(\frac12 Q\Bigr).$$
Let $\{\wh P_j\}_j$ be a family of $4$-dyadic squares or points such that
$\frac12 \wh P_j$ $h_a$-doubling for all $j$, 
which cover $\bigcup_{i\in I_{HC}}S_i$
with finite overlap, with $\ell(\wh P_j)\leq\ell(Q)/100$, so that 
$$D_\mu(\wh P_j,Q)\leq C(\eta)\theta_\mu(Q)^2.$$
By Tchebytchev, it is easy to check that there exists a subfamily $\{\wh P_j\}_{j\in J}\subset 
\{\wh P_j\}_j$ such that for each $j\in J$,
$$\mu\bigl\{x\in P_j:\,K_{\mu,J(P_j)+10}\chi_E(x) - K_{\mu,
J(Q)-4}\chi_E(x)\geq \ve_0\theta_\mu(Q)^2\bigr\} \geq
C_{10}^{-1}\,\mu(P_j),$$
with 
$$\mu\biggl(\,\bigcup_{j\in J} \wh P_j\biggr)\geq C^{-1}\mu(Q).$$
Notice that for $x$ in a big piece of each square $\wh P_j$, $j\in J$,
\begin{align*}
K_{\mu,J(\wh P_j)+10}\chi_E(x) - K_{\mu,
J(Q_0)-4}\chi_E(x) &\geq 
K_{\mu,J(\wh P_j)+10}\chi_E(x) - K_{\mu,
J(Q)-4}\chi_E(x) \\ & \geq
\ve_0\theta_\mu(Q)^2\geq C^{-1}{\wt\delta}^2\ve_0\theta_\mu(Q_0)^2.
\end{align*}
Thus if we choose $\wt\ve_0$ small enough so that $\wt\ve_0\ll{\wt\delta}^2\ve_0$, and
we also take $\beta\ll C_{10}^{-1}$, then 
one can find squares $\{P_i^j\}_i$ contained in $\frac34P_j$ which cover $\frac12
P_j$ with $D_\mu(P_i^j,P_j)=C\theta_\mu(Q_0)$, so that
the family $\bigcup_{j\in J}\{P_i^j\}_{i}$ satisfies all the required properties.
We leave the details for the reader.
\end{proof}
\medskip

For $Q\in\ttt_R(\mu)$,
we denote by $\tree_R(Q)$ the family of dyadic squares from $\tree(R)$ 
that are contained in 
$3Q$ and contain either some of the sixteen dyadic squares of equal length that form one square from $\sss(Q)$, or some point from $\wt G(Q)$.

\begin{lemma}\label{lem46}
Given $R\in\sel_S(\mu)$, 
for each $Q\in\ttt_R(\mu)$,
\begin{equation}\label{eqtree2}
\sum_{P\in\tree_R(Q)}\ve_a(P)^2\mu(P)\leq C(A,\delta,\ve_0,\eta)\ve_a(Q)^2\mu(Q).
\end{equation}
\end{lemma}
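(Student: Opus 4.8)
The plan is to prove Lemma~\ref{lem46} as a consequence of Lemma~\ref{lem45} together with the doubling-square estimate of Lemma~\ref{lemnodob}. Fix $Q\in\ttt_R(\mu)$. The rough idea is this: the sum over $P\in\tree_R(Q)$ of $\ve_a(P)^2\mu(P)$ should be controlled by first passing to the $h_a$-doubling squares that dominate the relevant $P$'s, and then, for each such doubling square $\wh P$, using Lemma~\ref{lem45} (applied with $\wh P$ in the role of ``$Q$'' and $Q$ in the role of ``$Q_0$'') to produce a descent that gains a definite fraction of $\mu$-mass at each step, giving a geometric (packing) series whose sum is $\lesssim \ve_a(Q)^2\mu(Q)$.

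In more detail, first I would split $\tree_R(Q)$ according to size. For squares $P$ with $\ell(P)\geq \eta'\ell(Q)$ (for a suitable threshold depending only on $A,\delta,\ve_0,\eta$) there are only boundedly many ``scales'' and, by condition (a) of Lemma~\ref{lembilip} together with \eqref{eqsum69} and Lemma~\ref{lemnodob}, each such $P$ satisfies $\ve_a(P)\lesssim \theta_\mu(P)\lesssim \wt A\,\theta_\mu(Q)\lesssim \ve_a(Q)$ up to the usual doubling comparisons; summing $\mu(P)$ over such $P$ costs only a bounded factor, so their contribution is $\lesssim C\,\ve_a(Q)^2\mu(Q)$. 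The main work is with the small squares. For these I would associate to each $P\in\tree_R(Q)$ with $\ell(P)$ small the smallest $h_a$-doubling ancestor $\widehat P$ with $P\subset \widehat P\subset$ (a fixed dilate of) $Q$; by Lemma~\ref{lemnodob}, $D_\mu(P,\widehat P)\lesssim \ve_a(\widehat P)^2$, and in particular $\ve_a(P)\lesssim \ve_a(\widehat P)$ and all intermediate $\ve_a$ are controlled, so it suffices to bound $\sum_{\widehat P}\ve_a(\widehat P)^2\mu(\widehat P)$ over the (finite-overlap) collection of these doubling ancestors. Each such $\widehat P$ satisfies the hypotheses of Lemma~\ref{lem45} (it intersects a dilate of $Q$, has much smaller side length, and $\tfrac12\widehat P$ is $h_a$-doubling), so Lemma~\ref{lem45} provides squares-or-points $\{P_i\}$ inside $\widehat P$, each lying either in the terminal region $\bigcup\term(R)\cup\eee(R)$ or in $3P'$ for some $P'\in\sss(Q)$, with $D_\mu(P_i,\widehat P)\lesssim\theta_\mu(\widehat P)^2$ and $\mu(\widehat P\cap\bigcup_i P_i)\geq\tau\,\mu(\widehat P)$.

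This last property is exactly what drives a packing/stopping-time argument: iterating, the doubling ancestors $\widehat P$ organize into a ``tree'' in which, passing from a node to the next generation, one descends into squares $P_i$ that capture a fixed proportion $\tau$ of the $\mu$-mass outside the terminal region, and the squares that belong to $\term(R)\cup\eee(R)$ (or to the $\sss(Q)$-pieces) are removed from further consideration. Since $\theta_\mu(\widehat P)\approx\ve_a(\widehat P)\approx \theta_\mu(Q)\approx \ve_a(Q)$ along such a tree (by condition (a) of Lemma~\ref{lembilip}, Lemma~\ref{lemnodob}, and Remark~\ref{remdob}), the weight $\ve_a(\widehat P)^2$ is essentially constant, equal to $\ve_a(Q)^2$ up to constants, while $\sum$ of the $\mu$-masses over all generations telescopes (the mass at generation $k+1$ inside a fixed node is at most $(1-\tau)$ times the mass at generation $k$, modulo the removed terminal part), yielding $\sum_{\widehat P}\mu(\widehat P)\lesssim \tau^{-1}\mu(3Q)\lesssim \mu(Q)$ by the doubling of $Q$ inherited from $\ttt_R(\mu)$. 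Combining, $\sum_{P\in\tree_R(Q)}\ve_a(P)^2\mu(P)\lesssim \ve_a(Q)^2\,\mu(Q)$ with constant depending on $A,\delta,\ve_0,\eta$, which is \eqref{eqtree2}.

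I expect the main obstacle to be the bookkeeping in the iteration: making precise the ``tree of doubling ancestors'', checking that the $P_i$ supplied by Lemma~\ref{lem45} can indeed be covered by $h_a$-doubling squares of controlled size to continue the recursion (so that Lemma~\ref{lemnodob} keeps applying on the gaps between consecutive doubling ancestors), and verifying that the portion of each $P_i$ that lands in $\term(R)\cup\eee(R)$ or in the $3P'$, $P'\in\sss(Q)$, is genuinely removed so that the geometric decay $(1-\tau)^k$ is valid at every node. A secondary point to be careful about is that $\tree_R(Q)$ consists of \emph{dyadic} squares while the doubling ancestors and the output of Lemma~\ref{lem45} are $4$-dyadic; this only costs bounded overlap and the standard comparisons between a dyadic square and the $4$-dyadic squares that contain it, but it has to be tracked so that the finite-overlap constants do not accumulate. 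Once these are in place, the estimate is a routine geometric-series summation using that $\ve_a$ and $\theta_\mu$ are comparable and slowly varying along doubling chains.
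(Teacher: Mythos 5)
Your reduction via Lemma~\ref{lemnodob} to the $h_a$-doubling ancestors, and your use of Lemma~\ref{lem45} as the engine producing a mass gain, are exactly the right ingredients, and the preliminary steps (boundedly many large scales; the upper bound $\ve_a(P)\leq C(\wt A)\,\ve_a(Q)$ for $P\in\tree_R(Q)$) are fine. The gap is in the packing step: the claim $\sum_{\widehat P}\mu(\widehat P)\lesssim \tau^{-1}\mu(Q)$, summed over \emph{all} $h_a$-doubling ancestors of squares of $\tree_R(Q)$, does not follow from ``a $\tau$-fraction of the mass is retired at each generation''. If a generation means one application of Lemma~\ref{lem45}, the sets $P_i$ it produces are constrained only by $D_\mu(P_i,\widehat P)\leq M\theta_\mu(\widehat P)^2$; they may sit arbitrarily many doubling scales below $\widehat P$, so between consecutive generations there can be unboundedly many intermediate doubling ancestors of essentially undiminished mass, and these are not accounted for by the geometric series. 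If instead you declare every doubling ancestor to be a node (generation $k+1$ being the maximal doubling ancestors strictly inside generation-$k$ nodes), the factor $(1-\tau)$ fails: a point of a removed set $P_i$ may perfectly well lie in a much smaller doubling ancestor of some square of $\tree_R(Q)$ strictly inside $\widehat P$, so its mass reappears at the next generation. Either way the unweighted Carleson condition for the doubling ancestors is not established; and one cannot simply trade $\ve_a(\widehat P)^2$ for $\ve_a(Q)^2$, because Lemma~\ref{lembilip} gives only the upper bound $\theta_\mu(P)\leq C\wt A\,\theta_\mu(Q)$ in general --- the lower bound in its condition (b) is available only when the accumulated $D_\mu$ is already of size $\theta_\mu(Q)^2$, so using it here is circular.

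The paper avoids this by indexing the iteration by the accumulated weight rather than by the doubling combinatorics. One sets $F(x)=\sum_k\max_P \ve_a(P)^2$, the maximum over $4$-dyadic squares $P\ni x$ of sidelength $2^{-k}$ one of whose dyadic pieces lies in $\tree_R(Q)$, so that the lemma amounts to $\|F\|_{L^1(\mu)}\lesssim \ve_a(Q)^2\mu(Q)$, and studies $\Omega_\lambda=\{F>\lambda\,\ve_a(Q)^2\}$. Lemma~\ref{lemnodob} shows that $\Omega_\lambda$ is covered by doubling squares $S_x\subset\Omega_{\lambda-C_{13}}$ on which $F$ is essentially constant, and Lemma~\ref{lem45} then produces inside each $S_x$ a subset of proportional $\mu$-mass on which $F\leq\lambda+C_{14}$ (the descent below the $P_i$ adds only $O(\ve_a(Q)^2)$ to $F$, by its condition (b)). This yields $\mu(\Omega_{\lambda+C_{14}})\leq\mu(\Omega_{\lambda-C_{13}})-C_{15}^{-1}\mu(\Omega_\lambda)$, hence geometric decay of $\mu(\Omega_\lambda)$ and the $L^1$ bound. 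Your stopping-time scheme becomes correct once the generations are driven by increments of $F$ (equivalently of $D_\mu$) of size $\approx\ve_a(Q)^2$, rather than by passage to the next doubling ancestor.
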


Recall that $\tree_R(Q)\subset\tree(R)$. Thus, in a sense all the relevant squares in 
the sum above are of type $S$, which originate short trees. 

\begin{proof}
For $x\in\C$ we define the function
\begin{equation}\label{eqsum84}
F(x) = \sum_{k\in\Z} \max_{P\sim (x,k)} \ve_a(P)^2,
\end{equation}
where the notation $P\sim (x,k)$ means that $P$ is a $4$-dyadic square containing $x$, with $\ell(P)=2^{-k}$
such that some of the $16$ dyadic squares of equal side length that form $P$ belongs
to $\tree(Q)$. 
From the definition, it is easy to check that $F(x)=0$ if $x\not\in CQ$, for some fixed
$C>1$. To prove the lemma we will show that $\|F\|_{L^1(\mu)}\leq C\ve_a(Q)^2\mu(Q)$.

For $\lambda>0$, denote $$\Omega_\lambda = \{x\in\C:F(x)>\lambda\ve_a(Q)^2\}.$$
For $x\in\Omega_\lambda$, let $k_x$ be the minimal integer such that
$$\sum_{k\leq k_x} \max_{P\sim (x,k)} \ve_a(P)^2>\lambda\ve_a(Q)^2,$$
and let $\wt S_x\sim (x,k_x)$ be such that $\ve_a(\wt S_x)$ is maximal.
Let $S_x$ be the smallest $4$-dyadic square such that $\frac12 S_x$ is $h_a$-doubling and
contains $\wt S_x$.
If $\ell(S_x)>\ell(Q)$, from  
 Lemma \ref{lemnodob}, it follows easily that 
$$D_\mu(\wt S_x,Q)\leq C_{11}\ve_a(Q)^2,$$
where $C_{11}$ may depend on $\wt A,\wt \delta$\ldots. This implies that
$$F(x)\leq C_{12}\ve_a(Q)^2,$$
with $C_{12}$ depending on $C_{11}$.

Assume that $\lambda>C_{12}$. In this case,
  $\ell(S_x)\leq\ell(Q)$. From Lemma \ref{lemnodob} and the fact
that for all $P\in\tree_R(Q)$, $\ve_a(P)\leq C(\wt A)\ve_a(Q)$, one infers that 
$$D_\mu(\wt S_x,S_x)\leq C_{11}\ve_a(Q)^2.$$
From this estimate, one deduces that
$$F(y)>(\lambda - C_{13})\ve_a(Q)^2\qquad \mbox{ for all $y\in S_x$,}$$
with $C_{13}$ depending on $C_{11}$. So we have
$$\Omega_\lambda \subset \bigcup_x S_x\subset \Omega_{\lambda - C_{13}}.$$
From the doubling properties of the squares $S_x$, there exists a subfamily
$\{S_{x_i}\}$ such that the squares from this family are pairwise disjoint
and 
$$\mu\biggl(\bigcup_iS_{x_i}\biggr)\geq C^{-1}\mu\biggl(\bigcup_x S_x\biggr).$$
We may cover each square $\frac12 S_{x_i}$ with a family of squares $\{T^i_j\}_j$
such that each $\frac12 T^i_j$ is $h_a$-doubling. By Lemma \ref{lem45},
for each $T^i_j$ there exists some subset $A^i_j$ such that
$\mu(A^i_j)\geq C^{-1}\mu(T^i_j)$ and
$$F(x)\leq \lambda + C$$
(because of (b) in Lemma \ref{lem45} and because $D_\mu(T^i_j,S_{x_i})\leq C$). Using some 
appropriate covering theorem (like Vitali), one
infers that for each $i$ there exists $A_i\subset S_{x_i}$ such that
$\mu(A_i)\geq C^{-1}\mu(S_{x_i})$ and
$F(x)\leq \lambda + C_{14}$ on $A_i$. 

Since $A_i\subset \Omega_{\lambda - C_{13}} \setminus
 \Omega_{\lambda + C_{14}}$, we deduce
$$\mu(\Omega_{\lambda - C_{13}}) - \mu(\Omega_{\lambda + C_{14}})
\geq \sum_i\mu(A_i)\geq C^{-1} \sum_i\mu(S_{x_i})\geq C_{15}^{-1}\mu(\Omega_\lambda).$$
Thus,
\begin{equation}\label{eqgeom1}
\mu(\Omega_{\lambda + C_{14}})\leq \mu(\Omega_{\lambda - C_{13}}) - C_{15}
^{-1}\mu(\Omega_\lambda).
\end{equation}
We have
$$\|F\|_{L^1(\mu)} \leq \ve_a(Q)^2\int_0^\infty \mu(\Omega_\lambda)\,d\lambda\leq
C_{12}\ve_a(Q)^2\mu(CQ) + \ve_a(Q)^2\int_{C_{12}}^\infty \mu(\Omega_\lambda)\,d\lambda.$$
We may assume that $C_{12},C_{13},C_{14}$ are integer constants. Then,
$$\int_{C_{12}}^\infty \mu(\Omega_\lambda)\,d\lambda  \leq \sum_{k\geq C_{12}}\mu(\Omega_k).$$
From \rf{eqgeom1},  one can easily check that $\mu(\Omega_k)$ decreases geometrically 
as $k\to\infty$. Indeed, \rf{eqgeom1} this implies that
$$\mu(\Omega_{\lambda+C_{14}}) (1+C_{15}^{-1})\leq \mu(\Omega_{\lambda - C_{13}}).$$
That is,
$\mu(\Omega_{\lambda+C_{14}+C_{13}}) \leq \alpha\,\mu(\Omega_\lambda),$
with $\alpha=(1+C_{15}^{-1})^{-1}$.
Then we deduce that
$$\sum_{k\geq C_{12}}\mu(\Omega_k)\leq C\mu(Q),$$
and then the lemma follows.
\end{proof}

\medskip

\begin{proof}[\bf Proof of Lemma \ref{lempac0}]
Let $\ttt_R(\mu)$ be the family described in Lemma \ref{lembilip}.
Since
$c^2_\mu(x)\leq 1$ for all $x\in\C$, we have
\begin{equation}\label{eqsum83}
\sum_{Q\in\ttt_R(\mu)}\theta_\mu(Q)^2\mu(Q)\leq C(\wt A,\wt \delta, \wt\ve_0)\mu(R).
\end{equation}

Notice that 
$$\tree(R)\subset \bigcup_{Q\in\ttt_R(\mu)}\tree_R(Q).$$
By the preceding lemma, for each $Q\in\ttt_R(\mu)$,
$$
\sum_{P\in\tree_R(Q)}\ve_a(P)^2\mu(P)\leq C(A,\delta,\ve_0,\eta)\ve_a(Q)^2\mu(Q).
$$
 Together with \rf{eqsum83} and the fact that
$\ve_a(Q)\approx \theta_\mu(Q)$ for $Q\in\ttt_R(\mu)$, this yields
\begin{align*}
\sum_{P\in\tree(R)}\ve_a(P)^2\,\mu(P)& \leq \sum_{Q\in\ttt_R(\mu)}\sum_{P\in\tree_R(Q)}\ve_a(P)^2\mu(P)\\ & \leq C\sum_{Q\in\ttt_R(\mu)}\ve_a(Q)^2\,\mu(Q)\leq 
C_{16}\mu(R),
\end{align*}
with $C_{16}$ depending on all the parameters $\eta,A, \delta,  \ve_0$.
\end{proof}



\bigskip

\section{Construction of the measure $\nu$ for the proof of Theorem \ref{distorgamma}}
\label{secconsnu}

In this section we will prove the estimate \rf{eqdistor} following the ideas explained in
Section \ref{secstrat}. To this end, using the corona decomposition of 
the preceding section 
we will construct a measure $\nu$ supported on $\phi(E)$ such that
$\nu(\phi(E))\approx \gamma(E)^{\frac{2K}{K+1}}$ and 
$\dot W_{ \frac{2K}{2K+1},\frac{2K+1}{K+1}}^\nu(x)\lesssim 1$ for all $x\in\phi(E)$.

\subsection{Preliminaries}

Next lemma is just a rescaled version of Lemma \ref{mainlem}

\begin{lemma}\label{mainlemrescal}
Let $\mu$ be a finite continuous (i.e. without point masses) Borel measure on $\C$.
For $a>0$ small enough (depending only on $K$), denote
$$\ve_a(x,t) = \ve_a(B) = \frac1t \int \psi_a\Bigl(\frac{y-x}t\Bigr)d\mu(y),\qquad h_a(x,t) = t\,\ve_a(x,t),$$
with $\psi_a$ as in \rf{eqpsia}. Let $\phi:\C\to\C$ be a $K$-quasiconformal mapping and set
\begin{equation}\label{defd43}
\ve(x,t) = \ve_a(\phi^{-1}(B(x,t)))^{\frac{2K}{K+1}},\qquad
h(x,t) = t^{\frac2{K+1}}\, \ve(x,t).
\end{equation}
If $E\subset \C$ is a compact subset contained in a ball $B$,
 we have
$$\frac{M^{h_a}(E)}{\diam(B)} \leq C(K) \,\biggl(\frac{M^h(\phi(E))}{\diam(\phi(B))^{\frac2{K+1}}}\biggr)^{\frac{K+1}{2K}}.$$
\end{lemma}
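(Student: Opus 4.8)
The plan is to read the estimate off from Lemma~\ref{mainlem} by an affine change of variables. The present statement differs from Lemma~\ref{mainlem} only in that $B$ is an arbitrary ball, $\phi$ an arbitrary $K$-quasiconformal homeomorphism of $\C$, and the gauge $\ve_0$ is the specific function $\ve_a=\ve_{\mu,a}$; in exchange one must carry along the homogeneity factors $\diam(B)$ and $\diam(\phi(B))$. First one checks that $\ve_a$ is an admissible choice of $\ve_0$ in Lemma~\ref{mainlem}, i.e. that $\ve_a\in\cG_1$ and that $\ve_a\circ\psi\in\cG_2$ for every $K_0$-quasiconformal $\psi$ with $K_0\le K$: the former holds for all $a>0$ (as noted right after the definition of $\cG_1$ and $\cG_2$), and the latter follows from Lemma~\ref{lemtec5} once $a>0$ is chosen small enough depending only on $K$ --- this is where the smallness of $a$ in the statement is used. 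Also, since $\mu$ is finite and atomless, $h_a(x,r)=\int\psi_a\bigl((y-x)/r\bigr)\,d\mu(y)\to\mu(\{x\})=0$ as $r\to0$, so that the contents $M^{h_a}$ and $M^h$ are well defined.

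Next I would normalize. Put $\sigma_1(z)=(z-c_B)/(2\diam(B))$, so $\sigma_1$ is conformal and $\sigma_1(B)=B(0,1/4)\subset B(0,1/2)$, and replace $\mu$ by the finite atomless image measure $\mu^{\#}=(\sigma_1)_{\#}\mu$; choose an affine $\sigma_2$ with $\diam\bigl(\sigma_2(\phi(B))\bigr)=1$ and set $\phi^{\#}=\sigma_2\circ\phi\circ\sigma_1^{-1}$, still $K$-quasiconformal. By a standard reduction (as in the proof of Theorem~\ref{teocap}(a)) one may further assume that $\phi^{\#}$ is principal and conformal on $\C\setminus\bar\D$. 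Because the gauge $h$ depends on $\phi^{-1}$ globally, this step also requires noting that the value of $M^h\bigl(\phi^{\#}(E^{\#})\bigr)$ is not essentially affected: it is an infimum over coverings of the compact set $\phi^{\#}(E^{\#})$, so only balls near $\phi^{\#}(E^{\#})$ --- whose $\phi^{\#}$-preimages remain in a fixed neighborhood of $E^{\#}$ --- matter, and there the relevant data is untouched up to constants depending only on $K$. This reconciliation is the one point that needs a little care; it follows the pattern of the reductions used elsewhere in the paper.

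Finally, with $E^{\#}=\sigma_1(E)\subset B(0,1/2)$, Lemma~\ref{mainlem} applied with $\ve_0=\ve_{\mu^{\#},a}$ and $\phi=\phi^{\#}$ gives $M^{h_a^{\#}}(E^{\#})\le C(K)\,M^{h^{\#}}\bigl(\phi^{\#}(E^{\#})\bigr)^{(K+1)/(2K)}$, where $h_a^{\#}(x,r)=r\,\ve_{\mu^{\#},a}(x,r)$ and $h^{\#}(x,r)=r^{2/(K+1)}\,\ve_{\mu^{\#},a}\bigl((\phi^{\#})^{-1}(B(x,r))\bigr)^{2K/(K+1)}$. It remains to un-normalize, which is homogeneity bookkeeping. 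A direct computation with $\mu^{\#}=(\sigma_1)_{\#}\mu$ gives $\ve_{\mu^{\#},a}(\sigma_1(B_0))=2\diam(B)\,\ve_{\mu,a}(B_0)$ for every ball $B_0$, hence $h_a^{\#}(\sigma_1(B_0))=h_{\mu,a}(B_0)$ and therefore $M^{h_a^{\#}}(E^{\#})=M^{h_a}(E)$. Likewise, using $(\phi^{\#})^{-1}=\sigma_1\circ\phi^{-1}\circ\sigma_2^{-1}$ together with $\diam(\sigma_2(\phi(B)))=1$, one gets $h^{\#}(\sigma_2(B_0))\approx\diam(\phi(B))^{-2/(K+1)}\diam(B)^{2K/(K+1)}\,h(B_0)$, so that $M^{h^{\#}}\bigl(\phi^{\#}(E^{\#})\bigr)\approx\diam(\phi(B))^{-2/(K+1)}\diam(B)^{2K/(K+1)}\,M^h(\phi(E))$. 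Substituting these two relations into the inequality from Lemma~\ref{mainlem} and raising $M^h(\phi(E))$ to the power $(K+1)/(2K)$ gives $M^{h_a}(E)\le C(K)\,\diam(B)\,\diam(\phi(B))^{-1/K}\,M^h(\phi(E))^{(K+1)/(2K)}$, which is the claimed estimate after dividing by $\diam(B)$ and writing $\diam(\phi(B))^{-1/K}=\bigl(\diam(\phi(B))^{-2/(K+1)}\bigr)^{(K+1)/(2K)}$. The only non-routine ingredient is the principal-map reduction above; everything else is the homogeneity bookkeeping just indicated, $h_a$ being of degree $1$ in $\mu$ and $h$ of degree $2/(K+1)$.
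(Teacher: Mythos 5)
Your proof is correct and is precisely the rescaling argument the paper has in mind: the paper itself offers no separate proof of Lemma~\ref{mainlemrescal}, stating only that it is ``just a rescaled version of Lemma~\ref{mainlem},'' and your $\sigma_1,\sigma_2$ conjugation, the verification of $\ve_{\mu^\#,a}\in\cG_1$ with the $\cG_2$ composition property via Lemma~\ref{lemtec5}, the standard principal-map reduction (with the correct caveat about the gauge depending on $\phi^{-1}$), and the degree-$1$ and degree-$2/(K+1)$ homogeneity bookkeeping reproduce exactly that rescaling (the $M^{h_2}$ in the statement is a typo for $M^{h_a}$, which you rightly interpret).
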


\vspace{3mm}
Obviously, an analogous version holds with squares instead of balls.

\begin{lemma}\label{lemtec53}
Under the same hypotheses and notation of Lemma \ref{mainlemrescal}, given any square $Q\subset\C$, if
$$M^{h_a}(Q\cap E)\geq C_{17} h_a(Q)$$
with $C_{17}>0$,
then
$$M^h(\phi(Q\cap E))\geq C_{18} h(\phi(Q)),$$
with $C_{18}>0$ depending only on $C_{17}$ and $K$.
\end{lemma}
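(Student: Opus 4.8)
The plan is to obtain this as an immediate consequence of Lemma~\ref{mainlemrescal} applied to the single piece $Q\cap E$, once one matches the normalizing factors on the two sides. First I would fix an auxiliary ball $B$ with $Q\subset B\subset 2Q$, so that $\diam(B)\approx\ell(Q)$ and, by the quasisymmetry of $\phi$, $\diam(\phi(B))\approx\diam(\phi(Q))$. Since $Q$ is half-open I would in fact apply Lemma~\ref{mainlemrescal} to the compact set $\overline{Q\cap E}\subset\overline Q\subset B$ (still contained in $E$, so $M^{h_2}(\overline{Q\cap E})\ge M^{h_2}(Q\cap E)$, and with $\phi(\overline{Q\cap E})=\overline{\phi(Q\cap E)}$); passing to the closure only forces $Q$ to be replaced by $2Q$ in the conclusion, which is harmless since $h_a(x,2t)\le Ch_a(x,t)$, $h(x,2t)\le Ch(x,t)$ and $\diam(\phi(2Q))\approx\diam(\phi(Q))$, so I will not dwell on it. Lemma~\ref{mainlemrescal} then yields
\[
\frac{M^{h_2}(Q\cap E)}{\diam(B)}\ \le\ C(K)\left(\frac{M^h(\phi(Q\cap E))}{\diam(\phi(B))^{\frac2{K+1}}}\right)^{\frac{K+1}{2K}},
\]
that is, $M^h(\phi(Q\cap E))\gtrsim\diam(\phi(B))^{\frac2{K+1}}\bigl(M^{h_2}(Q\cap E)/\diam(B)\bigr)^{\frac{2K}{K+1}}$.

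Next I would substitute the hypothesis. Since $h_a(Q)\approx\ell(Q)\,\ve_a(Q)$ and $\diam(B)\approx\ell(Q)$, the assumption $M^{h_2}(Q\cap E)\ge C_{17}h_a(Q)$ gives $M^{h_2}(Q\cap E)/\diam(B)\gtrsim C_{17}\,\ve_a(Q)$, and combined with $\diam(\phi(B))\approx\diam(\phi(Q))$ this produces $M^h(\phi(Q\cap E))\gtrsim C_{17}^{2K/(K+1)}\,\diam(\phi(Q))^{\frac2{K+1}}\,\ve_a(Q)^{\frac{2K}{K+1}}$. It only remains to recognize this as a constant multiple of $h(\phi(Q))$. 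By the convention for gauges on bounded sets, $h(\phi(Q))=h(B')$ with $B'$ a smallest ball containing $\phi(Q)$, so $h(\phi(Q))\approx\diam(\phi(Q))^{\frac2{K+1}}\,\ve_a(\phi^{-1}(B'))^{\frac{2K}{K+1}}$; and since $\diam(B')=\diam(\phi(Q))$, quasisymmetry makes $\phi^{-1}(B')$ a set of diameter comparable to $\diam(Q)$ containing $Q$, whence $\ve_a(\phi^{-1}(B'))\approx\ve_a(Q)$ because $\ve_a=\ve_{\mu,a}\in\cG_1$. Therefore $h(\phi(Q))\approx\diam(\phi(Q))^{\frac2{K+1}}\ve_a(Q)^{\frac{2K}{K+1}}$, and the bound above becomes $M^h(\phi(Q\cap E))\ge C_{18}h(\phi(Q))$ with $C_{18}=c(K)\,C_{17}^{2K/(K+1)}$, as claimed.

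Since the proof is essentially a substitution into Lemma~\ref{mainlemrescal}, I do not expect any real obstacle; the only slightly delicate points are purely of bookkeeping type — the comparison $h(\phi(Q))\approx\diam(\phi(Q))^{2/(K+1)}\ve_a(Q)^{2K/(K+1)}$, for which I rely on the $\cG_1$-regularity of $\ve_{\mu,a}$ together with the quasisymmetry estimates $\diam(\phi(B))\approx\diam(\phi(Q))$ and $\diam(\phi^{-1}(B'))\approx\diam(Q)$, and the harmless replacement of $Q\cap E$ by its closure in order to invoke Lemma~\ref{mainlemrescal} for a compact set.
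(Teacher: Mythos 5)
Your proposal is correct and follows essentially the same route as the paper: the paper rewrites the hypothesis as $M^{h_2}(Q\cap E)/\ell(Q)\geq C\ve_a(Q)$, applies Lemma~\ref{mainlemrescal}, and observes that the resulting bound is equivalent to $M^h(\phi(Q\cap E))\geq C_{18}h(\phi(Q))$. You simply make explicit the bookkeeping --- the auxiliary ball $B$, the closure of $Q\cap E$, and the identification $h(\phi(Q))\approx\diam(\phi(Q))^{2/(K+1)}\ve_a(Q)^{2K/(K+1)}$ via $\cG_1$-regularity of $\ve_a$ and quasisymmetry of $\phi$ --- that the paper's two-line argument leaves implicit.
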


\begin{proof}
We have
$$\frac{M^{h_a}(Q\cap E)}{\ell(Q)}\geq C\ve_a(Q).$$
Then, by Lemma \ref{mainlemrescal} applied to $Q\cap E$ and $Q$ instead of $B$,
$$\frac{M^h(\phi(Q\cap E))}{\diam(\phi(Q))^{\frac2{K+1}}}\geq C\ve_a(Q)^{\frac{2K}{K+1}},$$
which is equivalent to $M^h(\phi(Q\cap E))\geq C_{18} h(\phi(Q)).$
\end{proof}

\bigskip

%

Notice that the assumption $M^{h_a}(Q\cap E)\geq C_{17} h_a(Q)$ is satisfied by the squares
from $\sel(\mu)$ in the corona construction in Section \ref{seccorona0}, since
$$M^{h_a}(Q\cap E)\geq C\,\mu(Q)\geq C\,h_a(Q),$$
by Lemma \ref{lem2.2} and $Q$ is $h_a$-doubling.

\bigskip


To construct $\nu$ we will use the structure of $4$-dyadic squares from $\sel(\mu)$
introduced in the preceding section.
We denote $\sel(\nu):=\phi(\sel(\mu))$, and analogously for other families of squares such as 
$\sel_S(\nu)$, $\sel_L(\nu)$, etc.
Given a $4$-dyadic $\phi$-square $R\in\sel(\nu)$, we denote $\cT_\nu(R):=\phi(\cT(\phi^{-1}(R)))$ and $F_\nu(R) := \phi(F(\phi^{-1}(R))$ (see Subsections \ref{subsels} and \ref{subsell}),
 and also
$G_\nu(R):= \phi(G(\phi^{-1}(R))$ (see \rf{defgg}).

We will define the values of $\nu$ on the squares of $\sel(\nu)$ (and/or other subsets
like $G_\nu(R)$ or $F_\nu(R)$, for $R\in\sel(\nu)$) inductively. To start with, we set
$$\nu(\phi(R_0))= M^h(\phi(E)).$$
Recall that $R_0$ is the biggest $4$-dyadic square from $\sel(\mu)$, so that $E$ is contained
 in one of the $4$ dyadic squares that form $\frac12 R_0$.  

In the algorithm of construction of $\nu$,
after fixing $\nu(R)$ for some $R\in\sel(\nu)$, then one defines the values 
of $\nu(P)$ for all $P\in\cT_\nu(R)$ , as well as in $G(R)\cup F(R)$.
To this end, it is necessary to distinguish two cases, according to wether $R$ is 
of type L or S. In Subsection \ref{sub55} we consider the case where $R$ is of 
type L, and in Subsection \ref{sub56}, the one where $R$ is of type S.

To simplify notation, in the rest of the paper given a square $Q$, we denote
$Q'=\phi(Q)$. Usually, the letters $P,Q,R$ will be reserved for squares, and $P',Q',R'$
for $\phi$-squares.

\bigskip


\subsection{Definition of $\nu$ on $\cT_\nu(R')$ when $R'\in \sel_L(\nu)$}\label{sub55}

Suppose first that 
$$\mu(G(R))< \frac14\,\mu\Bigl(\frac12R\Bigr).$$


\subsubsection{First step: definition of $\nu(3 S_j'))$, $j\in I_R$}\label{susu3}

Recall the definition of the squares $S_j$, $j\in I_R$, in \rf{eqsj7}.
In particular, recall that the squares $5S_j$, $j\in I_R$, are pairwise disjoint,
contained in $R$, so that $S_j\cap\Gamma_R\neq\varnothing$ 
(where $\Gamma_R$ is a chord arc curve or a union of at most $N_0$ chord arc curves), and moreover,
$$\sum_{j\in I_R}\mu(S_j)\geq C^{-1}\mu(R).$$

We have
\begin{equation}\label{eqdak33}
\sum_{j\in I_R}\ell(S_j)\geq C_{19}\diam(\Gamma_R),
\end{equation}
with $C_{19}$ depending on $A,\delta$ (but not on $\eta$), because by the property (c) from Lemma 
\ref{lemcorba},
\begin{align*}
\sum_{j\in I_R} \ell(S_j) &= \sum_{j\in I_R} \mu(S_j)\, \theta_\mu(S_j)^{-1} 
\approx \sum_{j\in I_R}\mu(S_j) \,\theta_\mu(R) \geq C^{-1}\,\mu(R)\,\theta_\mu(R) = C\,\ell(R).
\end{align*}
Then,
from Lemma \ref{lemdistca4} applied to suitable arcs contained in $3S_j\cap\Gamma_R$
and quasisymmetry we deduce
\begin{equation}\label{eqdak34} 
\sum_{j\in I_R}\ell(S_j')^\alpha \geq C_{20}\diam(\phi(\Gamma))^\alpha\approx \ell(R')^\alpha,
\end{equation}
where $\alpha>2/(K+1)$ depends only on $K$, and $C_{20}$ depends on $C_{19}$, $K$, and the parameters of the corona construction 
(except $\eta$).
In fact, a similar argument shows that the set $G':=\bigcup_{j\in I_R}3S_j'$ satisfies
$$H_\infty^\alpha(G')\geq C_{21}\diam(\phi(\Gamma))^\alpha.$$
To see this, just take into account that any union of sub-arcs of $\Gamma_R$ that contains
$\Gamma_R\cap \bigcup_{j\in I_R}3S_j$ satisfies an estimate analogous to \rf{eqdak33}, and
thus we would get an estimate similar to \rf{eqdak34} for the corresponding images.

By Frostman Lemma, we deduce that there exists some measure $\sigma$ supported on
$G'$ such that
$\sigma(G') = H^\alpha_\infty (G')$
and
\begin{equation}\label{condsigma}
\sigma(B(x,r))\leq Cr^\alpha\qquad \mbox{for all $x\in\C,\,r>0.$}
\end{equation}
We define
$$\nu(3S_j') = \frac{\sigma(3S_j')}{\sigma(G')}\,\nu(R')$$
(recall that we assume that $\nu(R')$ has already been fixed),
and moreover, 
$$\nu(5S_j'\setminus 3S_j')=0.$$
It is easy to check then that if $P'$ is a $\phi$-square concentric with $S_j'$ which contains $3S_j'$ and is contained in
$3R'$, then
$$\nu(P') \leq C\,\frac{\ell(P')^\alpha}{\sigma(G')}\,\nu(R')\approx
\frac{\ell(P')^\alpha}{\ell(R')^\alpha}\,\nu(R').$$
This can be proved using the condition \rf{condsigma} and the fact that the $\phi$-squares
$5S_j'$ are disjoint, for instance.
Therefore,
\begin{equation}\label{eqf36}
\frac{\nu(P')}{\ell(P')^{\frac2{K+1}}} \leq C\,\biggl(\frac{\ell(P')}{\ell(R')}\biggr)^{\alpha-\frac2{K+1}}\,\frac{\nu(R')}{\ell(R')^{\frac2{K+1}}}.
\end{equation}

\bigskip


\subsubsection{Second step: definition of $\nu(P')$ for $P'\in \cT_\nu(R')$}
\label{subsub69}

Recall that for each $j\in I_R$, there is a family $\cP'=\cT_{j,\nu}(R')\cup F_j(R')$ of $\phi$-squares or points $P'$ which are contained in 
$3S_j'$, such that different $\phi$-squares $5P'$ are pairwise disjoint and 
$$\mu\biggl(\,\bigcup_{P\in \cP} P\biggr) \geq C^{-1}\mu(5S_j).$$

We denote 
$$G_j' = \bigcup_{P'\in \cP'} P'.$$
Our next task consists in distributing the measure $\nu_{| 3S_j'}$ among the $\phi$-squares
$P'$ above. In particular,  we will have
$$\nu(3S_j') = \nu(G_j').$$
To define the appropriate values of $\nu(P')$, for $P'\in \cP'$, we will follow an algorithm
inspired by the proof of Frostman Lemma ``from above''. Let $Q_0'$ be a 
dyadic $\phi$-square contained in $3S_j'$, with $\ell(Q_0)=\ell(S_j')$, such that 
$\mu(Q_0'\cap G_j')$ is maximal.
We set
\begin{equation}\label{defq0}
\tau(Q_0')=\nu(3S_j'),
\end{equation}
where $\tau$ should be considered as a preliminary version of $\nu$ on some $\phi$-squares contained in $3S_j'$.
If $Q'$ is a dyadic $\phi$-square contained in $Q_0'$ such that $\tau(Q')$ has already been
defined and $Q'$ is not contained in any $\phi$-square from $\cP'$ (in particular, 
$Q'\not\in \cP'$), then we define $\tau$ on the sons $Q_1',\ldots,Q_4'$ of $Q'$ as follows:
\begin{equation}\label{eqtau}
\tau(Q_i') = \frac{M^h(Q_i'\cap G_j')}{\sum_{k=1}^4 M^h(Q_k'\cap G_j')}\,
\tau(Q').
\end{equation}
Clearly, we have $\sum_{1\leq i \leq 4}\tau(Q_i')=\tau(Q').$
At the end of the algorithm,
for each $P'\in\cP'$ there is a pairwise disjoint family of dyadic $\phi$-squares $T_1',\ldots,T_m'$ such that $P'=\bigcup_{1\leq i \leq m} T_i'$ so that $\tau(T_i')$ has been
defined. We set
$$\nu(P') = \sum_{1\leq i\leq m}\tau(T_i').$$

\bigskip


\subsubsection{The case $\mu(G(R))\geq \frac14\,\mu(\frac12R)$.}
The arguments for this case are very similar  to the ones of Subsection \ref{susu3}.
Instead of $\phi$-squares $S_j'$, we have now points from $G_\nu(R')$.
So we construct $\nu_{|3R'}$ so that it is concentrated on $G_\nu(R')$, arguing as 
in Subsection \ref{susu3}. We leave
the details for the reader.

\bigskip


\subsection{Definition of $\nu$ on $\cT_\nu(R')$ when $R'\in \sel_S(\nu)$}\label{sub56}

\subsubsection{The case $\mu(F(R))\leq \frac14\,\mu(\frac12R)$.}

Recall the definition of the family of squares $\cT(R)$. For $P\in\cT(R)$,
  Set
$$U(P) = \sum_{Q\in\cD:P\subset Q\subset R}\ve_a(Q)^2
= \sum_{Q'\in\phi\cD:P'\subset Q'\subset R'}\ve(Q')^{\frac{K+1}K}.$$
By Lemma \ref{lempac0},
\begin{equation}\label{eqf56}
\sum_{P\in \cT(R)}U(P)\,\mu(P)\leq C(\eta)\mu(R).
\end{equation}
Since $\mu\left(\bigcup_{P\in\cT(R)}P\right)\approx \mu(R)$,
by Tchebytchev there is a subfamily $\cT_1(R)\subset \cT(R)$
such that 
\begin{equation}\label{eqft89}
\mu\biggl(\bigcup_{P\in\cT_1(R)}P\biggr)\approx \mu(R) \quad\mbox{ and } \quad
U(P)\leq 2C(\eta)\quad\mbox{ for every $P\in\cT_1(R)$.}
\end{equation}

For $P'\in\cT_\nu(R')\setminus \cT_{1,\nu}(R')$, we set
$$\nu(P')=0.$$

To define $\nu$ on the $\phi$-squares from $\cT_{1,\nu}(R')$ we follow the same 
algorithm of Subsection \ref{subsub69}: we denote 
$$G' = \bigcup_{P'\in \cT_{1,\nu}(R')} P'.$$
Let $Q_0$ one of the $16$ dyadic squares that form $R$ such that $\mu(Q_0\cap G)$ is maximal.
We set
$$\tau(Q_0')=\nu(R'),$$
where $\tau$ should be considered as a preliminary version of $\nu$ on some $\phi$-squares contained in $R$.
If $Q'$ is a dyadic $\phi$-square contained in $Q_0'$ such that $\tau(Q')$ has already been
defined and $Q'$ is not contained in any $\phi$-square from $\cT_{\nu,1}(R')$ 
(in particular, 
$Q'\not\in \cT_{\nu,1}(R')$), then we define $\tau$ on the sons $Q_1',\ldots,Q_4'$ of $Q'$ as follows:
$$\tau(Q_i') = \frac{M^h(Q_i'\cap G')}{\sum_{k=1}^4 M^h(Q_k'\cap G')}\,
\tau(Q').$$
Clearly, we have $\sum_{1\leq i \leq 4}\tau(Q_i')=\tau(Q').$
At the end of the algorithm,
for each $P'\in\cT_{\nu,1}(R')$ there is a pairwise disjoint family of dyadic $\phi$-squares $T_1',\ldots,T_m'$ such that $P'=\bigcup_{1\leq i \leq m} T_i'$ so that 
$\tau(T_i')$ has been
defined. We set
$$\nu(P') = \sum_{1\leq i\leq m}\tau(T_i').$$

\bigskip
\subsubsection{The case $\mu(F(R))\geq \frac14\,\mu(\frac12R)$.}

This case is treated as the preceding one, with the convention that the points from 
$F(R)$ are the same as squares with zero side length.

\bigskip

\subsection{Estimate of the Wolff potential of $\nu$ on trees of type L} \label{subsub67}

Recall that $\eta$ is the constant in the Definition \ref{def58} of short and long trees, and
that $\eta\ll1$.

\begin{lemma}\label{lemenertree}
Let $R'\in\sel_L(\nu)$. If $\nu(R')\leq bh(R')$, then
\begin{equation}\label{eqff9}
\nu(P')\leq C_{22}\,b\eta^{\alpha-\frac2{K+1}} h(P')\qquad\mbox{for all $P'\in\cT_\nu(R')$}.
\end{equation}
Also, if $Q'$ is a $\phi$-square such that $P'\subset Q'\subset 3R'$ for some
$P'\in\cT_\nu(R')
\cup G_\nu(R')\cup F_\nu(R')$, then
\begin{equation}\label{eqff8}
\nu(Q')\leq C_{22}bh(Q').
\end{equation}
Moreover, for each $P'\in\cT_\nu(R')
\cup G_\nu(R')\cup F_\nu(R')$
\begin{equation}\label{eqff10} 
\sum_{Q'\in\phi\cD:P'\subset Q'\subset R'}\Biggl(\frac{\nu(3Q')}{\ell(Q')^{\frac2{K+1}}}\Biggr)^{\frac{K+1}K}\leq
Cb^{\frac{K+1}K}.
\end{equation}
\end{lemma}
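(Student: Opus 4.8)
The plan is to prove the three estimates \eqref{eqff9}, \eqref{eqff8}, \eqref{eqff10} in order, exploiting the explicit algorithm used to define $\nu$ on $\cT_\nu(R')$ in Subsection~\ref{sub55} together with the distortion inequality \eqref{eqf36} from the first step of that construction. First I would recall that the key input from Step~1 (Subsection~\ref{susu3}) is that for every $\phi$-square $P'$ concentric with some $S_j'$ with $5S_j'\subset P'\subset 3R'$ one has
$$\frac{\nu(P')}{\ell(P')^{\frac2{K+1}}} \leq C\,\Bigl(\frac{\ell(P')}{\ell(R')}\Bigr)^{\alpha-\frac2{K+1}}\,\frac{\nu(R')}{\ell(R')^{\frac2{K+1}}},$$
where $\alpha>\frac{2}{K+1}$ depends only on $K$. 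Combining this with $\nu(R')\leq bh(R')$ and the doubling property $h(x,2t)\leq C\,h(x,t)$ of the gauge $h$ (which holds because $\ve\in\cG_2$ by Lemma~\ref{lemtec5} applied to $\phi^{-1}$), one gets $\nu(5S_j')\leq C b\,\eta^{\alpha-\frac2{K+1}} h(S_j')$ using that $\ell(S_j')\lesssim\eta^{?}\ell(R')$; more precisely since the $S_j$ come from $\maxbad(R)$ with $\ell(Q)<\eta\ell(R)$ and quasiconformality distorts these ratios in a controlled bilipschitz-in-log way, $\ell(S_j')/\ell(R')\lesssim \eta^{c(K)}$, absorbing the loss into the exponent $\alpha-\frac2{K+1}$. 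This gives \eqref{eqff9} for $P'=5S_j'$; for a general $P'\in\cT_\nu(R')$ one uses that $P'\subset 5S_j'$ and $\tau$ (hence $\nu$) distributes mass among dyadic subsquares proportionally to $M^h(\cdot\cap G_j')$, so $\nu(P')\leq \nu(5S_j')\cdot\frac{M^h(P')}{M^h(5S_j')}\lesssim \nu(5S_j')\cdot\frac{h(P')}{h(5S_j')}$ by Lemma~\ref{CH}(b) / the Frostman-type normalization, and one chains the two bounds.

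Next, \eqref{eqff8} should follow by splitting the chain $P'\subset Q'\subset 3R'$ at the scale of $5S_j'$: if $Q'\subset 5S_j'$ the estimate reduces to the internal Frostman algorithm on $5S_j'$, where by construction $\tau$ satisfies $\tau(T')\leq C M^h(T'\cap G_j')\leq C h(T')$ on the terminal squares and $\tau$ is additive on parents, giving $\nu(Q')\leq C\sum\tau(T_i')\leq C\sum h(T_i')\lesssim h(Q')$ by subadditivity-plus-doubling of $h$ (here one needs that the $h$-content is comparable to $\sum h(T_i')$ over a disjoint covering, which is where $\ve\in\cG_2$ is used again). If instead $5S_j'\subsetneq Q'\subset 3R'$, then $Q'$ is comparable to some concentric dilate of $S_j'$ (up to bounded factors) or can be covered by boundedly many such, and \eqref{eqf36} together with the bound $\nu(R')\leq bh(R')$ yields $\nu(Q')\leq C b h(Q')$ after again using $h$-doubling. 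The intermediate regime where $Q'$ straddles several $S_j'$ is handled by disjointness of the $5S_j'$ and summing, using that $\sum_j \nu(5S_j') = \nu(R')$ and that $\sum_j h(S_j')\lesssim h(R')$ (which follows from $\sum_j\ell(S_j')^\alpha\approx\ell(R')^\alpha$ and a Hölder/Jensen argument converting the $\alpha$-summability into $\frac2{K+1}$-summability weighted by $\ve^{2K/(K+1)}$).

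Finally, for \eqref{eqff10} I would fix $P'\in\cT_\nu(R')\cup G_\nu(R')\cup F_\nu(R')$ and split the sum over dyadic $\phi$-squares $Q'$ with $P'\subset Q'\subset R'$ into two ranges: those with $Q'\subset 5S_j'$ (the containing $S_j'$) and those with $5S_j'\subset Q'\subset R'$. For the upper range, \eqref{eqf36} gives $\frac{\nu(3Q')}{\ell(Q')^{2/(K+1)}}\lesssim \bigl(\tfrac{\ell(Q')}{\ell(R')}\bigr)^{\alpha-\frac2{K+1}}\frac{\nu(R')}{\ell(R')^{2/(K+1)}}$, so raising to the power $\frac{K+1}{K}$ and summing the resulting geometric series in $\ell(Q')$ (convergent precisely because $\alpha>\frac2{K+1}$) produces a bound $\lesssim b^{(K+1)/K}$. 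For the lower range $Q'\subset 5S_j'$, I use $\nu(3Q')\lesssim \tau(Q'') $ for the appropriate dyadic $Q''\supset 3Q'$ and the defining property \eqref{eqtau}: along the algorithm, $\frac{\nu(3Q')}{\ell(Q')^{2/(K+1)}}\lesssim M^h(3Q'\cap G_j')/\ell(Q')^{2/(K+1)}\lesssim \ve(Q')^{K/(K+1)}\cdot(\text{local density factor})$, and crucially $\nu(5S_j')\leq C b h(S_j')$ from \eqref{eqff9} lets us pull out $b^{(K+1)/K}$; the remaining sum $\sum_{Q'\subset 5S_j'}\ve(Q')^{(K+1)/K}$ is exactly $U(P)$-type and is $O(1)$ — but wait, $U(P)$ is controlled only for $P\in\cT_1(R)$, i.e. only for type-S trees. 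The main obstacle I anticipate is precisely this: for type-L trees there is no packing estimate $\sum\ve^2\mu\lesssim\mu(R)$ available, so the lower-range sum must instead be controlled geometrically, using that inside a type-L tree the densities $\ve_a$ decay (by Lemma~\ref{lemnodob}, since the relevant squares between terminal squares and $S_j$ are not $h_a$-doubling), which transfers under $\phi$ via Lemma~\ref{lemtec5} to $\sum_{Q'}\ve(Q')^{(K+1)/K}\lesssim \ve(S_j')^{(K+1)/K}$, closing the estimate. Making this decay argument quantitative — tracking how non-doubling of $\mu$-squares survives the passage through $\phi$ and the exponent change $2\mapsto 2K/(K+1)$ — is the delicate point, and I would isolate it as a separate sublemma before assembling \eqref{eqff10}.
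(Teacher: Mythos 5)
Your overall plan matches the paper's: prove \eqref{eqff9} and \eqref{eqff8} via \eqref{eqf36} and an induction along the Frostman-type algorithm \eqref{eqtau}, then split \eqref{eqff10} into the scales above $S_j'$ (handled by the geometric decay with exponent $\alpha-\frac2{K+1}>0$) and those between $P'$ and $S_j'$. However, two points in your write-up are off.

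First, there is no quasiconformal distortion of the ratio $\ell(S_j')/\ell(R')$ to worry about: by the convention in Section \ref{subsecterm} the side length of a $\phi$-square is $\ell(\phi(Q_0)):=\diam(Q_0)$, so $\ell(S_j')/\ell(R')=\diam(S_j)/\diam(R)\lesssim\eta$ exactly. Substituting this directly into \eqref{eqf36} gives the factor $\eta^{\alpha-\frac2{K+1}}$ with no loss in the exponent; your phrase ``$\lesssim\eta^{c(K)}$, absorbing the loss into the exponent'' misreads the definition and would, if taken literally, produce a weaker (and $K$-dependent) power of $\eta$.

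Second, the ``delicate point'' you flag at the end does not exist. You worry about tracking non-doubling of $\mu$-squares through $\phi$ and the change of exponent $2\mapsto\frac{2K}{K+1}$, but the definition $\ve(Q')=\ve_a(\phi^{-1}(B))^{2K/(K+1)}$ already does the bookkeeping: $\ve(Q')^{\frac{K+1}{K}}=\ve_a(\phi^{-1}(B))^2\approx\ve_a(Q)^2$, so for $P'\subset Q'\subset S_j'$ one has $\sum_{Q'}\ve(Q')^{\frac{K+1}{K}}\approx D_\mu(P,S_j)$, and this quantity is $\leq C\ve_a(S_j)^2\lesssim 1$ directly on the $\mu$ side, because in Subsection \ref{subsell} the terminal square $P$ is chosen as the largest $(h_a,b)$-doubling square, so the intermediate scales are non-doubling and Lemma \ref{lemnodob} applies. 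No ``transfer under $\phi$'' sublemma is needed. Relatedly, the correct per-scale estimate in the lower range is $\frac{\nu(3Q')}{\ell(Q')^{2/(K+1)}}\lesssim b\,\ve(Q')$ (a consequence of \eqref{eqff8}), not $\lesssim\ve(Q')^{K/(K+1)}$ as you wrote; the latter has a spurious exponent and would not combine correctly with the $D_\mu$ bound.
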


Let us remark that the constant $C_{22}$ is independent from $\eta$.

One of the key points in this lemma is that, by \rf{eqff9},
$$\frac{\nu(P')}{h(P')} \ll \frac{\nu(R')}{h(R')}$$
if $P'\in\cT_\nu(R')$, for $R'\in\sel_L(\nu)$, assuming that $\eta$ is chosen small enough.
This is due to improved distortion estimates for sub-arcs of chord arc curves. This point
plays an essential role in our proof of Theorem \ref{distorgamma}.

\begin{proof}
For simplicity we will only consider the case where $\mu(G(R))<\mu(\frac12R)/4$, and that
$\mu(F_j(R))\leq\mu(S_j)/2$ for all $j$. By arguments quite similar to the ones below, one can
deal with the other cases, considering points as squares of zero side length.

Recall that if $Q'$ is a $\phi$-square concentric with $S_j'$ which contains $5S_j'$ and is contained in $3R'$, by \eqref{eqf36},
\begin{equation}\label{eqff89}
\frac{\nu(Q')}{\ell(Q')^{\frac2{K+1}}} \leq C\,\biggl(\frac{\ell(Q')}{\ell(R')}\biggr)^{\alpha-\frac2{K+1}}\,\frac{\nu(R')}{\ell(R')^{\frac2{K+1}}}
\leq  C\,b\biggl(\frac{\ell(Q')}{\ell(R')}\biggr)^{\alpha-\frac2{K+1}}\ve(R'),
\end{equation}
since $\nu(R')\leq b\ell(R')^{\frac2{K+1}}\ve(R')$.
By construction, $\ve(Q')\approx\ve(R')$ (by Remark \ref{remdob}) and so we get
\begin{equation}\label{eqft6}
\nu(Q')\leq C\,b\biggl(\frac{\ell(Q')}{\ell(R')}\biggr)^{\alpha-\frac2{K+1}}\ell(Q')^{\frac2{K+1}}\ve(Q') = C\,b\biggl(\frac{\ell(Q')}{\ell(R')}\biggr)^{\alpha-\frac2{K+1}}h(Q').
\end{equation}
Recall that the subset $G_j' =\bigcup_{P'\in \cP'} P'$ of $5S_j'$ and the $\phi$-square $Q_0'$ in \eqref{defq0} satisfy
$$M^{h_a}(Q_0\cap G_j) \geq C^{-1} \mu(G_j)
 \geq C^{-1}\mu(5S_j).$$
Since $\ve_{a}(R) \approx \ve_{a}(5S_j)\approx \theta_{\mu}(5S_{j})$, this implies
$$M^{h_a}(Q_0\cap G_j)  \geq C h_{a}(5S_{j}),$$
and then, by Lemma \ref{lemtec53},
$$M^h(Q_0'\cap G_j')\geq Ch(5S_j').$$
Thus, by \eqref{eqft6}, 
$$\tau(Q_0')=\nu(5S_{j}')\leq
 C_{23}\,b\eta^{\alpha-\frac2{K+1}}M^h(G_j'\cap Q_0').$$
We claim that all the numbers $\tau(Q')$ in \eqref{eqtau} satisfy the analogous inequality
\begin{equation}\label{eqf447}
\tau(Q')\leq
 C_{23}\,b\eta^{\alpha-\frac2{K+1}}M^h(G_j'\cap Q').
\end{equation}
To prove this, it is enough to show that if this hods for some $\phi$-square $Q'$, then it also holds for its
sons $Q_i'$, $1\leq i \leq 4$, assuming that $Q'$ is not contained in any $\phi$-square from $\cP'$ (this was the necessary condition to define $\tau(Q_i')$, $1\leq i \leq 4$).
By \eqref{eqtau}, we get
\begin{align*}
\tau(Q_i') & = \frac{M^h(Q_i'\cap G_j')}{\sum_{k=1}^4 M^h(Q_k'\cap G_j')}\,
\tau(Q')\\ & \leq \frac{M^h(Q_i'\cap G_j')}{M^h(Q'\cap G_j')}\,
\tau(Q') \leq C_{23}\,b\eta^{\alpha-\frac2{K+1}}M^h(Q_i'\cap G_j'),
\end{align*}
and so \eqref{eqf447} holds. From this estimate one easily obtains
$$\nu(Q')\leq C\,b\eta^{\alpha-\frac2{K+1}}M^h(Q_i'\cap G_j')$$
for $Q'$ contained in $5S_j'$ and containing some $P_0'\in\cT(R')$.
Indeed,
$$\nu(Q')\leq \nu\Biggl(\bigcup_{P'\in\cT_\nu(R'):P'\cap Q'\neq\varnothing} P'\Biggr).$$
From the fact that the $\phi$-squares $5P'$, $P'\in\cT_\nu(R')$ are pairwise disjoint, it follows that if $Q'$ intersects another $\phi$-square $P'\in\cT_\nu(R')$, 
then $\ell(Q)\geq \ell(P)$. As a consequence, all $\phi$-squares $P'\in\cT_\nu(R')$ 
intersecting
$Q'$ are contained in $3Q'$. Thus, there are at most four dyadic $\phi$-squares 
$L_1',\ldots,L_4'$
with $\ell(L_i)\leq 2\ell(3Q)$ that contain all $\phi$-squares $P'\in\cT_\nu(R')$ intersecting
$Q'$. Then, by construction we have
$$\nu(Q') \leq \sum_{i=1}^4 \tau(L_i')\leq \sum_{i=1}^4
C_{23}\,b\eta^{\alpha-\frac2{K+1}}M^h(L_i'\cap G_j') \leq C\,b\eta^{\alpha-\frac2{K+1}}
h(Q').$$
From \eqref{eqft6} and the preceding inequality, one easily deduces \rf{eqff9} and \rf{eqff8}.

\medskip

To prove \rf{eqff10}, it is enough to show that for each $P'\in\cT_\nu(R')$
$$\sum_{Q'\in\phi\cD:P'\subset Q'\subset R'}\Biggl(\frac{\nu(3Q')}{\ell(Q')^{\frac2{K+1}}}\Biggr)^{\frac{K+1}K}\leq
Cb^{\frac{K+1}K}.$$
Suppose that $P'\subset S_j'$. Then we split the sum above as follows:
$$\sum_{Q'\in\phi\cD:P'\subset Q'\subset R'}\Biggl(\frac{\nu(3Q')}{\ell(Q')^{\frac2{K+1}}}\Biggr)^{\frac{K+1}K}
= \sum_{Q'\in\phi\cD:S_j'\subsetneq Q'\subset R'}\cdots + \sum_{Q'\in\phi\cD:P'\subset Q'\subset S_j'}\cdots
=:T_1 + T_2.$$
To estimate the first sum recall that by \rf{eqff89} we have
$$\frac{\nu(3Q')}{\ell(Q')^{\frac2{K+1}}} \leq  Cb\biggl(\frac{\ell(Q')}{\ell(R')}
\biggr)^{\alpha-\frac2{K+1}}\ve(R').$$
Then it follows that
$T_1\leq C\bigl(b \,\ve(R')\bigr)^{\frac{K+1}K}.$
Recalling that $\ve(R') = 
\ve_a(R)^{\frac{2K}{K+1}}\leq C$, we infer that
$$T_1\leq Cb^{\frac{K+1}K}.$$

To estimate $T_2$ we use that 
$$\frac{\nu(3Q')}{\ell(Q')^{\frac2{K+1}}}\leq Cb \,\ve(Q')$$
 and the fact that $D_\mu(P,S_j)\leq C\ve_a(S_j)^2\leq C\ve_a(R)^2$, by construction, and so
$$\sum_{Q'\in\phi\cD:P'\subset Q'\subset S_j'}\ve(Q')^{\frac{K+1}K}
\approx D_\mu(P,S_j)
\leq C\ve(R')^{\frac{K+1}K}\leq C,$$
and then we deduce that
$T_2\leq Cb^{\frac{K+1}K}.$
\end{proof}

\bigskip


\subsection{Estimates for the Wolff potential of $\nu$ on trees of type S} \label{subsub68}

Recall Definition \ref{def51} of $\tree(R)$ for $R\in \sel_S(\mu)$.
We denote $\tree_\nu(R')= \phi(\tree(R))$.

\begin{lemma}\label{lemenertree2}
Let $R'\in\sel_S(\nu)$. If $\nu(R')\leq bh(R')$, then
\begin{equation}\label{eqfff8}
\nu(Q')\leq C_{24}bh(Q')\qquad\mbox{for all $Q'\in\tree_\nu(R')$}
\end{equation}
and, for each $P'\in\cT_\nu(R')\cup F_\nu(R')$,
\begin{equation}\label{eqfff10}
\sum_{Q'\in\phi\cD:P'\subset Q'\subset R'}\Biggl(\frac{\nu(3Q')}{\ell(Q')^{\frac2{K+1}}}\Biggr)^{\frac{K+1}K}\leq
C(\eta)b^{\frac{K+1}K}.
\end{equation}
\end{lemma}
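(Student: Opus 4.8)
The plan is to mimic the proof of Lemma \ref{lemenertree} but now exploiting the packing estimate of Lemma \ref{lempac0} (equivalently \eqref{eqf56}--\eqref{eqft89}) in place of the improved distortion estimates for chord arc curves. First I would recall the construction of $\nu$ on a type S square $R'$ from Subsection \ref{sub56}: the ``selected'' sub-family $\cT_{1}(R)\subset\cT(R)$ carries almost all the mass and satisfies $U(P)\le 2C(\eta)$ for $P\in\cT_{1}(R)$, while $\nu$ vanishes on $\cT_\nu(R')\setminus\cT_{1,\nu}(R')$; on the squares of $\cT_{1,\nu}(R')$ the value $\nu(P')$ is built by the Frostman-type ``redistribution from above'' algorithm through the $\tau(Q')$'s, starting from $\tau(Q_0')=\nu(R')$ and splitting proportionally to $M^h(\cdot\cap G')$. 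The first thing to establish is the analogue of \eqref{eqf447}: by downward induction on the dyadic $\phi$-squares, if $\tau(Q')\le C\, b\, M^h(Q'\cap G')$ holds for $Q'$ then, since $\tau(Q_i')=\tau(Q')\,M^h(Q_i'\cap G')/\sum_k M^h(Q_k'\cap G')\le \tau(Q')\,M^h(Q_i'\cap G')/M^h(Q'\cap G')$, it descends to the sons. The base case $\tau(Q_0')=\nu(R')\le b\,h(R')\le C\,b\,M^h(R'\cap G')$ follows because $M^h(Q_0\cap G)\gtrsim\mu(G)\gtrsim\mu(R)$ (by the choice of $Q_0$ as the richest of the $16$ dyadic subsquares of $R$), whence $M^{h_2}(Q_0\cap G)\gtrsim h_a(R)$ and Lemma \ref{lemtec53} gives $M^h(Q_0'\cap G')\gtrsim h(R')$.

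Next, to get \eqref{eqfff8} for a general $Q'\in\tree_\nu(R')$: the $\phi$-squares $5P'$, $P'\in\cT_\nu(R')$, are pairwise disjoint, so any $Q'\in\tree_\nu(R')$ meets only $\phi$-squares $P'\in\cT_\nu(R')$ with $\ell(P')\le\ell(Q')$, all contained in $3Q'$; hence $\nu(Q')\le\nu\bigl(\bigcup\{P'\in\cT_{1,\nu}(R'):P'\cap Q'\ne\varnothing\}\bigr)$, and this union is covered by at most four dyadic $\phi$-squares $L_i'$ of side $\lesssim\ell(Q')$ on which the $\tau$-estimate applies, giving $\nu(Q')\le\sum_i\tau(L_i')\le C\,b\sum_i M^h(L_i'\cap G')\le C\,b\,h(Q')$, using $h(L_i')\approx h(Q')$ (the gauge $h$ is doubling by construction of $\ve$ and Lemma \ref{lemtec5}). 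For the Wolff-potential bound \eqref{eqfff10}, fix $P'\in\cT_{1,\nu}(R')$ (the cases $P'\in F_\nu(R')$ or $\nu(P')=0$ being trivial or easier) and split $\sum_{Q':P'\subset Q'\subset R'}\bigl(\nu(3Q')/\ell(Q')^{2/(K+1)}\bigr)^{(K+1)/K}$. Using \eqref{eqfff8} in the form $\nu(3Q')\le C\,b\,h(3Q')\le C\,b\,\ell(Q')^{2/(K+1)}\ve(Q')$, the sum is bounded by $C\,b^{(K+1)/K}\sum_{Q':P'\subset Q'\subset R'}\ve(Q')^{(K+1)/K}$. By the very definition of $\ve$ in \eqref{defd43} and the definition of $U(P)$ in Subsection \ref{sub56}, $\sum_{Q'\in\phi\cD:P'\subset Q'\subset R'}\ve(Q')^{(K+1)/K}=\sum_{Q\in\cD:P\subset Q\subset R}\ve_a(Q)^2=U(P)\le 2C(\eta)$, since $P\in\cT_1(R)$ by \eqref{eqft89}. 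Combining, \eqref{eqfff10} follows with constant $C(\eta)$.

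The main obstacle is the bookkeeping between the Hausdorff content $M^h$ and the measure $\nu$ along the redistribution algorithm — specifically making rigorous that $\tau$ behaves subadditively downward and additively across siblings, and that the passage $\nu(Q')\le C\,b\,M^h(Q'\cap G')\le C\,b\,h(Q')$ does not lose powers of $\eta$ or accumulate constants as one descends many dyadic generations (here, unlike the type L case, no gain $\eta^{\alpha-2/(K+1)}$ is claimed, which simplifies matters but also means one must be careful that $C_{24}$ is genuinely $\eta$-independent in \eqref{eqfff8}). A secondary point is to verify that $h$ is doubling in the relevant sense so that $h(3Q')\approx h(Q')$ and $h(L_i')\approx h(Q')$; this is exactly the content of $\ve\in\cG_2$ proved via Lemmas \ref{lemtec1}, \ref{lemtec5}, so it is available. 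Everything else is a routine adaptation of the argument of Lemma \ref{lemenertree}, replacing the chord-arc distortion input by the packing bound \eqref{eqf56}.
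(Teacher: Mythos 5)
Your proposal is correct and takes essentially the same approach as the paper. The paper's own proof is extremely terse — it dispatches \eqref{eqfff8} by saying the argument is "very similar" to that for Lemma \ref{lemenertree} (using $\mu(\bigcup_{P\in\cT_1(R)}P)\approx\mu(R)$ in place of the estimates on $5S_j$), and it obtains \eqref{eqfff10} exactly as you do, from the doubling consequence $\nu(3Q')\le Cbh(Q')$ of \eqref{eqfff8} together with the identity $\sum_{Q'\in\phi\cD:\,P'\subset Q'\subset R'}\ve(Q')^{(K+1)/K}=U(P)\le 2C(\eta)$. Your write-up supplies the details the paper elides, including the correct observation that the base case of the $\tau$-induction rests on $M^{h_2}(Q_0\cap G)\gtrsim\mu(R)\gtrsim h_a(R)$ plus Lemma \ref{lemtec53}, and you are in fact slightly more careful than the paper in restricting the Wolff-sum estimate to $P'\in\cT_{1,\nu}(R')$ (which is where $\supp\nu$ actually lives, so this is harmless for the application).
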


The constant $C_{24}$ above is independent of $\eta$ in the definition of 
``long trees''.

\begin{proof}
The arguments to prove \rf{eqfff8} are very similar to the ones used in 
Lemma \ref{lemenertree} to show that analogous estimates hold for the squares contained in the squares $S_j$, taking into account that
$\mu\left(\bigcup_{P\in\cT_1(R)}P\right)\approx \mu(R)$, by \rf{eqft89}.
So we skip the details.

On the other hand, from \rf{eqfff8} we also infer that
$$\nu(3Q')\leq Cbh(Q') \qquad\mbox{for all $Q'\in\tree_\nu(R')$}.$$
Then, \rf{eqfff10} follows from this estimate and the 
fact that for every $P'\in\cT_\nu(R')$,
$$\sum_{Q'\in\phi\cD:P'\subset Q'\subset R'}\ve(Q')^{\frac{K+1}K} \leq C(\eta),$$
by \rf{eqft89}.
\end{proof}

\bigskip


\section{Proof of Theorem \ref{distorgamma}} \label{secproof}

Recall that the measure $\nu$ supported on $\phi(E)$ that we have constructed in 
Section~\ref{secconsnu} satisfies
$$\nu(\phi(E)) = M^h(\phi(E))\gtrsim \mu(E)^{\frac{2K}{K+1}} \approx \gamma(E)^{\frac{2K}{K+1}}.$$
Thus the theorem follows if we show that
\begin{equation}\label{eqfi99}
\sum_{Q'\in\phi\cD:x\in Q'} 
\Biggl(\frac{\nu(3Q')}{\ell(Q')^{\frac2{K+1}}}\Biggr)^{\frac{K+1}K}\leq C\quad \mbox{ for all $x\in \supp(\nu)$.}
\end{equation}
Let $\{R_n'\}_{n\geq 0}$ be the collection of $\phi$-squares from $\sel(\nu)$ which contain $x$.
We assume that $\ell(R_n)>\ell(R_{n+1})$ for all $n$. We split the preceding sum as follows:
\begin{align}\label{eqf51}
\sum_{Q'\in\phi\cD:x\in Q'} 
\left(\frac{\nu(3Q')}{\ell(Q')^{\frac2{K+1}}}\right)^{\frac{K+1}K} & =
\sum_{Q'\in\phi\cD: R_0'\subsetneq Q'} \Biggl(\frac{\nu(3Q')}{\ell(Q')^{\frac2{K+1}}}\Biggr)^{\frac{K+1}K} \nonumber \\
&\quad + \sum_{n\geq0} \sum_{Q'\in\phi\cD: R_{n+1}'\subsetneq Q'\subset R_n'}\Biggl(\frac{\nu(3Q')}{\ell(Q')^{\frac2{K+1}}}\Biggr)^{\frac{K+1}K}=: S_1 + S_2.
\end{align}
To estimate the sum $S_1$ on the right side, one only needs to take into account
that
$$
\Biggl(\frac{\nu(\phi(E))}{\ell(Q')^{\frac2{K+1}}}\Biggr)^{\frac{K+1}K} =
\frac{\ell(R_0')^{\frac2K}}{\ell(Q')^{\frac2K}}\,
\Biggl(\frac{\nu(\phi(E))}{\ell(R_0')^{\frac2{K+1}}}\Biggr)^{\frac{K+1}K} \leq
\frac{\ell(R_0')^{\frac2K}}{\ell(Q')^{\frac2K}}\,
\ve(R_0')^{\frac{K+1}K}\leq C\,\frac{\ell(R_0')^{\frac2K}}{\ell(Q')^{\frac2K}},$$
and summing over those $Q'\in\phi\cD$ containing $R_0'$, we get $S_1\leq C$.

To deal with $S_2$, observe that, by Lemmas \ref{lemenertree} and \ref{lemenertree2},
\begin{align*}
\sum_{n\geq0} \sum_{Q'\in\phi\cD: R_{n+1}'\subsetneq Q'\subset R_n'}\Biggl(\frac{\nu(3Q')}{\ell(Q')^{\frac2{K+1}}}\Biggr)^{\frac{K+1}K} \leq C(\eta) \sum_{n\geq0} 
\left(\frac{\nu(R_n')}{h(R_n')}\right)^{\frac{K+1}K}.
\end{align*}
 Lemma \ref{lemenertree} tells us
that if $R_n'\in\sel_L(\nu)$, then
$$\frac{\nu(R_{n+1}')}{h(R_{n+1}')} \leq C_{25}\,\eta^{\alpha-\frac2{K+1}} 
\frac{\nu(R_n')}{h(R_n')},$$
and if $R_n'\in\sel_S(\nu)$, then
$$\frac{\nu(R_{n+1}')}{h(R_{n+1}')} \leq C_{25}\, 
\frac{\nu(R_n')}{h(R_n')},$$
where $C_{25}$ is the maximum of the corresponding constants $C_{22}$ and $C_{24}$ 
in \rf{eqff8} and \rf{eqfff8}.
Notice that, by construction, for all $m$, it turns out 
that either $R_m'$ or $R_{m+1}'$ belongs to $\sel_L(\nu)$. As a consequence,
$$\sum_{n\geq0} \left(
\frac{\nu(R_n')}{h(R_n')}\right)^{\frac{K+1}K}\leq \sum_{n\geq0} 
\Bigl(C_{25}\,\eta^{\frac12
\left(\alpha-\frac2{K+1}\right)}\Bigr)^{\frac{K+1}K\,n} \leq C,$$
if $\eta$ is chosen small enough (recall that $C_{25}$ is independent of $\eta$).
Thus, $S_2\leq C(\eta)$ and \rf{eqfi99} follows.


\bigskip

\section{Examples showing sharpness of results}\label{SectionExamples}

\subsection{Some results from \cite{ACMOU}}

The state-of-the-art for largest ``metric" (or ``size") sufficient conditions for removability theorems for bounded $K$ quasiregular maps is given by Theorem 1.2 in \cite{ACMOU}.

\begin{theorem}[Astala, Clop, Mateu, Orobitg, Uriarte-Tuero]
\label{RemovabilityBoundedKQRACMOUT}
Let $K > 1$ and suppose $E \subset \C$ is a compact set with $\H^{\frac{2}{K+1}} (E)$ $\sigma$-finite. Then $E$ is removable for bounded $K$ quasiregular maps.
\end{theorem}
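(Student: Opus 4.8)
The plan is to reduce the removability statement to Theorem~\ref{distorgamma} via Stoilow factorization and the $\sigma$-additivity of analytic capacity. First I would recall the equivalence from the introduction: a compact set $E$ is $K$-removable if and only if $\gamma(\phi(E)) = 0$ for every planar $K$-quasiconformal map $\phi:\C\to\C$. So it suffices to fix an arbitrary $K$-quasiconformal $\phi$ and show $\gamma(\phi(E)) = 0$ under the hypothesis that $\H^{\frac{2}{K+1}}(E)$ is $\sigma$-finite. Write $E = \bigcup_{j} E_j$ with $\H^{\frac{2}{K+1}}(E_j) < \infty$ for each $j$; we may take the $E_j$ compact after a routine inner-regularity / exhaustion argument, and enclose each $E_j$ in a ball $B_j$.

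The key step is to invoke Theorem~\ref{distorgamma} on each piece: for each $j$,
$$
\left(\frac{\gamma(\phi(E_j))}{\diam(\phi(B_j))}\right)^{\frac{2K}{K+1}} \leq c\,\frac{\dot C_{\frac{2K}{2K+1},\frac{2K+1}{K+1}}(E_j)}{\diam(B_j)^{\frac{2}{K+1}}}.
$$
Now I would use the standard fact (stated in the excerpt, just below Theorem~\ref{distorgamma}, and proved in \cite[Chapter 4]{adamshedberg}) that sets of finite $\H^{2-\alpha p}$ measure have zero capacity $\dot C_{\alpha,p}$; here $2 - \alpha p = 2 - \frac{2K}{2K+1}\cdot\frac{2K+1}{K+1} = \frac{2}{K+1}$, so $\H^{\frac{2}{K+1}}(E_j) < \infty$ forces $\dot C_{\frac{2K}{2K+1},\frac{2K+1}{K+1}}(E_j) = 0$. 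Hence $\gamma(\phi(E_j)) = 0$ for every $j$. Finally, since $\phi(E) = \bigcup_j \phi(E_j)$ is a countable union of compact sets of zero analytic capacity, the countable semiadditivity of analytic capacity \cite{tolsasemiadditivityanalyticcapacity} gives $\gamma(\phi(E)) \leq c\sum_j \gamma(\phi(E_j)) = 0$. Therefore $\phi(E)$ is removable for bounded analytic functions, and since $\phi$ was arbitrary, $E$ is $K$-removable.

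The main obstacle — or rather the only delicate point — is the passage from an arbitrary $\sigma$-finite decomposition to one by \emph{compact} sets contained in balls, so that Theorem~\ref{distorgamma} literally applies, together with the fact that analytic capacity is only \emph{countably} semiadditive (not $\sigma$-subadditive with constant $1$), which is exactly the deep result of \cite{tolsasemiadditivityanalyticcapacity} that makes the last step legitimate. Both of these are standard once cited, so the argument is genuinely short; the substance is entirely in Theorem~\ref{distorgamma} and in the measure-theoretic comparison between Hausdorff measure and Riesz capacity. One should also note that the hypothesis $K > 1$ is used only through Theorem~\ref{distorgamma} (the case $K = 1$ being the classical Painlev\'e theorem), so no separate treatment is needed.
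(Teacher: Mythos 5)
Your argument is correct, but it takes a detour that the paper avoids. The paper's own derivation of Theorem~\ref{RemovabilityBoundedKQRACMOUT} from Theorem~\ref{distorgamma} does the countable summation at the level of the Riesz capacity: since $\dot C_{\alpha,p}$ is countably \emph{subadditive} (an elementary property of these capacities, see Adams--Hedberg), $\H^{\frac{2}{K+1}}(E)$ being $\sigma$-finite immediately gives $\dot C_{\frac{2K}{2K+1},\frac{2K+1}{K+1}}(E)=0$, and then Theorem~\ref{distorgamma} is applied \emph{once} to the whole compact $E$ to get $\gamma(\phi(E))=0$. You instead apply Theorem~\ref{distorgamma} piecewise and then sum at the level of analytic capacity, which forces you to invoke the countable \emph{semiadditivity} of $\gamma$ — a far deeper theorem — as well as a reduction of the decomposition $E=\bigcup_j E_j$ to one by compact sets (needed so that Theorem~\ref{distorgamma} applies to each piece, and not entirely automatic since Borel sets of finite Hausdorff measure need not be $\sigma$-compact, though one can patch this via inner regularity and a further use of semiadditivity on the residual null set). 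So both routes land in the same place, but the paper's is strictly more economical: it replaces the heavy machinery of \cite{tolsasemiadditivityanalyticcapacity} in the summation step by the soft subadditivity of $\dot C_{\alpha,p}$, and thereby also bypasses the compact-exhaustion issue. Worth internalizing the general principle: whenever you have a distortion inequality of the form ``capacity dominates a power of $\gamma$,'' it is better to exploit the good additivity properties of the dominating capacity than to fall back on the semiadditivity of $\gamma$ itself.
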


As a first remark, let us mention that from Theorem \ref{distorgamma} we recover this result. Indeed, if $E \subset \C$ and $\H^{\frac{2}{K+1}} (E) < \infty$, then 
$\dot C_{\frac{2K}{2K+1}, \frac{2K+1}{K+1}}(E)=0$. Also if $E_i \subset \C$, for $i = 1,2,\dots$ and $E = \bigcup_{i=1}^{\infty} E_i$ with $\H^{\frac{2}{K+1}} (E_i) < \infty$, then $\dot C_{\frac{2K}{2K+1}, \frac{2K+1}{K+1}}(E) 
\leq \sum_{i=1}^{\infty} \dot C_{\frac{2K}{2K+1}, \frac{2K+1}{K+1}}(E_i)=0$,
by the subadditivity of $\dot C_{\frac{2K}{2K+1}, \frac{2K+1}{K+1}}$ (see \cite[Proposition 2.3.6]{adamshedberg}, for example). Consequently, recalling that by Stoilow's factorization any $K$-quasiregular map $f$ can be factored as $f = h \circ g$, where $h$ is analytic and $g$ is $K$-quasiconformal, by Theorem \ref{distorgamma} in the present paper, $E$ is removable. 

Of course, in order to prove Theorem \ref{distorgamma}, we used many of the ideas in \cite{ACMOU}, so we are not claiming any novelty.

To contextualize some of our examples below, we recall the next result from \cite{ACMOU}.

\begin{theorem}[Astala, Clop, Mateu, Orobitg, Uriarte-Tuero]
\label{NonRemovabilityBoundedKQRACMOUT}
Let $K\geq 1$. Suppose  $h(t)=t^\frac{2}{K+1}\,\varepsilon(t)$ is a measure function such that
 \begin{equation} \label{kesa2}
  \int_0 \frac{\varepsilon(t)^{1+1/K}}{t} dt < \infty.
 \end{equation}
 Then there is a compact set $E\, $ which is not $K$-removable and 
 such that $0<\H^h(E)<\infty$.
 In particular, whenever  $\varepsilon(t)$ is chosen so that in addition for every  $\alpha >0$ we have  $t^\alpha/\varepsilon(t) \to 0$  as  $t \to 0$,   
then  the construction gives  a non-$K$-removable set $E$ with $\dim(E)=\frac{2}{K+1}$.
\end{theorem}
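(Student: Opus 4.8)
\textbf{Proof proposal for Theorem \ref{NonRemovabilityBoundedKQRACMOUT}.}

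The plan is to construct $E$ as a Cantor-type set of generalized ``dimension'' $h$, and to produce an explicit $K$-quasiconformal map $\phi$ for which $\phi(E)$ has positive length, hence is non-removable for bounded analytic functions (by the line-segment example and Ahlfors' theorem), so that $E$ is not $K$-removable by Stoilow factorization. First I would fix a rapidly decreasing sequence of scales $\{r_n\}$ and at the $n$-th generation replace each surviving square of side $\ell_n$ by $4$ (or more generally $N_n^2$) subsquares of side $\ell_{n+1}$, arranged in the corners, so that the natural measure $\mu$ on $E$ satisfies $\mu(Q)\approx h(\ell(Q))$ for the cubes $Q$ of the construction. The ratios $\ell_{n+1}/\ell_n$ should be chosen from the gauge: roughly $\ell_n^{\,2/(K+1)}\varepsilon(\ell_n)$ should behave like $4^{-n}$, so that $\mathcal H^h(E)\approx 1$, with the usual mass-distribution and covering arguments giving $0<\mathcal H^h(E)<\infty$.

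Next I would build the quasiconformal map $\phi$ as an infinite composition $\phi=\cdots\circ\phi_n\circ\cdots\circ\phi_1$, where each $\phi_n$ is supported on the generation-$n$ squares and, on each such square, is a radial-type stretching (a power map in suitable coordinates) that expands the generation-$(n+1)$ configuration so that, after applying $\phi$, the image squares have side comparable to $2^{-n}$ (the scaling of a ``fat'' self-similar Cantor set of positive length in the plane, i.e.\ the corner quarters construction with ratio $1/4$ replaced by something yielding $\mathcal H^1(\phi(E))>0$). The dilatation of each $\phi_n$ is governed by how far $\ell_{n+1}/\ell_n$ is from $1/4$; the borderline integrability condition \eqref{kesa2}, $\int_0 \varepsilon(t)^{1+1/K}\,t^{-1}\,dt<\infty$, is precisely what guarantees that the accumulated distortion $\sum_n(K_{\phi_n}-1)$ (equivalently $\|\mu_\phi\|_\infty$, the Beltrami coefficient) stays bounded by $K$ in the limit, so that $\phi$ is globally $K$-quasiconformal. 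This is the standard Astala-type optimal-distortion computation, and verifying it is where one uses that the exponent is $1+1/K$ rather than, say, $2$.

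Finally I would check that $\phi(E)$ is a genuine planar Cantor set with $\mathcal H^1(\phi(E))>0$: one estimates $\mathcal H^1$ from below by pushing forward the measure $\mu$ and using that the image squares at generation $n$ have disjoint interiors and side $\approx 2^{-n}$ with $4^n$ of them, so the pushforward measure $\phi_*\mu$ has linear growth from above, giving $\mathcal H^1(\phi(E))\gtrsim (\phi_*\mu)(\phi(E))>0$; since $\phi(E)$ contains (or, more carefully, since positive length already suffices via Garnett--Ivanov-type examples, or one arranges the construction to be a Garnett--Ivanov four-corners set which is known to have $\gamma>0$ when length is positive and the set is ``non-rectifiable but with comparable curvature'') one concludes $\gamma(\phi(E))>0$. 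Then $E$ is not $K$-removable. The ``in particular'' clause follows because if $t^\alpha/\varepsilon(t)\to0$ for every $\alpha>0$ then $h(t)=t^{2/(K+1)}\varepsilon(t)$ decays slower than any power $t^{2/(K+1)-\alpha}$, forcing $\dim_H(E)=2/(K+1)$ exactly, while $\mathcal H^h(E)$ is finite and positive.

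The main obstacle I expect is the precise bookkeeping of the accumulated dilatation: one must show that the condition \eqref{kesa2} is exactly matched to the geometry, i.e.\ that choosing the scale ratios so that $\phi$ turns the $h$-Cantor set into a positive-length Cantor set costs total distortion $\lesssim \sum_n \varepsilon(\ell_n)^{1+1/K}<\infty$ (after relating the sum over generations to the integral $\int_0 \varepsilon(t)^{1+1/K}t^{-1}\,dt$). A secondary technical point is ensuring the infinite composition converges to a homeomorphism of $\C$ and that $\phi(E)$ genuinely carries analytic capacity --- for this it is cleanest to set up the target as an explicit self-similar four-corners Cantor set of positive length (for which $\gamma>0$ is classical) and to let $\phi$ be the map matching the two Cantor constructions corner-to-corner, so that $\gamma(\phi(E))>0$ is immediate from the known example rather than needing a fresh capacity estimate.
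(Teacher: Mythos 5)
Your overall skeleton (a Cantor-type set $E$ with $\mu(Q)\approx h(\ell(Q))$, together with an infinite composition of radial quasiconformal stretchings matching it to a planar Cantor set in the image) is indeed the construction behind this theorem, as recalled in Subsection \ref{BasicConstructionCantorSets} via \cite{uriartesharpqcstretching}. However, two of your key steps are wrong, and they concern precisely the two points where the hypothesis \eqref{kesa2} must enter.

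First, the quasiconformality of the limit map is not obtained by making the "accumulated distortion $\sum_n(K_{\phi_n}-1)$" small. In the actual construction each building block $\phi_n$ is a genuine $K$-quasiconformal radial stretching (this is why the source radii carry the factor $\sigma^K$ against $\sigma$ in the target, cf. \eqref{RadiusSourceNthStep}--\eqref{RadiusTargetNthStep}); the composition remains $K$-quasiconformal because the Beltrami coefficients of the successive maps are supported, after pullback, on pairwise disjoint annuli, so that at each point only one factor is non-conformal. Condition \eqref{kesa2} plays no role whatsoever in keeping $\phi$ $K$-quasiconformal. Second, and more seriously, your final step "positive length of $\phi(E)$ implies $\gamma(\phi(E))>0$" is false, and your appeal to Garnett--Ivanov is exactly backwards: the four-corners Cantor set of positive finite length is the classical example of a set of positive length with \emph{zero} analytic capacity. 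For a Cantor set of this type one has $\gamma>0$ if and only if the squared density ratios of the image measure are summable, $\sum_n\theta_n^2<\infty$ (equivalently $\dot W^{\mu}_{2/3,3/2}$ is bounded; see Theorem A and \cite{MTV}), which forces the image to have infinite, indeed non-$\sigma$-finite, length. The computation in Subsection \ref{BasicConstructionCantorSets} shows that the image density satisfies $\theta_N=\prod_j(R_{j}/\sigma_{j})$ while the source gauge satisfies $\varepsilon(s_N)\approx\theta_N^{2K/(K+1)}$, so that $\sum_N\theta_N^2<\infty$ is \emph{equivalent} to $\sum_N\varepsilon(s_N)^{1+1/K}<\infty$, i.e. to \eqref{kesa2}. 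In other words, \eqref{kesa2} is the capacity-positivity condition for the image set, not a distortion budget for $\phi$; as written, your argument would "prove" non-removability for any gauge with $\varepsilon(t)\to0$, which contradicts Theorem \ref{Theorem5.4.2AdamsHedberg} combined with Theorem \ref{distorgamma}.
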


\subsection{Example 1}\label{secex1}

Our next example shows that Theorem \ref{distorgamma} is strictly stronger than Theorem \ref{RemovabilityBoundedKQRACMOUT}. Indeed, let us recall Theorem 5.4.2 in \cite{adamshedberg}, adapted to our situation.

\begin{theorem}\label{Theorem5.4.2AdamsHedberg}
Let $h$ be an increasing nonnegative function on $[0,\infty)$. If
$$
\int_0^1 \left( \frac{h(r)}{r^{\frac{2}{K+1}}}  \right)^{1+\frac{1}{K}} \; \frac{dr}{r} = \infty \ ,
$$
then there is a compact set $E \subset \C$ such that $\H^h(E)>0$ and $\dot C_{\frac{2K}{2K+1}, \frac{2K+1}{K+1}}(E) =0$.
\end{theorem}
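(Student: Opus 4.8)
The plan is to deduce this from the classical theory of nonlinear capacities of Cantor sets, as developed in \cite[Chapter~5]{adamshedberg}, specialized to $\alpha=\frac{2K}{2K+1}$, $p=\frac{2K+1}{K+1}$. For these indices $p'-1=\frac{K+1}{K}=1+\frac1K$ and $2-\alpha p=\frac2{K+1}$, so the hypothesis reads $\int_0^1\bigl(h(r)/r^{2-\alpha p}\bigr)^{p'-1}\,dr/r=\infty$, which is the usual ``Wiener type'' divergence condition for $\dot C_{\alpha,p}$. By Wolff's theorem, $\|I_\alpha\mu\|_{p'}^{p'}\approx\int\dot W^\mu_{\alpha,p}\,d\mu$, and a homogeneity argument gives $\dot C_{\alpha,p}(E)\approx\sup\{\mu(E)^p/(\int\dot W^\mu_{\alpha,p}\,d\mu)^{p-1}:\ \mu\in\mathcal M^+(E),\ \mu\neq0\}$; hence $\dot C_{\alpha,p}(E)=0$ if and only if every probability measure supported on $E$ has infinite Wolff energy $\int\dot W^\mu_{\alpha,p}\,d\mu$. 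It then remains to build a compact $E$ with $\H^h(E)>0$ having this property. (If $h(0^+)>0$ a single point already works, since $\dot C_{\alpha,p}$ has positive homogeneity $2/(K+1)$, and the hypothesis is automatic; and if $h(r)\lesssim r^2$ for all small $r$ then, by the computation of the integrand, the hypothesis fails. So we may assume $h(0^+)=0$ and that $\{r:\ h(r)\ge r^2\}$ clusters at the origin, which, as will be seen, is exactly what makes the construction possible.)

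First I would construct $E$ as a Cantor set adapted to $h$. Choosing inductively scales $1=\ell_0>\ell_1>\cdots\to0$ with controlled ratios and branching numbers $b_n\ge1$, let $E_0$ be a unit square and let $E_n$ be a disjoint union of $M_n=\prod_{j\le n}b_j$ squares of side $\ell_n$, each square of $E_{n-1}$ containing exactly $b_n$ squares of $E_n$ placed in a fixed symmetric, well separated pattern (mutual separation comparable to the side length of the parent). Set $E=\bigcap_n E_n$ and let $\mu_0$ be the natural probability measure on $E$, assigning mass $M_n^{-1}$ to every square of $E_n$. The parameters are picked so that $M_n^{-1}\approx h(\ell_n)$ and the scales separate enough that $\mu_0(B(x,r))\le C\,h(r)$ for all balls; by the mass distribution principle this gives $\H^h(E)\ge C^{-1}\mu_0(E)>0$. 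This is the step where one uses that $h(r)\gtrsim r^2$ along the chosen scales: otherwise the required $M_n$ squares cannot be packed into a square of $E_{n-1}$ with the prescribed separation. When $h$ is irregular one first replaces it by a comparable, mildly regular gauge with the same divergence property, in order to make these choices clean.

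Next I would estimate the energy of $\mu_0$. For $x\in E$ and $\ell_{n+1}\le r\le\ell_n$ one has $\mu_0(B(x,r))\approx M_{n+1}^{-1}$ (such a ball meets only boundedly many squares of $E_{n+1}$ and contains at least one), and since the exponent of $r$ in the Wolff integral equals $-\frac2K-1<-1$, each dyadic block is governed by its lower endpoint, giving
\[
\dot W^{\mu_0}_{\alpha,p}(x)=\int_0^\infty\Bigl(\frac{\mu_0(B(x,r))}{r^{2/(K+1)}}\Bigr)^{\frac{K+1}{K}}\frac{dr}{r}\ \approx\ \sum_{n\ge1}\Bigl(\frac{M_n^{-1}}{\ell_n^{2/(K+1)}}\Bigr)^{\frac{K+1}{K}}\ \approx\ \int_0^1\Bigl(\frac{h(r)}{r^{2/(K+1)}}\Bigr)^{\frac{K+1}{K}}\frac{dr}{r}=\infty,
\]
the last comparison being a routine series--integral estimate using $M_n^{-1}\approx h(\ell_n)$, the controlled scale ratios, and monotonicity of $h$. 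Thus $\dot W^{\mu_0}_{\alpha,p}\equiv\infty$ on $E$, so $\int\dot W^{\mu_0}_{\alpha,p}\,d\mu_0=\infty$. To pass from $\mu_0$ to an arbitrary probability measure $\mu$ on $E$, note that $\mu\mapsto\int\dot W^\mu_{\alpha,p}\,d\mu=\|I_\alpha\mu\|_{p'}^{p'}$ is convex and that $E$ was built so that its symmetry group acts transitively on the squares of each $E_n$, with $\mu_0$ the unique invariant probability measure; averaging $\mu$ over this compact group does not increase the energy and returns $\mu_0$, so $\int\dot W^\mu_{\alpha,p}\,d\mu\ge\int\dot W^{\mu_0}_{\alpha,p}\,d\mu_0=\infty$. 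Hence $\dot C_{\alpha,p}(E)=0$. (Alternatively one may quote directly the Cantor-set capacity formula of \cite[Chapter~5]{adamshedberg}, whose vanishing is equivalent to divergence of exactly the series above.)

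The step I expect to be the main obstacle is the balancing act in the construction: making the squares sparse enough that $\mu_0(B(x,r))\lesssim h(r)$ — so that $\H^h(E)>0$ — while keeping the construction tight enough to $h$ that the capacity series still diverges, all under the sole assumption that $h$ is increasing. Overcoming it requires allowing general branching numbers, inserting intermediate scales where $h$ varies slowly, and a preliminary regularization of $h$; this is the technical core, and it coincides with the Cantor-set constructions carried out in \cite[Chapter~5]{adamshedberg}.
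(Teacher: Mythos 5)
Your overall strategy is the same one the paper invokes: the paper simply cites Theorem 5.4.2 of \cite{adamshedberg}, and your proposal reconstructs the Cantor-set argument that underlies it, with the correct index bookkeeping ($2-\alpha p=\frac{2}{K+1}$, $p'-1=\frac{K+1}{K}$), the correct variational reformulation $\dot C_{\alpha,p}(E)=0\iff$ every nonzero measure on $E$ has infinite Wolff energy, and the correct Frostman-plus-Wolff-energy plan.

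However, the step you use to pass from $\mu_0$ to an arbitrary probability measure $\mu$ has a genuine gap. You invoke a compact group of symmetries of $E$, acting transitively on the $E_n$-pieces at \emph{every} level, and claim that averaging over it decreases the $\|I_\alpha\cdot\|_{p'}^{p'}$-energy and returns $\mu_0$. But this energy is invariant only under isometries of $\C$, and a compact group of plane isometries is conjugate into $O(2)$: a finite such group has bounded orbit sizes and so cannot act transitively on the $M_n\to\infty$ level-$n$ cells, while $SO(2)$ has circular orbits of zero area. The maps that actually permute the level-$n$ cells of $E$ are tree automorphisms / piecewise isometries, which do \emph{not} preserve the Riesz potential, so the Jensen step $\mathcal E(\int g_*\mu\,dg)\le\int\mathcal E(g_*\mu)\,dg=\mathcal E(\mu)$ breaks down. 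The fix — which is what is actually done in \cite[Ch.~5]{adamshedberg} — is a per-level H\"older/Jensen estimate that bypasses symmetrization entirely: for $x\in E$ the ball $B(x,C\ell_n)$ contains the level-$n$ cell $Q_n(x)$, so
$$
\int \dot W^\mu_{\alpha,p}\,d\mu \ \gtrsim\ \sum_n \ell_n^{-2/K}\sum_{Q\in E_n}\mu(Q)^{\frac{2K+1}{K}} \ \ge\ \sum_n \ell_n^{-2/K}\,M_n^{-\frac{K+1}{K}},
$$
the last inequality being Jensen with exponent $s=\frac{2K+1}{K}>1$ applied to $\sum_{Q\in E_n}\mu(Q)=1$ over $M_n$ cells. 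With $M_n^{-1}\approx h(\ell_n)$ this is comparable to $\sum_n \bigl(h(\ell_n)/\ell_n^{2/(K+1)}\bigr)^{(K+1)/K}$, which diverges by your scale selection and series--integral comparison. This cleanly replaces the symmetrization argument; the rest of your proposal (Frostman's lemma for $\H^h(E)>0$, the role of $h(r)\gtrsim r^2$ along the chosen scales, the preliminary regularization of $h$) matches the cited construction.
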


If we choose $h(r)$ so that it satisfies the conditions in Theorem \ref{Theorem5.4.2AdamsHedberg} but $\frac{h(r)}{r^{\frac{2}{K+1}}} \to 0$ as $t \to 0$, then the set $E$ obtained in Theorem \ref{Theorem5.4.2AdamsHedberg} will be non-$\sigma$-finite for $\H^{\frac{2}{K+1}}$, but will be removable for bounded $K$-quasiregular maps due to Theorem \ref{distorgamma} and Stoilow's factorization. For this purpose it is enough to choose $h(r) = \frac{r^{\frac{2}{K+1}}}{\log \left( \frac{1}{r} \right)^\beta}$ when $r$ is small enough, so that $\beta >0$ and $\beta \left( 1 + \frac{1}{K} \right) \leq 1$.

\subsection{Basic construction for the subsequent examples}\label{BasicConstructionCantorSets}

For our subsequent examples we need to refine the construction from Theorem \ref{NonRemovabilityBoundedKQRACMOUT}.
To this end we argue as in \cite{uriartesharpqcstretching}. We assume the reader is familiar with that paper and we will use the notation from it without further reference. The formulae look slightly nicer if we assume in the construction that $\varepsilon_n = 0$ for all $n$, i.e. that we take infinitely many disks in each step, completely filling the area of the unit disk $\mathbb D$ (see equations (2.1), (2.2) and (2.3) in \cite{uriartesharpqcstretching}.) It is not strictly needed to set in that construction $\varepsilon_n = 0$ for all $n$, and we will later indicate the corresponding formulae if $\varepsilon_n > 0$ for all $n$ (which is the case in \cite{uriartesharpqcstretching}.) The construction in \cite{uriartesharpqcstretching} works as well if we set $\varepsilon_n = 0$ for all $n$, the only point that the reader might wonder about is whether the resulting map is $K$-quasiconformal. 
However, this can be seen easily by a compactness argument (approximating the desired map by maps with finitely many circles in each step which are $K$-quasiconformal and have more and more disks in each step of the construction).

So we get (see equations (2.5) and (2.6) in \cite{uriartesharpqcstretching}) a Cantor type set $E$ and a $K$-quasiconformal map $\phi$ so that a building block in the $N^{th}$ step of the construction of the source set $E$ is a disk with radius given by 
\begin{equation}\label{RadiusSourceNthStep}
s_{j_1,...,j_N}=\left( (\sigma_{1,j_1})^K \, R_{1,j_1} \right) \dots \left( (\sigma_{N,j_N})^K R_{N,j_N} \right) ,
\end{equation}
and a building block in the $N^{th}$ step of the construction in the target set $\phi(E)$ is a disk with radius given by
\begin{equation}\label{RadiusTargetNthStep}
t_{j_1,...,j_N}=\left( \sigma_{1,j_1} \, R_{1,j_1} \right) \dots \left( \sigma_{N,j_N} \, R_{N,j_N} \right) .
\end{equation}

Now we consider a measure $\mu$ supported on $\phi(E)$ (which will be the ``large" set of dimension $d' = 1 $) and its image measure $\nu = \phi^{-1}_\ast \mu$ supported on $E$ (which will be the ``small" set of dimension $d = \frac{2}{K+1}$) given by splitting the mass according to area. More explicitly,

\begin{equation}\label{DefinitionOfMuStep0}
\mu (\D) = 1,
\end{equation}
for any disk $B_{1,j_1} = \psi^{i_1}_{1,j_1} \left( \, \overline{\D} \, \right)$ of the first step of the construction with radius $t_{j_1} = \left( \sigma_{1,j_1} \, R_{1,j_1} \right)$,
\begin{equation}\label{DefinitionOfMuStep1}
\mu (B_{1,j_1}) = \left( R_{1,j_1} \right)^2,
\end{equation}
and in general, for any disk $B_{N ; j_1, \dots , j_N}^{i_1, \dots , i_N} = \psi^{i_1}_{1,j_1}  \circ \dots \circ \psi^{i_N}_{N,j_N} \left( \, \overline{\D} \, \right)$ of the $N^{th}$ step of the construction with radius 
$t_{j_1,...,j_N}=\left( \sigma_{1,j_1} \, R_{1,j_1} \right) \dots \left( \sigma_{N,j_N} \, R_{N,j_N} \right) $,
\begin{equation}\label{DefinitionOfMuStepN}
\mu (B_{N ; j_1, \dots , j_N}^{i_1, \dots , i_N}) = \left( R_{1,j_1} \dots R_{N,j_N} \right)^2\ .
\end{equation}

Since we took $\varepsilon_N = 0 $ for all $N$, the total mass of $\mu$ is always $1$ on every step. (If one prefers to take $\varepsilon_N > 0 $ for all $N$, the definition should be changed to $\mu (B_{N ; j_1, \dots , j_N}^{i_1, \dots , i_N}) = \left( R_{1,j_1} \dots R_{N,j_N} \right)^2\ \ \prod_{n=N+1}^{\infty} \left( 1- \varepsilon_n \right) $, and the total mass of $\mu$ gets renormalized by the factor $\prod_{n=1}^{\infty} \left( 1- \varepsilon_n \right) >0
$, but otherwise the rest of the construction we are about to describe works, keeping in mind these renormalizations.)

Since $\nu$ is the image measure, for any disk $D_{N ; j_1, \dots , j_N}^{i_1, \dots , i_N} = \varphi^{i_1}_{1,j_1}  \circ \dots \circ \varphi^{i_N}_{N,j_N} \left( \, \overline{\D} \, \right) = \phi^{-1}( B_{N ; j_1, \dots , j_N}^{i_1, \dots , i_N} = \varphi^{i_1}_{1,j_1}  \circ \dots \circ \varphi^{i_N}_{N,j_N} \left( \, \overline{\D} \, \right) )$
we get
\begin{equation}\label{DefinitionOfNuStepN}
\nu (D_{N ; j_1, \dots , j_N}^{i_1, \dots , i_N}) = \left( R_{1,j_1} \dots R_{N,j_N} \right)^2\ .
\end{equation}

\begin{lemma}\label{ComputingWolffPotentialInOurCantorSets}
For the Cantor type sets just described (in subsection \ref{BasicConstructionCantorSets}), for any $\alpha, p >0$ with $\alpha p < 2$, and for $x \in E$, the Wolff potentials satisfy

$$\dot W^\mu_{\alpha,p}(x) \approx \sum_n \biggl(\frac{\mu(B(x,2^{n}))}{2^{n(2-\alpha p)}}\biggr)^{p'-1}\, \approx 
\sum_{ N : x \in B_{N ; j_1, \dots , j_N}^{i_1, \dots , i_N} } \biggl(\frac{\mu( B_{N ; j_1, \dots , j_N}^{i_1, \dots , i_N} ) }{ \left( t_{j_1,...,j_N} \right)^{(2-\alpha p)}}\biggr)^{p'-1} \ ,
$$
and analogously for $\nu$, $D_{N ; j_1, \dots , j_N}^{i_1, \dots , i_N}$ and $s_{j_1,...,j_N}$.

\end{lemma}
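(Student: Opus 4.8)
The plan is to prove Lemma \ref{ComputingWolffPotentialInOurCantorSets} by showing that the continuous integral defining the Wolff potential is comparable to a discrete sum over dyadic annuli, and then that this dyadic sum is in turn comparable to the sum over only those radii corresponding to building blocks of the Cantor construction. The first comparison is essentially standard: since $r\mapsto \mu(B(x,r))$ is nondecreasing, for $r\in[2^n,2^{n+1})$ we have $\mu(B(x,2^n))\le\mu(B(x,r))\le\mu(B(x,2^{n+1}))$, while $r^{2-\alpha p}\approx 2^{n(2-\alpha p)}$; integrating $dr/r$ over each annulus $[2^n,2^{n+1})$ contributes a bounded factor, so
$$
\dot W^\mu_{\alpha,p}(x)=\int_0^\infty\Bigl(\frac{\mu(B(x,r))}{r^{2-\alpha p}}\Bigr)^{p'-1}\frac{dr}{r}\approx\sum_{n\in\mathbf Z}\Bigl(\frac{\mu(B(x,2^n))}{2^{n(2-\alpha p)}}\Bigr)^{p'-1}.
$$
(One has to check that only finitely many terms on the left with $2^n$ larger than $\diam(\D)$ matter, and that the small-scale terms converge because $\alpha p<2$ and $\mu$ has no atoms on $E$; this is routine.)

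Next I would reduce the dyadic sum to the sum over construction scales. Fix $x\in E$; then $x$ lies in a nested sequence of building blocks $B_{N}=B_{N;j_1,\dots,j_N}^{i_1,\dots,i_N}$ of radius $t_{j_1,\dots,j_N}$, and these radii are the ``natural'' scales at $x$. The key geometric facts from the construction in \cite{uriartesharpqcstretching} that I would invoke are: (i) consecutive construction radii satisfy $t_{j_1,\dots,j_{N+1}}=\sigma_{N+1,j_{N+1}}R_{N+1,j_{N+1}}\,t_{j_1,\dots,j_N}$ with the contraction ratios bounded away from $0$ and $1$ by constants depending only on $K$ (this is where the uniformity of the parameters enters — the building blocks at each level have comparable sizes and the ratio between consecutive levels is controlled), and (ii) for $r$ with $t_{j_1,\dots,j_{N+1}}\lesssim r\lesssim t_{j_1,\dots,j_N}$ one has $\mu(B(x,r))\approx\mu(B_N)=(R_{1,j_1}\cdots R_{N,j_N})^2$, because the ball $B(x,r)$ either contains $B_N$ up to bounded dilation or is contained in it and then meets only a bounded number of the sub-blocks $B_{N+1}$, each of mass $\approx$ a fixed fraction of $\mu(B_N)$. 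Combining (i) and (ii), each construction scale $t_{j_1,\dots,j_N}$ ``absorbs'' only a bounded number of dyadic scales $2^n$, over which the summand $(\mu(B(x,2^n))/2^{n(2-\alpha p)})^{p'-1}$ is comparable to $(\mu(B_N)/t_{j_1,\dots,j_N}^{2-\alpha p})^{p'-1}$; summing the geometric-type contributions in between gives
$$
\sum_{n\in\mathbf Z}\Bigl(\frac{\mu(B(x,2^n))}{2^{n(2-\alpha p)}}\Bigr)^{p'-1}\approx\sum_{N\,:\,x\in B_N}\Bigl(\frac{\mu(B_N)}{t_{j_1,\dots,j_N}^{2-\alpha p}}\Bigr)^{p'-1}.
$$
The statement for $\nu$, the disks $D_N$, and the radii $s_{j_1,\dots,j_N}$ follows by the identical argument, using \eqref{RadiusSourceNthStep} and \eqref{DefinitionOfNuStepN} in place of \eqref{RadiusTargetNthStep} and \eqref{DefinitionOfMuStepN}.

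The main obstacle I anticipate is bookkeeping the geometry at intermediate scales between two consecutive construction levels — specifically verifying claim (ii), that $\mu(B(x,r))\approx\mu(B_N)$ throughout the range $t_{j_1,\dots,j_{N+1}}\lesssim r\lesssim t_{j_1,\dots,j_N}$. When $r$ is comparable to $t_{j_1,\dots,j_N}$ this is immediate, but as $r$ shrinks toward the next level one must rule out the possibility that $B(x,r)$ picks up wildly different amounts of mass depending on how the sub-disks $B_{N+1}$ are packed inside $B_N$. This is where one genuinely uses that in the construction of \cite{uriartesharpqcstretching} the sub-disks at step $N+1$ are roughly uniformly distributed inside each disk of step $N$ and all have comparable radii, so that any ball of radius $r$ with $t_{j_1,\dots,j_{N+1}}\le r\le t_{j_1,\dots,j_N}$ meets $\approx (r/t_{j_1,\dots,j_{N+1}})^2$ of them, each carrying mass $\approx t_{j_1,\dots,j_{N+1}}^2/t_{j_1,\dots,j_N}^2\cdot\mu(B_N)\cdot(\text{number of sub-disks})^{-1}$ — and the product telescopes to give $\mu(B(x,r))\approx\mu(B_N)\cdot (r/t_{j_1,\dots,j_N})^{2}\cdot(\text{const})$ only when $r$ is near $t_{j_1,\dots,j_{N+1}}$, which is exactly the borderline already counted at level $N+1$; in the bulk of the range the mass is $\approx\mu(B_N)$. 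Making this precise but not pedantic is the delicate part; everything else is the standard annular-decomposition estimate for Wolff potentials.
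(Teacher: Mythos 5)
Your outline of the first step (comparing the continuous Wolff integral to a dyadic sum) is standard and fine. But the second step, as you set it up, has a genuine gap: both of your ``key geometric facts'' (i) and (ii) are false for the construction in question, and the paper's actual proof is built precisely around not assuming them.

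Regarding (i): the ratio $t_{j_1,\dots,j_{N+1}}/t_{j_1,\dots,j_N}=\sigma_{N+1,j_{N+1}}R_{N+1,j_{N+1}}$ is \emph{not} bounded away from $0$ by a constant depending only on $K$. The paper only records that these parameters are $\le 1/100$, and in the examples built on this lemma the radii are taken to decrease super-exponentially (e.g.\ $S^N_{\max}\le e^{-e^N}$ in Example~3). So each construction scale can absorb arbitrarily many dyadic scales, and you cannot conclude the equivalence just by a ``$O(1)$ dyadic terms per level'' count. Regarding (ii): $\mu(B(x,r))$ is \emph{not} comparable to $\mu(B_N)$ throughout $t_{j_1,\dots,j_{N+1}}\lesssim r\lesssim t_{j_1,\dots,j_N}$. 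The paper splits this range in two using the protecting disks $P^{N}_{I;J}$ (of radius $t_{j_1,\dots,j_N}/\sigma_{N,j_N}$) and generating disks $G^{N}_{I;J}$ (of radius $t_{j_1,\dots,j_N}$). Only in the ``isolation'' annulus $r(G^N)\lesssim r\lesssim r(P^N)$ is the mass exactly $\mu(G^N)$, and there the summand $(\mu(B(x,r))/r^{2-\alpha p})^{p'-1}$ forms a geometric series in $r$ whose sum is dominated by the $r=t_{j_1,\dots,j_N}$ endpoint. In the complementary range $r(P^N)\lesssim r\lesssim r(G^{N-1})$ the mass grows like area (eq.\ \eqref{UpperBoundForMuOfBallIfBallInBetweenProtectingNAndGeneratingNMinus1}), i.e.\ $\mu(B(x,r))\lesssim r^2\cdot(\text{density inside }G^{N-1})$, so the summand is $\approx r^{\alpha p(p'-1)}$ times a constant and again forms a geometric series, this time dominated by the $r=r(G^{N-1})$ endpoint. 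Your own last paragraph correctly senses that the mass scales like area between levels, but that observation contradicts your stated claim (ii), and without the protecting/generating dichotomy you have no way to make the two regimes each contribute a convergent geometric series. You should reorganize the argument around that dichotomy rather than assume comparability of consecutive radii.
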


\begin{proof}

We first introduce some convenient notation. 
For any multiindexes $I=(i_1,...,i_N)$ and  $J=(j_1,...,j_N)$, where $1\leq i_k , 
j_k \leq \infty $ (since we are taking infinitely many disks in each step of the construction), 
we will denote by
\begin{equation}\label{DefinitionProtectingDisk}
P^{N}_{I;J} = \frac{1}{\sigma_{N,j_N}}\, \psi^{i_1}_{1,j_1} \circ \dots \circ \psi^{i_N}_{N,j_N}(\D)
\end{equation}
a {\it{protecting}} disk of generation $N$. Then, $P^N_{I;J}$ has radius 
$$r(P^{N}_{I;J}) = \frac{1}{ \sigma_{N,j_N} } t_{j_1,...,j_N}=\left( \sigma_{1,j_1} \, \dots \sigma_{N-1,j_{N-1}} \right) \left(   R_{1,j_1} \dots   R_{N,j_N} \right).$$ 
Analogously, we will write
\begin{equation}\label{DefinitionGeneratingDisk}
G^{N}_{I;J} = \psi^{i_1}_{1,j_1}  \circ \dots \circ \psi^{i_N}_{N,j_N}(\D)
\end{equation}
in order to denote a {\it{generating}} disk of generation $N$, which has radius $$r(G^{N}_{I;J}) = t_{j_1,...,j_N}=\left( \sigma_{1,j_1} \, \dots \sigma_{N,j_N} \right)  \left(   R_{1,j_1} \dots   R_{N,j_N} \right).$$

Notice that, since all values of $\sigma_{n,j_n}$ and $R_{n,j_n}$ are $\leq \frac{1}{100}$, then $\mu (G^{N}_{I;J}) = \mu (2 G^{N}_{I;J})$, so we can pretend without loss of generality that the radii $t_{j_1,...,j_N}$ are dyadic numbers.

Now, if $ r(G^{N}_{I;J}) \lesssim t \lesssim r(P^{N}_{I;J}) $, and $x \in E$ so that $B(x, t) \subseteq P^{N}_{I;J}$, then $\mu ( B(x,t)) = \mu ( G^{N}_{I;J} )$, so that 
$$
\sum_{n : G^{N}_{I;J} \subseteq B(x,2^{n}) \subseteq P^{N}_{I;J}} \biggl(\frac{\mu(B(x,2^{n}))}{2^{n(2-\alpha p)}}\biggr)^{p'-1}
$$
is a geometric series with sum comparable (with constants of comparison only depending on $\alpha$ and $p$) to its largest term, namely, up to universal constants, $ \biggl(\frac{\mu( G^{N}_{I;J} ) }{ \left( t_{j_1,...,j_N} \right)^{(2-\alpha p)}}\biggr)^{p'-1}  $.

And if $ r(P^{N}_{I;J}) \lesssim t \lesssim r(G^{N-1}_{I';J'}) $, where $G^{N-1}_{I';J'}$ is the unique generating disk of generation $N-1$ containing $P^{N}_{I;J}$, and $x \in E$ so that $P^{N}_{I;J} \subseteq B(x, t) \subseteq G^{N-1}_{I';J'}$, then 
\begin{equation}\label{UpperBoundForMuOfBallIfBallInBetweenProtectingNAndGeneratingNMinus1}
\mu ( B(x,t)) \lesssim \frac{ t^2}{ \left( \sigma_{1,j_1} \, \dots \sigma_{N-1,j_{N-1}} \right)  \left(   R_{1,j_1} \dots   R_{N-1,j_{N-1}} \right)}  \left(   R_{1,j_1} \dots   R_{N-1,j_{N-1}} \right)^2 \ , 
\end{equation}
i.e. the mass that $\mu$ assigns to $B(x,t)$ is proportional to its area once $G^{N-1}_{I';J'}$ is renormalized to $\D$, but multiplied by the mass that $\mu$ assigns to $G^{N-1}_{I';J'}$, namely $\left(   R_{1,j_1} \dots   R_{N-1,j_{N-1}} \right)^2$. Hence 
$$
\sum_{n : P^{N}_{I;J} \subseteq B(x,2^{n}) \subseteq G^{N-1}_{I';J'}} \biggl(\frac{\mu(B(x,2^{n}))}{2^{n(2-\alpha p)}}\biggr)^{p'-1} 
$$
is dominated by a geometric series (if $n$ appears in the above sum and $2^{n} = \frac{ r(G^{N-1}_{I';J'}) }{2^k}$ with $k >0$, then $ \biggl(\frac{\mu(B(x,2^{n}))}{2^{n(2-\alpha p)}}\biggr)^{p'-1} \lesssim  \biggl(\frac{\mu( G^{N-1}_{I';J'})}{r(G^{N-1}_{I';J'})^{(2-\alpha p)}} \ \frac{ 2^{k(2-\alpha p)} }{2^{2k} }  \biggr)^{p'-1} $ , and hence the above sum is $ \lesssim  \biggl(\frac{\mu( G^{N-1}_{I';J'} )}{r(G^{N-1}_{I';J'})^{(2-\alpha p)}}\biggr)^{p'-1}$, with constants of comparison only depending on $\alpha$ and $p$.)
\end{proof}


\subsection{Example 2}\label{secex2}

In view of example 1, it is natural to wonder whether all compact sets $E$ with $\H^h(E)=0$ for some gauge function $h(r)$ satisfying $\frac{h(r)}{r^{\frac{2}{K+1}}} \to 0$ are 
removable for bounded $K$-quasiregular maps, i.e. whether there is some condition strictly weaker than $\H^{\frac{2}{K+1}}(E)$ being $\sigma$-finite in terms of the gauge function $h$, which guarantees removability. Our next example shows that this is not the case. Notice the resemblance to Theorem 5.4.1 in \cite{adamshedberg}.

\begin{theorem}\label{ForAnyGaugeFunctionMeasuringSetsStrictlyLargerThan2/K+1ThereIsANonRemovableSet}
Let $h$ be a positive function on $(0,\infty)$ such that
$$\varepsilon (r) = \frac{h(r)}{r^{\frac{2}{K+1}}} \to 0 \quad\mbox{ as $r\to0$.}$$ 
 Then there is a compact set $E \subset \C$ such that $\H^h(E)=0$ and a $K$-quasiconformal map $\phi$ such that $\gamma(\phi E) >0$ (and hence $\dot C_{ \frac{2K}{2K+1},\frac{2K+1}{K+1} }(E) >0$, due to Theorem \ref{distorgamma}.)

\end{theorem}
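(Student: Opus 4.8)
The plan is to obtain $E$ and $\phi$ from the radial-stretching Cantor construction of Subsection~\ref{BasicConstructionCantorSets} (the one taken from \cite{uriartesharpqcstretching}), but with the parameters tuned far more aggressively than in \cite{ACMOU}. I would make all the data at a given step constant in the ``horizontal'' index: $m_k$ disks at step $k$, with $R_{k,j_k}=R_k$ and $\sigma_{k,j_k}=\sigma_k$; the mass normalization \eqref{DefinitionOfMuStep0}--\eqref{DefinitionOfMuStepN} then forces $m_kR_k^2\approx 1$, i.e.\ $R_k\approx m_k^{-1/2}$. Writing $M_N=\prod_{k\le N}m_k$, $\Sigma_N=\prod_{k\le N}\sigma_k$, and $\theta_N:=M_Nt_N$ for the ``density'' of the target Cantor set, formulas \eqref{RadiusTargetNthStep} and \eqref{RadiusSourceNthStep} give that the step-$N$ target and source disks have radii $t_N\approx\Sigma_N M_N^{-1/2}$ and $s_N\approx\Sigma_N^K M_N^{-1/2}$ respectively, there being $M_N$ of each, and hence the identities that run the whole argument:
\[
\theta_N\approx\Sigma_N M_N^{1/2},\qquad s_N\approx\frac{\Sigma_N^{K+1}}{\theta_N},\qquad M_N\,s_N^{\frac{2}{K+1}}\approx\theta_N^{\frac{2K}{K+1}}.
\]

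I would then choose $(m_k)$ and $(\sigma_k)$ so that simultaneously (i) $\theta_N\to\infty$ with $\sum_N\theta_N^{-2}<\infty$ (say $\theta_N\approx N$, i.e.\ $\sigma_k\sqrt{m_k}\to1$); and (ii) $\Sigma_N\to0$ so rapidly that $\varepsilon(2s_N)\le N^{-\frac{2K}{K+1}-1}$ for all large $N$. Condition (ii) is achievable because $\varepsilon(r)\to0$ as $r\to0$ while $s_N\approx\Sigma_N^{K+1}/\theta_N$ can be made as small as we please by shrinking $\Sigma_N$; and it does not conflict with (i), since (i) only pins down the product $\sigma_k\sqrt{m_k}$, so any sequence $\sigma_k\downarrow 0$ is allowed provided $m_k$ is taken (an integer) of the order $\sigma_k^{-2}$ --- the disk-packing requirement $\sigma_k^2\lesssim 1$ and $R_k\approx m_k^{-1/2}\le\frac{1}{100}$ then holding automatically for large $k$, the finitely many first steps being harmless. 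With these parameters the map $\phi$ is $K$-quasiconformal by the construction of \cite{uriartesharpqcstretching} (equivalently, by the compactness argument recalled in Subsection~\ref{BasicConstructionCantorSets}).

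It then remains to check the two conclusions. For $\mathcal H^h(E)=0$: the $M_N$ source disks of radius $s_N$ cover $E$, so, using $h(r)=r^{\frac2{K+1}}\varepsilon(r)$ and the last identity above, their total $h$-mass is
\[
M_N\,h(2s_N)=2^{\frac{2}{K+1}}\,M_N\,s_N^{\frac{2}{K+1}}\,\varepsilon(2s_N)\approx\theta_N^{\frac{2K}{K+1}}\,\varepsilon(2s_N)\le\frac{C}{N},
\]
which tends to $0$; since moreover $s_N\downarrow 0$, this forces $\mathcal H^h_\delta(E)\to 0$ as $\delta\to 0$, i.e.\ $\mathcal H^h(E)=0$. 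For $\gamma(\phi(E))>0$: let $\mu$ be the natural measure on $\phi(E)$ of \eqref{DefinitionOfMuStepN}, so that $\mu(B_N)\approx M_N^{-1}$ for the step-$N$ target disk $B_N$ containing a given $x\in\phi(E)$, whence $\mu(B_N)/t_N\approx\theta_N^{-1}$. Applying Lemma~\ref{ComputingWolffPotentialInOurCantorSets} with $\alpha=\frac23$, $p=\frac32$ (so $\alpha p=1<2$, $p'-1=2$) yields, for every $x\in\phi(E)$,
\[
\dot W^\mu_{2/3,3/2}(x)\approx\sum_{N\ge 0}\biggl(\frac{\mu(B_N)}{t_N}\biggr)^{2}\approx\sum_{N\ge 0}\theta_N^{-2}\le C.
\]
By Wolff's theorem this gives $\dot C_{2/3,3/2}(\phi(E))\gtrsim\mu(\phi(E))>0$, and then $\gamma(\phi(E))\ge c^{-1}\dot C_{2/3,3/2}(\phi(E))>0$ by \eqref{eqbeta0}; in particular $E$ is not $K$-removable, and $\dot C_{\frac{2K}{2K+1},\frac{2K+1}{K+1}}(E)>0$ by Theorem~\ref{distorgamma}.

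The heart of the matter --- and the reason a self-similar Cantor set cannot work --- is the tension between (i) and (ii): the target set $\phi(E)$ has dimension $1$ here, so positivity of its analytic capacity is not automatic and forces the density $\theta_N$ to diverge; but making $s_N$ small enough to defeat the (possibly arbitrarily slowly decaying) $\varepsilon$ forces $\Sigma_N$, and with it the $R_k$, to be tiny, which pushes $\theta_N$ down. Reconciling the two demands letting $m_k$ grow at a rate governed by the decay rate of the given $\varepsilon$ while keeping $m_k$ integral and the disks geometrically admissible; verifying that bookkeeping is the only genuinely delicate step, together with the (already-settled) point that the construction remains honestly $K$-quasiconformal.
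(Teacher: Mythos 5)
Your proposal is correct and follows essentially the same route as the paper's proof: the same Cantor construction with $\sigma_N/R_N=d_N$ and $\prod_{j\le N}d_j\approx N$ (your $\theta_N$), the same Wolff-potential bound $\sum_N\theta_N^{-2}<\infty$ giving $\gamma(\phi E)>0$ via \eqref{eqbeta0}, and the same covering estimate $M_N\,h(s_N)\approx\theta_N^{\frac{2K}{K+1}}\varepsilon(s_N)\to0$ obtained by shrinking the radii to defeat $\varepsilon$. The only cosmetic difference is your reduction to equal radii per generation; there you should split the mass equally among children (rather than strictly by area) so that the normalization $\mu(\D)=1$ survives the fixed packing deficit at each step.
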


\begin{proof}

For $E$ and $\phi$ as in Subsection \ref{BasicConstructionCantorSets}, 
notice that by Lemma \ref{ComputingWolffPotentialInOurCantorSets}, for $x \in \phi E$

$$\dot W^{\mu}_{ \frac{2}{3} , \frac{3}{2}  } (x) \approx \sum_{ N : x \in B_{N ; j_1, \dots , j_N}^{i_1, \dots , i_N} } \biggl(\frac{\mu( B_{N ; j_1, \dots , j_N}^{i_1, \dots , i_N} ) }{ \left( t_{j_1,...,j_N} \right)}\biggr)^{2}  =  \sum_{ N : x \in B_{N ; j_1, \dots , j_N}^{i_1, \dots , i_N} } \biggl(\frac{ R_{1,j_1} \dots R_{N,j_N} }{\sigma_{1,j_1} \dots \sigma_{N,j_N} }\biggr)^{2}.$$

Since on the one hand $E$ is very ``close" to satisfying $0 < \H^{\frac{2}{K+1}}(E) < \infty$ 
and $0 < \H^{1}(\phi E) < \infty$ (see (3.11) and (4.5) in \cite{uriartesharpqcstretching}), and on the other hand an important element in 
the proof of the semiadditivity of analytic capacity is that the potential is ``approximately 
constant" on each scale (see \cite{tolsasemiadditivityanalyticcapacity}), the above equation 
suggests the choice
\begin{equation}\label{ChoiceOfSigma}
\sigma_{N,j_N} = R_{N,j_N} \, d_N\quad\mbox{ for all $N$,}
\end{equation} 
where $d_N \in [1,2]$ is a parameter to be determined, independent of $j_N$.

If we take
\begin{equation}\label{ChoiceOfDjForExample2}
d_j = \frac{j+1}{j},
\end{equation} 
then, for $x \in \phi E$, we have
$$ \dot W^{\mu}_{ \frac{2}{3} , \frac{3}{2}  } (x) \approx \sum_n \left\{ \prod_{j=1}^{n} \frac{1}{\left( d_j \right)^2} \right\} = \sum_{n=2}^{\infty} \frac{1}{n^2} < \infty,$$
so that $ \dot C_{ \frac{2}{3} , \frac{3}{2} } (\phi E) >0$, and $\gamma (\phi E) >0$.

Let us denote $\ve^{N}_{max} = \max \left\{\ve(s_{j_1,...,j_N}) \right\}$.
For each $N$, substituting $\sigma_{N,j_N} = R_{N,j_N} \, d_N$, recalling that $\sum_{j_1, \dots , j_N} \left(  R_{1,j_1} \dots R_{N,j_N}  \right)^2 = 1$, and that $d_n = \frac{n+1}{n}$, we obtain
\begin{eqnarray}\label{UpperEstimateHausdorffMeasureExample4}
\sum_{j_1, \dots , j_N} h( r(D_{N ; j_1, \dots , j_N}^{i_1, \dots , i_N} ) ) & = & \sum_{j_1, \dots , j_N} h( s_{j_1,...,j_N} ) \nonumber \\
& = & \sum_{j_1, \dots , j_N} \varepsilon (s_{j_1,...,j_N}) \left(  (\sigma_{1,j_1})^K \, R_{1,j_1} \right) \dots \left( (\sigma_{N,j_N})^K R_{N,j_N} \right)^{\frac{2}{K+1}}  \nonumber \\
& \leq & \ve^{N}_{max} \ \left(  d_1 \dots d_N \right)^{\frac{2K}{K+1}} \sum_{j_1, \dots , j_N} \left(  R_{1,j_1} \dots R_{N,j_N}  \right)^2  \nonumber \\
& = & \ve^{N}_{max} \ \left(  d_1 \dots d_N \right)^{\frac{2K}{K+1}} = \ve^{N}_{max} \ \left(  N+1 \right)^{\frac{2K}{K+1}}.
\end{eqnarray}
Choosing $R_{1,j_1} \dots R_{N,j_N}$  small enough in the construction so that 
$\ve^{N}_{max} \ \left(  N+1 \right)^{\frac{2K}{K+1}} \to 0$ as $N \to \infty$, 
one infers that $\H^h(E)=0$.
\end{proof}

\subsection{Example 3}\label{secex5}

The preceding example can be modified (notice the analogies with Theorem 5.6.4 in \cite{adamshedberg}) to show that

\begin{theorem}\label{ThereIsAUniversalNonRemovableSetForAnyGaugeFunctionMeasuringSetsStrictlyLargerThan2/K+1WithConvergentIntegral}
There is a compact set $E \subset \C$ such that $\gamma(\phi E) >0$ (and hence $\dot C_{ \frac{2K}{2K+1},\frac{2K+1}{K+1} }(E) >0$, due to Theorem \ref{distorgamma}), but $\H^h(E)=0$ for every positive function $h$ such that 
$$\varepsilon (r) = \frac{h(r)}{r^{\frac{2}{K+1}}} \,\text{ is non decreasing},$$
and 
$$
\int_0^1 \left( \frac{h(r)}{r^{\frac{2}{K+1}}}  \right)^{a} \; \frac{dr}{r} < \infty,\;
\mbox{ for some $a>0$.}
$$
\end{theorem}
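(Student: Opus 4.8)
The plan is to run the Cantor-type construction of Subsection~\ref{BasicConstructionCantorSets} once and for all, with the choices of Example~2, that is, $\sigma_{n,j_n}=R_{n,j_n}\,d_n$ and $d_n=(n+1)/n$ as in \eqref{ChoiceOfSigma}--\eqref{ChoiceOfDjForExample2} (so $d_n\in[1,2]$ and $D_N:=\prod_{k=1}^N d_k=N+1$), but additionally to require that the scales $\rho_n:=\sup_j R_{n,j}$ decrease so fast that
$$
s^N_{\max}:=\max_{j_1,\dots,j_N}s_{j_1,\dots,j_N}=D_N^{\,K}\Bigl(\prod_{k=1}^N\rho_k\Bigr)^{K+1}\le 2^{-2^N}
$$
for every $N\ge1$, which is possible in view of \eqref{RadiusSourceNthStep} by picking the $\rho_n$ recursively while keeping $\sum_j R_{n,j}^2=1$ and $R_{n,j}\le1/100$ (the variant $\varepsilon_n>0$ only introduces the harmless renormalizations already noted in Subsection~\ref{BasicConstructionCantorSets}). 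Let $E$ and $\phi$ be the resulting set and map. For the positivity statements I would argue exactly as in Example~2: by Lemma~\ref{ComputingWolffPotentialInOurCantorSets}, for $x\in\phi(E)$ and the level-$N$ generating disk $B$ containing $x$ one has $\mu(B)/t_{j_1,\dots,j_N}=\prod_{k=1}^N(R_{k,j_k}/\sigma_{k,j_k})=1/D_N$, hence
$$
\dot W^{\mu}_{2/3,3/2}(x)\ \approx\ \sum_{N}D_N^{-2}\ =\ \sum_{N}\frac1{(N+1)^2}\ <\ \infty .
$$
Thus $\dot W^{\mu}_{2/3,3/2}$ is bounded on $\phi(E)$, so by Wolff's theorem $\dot C_{2/3,3/2}(\phi(E))\gtrsim\mu(\phi(E))>0$, by \eqref{eqbeta0} $\gamma(\phi(E))\ge c^{-1}\dot C_{2/3,3/2}(\phi(E))>0$, and then Theorem~\ref{distorgamma} gives $\dot C_{\frac{2K}{2K+1},\frac{2K+1}{K+1}}(E)>0$ (there is no normalization issue since $E$ lies in a fixed ball).

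The main point is to prove $\H^h(E)=0$ for \emph{every} admissible $h$ simultaneously. Fix such an $h$, with $\varepsilon(r)=h(r)/r^{2/(K+1)}$ non-decreasing and $\int_0^1\varepsilon(r)^a\,dr/r<\infty$ for some $a>0$. The level-$N$ source disks cover $E$, so for $\delta>s^N_{\max}$, using that $\varepsilon$ is non-decreasing, that $s_{j_1,\dots,j_N}^{2/(K+1)}=D_N^{2K/(K+1)}\prod_{k=1}^N R_{k,j_k}^2$, and that $\sum_{j_1,\dots,j_N}\prod_k R_{k,j_k}^2=1$, we get
$$
\H^h_\delta(E)\ \le\ \sum_{j_1,\dots,j_N}h(s_{j_1,\dots,j_N})\ \le\ \varepsilon(s^N_{\max})\sum_{j_1,\dots,j_N}s_{j_1,\dots,j_N}^{2/(K+1)}\ =\ \varepsilon(s^N_{\max})\,(N+1)^{2K/(K+1)} .
$$
Next, the hypotheses on $h$ yield $\sum_{m\ge1}\varepsilon(2^{-m})^a<\infty$ (bound each dyadic block of the integral from below by its value at the left endpoint), and since $m\mapsto\varepsilon(2^{-m})^a$ is non-increasing, Olivier's theorem gives $m\,\varepsilon(2^{-m})^a\to0$, i.e. $\varepsilon(2^{-m})=o(m^{-1/a})$. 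Because $s^N_{\max}\le2^{-2^N}$, monotonicity of $\varepsilon$ gives $\varepsilon(s^N_{\max})\le\varepsilon(2^{-2^N})=o(2^{-N/a})$, hence $\varepsilon(s^N_{\max})\,(N+1)^{2K/(K+1)}\to0$ as $N\to\infty$. Therefore $\H^h_\delta(E)=0$ for every $\delta>0$, so $\H^h(E)=0$, which completes the argument.

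The step I expect to be the main obstacle is precisely the uniformity over $h$: in contrast with Example~2, here the set $E$ must be fixed \emph{before} $h$ is revealed, so the decay rate of $s^N_{\max}$ cannot be adapted to $h$. The reason a single, fixed, super-exponential rate $s^N_{\max}\le2^{-2^N}$ suffices is that the integrability condition only forces the power-type decay $\varepsilon(2^{-m})=o(m^{-1/a})$, with an exponent that depends on $h$; since $2^{-N/a}$ dominates the fixed polynomial factor $(N+1)^{2K/(K+1)}$ for every $a>0$, the bound closes, whereas a merely polynomial rate for $s^N_{\max}$ (e.g. $s^N_{\max}\le2^{-N^2}$) would already fail when $a$ is small. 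A routine bookkeeping point is to check that the recursive choice of the $\rho_n$ is compatible with the packing constraints ($\sum_j R_{n,j}^2=1$, $R_{n,j}\le1/100$, infinitely many disks per step) of the construction in Subsection~\ref{BasicConstructionCantorSets}, which is clearly the case.
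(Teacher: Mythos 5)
Your proof is correct, and it follows the same construction as the paper (the Example~2 Cantor set with $\sigma_{n,j}=R_{n,j}d_n$, $d_n=(n+1)/n$, and an a priori doubly-exponential upper bound on the level-$N$ source radii, $s^N_{\max}\lesssim 2^{-2^N}$, playing the same role as the paper's $e^{-e^N}$), but the concluding step is genuinely different. The paper starts from the same covering estimate $\H^h_\delta(E)\le \varepsilon(s^N_{\max})(N+1)^{2K/(K+1)}$, bounds $\varepsilon(s^N_{\max})^a N^{2Ka/(K+1)}$ by the tail sum $\sum_{n\ge N}\varepsilon(e^{-e^n})^a n^{2Ka/(K+1)}$, and then performs a change of variables $s=e^{-1/t}$ to dominate that tail by $\int_0^{e^{-e^{N-1}}}\varepsilon(s)^a\,ds/s$, which vanishes by the convergence hypothesis. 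You instead extract a pointwise decay rate from the integral hypothesis: condensation gives $\sum_m \varepsilon(2^{-m})^a<\infty$, Olivier's theorem (monotone convergent series implies $m a_m\to 0$) gives $\varepsilon(2^{-m})=o(m^{-1/a})$, and then the doubly-exponential choice $s^N_{\max}\le 2^{-2^N}$ yields $\varepsilon(s^N_{\max})=o(2^{-N/a})$, which beats the polynomial factor $(N+1)^{2K/(K+1)}$ for any fixed $a>0$. Your route is slightly more elementary (no change of variables, no comparison of a tail sum to an integral), and it makes quite visible the heuristic you state explicitly: a single super-exponential decay rate for $s^N_{\max}$ suffices precisely because the integrability hypothesis only forces $h$-dependent power-type decay of $\varepsilon$ along dyadic scales, which a fixed exponential always dominates. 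The paper's integral manipulation buys a bit more in that it never explicitly invokes Olivier's theorem and keeps everything in terms of the hypothesis itself, but the content and the construction are the same.
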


\begin{proof} In the preceding construction, denote  $S^{N}_{max} = \max \left\{ s_{j_1,...,j_N} \right\}$ and
choose $S^{N}_{max} \leq 
e^{-e^N}$. Since $\ve$ is non decreasing, $\ve(s_{j_1,...,j_N})\leq \ve(e^{-e^N})$, and 
from \eqref{UpperEstimateHausdorffMeasureExample4} we deduce
$$\left[ \H^h(E)  \right]^{a}   \lesssim  \liminf_{N \to \infty} 
\left\{ \left[ \ve(S^{N}_{max}) \right]^{a} \ N^{\frac{2Ka}{K+1}} \right\} \lesssim \liminf_{N \to \infty} \left\{  \sum_{n=N}^{\infty}  \left[ \varepsilon (e^{-e^{n}})  \right]^{a} \ n^{\frac{2Ka}{K+1}} \right\}.$$
Using that again that $\ve$ is non decreasing and setting $s = e^{- \frac{1}{t} }$, we obtain
\begin{eqnarray}\label{UpperEstimateHausdorffMeasureExample5}
\left[ \H^h(E)  \right]^{a}  & \lesssim  & \liminf_{N \to \infty} \left\{ 
\sum_{n=N}^\infty\int_{e^{-n}}^{e^{-n+1}} \left[ \varepsilon 
(e^{- e^{n }})  \right]^{a} \ 
\left[  \log \left( \frac{1}{t} \right)  \right]^{\frac{2Ka}{K+1}} \ \frac{dt}{t} \right\} \nonumber \\
& \leq & \liminf_{N \to \infty} \left\{ \int_{0}^{e^{-N+1}} \left[ \varepsilon 
(e^{- \frac{1}{t} } ) \right]^{a} \ 
\left[  \log \left( \frac{1}{t} \right)  \right]^{\frac{2Ka}{K+1}} \ \frac{dt}{t} \right\} \nonumber \\
& = & \liminf_{N \to \infty} \left\{  \int_{0}^{e^{-e^{N-1}}} \left[ \varepsilon (s) 
\right]^{a} \ \frac{ \left[ \log \log \left( \frac{1}{s} \right) \right]^{\frac{2Ka}{K+1}} }{  \log \left( \frac{1}{s} \right) } \ \frac{ds}{s} \right\} \nonumber \\
& \lesssim & \liminf_{N \to \infty} \left\{  \int_{0}^{e^{-e^{N-1}}} \left[ \varepsilon (s) 
\right]^{a} \ \frac{ds}{s} \right\} = 0.
\end{eqnarray}
\end{proof}

\subsection{Example 4}\label{secex4}

Examples 2 and 3 strongly suggest that the language of capacities $\dot C_{\alpha, p}$ is better suited to understand the removability for bounded $K$-quasiregular maps than the language of Hausdorff measures. Hence it is natural to wonder how sharp Theorem \ref{distorgamma} is in the category of capacities $\dot C_{\alpha, p}$. To that effect, it is useful to recall Theorem 5.5.1 (b) in \cite{adamshedberg} adapted to our situation (and combined with Proposition 5.1.4):

\begin{theorem}\label{TheoremRelatingCapacities}
Let $E \subset \C$. Then there is a constant $A$ such that 
$$
\dot C_{\beta, q} (E) \leq A \dot C_{\alpha, p} (E) \ ,
$$
for $\beta q = \alpha p = 2-\frac{2}{K+1} = \frac{2K}{K+1}$, $p<q$.

Moreover, there exist sets $E$ such that $\dot C_{\beta, q} (E) =0$ but $\dot C_{\alpha, p} (E) >0$.
\end{theorem}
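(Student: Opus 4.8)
\textbf{Plan for the proof of Theorem \ref{TheoremRelatingCapacities}.} The inequality $\dot C_{\beta,q}(E)\le A\,\dot C_{\alpha,p}(E)$ when $\beta q=\alpha p$ and $p<q$ is a purely potential-theoretic statement about Riesz capacities of a fixed exponent homogeneity $2-\alpha p$, so the plan is to reduce it to the Wolff-potential description already recalled after the statement of Theorem \ref{distorgamma}. First I would normalize: assume $E$ is bounded and, by homogeneity, that $E\subset B(0,1)$, so that in the Wolff potentials only the radii $r\le 2$ (say) matter. Take a measure $\mu$ supported on $E$ with $\dot W^\mu_{\beta,q}(x)\le 1$ for all $x\in E$ and $\mu(E)\gtrsim \dot C_{\beta,q}(E)$. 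The goal is to show $\dot W^\mu_{\alpha,p}(x)\lesssim 1$ on $E$, which then gives $\dot C_{\alpha,p}(E)\gtrsim\mu(E)\gtrsim\dot C_{\beta,q}(E)$. Writing $\theta(x,r)=\mu(B(x,r))/r^{2-\alpha p}=\mu(B(x,r))/r^{2-\beta q}$ (the two exponents coincide), the two potentials are $\int_0^\infty\theta(x,r)^{q'-1}\frac{dr}{r}$ and $\int_0^\infty\theta(x,r)^{p'-1}\frac{dr}{r}$, and since $p<q$ we have $p'>q'$, i.e. $p'-1>q'-1$. The elementary point is that $\theta(x,r)\le 1$ uniformly: indeed $\theta(x,r)^{q'-1}\le \dot W^\mu_{\beta,q}(x)\cdot(\log\text{-factor})^{-1}$ is not quite immediate, but $\theta(x,r)$ is bounded because $\int_r^{2r}\theta(x,s)^{q'-1}\frac{ds}{s}\le 1$ together with the doubling-type comparison $\theta(x,s)\approx\theta(x,2s)$ up to the error controlled by the tail (alternatively one uses $\sum_k\theta(x,2^k)^{q'-1}\lesssim 1$, forcing each term, hence each $\theta$, to be $O(1)$). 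Once $\theta(x,r)\le C_0$, monotonicity of $t\mapsto t^{p'-1}/t^{q'-1}=t^{p'-q'}$ gives $\theta^{p'-1}=\theta^{q'-1}\cdot\theta^{p'-q'}\le C_0^{p'-q'}\,\theta^{q'-1}$, so $\dot W^\mu_{\alpha,p}(x)\le C_0^{p'-q'}\dot W^\mu_{\beta,q}(x)\le C_0^{p'-q'}$, which is exactly what we need.

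The one subtlety in the step above is the uniform bound $\theta(x,r)\le C_0$: the naive argument (each dyadic term of a convergent sum is bounded) works when the sum $\sum_{k\in\Z}\theta(x,2^k)^{q'-1}$ is the one bounded by $\dot W^\mu_{\beta,q}(x)$, which is indeed the case after discretizing the integral (decomposing into annuli as in \eqref{ed**}). So I would first record the standard equivalence $\dot W^\mu_{\beta,q}(x)\approx\sum_{k\in\Z}\bigl(\mu(B(x,2^k))/2^{k(2-\beta q)}\bigr)^{q'-1}$, from which $\mu(B(x,2^k))/2^{k(2-\beta q)}\le \dot W^\mu_{\beta,q}(x)^{1/(q'-1)}\lesssim 1$ for every $k$, and then for general $r\in[2^k,2^{k+1})$ compare $\mu(B(x,r))\le\mu(B(x,2^{k+1}))$ and $r^{2-\beta q}\ge 2^{k(2-\beta q)}$ (here $2-\beta q>0$ since $\beta q=\frac{2K}{K+1}<2$) to get $\theta(x,r)\le C\cdot 2^{2-\beta q}\lesssim 1$. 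This part is routine; I would state it as a short lemma or simply cite \cite[Chapter 4]{adamshedberg} where the pointwise bound on the Wolff potential's summands is standard.

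For the second assertion — the existence of $E$ with $\dot C_{\beta,q}(E)=0$ but $\dot C_{\alpha,p}(E)>0$ — the plan is to use the Cantor-type sets of Subsection \ref{BasicConstructionCantorSets} together with Lemma \ref{ComputingWolffPotentialInOurCantorSets}. On such a set, for a measure $\mu$ distributed by the natural self-similar rule, the Wolff potential $\dot W^\mu_{\alpha,p}(x)$ is comparable to $\sum_N \bigl(\mu(G^N)/r(G^N)^{2-\alpha p}\bigr)^{p'-1}$, a series whose $N$-th term is $\bigl(\prod_{n\le N}\lambda_n\bigr)^{p'-1}$ for an appropriate ratio $\lambda_n$ depending only on the construction parameters $R_{n,j_n},\sigma_{n,j_n}$. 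Since $p<q$ gives $p'-1>q'-1$, one can choose the parameters so that $\prod_{n\le N}\lambda_n$ decays slowly enough that $\sum_N(\prod\lambda_n)^{q'-1}=\infty$ (so $\dot C_{\beta,q}(E)=0$, after checking that \emph{every} admissible measure gives a divergent potential, which on these homogeneous Cantor sets reduces to the single natural measure up to constants) but fast enough that $\sum_N(\prod\lambda_n)^{p'-1}<\infty$ (so $\dot W^\mu_{\alpha,p}\lesssim 1$ on $E$, hence $\dot C_{\alpha,p}(E)>0$); concretely a polynomial choice $\prod_{n\le N}\lambda_n\approx N^{-s}$ with $s$ chosen so that $s(q'-1)\le 1<s(p'-1)$ works, mimicking the choices $d_j=(j+1)/j$ in Example 2. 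The main obstacle here is the ``only the natural measure matters'' claim for the upper bound $\dot C_{\beta,q}(E)=0$: one must verify that for these Cantor sets the supremum in the definition of $\dot C_{\beta,q}$ is (up to a constant) attained by the self-similar measure, which follows from the uniform structure of the construction and a standard averaging/symmetrization argument over the branches, again parallel to the reasoning in \cite{adamshedberg} and in the computations of Lemma \ref{ComputingWolffPotentialInOurCantorSets}. I would present this existence part briefly, referring to the construction of Subsection \ref{BasicConstructionCantorSets} and to \cite[Section 5.6]{adamshedberg} for the prototype, since the argument is by now standard.
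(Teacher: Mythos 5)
Your proof of the comparison inequality $\dot C_{\beta,q}(E)\le A\,\dot C_{\alpha,p}(E)$ is correct and clean. Passing to the Wolff-potential description, the pointwise bound $\theta(x,r):=\mu(B(x,r))/r^{2-\beta q}\le C_0$ for $x\in E$ follows from $\int_\rho^{2\rho}\theta(x,r)^{q'-1}\,dr/r\le \dot W^\mu_{\beta,q}(x)\le 1$ together with the monotonicity of $r\mapsto\mu(B(x,r))$ (one does not even need a full dyadic discretization: a single doubling annulus already gives $\log 2\cdot\bigl(2^{-(2-\beta q)}\theta(x,\rho)\bigr)^{q'-1}\le 1$, and $2-\beta q=\frac2{K+1}>0$). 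Then $p<q\Rightarrow p'>q'$, so $\theta^{p'-1}\le C_0^{p'-q'}\theta^{q'-1}$, hence $\dot W^\mu_{\alpha,p}\le C_0^{p'-q'}$ on $E$ and the conclusion follows by Wolff's theorem. I should point out that the paper does not supply its own proof of this statement: it is explicitly borrowed (``Theorem 5.5.1(b) in \cite{adamshedberg} adapted to our situation, combined with Proposition 5.1.4''). So for the first part you are proving something the paper only cites, and your argument is essentially the mechanism behind the cited result.

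For the existence part, your plan mirrors what the paper actually carries out in Theorem \ref{teosharp} (which is a special case of the present statement, with $p=\frac{2K+1}{K+1}$, $\alpha=\frac{2K}{2K+1}$): build a self-similar Cantor set as in Subsection \ref{BasicConstructionCantorSets}, compute both Wolff potentials of the natural measure via Lemma \ref{ComputingWolffPotentialInOurCantorSets}, and tune the parameters so that $\prod_{j\le n}\lambda_j\approx n^{-s}$ with $s(q'-1)\le 1<s(p'-1)$. You correctly identify the remaining gap, and it is a real one: for a general set, producing \emph{one} measure $\nu$ on $E$ with $\dot W^\nu_{\beta,q}(x)=\infty$ for all $x\in E$ does not by itself give $\dot C_{\beta,q}(E)=0$. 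One must either prove that on these homogeneous Cantor sets the natural measure is energy-extremal up to constants (the symmetrization argument you allude to), or invoke the fact the paper itself cites at that point in the proof of Theorem \ref{teosharp}, namely Proposition 6.3.12 and (6.3.4) of \cite{adamshedberg}. As written, your sketch leaves that step open; it is the same step the paper resolves by citation rather than by argument.
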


Hence it is conceivable that Theorem \ref{distorgamma} might be strengthened to a statement of the form 
$$
\frac{\dot C_{ \beta, q}(E)}{\diam(B)^{\frac2{K+1}}}  \geq 
c^{-1} \left(\frac{\gamma(\phi(E))}{\diam(\phi(B))}\right)^{\frac{2K}{K+1}}
$$
for some $\beta, q$ such that $\beta q = \frac{2K}{K+1}$ and $ \frac{2K+1}{K+1} < q$, i.e. for $q'-1 < 1 + \frac{1}{K}$. The following theorem shows that the answer to this question is negative.

\begin{theorem}\label{teosharp}
For any $\beta, q >0$ such that $\beta q = \frac{2K}{K+1}$ and $ \frac{2K+1}{K+1} < q$, there exists a compact $E \subset \C$ and a $K$-quasiconformal map $\phi$ such that $\gamma(\phi E) >0$ (and hence $\dot C_{ \frac{2K}{2K+1},\frac{2K+1}{K+1} }(E) >0$, due to Theorem \ref{distorgamma}), but $ \dot C_{ \beta, q}(E) = 0$.
\end{theorem}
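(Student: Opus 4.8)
The plan is to use the explicit Cantor-type construction from Subsection~\ref{BasicConstructionCantorSets}, tuning the parameters $R_{N,j_N}$ and $\sigma_{N,j_N}$ so that the target set $\phi(E)$ has positive analytic capacity (via a convergent Wolff potential for $\dot W^\mu_{2/3,3/2}$) while the source set $E$ has $\dot C_{\beta,q}(E)=0$. As in Example~2 and Example~3, I would impose the relation $\sigma_{N,j_N}=R_{N,j_N}\,d_N$ with $d_N\in[1,2]$ a sequence of parameters independent of $j_N$, and I would take the $R_{N,j_N}$ at step $N$ all equal to a common value $R_N$ (so that the mass condition $\sum_{j_1,\dots,j_N}(R_{1,j_1}\cdots R_{N,j_N})^2=1$ fixes the number of disks at each step). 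With these choices, Lemma~\ref{ComputingWolffPotentialInOurCantorSets} reduces both potentials to explicit series: for $x\in\phi(E)$,
$$
\dot W^\mu_{2/3,3/2}(x)\approx \sum_{n\ge1}\Bigl(\prod_{j=1}^n\frac1{d_j^2}\Bigr),
$$
and for $x\in E$, using the source radii \eqref{RadiusSourceNthStep} and $\mu$-mass $=\nu$-mass $=(R_{1,j_1}\cdots R_{N,j_N})^2$,
$$
\dot W^\nu_{\beta,q}(x)\approx \sum_{n\ge1}\biggl(\frac{(R_1\cdots R_n)^2}{\bigl(\sigma_{1}^K R_1\cdots \sigma_n^K R_n\bigr)^{2-\beta q}}\biggr)^{q'-1}
=\sum_{n\ge1}\biggl(\frac{(R_1\cdots R_n)^2}{\bigl(d_1\cdots d_n\bigr)^{K(2-\beta q)}\,(R_1\cdots R_n)^{2K(2-\beta q)/(K+1)\cdot(K+1)/(K+1)}}\biggr)^{q'-1},
$$
which, after substituting $\beta q=\frac{2K}{K+1}$ so that $2-\beta q=\frac2{K+1}$ and hence $K(2-\beta q)=\frac{2K}{K+1}$, simplifies to a clean power of $d_1\cdots d_n$ times a power of $R_1\cdots R_n$.

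The key arithmetic point is the following: since $q>\frac{2K+1}{K+1}$ we have $q'-1<1+\frac1K$, i.e. the exponent applied to the ``density ratio'' in the source potential is strictly smaller than the exponent $p'-1=1+\frac1K$ that appears for the critical indices $\alpha=\frac{2K}{2K+1}$, $p=\frac{2K+1}{K+1}$. I would choose the $d_n$ so that $\prod_{j\le n}d_j^{-2}$ is summable (so the target potential converges and $\gamma(\phi(E))>0$ by Theorem~A together with \eqref{eqbeta0}), but so that after raising to the smaller power $q'-1$ the corresponding source series \emph{diverges}; concretely one wants $\sum_n (d_1\cdots d_n)^{-2(q'-1)K\cdot(\text{something})}=\infty$. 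The natural candidate, mimicking Example~2, is $d_n=\frac{n+1}{n}$, giving $d_1\cdots d_n=n+1$; then the target series is $\sum_n (n+1)^{-2}<\infty$ while the source series becomes $\sum_n (n+1)^{-\theta}$ with $\theta=\theta(K,q)$, and the inequality $q'-1<1+\frac1K$ forces $\theta\le1$, so the source series diverges and $\dot C_{\beta,q}(E)=0$ by the Wolff characterization. (If $d_n=\frac{n+1}{n}$ does not land exactly in the range $\theta\le1$ for all admissible $q$, I would instead use $d_n$ chosen so that $d_1\cdots d_n$ grows like a prescribed slowly varying function $w(n)$, picking $w$ with $\sum w(n)^{-2}<\infty$ but $\sum w(n)^{-2(q'-1)K/(1+1/K)}=\infty$ — possible precisely because $2(q'-1)K/(1+1/K)<2$.)

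I would then wrap up by verifying the two endpoints separately. For $\gamma(\phi(E))>0$: the convergent Wolff potential $\dot W^\mu_{2/3,3/2}(x)\lesssim1$ on $\phi(E)$ yields, via the elementary bound $\bigl(\sup_r \mu(B(x,r))/r\bigr)^2+c_\mu^2(x)\le C\dot W^\mu_{2/3,3/2}(x)$ recalled in the introduction, that $\mu/C$ is an admissible measure in Theorem~A, so $\gamma(\phi(E))\gtrsim\mu(\phi(E))>0$; alternatively one invokes \eqref{eqbeta0} directly. For $\dot C_{\beta,q}(E)=0$: one shows $\dot W^\nu_{\beta,q}(x)=\infty$ for every $x\in E$ (the series above diverges, uniformly in $x$, since the construction is self-similar), and then a standard argument — if some measure $\sigma$ on $E$ had bounded Wolff potential, then since $\nu$ is, up to constants, the unique natural self-similar measure and every subset of $E$ of positive $\dot C_{\beta,q}$-capacity must ``see'' infinitely many scales — rules out any admissible competitor; more robustly, one uses that a set carrying a measure with $\dot W_{\beta,q}$-potential bounded on a subset of positive measure would force convergence of the corresponding series at $\nu$-a.e.\ point, a contradiction. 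The main obstacle I anticipate is purely bookkeeping: keeping the exponents straight in \eqref{RadiusSourceNthStep}–\eqref{DefinitionOfNuStepN} so that the source series exponent comes out to be exactly $2(q'-1)K/(1+1/K)$, and then checking that this quantity is $<2$ precisely under the hypothesis $q>\frac{2K+1}{K+1}$ — which is the sharpness statement — while simultaneously not breaking the target-side convergence; choosing a single sequence $(d_n)$ that witnesses both is the crux, and the computation that $d_n=\frac{n+1}{n}$ (or a slowly-varying variant) works should be carried out carefully rather than asserted.
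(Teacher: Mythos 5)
Your approach matches the paper's proof almost exactly: the same Cantor construction from Subsection~\ref{BasicConstructionCantorSets}, the same ansatz $\sigma_{N,j_N}=R_{N,j_N}\,d_N$, the same reduction via Lemma~\ref{ComputingWolffPotentialInOurCantorSets} to comparing two explicit series, and the same observation that $q>\frac{2K+1}{K+1}$ is equivalent to $(q'-1)\frac{K}{K+1}<1$, which creates the room to make the source potential diverge while the target potential converges. Two small things you would want to fix when you carry out the bookkeeping you rightly flag as the crux. First, your trial choice $d_n=\frac{n+1}{n}$ does not work in general: it gives a source series $\sum_n(n+1)^{-\theta}$ with $\theta=\frac{2K}{K+1}(q'-1)$, and the hypothesis only gives $\theta<2$, not $\theta\le1$; your fallback is the right move, and the paper implements it cleanly by setting $(d_j)^{2(q'-1)K/(K+1)}=\frac{j+1}{j}$, so the source series is exactly harmonic and the target series is $\sum_n(n+1)^{-1/[(q'-1)K/(K+1)]}$ with exponent $>1$. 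Second, your written exponent $2(q'-1)K/(1+1/K)=\frac{2K^2(q'-1)}{K+1}$ has a spurious factor of $K$; the correct exponent is $\frac{2K(q'-1)}{K+1}$, and only with this value is ``$<2$'' equivalent to the hypothesis on $q$. Finally, the conclusion $\dot C_{\beta,q}(E)=0$ does not need the self-similarity heuristics you sketch: once $\dot W^\nu_{\beta,q}(x)=\infty$ for \emph{every} $x\in E$ for a fixed finite $\nu$ on $E$, one invokes the standard nonlinear-potential-theory fact (Adams--Hedberg, Prop.~6.3.12 and (6.3.4)) that this forces $\dot C_{\beta,q}(E)=0$, which is precisely what the paper cites.
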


\begin{proof}
As in the construction in Example 2, we choose $\sigma_{N,j_N} = R_{N,j_N} \, d_N$. Then,
for $y \in \phi E$,
$$
\dot W^{\mu}_{ \frac{2}{3} , \frac{3}{2}  } (y) \approx \sum_n \left\{ \prod_{j=1}^{n} \frac{1}{\left( d_j \right)^2} \right\} \ ,
$$
while by Lemma \ref{ComputingWolffPotentialInOurCantorSets} and
\rf{RadiusSourceNthStep}, for $x \in E$,
$$
\dot W^{\nu}_{ \beta , q } (x) \approx \sum_{ N : x \in D_{N ; j_1, \dots , j_N}^{i_1, \dots , i_N} } \biggl(\frac{\nu( D_{N ; j_1, \dots , j_N}^{i_1, \dots , i_N} ) }{ \left( s_{j_1,...,j_N} \right)^{ \frac{2}{K+1} }}\biggr)^{q'-1}  =  \sum_{ N : x \in D_{N ; j_1, \dots , j_N}^{i_1, \dots , i_N} } \biggl(\frac{ R_{1,j_1} \dots R_{N,j_N} }{\sigma_{1,j_1} \dots \sigma_{N,j_N} }\biggr)^{\frac{2K}{K+1}\left( q'-1 \right) } \ ,
$$
so that, substituting $\sigma_{N,j_N} = R_{N,j_N} \, d_N$ we get, for $x \in E$,
$$
\dot W^{\nu}_{ \beta , q } (x) \approx \sum_n \left\{ \prod_{j=1}^{n} \frac{1}{\left( d_j \right)^2} \right\}^{\left( q'-1 \right) \left( \frac{K}{K+1} \right)  } \ .
$$

Now choose $\left( d_j \right)^{2 \left( q'-1 \right) \left( \frac{K}{K+1} \right)  }= \dfrac{j+1}{j}$, so that for $x \in E$, $\dot W^{\nu}_{ \beta , q } (x) \approx \sum_{n=2}^{\infty}  \dfrac{1}{n} = \infty$, while for $y \in \phi E$,
$$\displaystyle \dot W^{\mu}_{ \frac{2}{3} , \frac{3}{2}  } (y) \approx \sum_{n=2}^{\infty}  \frac{1}{\left\{  n^{ \frac{1}{ \left( q'-1 \right) \left( \frac{K}{K+1} \right) }  }   
\right\}
} < \infty,$$ since $\left( q'-1 \right) \left( \frac{K}{K+1} \right) <1$.

The fact that $\dot W^{\mu}_{ \frac{2}{3} , \frac{3}{2}  } (y)<\infty$ for all $y\in\phi(E)$
implies that $\dot C_{ \frac{2}{3} , \frac{3}{2} } (\phi E) >0$, and hence $\gamma (\phi E) >0$ and $\dot C_{ \frac{2K}{2K+1},\frac{2K+1}{K+1} }(E) >0$),
while from the  fact that $\dot W^{\nu}_{ \beta , q } (x) =\infty$ for all $x\in E$ one
infers that $ \dot C_{ \beta, q}(E) = 0$ (see 
 Proposition 6.3.12 and (6.3.4) in \cite{adamshedberg}, adapted for the potential $\dot W^{\nu}_{ \beta , q }$.)
\end{proof}

Let us remark that the above example also gives that $\dot C_{\gamma,r}(E)=0$ if 
$\gamma\,r<\beta\,q=2K/(K+1)$. This due to the fact that there is some constant $A$ 
indendent of $E$ such that
$$\dot C_{\gamma,r}(E)^{1/(2-\gamma r)}\leq A \,\dot C_{\beta,q}(E)^{1/(2-\beta q)}.$$
See Theorem 5.5.1 of \cite{adamshedberg}.


\section{Final remarks}\label{sec9}

The Main Lemma \ref{mainlem} on the distortion of $h$-contents can also be proved using 
arguments based on the ideas in \cite{Lacey-Sawyer-Uriarte}, instead of 
\cite{ACMOU}. 
Following this new approach one can extend the Main Lemma \ref{mainlem} to $h$-contents
$M^h$, with $h$ of the form
$h(B(x,r))=r^t\,\ve(B(x,r)),$
for all $0<t<2$.
As a consequence, one can extend
Theorem \ref{teocap} (a) to 
all capacities $\dot C_{\alpha,p}$, with $\alpha>0$, $1<p<\infty$, such that
$\alpha p <2$. Then, one obtains the following:

\begin{theorem} \label{teocap2}
Let $1<q<\infty$ and $0<\beta q<2$. Let $t'=2-\beta q$, and $t$ be such that
$$
\frac1t-\frac12 = K\left(\frac{1}{t'}-\frac12\right).
$$
Let $E\subset\C$ be compact, and let $\phi:\C\to\C$ be a $K$-quasiconformal map. If $E$ is contained in a ball $B$, then
\begin{equation}\label{eq111}
\frac{\dot{{\cal C}}_{\beta,q}(\phi(E))}{\diam(\phi(B))^{t'}}\leq C(\beta,q,K)\,\left(\frac{\dot{{\cal C}}_{\alpha,p}(E)}{\diam(B)^t}\right)^\frac{t'}{Kt}
\end{equation}
where
$$
p=1+\frac{Kt}{t'}\,(q-1)\hspace{1cm}\text{and}\hspace{1cm} 2-\alpha p=t.
$$
The constant in \rf{eq111} depends only on $\beta$, $q$, $K$.
\end{theorem}

The proof will appear in \cite{ACTUV}.

\bigskip

\bigskip

\emph{Acknowledgements.} The second named author would like to thank K. Astala, J. Mateu, J. Orobitg, and J. Verdera for fruitful conversations regarding some parts this paper.

\bigskip


\bibliographystyle{alpha}
\bibliography{./references25G}

\enlargethispage{2cm}
\end{document}